\documentclass[10pt,a4paper]{article}
\oddsidemargin 7pt
\evensidemargin 25pt
\marginparwidth 29pt
\textwidth 430pt

\textheight 630pt
\topmargin -28pt

\usepackage[nottoc]{tocbibind}
\usepackage{graphicx}
\usepackage[export]{adjustbox}
\usepackage{slashed}
\usepackage{enumerate}
\usepackage{amsmath}
\usepackage[hidelinks]{hyperref}
{
	
	\newtheorem{assumption}{Assumption}
}
\usepackage{amsfonts}
\usepackage{psfrag}
\usepackage{color}
\usepackage{mathtools}
\usepackage{pgf,tikz}
\usepackage{mathrsfs}
\usetikzlibrary{arrows}
\usepackage{fancyhdr}
\usepackage{amssymb}
\usepackage{slashed}
\usepackage{enumitem}
\usepackage{verbatim}

\usepackage[T1]{fontenc}
\usepackage[utf8]{inputenc}
\usepackage{authblk}

\pagestyle{plain}

\newtheorem{remark}{Remark}

\newtheorem{proposition}{Proposition}[section]

\newtheorem{theorem}{Theorem}[section]
\newtheorem{corollary}{Corollary}[section]
\newtheorem{lemma}{Lemma}[section]
\newtheorem{conjecture}{Conjecture}[section]

\numberwithin{equation}{section}
\setlength{\delimitershortfall}{-0.1pt}
\allowdisplaybreaks[4]

\newenvironment{proof}{\smallskip\noindent\emph{Proof.}\hspace{1pt}}%
{\hspace{-5pt}{\nobreak\quad\nobreak\hfill\nobreak$\square$\vspace{8pt}%
		\par}\smallskip\goodbreak}
\newcommand{\csigma}{\check{\sigma}}
\newcommand{\Lbar}{\underline{L}}
\newcommand{\Hbar}{\underline{H}}
\newcommand{\dal}{\delta^{\frac{1}{2}}}
\newcommand{\uprime}{u^{\prime}}

\newcommand{\upr}{u^{\prime}}

\newcommand{\omegabar}{\underline{\omega}}
\newcommand{\psibar}{\underline{\psi}}

\newcommand{\inftySu}[1]{\lVert{#1} \rVert_{L^{\infty}(S_{u,\ubar})}}

\newcommand{\inftySuubarprime}[1]{\lVert{#1} \rVert_{L^\infty(S_{u,\ubar^\prime})}}
\newcommand{\twoSudelta}[1]{\lVert {#1} \rVert_{L^2(S_{u,\delta})} }
\newcommand{\TwoSudelta}[1]{\big\lVert {#1} \big \rVert_{L^2(S_{u,\delta})} }
\newcommand{\inftySudelta}[1]{\lVert {#1} \rVert_{L^{\infty}(S_{u,\delta})} }

\newcommand{\enmu}{\end{multline}}

\newcommand{\chihat}{\hat{\chi}}
\newcommand{\g}{\gamma}
\newcommand{\al}{a^{\frac{1}{2}}}
\newcommand{\chibar}{\underline{\chi}}
\newcommand{\chibarhat}{\underline{\hat{\chi}}}
\newcommand{\ubar}{\underline{u}}
\newcommand{\e}{\mathrm{e}}
\newcommand{\be}{\begin{equation}}
\newcommand{\ee}{\end{equation}}

\newcommand{\bemu}{\begin{multline}}

\newcommand{\dubarprime}{\hspace{.5mm} \text{d}\ubar^{\prime}}

\newcommand{\tru}{\text{tr}\chi - \frac2u + \frac{4m_0}{u^2}}
\newcommand{\trubar}{\text{tr}\chibar + \frac2u}
\newcommand{\hsp}{\hspace{.5mm}}
\newcommand{\hs}{\hspace{.5mm}}

\newcommand{\tildetr}{\widetilde{\tr \chibar}}

\newcommand{\kmax}{k_{\text{max}}}

\newcommand{\twoSu}[1]{\lVert{#1} \rVert_{L^2(S_{u,\ubar})}}
\newcommand{\TwoSu}[1]{\big\lVert{#1} \big\rVert_{L^2(S_{u,\ubar})}}

\newcommand{\twoSuprime}[1]{\lVert{#1} \rVert_{L^2(S_{u^\prime,\ubar})}}
\newcommand{\twoSubarprime}[1]{\lVert{#1} \rVert_{L^2(S_{u,\ubar^{\prime}})}}

\newcommand{\twoSuubarprime}[1]{\lVert{#1} \rVert_{L^2(S_{u,\ubar^\prime})}}

\newcommand{\alphabar}{\underline{\alpha}}
\newcommand{\betabar}{\underline{\beta}}
\newcommand{\etabar}{\underline{\eta}}

\newcommand{\Hutwo}[1]{\lVert #1 \rVert_{L^2(H_u)}}
\newcommand{\Hbarubartwo}[1]{\lVert #1 \rVert_{L^2(\Hbar_{\ubar})}}

\newcommand{\duprime}{\hspace{.5mm} \text{d}u^{\prime}}

\newcommand{\Hodge}[1]{\prescript{*}{}{#1}}

\newcommand{\chihatbar}{\hat{\chibar}}

\def\crho {\check{\rho}}


\def\f {\frac}
\def\i {\infty}
\def\l {\bigg(}
\def\r {\bigg)}
\def\S {S_{u,\underline{u}}}

\def\K{K-\frac{1}{|u|^2}}


\renewcommand{\div}{\mbox{div }}
\newcommand{\curl}{\mbox{curl }}


\newcommand{\tr}{\mbox{tr}}
\newcommand\restr[2]{{
	\left.\kern-\nulldelimiterspace 
	#1 
	\vphantom{\big|} 
	\right|_{#2} 
}}

\newcommand{\uubarSuu}[4]{\lVert #1 \rVert_{L_{u}^{#2}L_{\ubar}^{#3} L^{#4}(S_{u,\ubar}) }}

\newcommand{\biguubarSuu}[4]{\Big \lVert #1 \Big \rVert_{L_{u}^{#2}L_{\ubar}^{#3} L^{#4}(S_{u,\ubar}) }}
\newcommand{\bigubaruSuu}[4]{\Big \lVert #1 \Big \rVert_{L_{\ubar}^{#2}L_{u}^{#3} L^{#4}(S_{u,\ubar}) }}


\newcommand\restri[2]{{
	\left.\kern-\nulldelimiterspace 
	#1 
	\right|_{#2} 
}}

\definecolor{ffqqqq}{rgb}{1.,0.,0.}
\definecolor{uuuuuu}{rgb}{0.26666666666666666,0.26666666666666666,0.26666666666666666}



\delimitershortfall=-0.1pt

\begin{document}

\title{Construction of Cauchy data for the dynamical formation of apparent horizons and the Penrose Inequality}
\author[1]{Nikolaos Athanasiou}
\author[2]{Martin Lesourd}
\affil[1]{Department of Mathematics, Oxford University, Oxford}
\affil[2]{Black Hole Initiative, Harvard University, Cambridge MA}

\maketitle

\begin{abstract}
Based on scale critical initial data, we construct smooth asymptotically flat Cauchy initial data for the Einstein vacuum system that does not contain Marginally Outer Trapped Surfaces (MOTS) but whose future evolution contains a trapped region, which itself is bounded by an apparent horizon (a smooth hypersurface foliated by MOTS).  \\ \indent 
Although the long time behaviour of these solutions is unknown, a statement of Kerr Stability would yield a dynamical, scale critical, non-spherically symmetric class of vacuum examples for the conjectures of Weak Cosmic Censorship and Final State.  \\ \indent
Owing to estimates obtained for the ADM mass of the data and the area of the MOTS foliating the apparent horizon, this construction yields a dynamical setting in which to test the conjectured spacetime Penrose Inequality. We show that the inequality holds in an open region in the future of the initial data, which itself can be controlled by the parameters of the initial data. 
\end{abstract}

\section{Introduction}
General relativity provides a framework to describe the structure of Lorentzian $(n+1)$-manifolds $(\mathcal{M},\overline{g})$ obeying the Einstein field equations 
\begin{equation}
\text{Ric}_{\overline{g}} - \frac{1}{2}\overline{g}\: R_{\overline{g}}=T
\end{equation}
where $R_{\overline{g}}$ is the scalar curvature of $\overline{g}$ and $T$ is a source term for possible matter fields in $\mathcal{M}$. Here we are concerned with the vacuum case of (1.1) where $T=0$ and the equations become
\begin{equation}
\text{Ric}_{\overline{g}} =0
\end{equation} 
Viewing (1.2) from the perspective of the Cauchy problem, the task is to specify suitable initial data and generate a spacetime by Cauchy evolution of this data. By `suitable' we mean that the initial data must satisfy the constraint equations. For $T=0$ these are
\begin{gather}
R_g- \lvert k \rvert^2 + (\tr k)^2 =0 \label{constraint1}, \\{\text{div}_g}k  -\text{d} (\tr k) =0\label{constraint2}
\end{gather}
where $M$ is an $n$-dimensional Riemannian manifold with metric $g$ and $k$ is a symmetric two-tensor on $M$ with $M$ isometrically embedding as a spacelike hypersurface of a spacetime $(\mathcal{M},\overline{g})$ satisfying (1.2).\\ \\
Once a solution $(M,g,k)$ to (1.3-4) is given, the field equations describe how it generates a spacetime $(\mathcal{M},\overline{g})$ by Cauchy evolution. This Cauchy problem is well posed in the sense that, for any such $(M,g,k)$, there exists a continuously and uniquely determined (up to isometry) spacetime called the maximal Cauchy development of $(M,g,k)$. Obtaining spacetimes from general initial data sets and understanding their properties is the focus of many outstanding conjectures in mathematical general relativity.   \\ \\
In the language of PDE, these conjectures take the form of global existence and uniqueness questions, and a binding theme among them is to understand the development of singularities. Only in the specific setting of the spherically symmetric scalar field, due to Christodoulou \cite{C91}, \cite{C99}, is this understood. \\ \\
The family of Kerr black hole solutions provide an explicit class of vacuum, axisymmetric, asymptotically flat, singular spacetimes. This family is thought to be archetypal in the class of asymptotically flat vacuum black hole spacetimes, and indeed there are a number of conjectures aiming to provide a better understanding of how these fit within the space of \textit{generic} asymptotically flat vacuum solutions. A central problem is to determine whether and how, in the course of black hole formation, the spacetimes that are generated share the properties of this family. This is the so-called \textit{Final State Conjecture} and it states that the generic outcome of gravitational collapse is a black hole spacetime whose exterior geometry approaches some member of the family.   \\ \\
The purpose of this paper is to construct a class of Cauchy initial data sets that can serve as models for the formation of black holes for the Einstein vacuum system. The three main results are as follows. 
\begin{enumerate}
    \item Based on the \textit{scale critical} initial data of \cite{AL17}, we construct smooth Cauchy initial data without trapped surfaces or MOTS, whose future evolution contains both such surfaces. These MOTS are strung out in a smooth hypersurface, i.e. \textit{apparent horizon}, whose approach to a null hypersurface can be controlled by the initial data. This gives the first apparent horizon formation result from Cauchy initial data. 
    \item The estimates we obtain bring about a connection between Kerr stability and the conjectures of Weak Cosmic Censorship and Final State for \textit{scale critical} data. In particular, if a certain form of Kerr stability holds, then this construction would yield the first scale critical dynamical vacuum examples of the conjectures of Weak Cosmic Censorship and Final State. 
    \item Owing to estimates for the ADM mass of our initial data sets and the area of the MOTS produced by evolution, the construction yields the first dynamical (non-spherically symmetric) setting in which to test the conjectured general spacetime Penrose inequality. We prove that the inequality holds for an open region in the evolution, whose size that can be controlled by the initial data.
\end{enumerate}

\subsection{Previous Work}
The first trapped surface existence result is due to Schoen-Yau \cite{SY83}. The result was refined by Yau \cite{Y01}, and extended into a \textit{hoop conjecture} statement by Alaee-Lesourd-Yau \cite{ALY19}. These results are purely at the level of the initial data: one formulates conditions on a Riemannian manifold with boundary which imply the existence of a trapped surface within. In a landmark contribution, Christodoulou \cite{C09} found a way of forming a trapped surface \textit{dynamically} for the vacuum Einstein system. 
\begin{theorem}[Christodoulou 2009]\label{C09}
Consider the characteristic initial value problem for (1.2) such that $\underline{H}_0$ coincides with a backwards lightcone in Minkowski space for $0\leq u \leq 1$. For every $B>0$ and $u_*\leq 1 $, there exists $\delta=\delta(B,u_*)>0$ sufficiently small such that if the initial $\chihat_0$, prescribed on $H_1$ for $0\leq u\leq \delta$, satisfies 
\begin{equation}
    \sum_{i\leq 5,j\leq 3} \delta^{\frac{1}{2}+j}||\nabla^i \nabla^j_4 \chihat_0||_{L^\infty_{\ubar}  L^2(S_{u,\ubar})} \leq B
\end{equation}
then the solution to (1.2) remains regular in $u_*\leq u\leq 1$, $0\leq \ubar \leq \delta$. Moreover, if the initial data satisfies the lower bound 
\begin{equation}
    \inf_{\omega\in S_{1,0}}\int_0^\delta |\chihat_0(\ubar',\omega)|^2d\ubar'\geq M_*\geq 2(1-u_*)
\end{equation}
then after choosing $\delta$ sufficiently small (depending on $B$, $u_*$, and $M_*$) if necessary, the sphere $S_{u_*,\delta}$ is a trapped surface. 
\end{theorem}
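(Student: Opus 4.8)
\medskip\noindent The plan is to establish the theorem in two stages: (a) a semi-global a priori estimate showing that the maximal characteristic development of the prescribed data exists and is regular throughout the slab $\{\,u_*\le u\le 1,\ 0\le\ubar\le\delta\,\}$, with every geometric quantity bounded in terms of $B$; and (b) a direct computation of the two null expansions on $S_{u_*,\delta}$, using the output of (a) together with the lower bound hypothesis.

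For stage (a) one works in the double-null gauge adapted to the foliation of the slab by the outgoing cones $H_u$ and the incoming cones $\Hb_{\ubar}$, with null frame $(e_3,e_4,e_A)$, $e_3$ along $\Lb$ and $e_4$ along $L$. The structural heart of the problem is the \emph{short-pulse hierarchy}: the bound on $\chihat_0$ forces $\chihat$ — and, through the null structure equations, the curvature component $\alpha$ — to be of size $\delta^{-1/2}$ in $L^2$ along the cones, while the remaining Ricci coefficients ($\trch-\tfrac{2}{|u|}$, $\trchb$, $\chihatbar$, $\eta$, $\etabar$, $\omega$, $\omegabar$, and the auxiliary $\omega^{\dagger},\omegabar^{\dagger}$) and curvature components ($\beta$, $\rho$, $\sigma$, $\betabar$, $\alphabar$) carry progressively milder, hierarchically assigned powers of $\delta$. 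One introduces scale-invariant norms that absorb these weights — $\scaletwoSu{\cdot}$ on the spheres and $\scaletwoHu{\cdot}$, $\scaletwoHbaru{\cdot}$ on the null hypersurfaces — makes bootstrap assumptions that all of them are bounded by a constant $C_0=C_0(B,u_*)$, and then recovers the bootstrap with a strictly smaller constant once $\delta$ is sufficiently small. The recovery proceeds in an ordered loop with three ingredients: (i) \emph{transport estimates} for the Ricci coefficients, obtained by integrating the null structure equations along the appropriate $e_4$ or $e_3$ generators with curvature as the source; (ii) \emph{elliptic estimates} on each $S_{u,\ubar}$, namely the Hodge systems ($\sdiv\chihat$ controlled by $\beta$ plus lower order, $\sdiv\eta$ by the mass aspect function, and so on) together with the Gauss equation $K=-\tfrac14\trch\,\trchb+\tfrac12\chihat\cdot\chihatbar-\rho$ controlling the Gauss curvature, which upgrade the transport control to control of all angular derivatives; and (iii) \emph{energy estimates} for the curvature, obtained by integrating the Bianchi identities against suitable multiplier vector fields (equivalently, the divergence identities for the Bel--Robinson tensor), which close the Bianchi pairs $(\alpha,\beta)$, $(\beta,(\rho,\sigma))$, $((\rho,\sigma),\betabar)$, $(\betabar,\alphabar)$ on $H_u$ and $\Hb_{\ubar}$. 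Once the bootstrap closes, a continuity argument completes stage (a): the set of $u'\in[u_*,1]$ for which the development exists on $\{u'\le u\le 1,\ 0\le\ubar\le\delta\}$ with the bootstrap bounds is nonempty, open (by local existence for the characteristic initial value problem), and closed (by the improved estimates), hence equals $[u_*,1]$, and uniform regularity on $u_*\le u\le 1$ follows.

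The step I expect to be the main obstacle is ingredient (iii) together with the bookkeeping of the $\delta$-weights. Since $\chihat$ and $\alpha$ sit at the very top of the hierarchy, every nonlinear product appearing in the structure and Bianchi equations must be checked to ensure that its net $\delta$-weight does not degenerate; a finite list of genuinely borderline terms survives with no power of $\delta$ to spare, and for these one must exploit the precise algebraic structure — the null structure of the Einstein vacuum equations, the benign way in which $\alpha$ enters the $e_3$-Bianchi equation for $\beta$, and the renormalization of $\trch$ by subtracting its Minkowskian value $\tfrac{2}{|u|}$. Controlling $\trch$ itself (equivalently the area radius and the null lapse) along $H_{u_*}$ is delicate precisely because the pulse drives $\trch$ away from $\tfrac{2}{|u|}$; this is closed using the Raychaudhuri equation below and the bootstrap bound on $\int|\chihat|^2$.

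For stage (b), integrate the Raychaudhuri equation
\[
\snab_4\trch+\tfrac12(\trch)^2=-|\chihat|^2-2\omega\,\trch
\]
along $H_{u_*}$ from $\ubar=0$ — where $S_{u_*,0}\subset\Hb_0$ is a round Minkowski sphere, so that $\trch=\tfrac{2}{|u_*|}$ is positive — to $\ubar=\delta$. The estimates of stage (a) bound the contributions of $2\omega\,\trch$ and of the difference between $\int_0^\delta|\chihat|^2\,d\ubar'$ on $H_{u_*}$ and its value inherited from $H_1$ by quantities of order $\delta$, so the hypothesis $\inf_{\omega}\int_0^\delta|\chihat_0(\ubar',\omega)|^2\,d\ubar'\ge M_*\ge 2(1-u_*)$ — whose right-hand side is exactly the Minkowskian budget that the accumulated focusing must exceed in this normalization — forces $\trch<0$ everywhere on $S_{u_*,\delta}$ once $\delta$ is small enough. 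Simultaneously $\trchb$ remains within $O(\delta)$ of its Minkowskian value and so is strictly negative on $S_{u_*,\delta}$. With both null expansions negative, $S_{u_*,\delta}$ is a trapped surface, which completes the proof.
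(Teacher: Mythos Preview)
The paper does not contain a proof of this theorem: it is stated in Section~1.1 as a result of Christodoulou \cite{C09}, accompanied only by a one-sentence description (``Christodoulou's argument relies on identifying a certain hierarchy among quantities that is preserved under the non-linear evolution of the Einstein vacuum system''). There is therefore nothing in the paper to compare your proposal against at the level of a proof.

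That said, your outline is an accurate high-level summary of Christodoulou's strategy and is consistent with the paper's one-line description: the short-pulse hierarchy with scale-invariant norms, the bootstrap loop of transport, elliptic, and Bianchi energy estimates, and the Raychaudhuri integration on $H_{u_*}$ to produce $\trch<0$ on $S_{u_*,\delta}$. As an outline it is correct; as a proof it is of course only a sketch, since the actual work --- the verification that every nonlinear interaction respects the $\delta$-hierarchy, the handling of the borderline terms you allude to, and the quantitative comparison of $\int_0^\delta|\chihat|^2$ on $H_{u_*}$ with its initial value on $H_1$ --- occupies the bulk of \cite{C09}. Note also that the paper's own analysis (Sections~3--4) follows the renormalized framework of \cite{AL17} and \cite{LR17} using $K$ and $\check\sigma$ in place of $\alpha,\alphabar,\rho,\sigma$, which differs from the Bel--Robinson/Bianchi-pair scheme you describe; but since the paper is not proving Theorem~\ref{C09} this is not a discrepancy.
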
	
Christodoulou's argument relies on identifying a certain hierarchy among quantities that is preserved under the non-linear evolution of the Einstein vacuum system. Identifying and maintaining this hierarchy makes existence possible whilst permitting certain quantities to grow large, in particular those needed for trapped surface formation. \\ \\
Shortly thereafter, Klainerman-Rodnianski \cite{KR12} found a simplified and more direct argument for the formation of trapped surfaces, which reduced the number of derivatives needed from two of curvature to one.  \\ \\
Another major contribution was brought by Li-Yu \cite{LY15}, who found a way to re-express a version of Theorem 1.1 in the language of Cauchy initial data. Their idea was to improve the estimates of \cite{C09} in order to use the local deformation result of Corvino-Schoen \cite{C00}, \cite{CS05} and glue an asymptotically flat slice (isometric to Kerr outside a compact set) onto the dynamical slab in \cite{C09}. To achieve this extra control, they imposed that the initial shear specified in Theorem 1.1 satisfy
\begin{equation}
    m_0=\frac{1}{4}\int_0^\delta |\chihat_0(\ubar',\omega)|^2d\ubar'
\end{equation}
for some constant $m_0$, so that the total shear along $\ubar\in[0,\delta]$ is independent of $\omega$. With this assumption, they eventually obtained the following. 
\begin{theorem}[Li-Yu 2015]
Let $\Sigma$ be a $3$-dimensional differential manifold diffeomorphic to $\mathbb{R}^3$ and seperated into four concentric regions \[\Sigma=\Sigma_M\cup \Sigma_C \cup \Sigma_S \cup \Sigma_K \] 
with $\Sigma_M$ diffeomorphic to the $3$-ball, $\Sigma_C$ and $\Sigma_S$ diffeomorphic to the $3$-annulus, and $\Sigma_K$ diffeomorphic to $\mathbb{R}^3\backslash B^3$. Then for any $\epsilon>0$ sufficiently small, there is a Riemannian metric $g$ and a symmetric two tensor $k$ on $\Sigma$ satisfying (1.3-4) such that 
\begin{enumerate}
    \item $\Sigma_M$ is a constant time slice in Minkowski spacetime $(g,k)=(\delta_{ij},0)$,
    \item $\Sigma_K$ is isometric to a constant time slice all the way to spacelike infinity in a Kerr spacetime with mass $m$ and angular momentum $\textbf{a}$ satisfying $|m-m_0|+|\textbf{a}|\lesssim \epsilon$, 
    \item $\Sigma$ is free of trapped surfaces, 
    \item there are trapped surfaces in the development of $\Sigma$.
\end{enumerate}
\end{theorem}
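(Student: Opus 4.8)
\noindent\emph{Proof idea.}
The plan is to produce the Cauchy data on $\mathbb{R}^3$ by carving a spacelike slice out of Christodoulou's dynamical vacuum region and then attaching, by Corvino--Schoen surgery, a flat ball on the inside and an exact Kerr end on the outside. Concretely, I would first run Theorem~\ref{C09} with the Li--Yu normalisation that the integrated shear $m_0=\tfrac14\int_0^\delta |\chihat_0(\ubar',\omega)|^2\,d\ubar'$ be independent of $\omega$, and extend the characteristic data on $H_1$ past $\ubar=\delta$ by vanishing shear, so that after the pulse the solution relaxes towards Schwarzschild of mass $m_0$ while, by Theorem~\ref{C09}, still forming the trapped sphere $S_{u_*,\delta}$. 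Inside the resulting spacetime I would select a spacelike hypersurface $\Sigma_C$ that (i) touches the Minkowskian cone $\underline{H}_0$ on its inner side, so that the data extends flatly across it to a round ball $\Sigma_M$ with $(g,k)=(\delta_{ij},0)$; (ii) keeps $S_{u_*,\delta}$ in its future domain of dependence; and (iii) stays within the untrapped region, exiting through a sphere in the post-pulse regime where the induced data is close to Schwarzschild data of mass $m_0$.

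The analytic heart of the argument is to make precise the closeness in (iii): starting from the a priori bounds of Theorem~\ref{C09} (or the Klainerman--Rodnianski streamlining~\cite{KR12}) one must show that, along the outer part of $\Sigma_C$, the induced metric and second fundamental form $(g,k)$ lie within $\epsilon$ --- in the weighted Sobolev norms that the Corvino--Schoen gluing consumes --- of the Cauchy data of a Schwarzschild slice of mass $m_0$. This is exactly where the $\omega$-independence of $m_0$ is essential: it is what turns the a priori anisotropic, scale-critical characteristic estimates into control of an \emph{almost round} mass aspect on the relevant spheres, hence into genuine smallness of the deviation from an exact stationary end. I expect this to be the main obstacle --- Christodoulou's estimates are organised around the hierarchy in the characteristic variables $(u,\ubar)$ and do not directly yield control of a spacelike slice in the function spaces required, so one has to propagate them carefully, keep track of the $\delta$-dependence, and ensure that the smallness survives as $\delta\to 0$.

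Granting this estimate, I would then apply the Corvino--Schoen local deformation theorem (\cite{C00},\cite{CS05}) on an annular collar $\Sigma_S$: solving the underdetermined linearised constraint operator one perturbs $(g,k)$ so that it remains a solution of \eqref{constraint1}--\eqref{constraint2}, is unchanged on the inner edge of $\Sigma_S$, and coincides on the outer edge with an exact Kerr Cauchy slice of parameters $(m,\mathbf{a})$, $|m-m_0|+|\mathbf{a}|\lesssim\epsilon$ (in particular one may take $\mathbf{a}=0$), which is then continued to spacelike infinity as $\Sigma_K$. Items (1) and (2) hold by construction. For (3), $\Sigma_M$ is flat and $\Sigma_K$ is a far exterior of a Kerr slice, so neither carries a MOTS; on $\Sigma_C\cup\Sigma_S$ the data is a controlled small perturbation of an untrapped configuration, and a quantitative computation of the null expansions (using the choices in (ii)--(iii) above) rules out MOTS there as well, so $\Sigma$ is free of trapped surfaces and MOTS. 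For (4), the gluing is carried out outside the causal past of the retained portion of the characteristic data, so $S_{u_*,\delta}$ still lies in the maximal Cauchy development of $(\Sigma,g,k)$, which therefore contains a trapped surface; the remaining freedom in placing $\Sigma_S$ and in shrinking $\delta$ gives the quantitative control asserted in the statement.
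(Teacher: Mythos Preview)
Your sketch is correct and matches the Li--Yu strategy as summarised in the paper: impose the $\omega$-independent shear condition~(1.7), improve Christodoulou's estimates to obtain closeness to Schwarzschild of mass $m_0$ in a transition region past $\ubar=\delta$, and then invoke Corvino--Schoen gluing to attach a Kerr end; this is exactly the template the present paper adapts to the scale-critical setting. One small correction: in the Corvino--Schoen step the Kerr parameters $(m,\mathbf{a})$ are not freely prescribable but are determined by the asymptotic charges of the data, so you cannot simply ``take $\mathbf{a}=0$'' --- you only get $|m-m_0|+|\mathbf{a}|\lesssim\epsilon$.
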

Note here that $\Sigma_C$ is a spacelike hypersurface traversing the dynamical spacetime slab arises from the characteristic initial value problem of Theorem 1.1 with (1.7). \\ \\
At around the same time, Klainerman-Luk-Rodnianski \cite{KLR14} were able to find an anisotropic mechanism to form trapped surfaces. 
\begin{theorem}[Klainerman-Luk-Rodnianski 2015]
Take as starting point the characteristic initial value problem of Theorem 1.1. If (1.6) is replaced with 
\begin{equation}
    \sup_{\omega\in S_{1,0}} \int_0^\delta |\chihat_0(\ubar',\omega)|^2d\ubar'\geq M_* >0
\end{equation}
then, after choosing $\delta$ smaller if necessary, a compact trapped surface can be guaranteed to form to the future of the initial data, within the domain in which the solution remains regular. 
\end{theorem}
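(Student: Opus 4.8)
We start from Theorem 1.1: imposing the upper bound (1.6) furnishes a smooth double-null foliated region $\mathcal{D}=\{u_*\le u\le 1,\ 0\le\ubar\le\delta\}$ carrying the full short-pulse hierarchy of estimates on all Ricci coefficients and curvature components. The task is to exhibit inside $\mathcal{D}$ a compact spacelike $2$-surface $S$ with both null expansions $\theta^{+}(S)<0$ and $\theta^{-}(S)<0$. The incoming expansion is never the difficulty: for any surface realised as a cut $\{\ubar=\ubar_0,\ u=\phi(\vartheta)\}$ of an incoming cone (or a small graph-deformation of one), the generator direction $e_3$ is a null normal, and since adding multiples of $e_3$ to a tangent frame leaves $\tr\chibar$ unchanged, the near-Minkowskian estimates give $\theta^{-}(S)=\tr\chibar|_S\approx -2/r<0$. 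So the whole game is $\theta^{+}<0$ on a \emph{closed} surface.

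Recall the isotropic mechanism. On the coordinate sphere $S_{u,\delta}$ one integrates the Raychaudhuri equation $\nabla_4\tr\chi+\tfrac12(\tr\chi)^2=-|\chihat|^2-\cdots$ from $\ubar=0$, using the hierarchy to bound the errors and the drift of $\chihat$ over $[u,1]$; schematically $\tr\chi(u,\delta,\omega)<0$ precisely when $E(\omega):=\int_0^\delta|\chihat_0(\ubar',\omega)|^2\,d\ubar'$ exceeds a threshold measuring the drift of $u$ off the data cone $u=1$ — the origin of Christodoulou's uniform requirement $M_*\ge 2(1-u_*)$ and of his choice of the round sphere $S_{u_*,\delta}$. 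Under (1.8) we know only $E(\omega_0)\ge M_*$ for some $\omega_0$, hence by continuity $E(\omega)\ge M_*/2$ only on a geodesic cap $U$ about $\omega_0$; on every $S_{u,\delta}$ the set $\{\theta^{+}<0\}$ is then at most a cap, and no coordinate sphere is trapped. This is the anisotropic obstruction the theorem must beat.

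The plan is to look for the trapped surface among graphs over the final incoming cone, $S_\phi=\{\ubar=\delta,\ u=\phi(\vartheta)\}$ with $\phi$ valued in $[u_*,1]$ (enlarging the ansatz to full double-null graphs $(u,\ubar)=(\phi(\vartheta),\psi(\vartheta))$ if needed). For such a graph,
\[
\theta^{+}[\phi]=\tr\chi+\big(\text{linear in }\snab^2\phi\big)+\tr\chibar\,|\snab\phi|^2+\big(\text{terms in }\snab\phi\text{ against Ricci coefficients}\big),
\]
the quadratic term being favourable since $\tr\chibar<0$. Over the cap $U$ we let $\phi$ dip toward $u_*$, where the large shear has already forced $\tr\chi<0$; over the complementary directions we make $\phi$ steep and concave exactly where the underlying $\tr\chi$ is least favourable, so that the $\snab^2\phi$ and $|\snab\phi|^2$ contributions absorb it, the hierarchy being used throughout to dominate the lower-order remainder — reaching $\theta^{+}<0$ on all of $S_\phi$, which (being a graph over $S^2$) is compact and, by the ambient near-flat geometry, a topological sphere in $\mathcal{D}$; with $\theta^{-}<0$ from the first paragraph it is the claimed trapped surface. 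Should the direct construction only deliver $\theta^{+}\le 0$ on $S_\phi$, one uses $S_\phi$ as an inner barrier against the cheap outer barrier $S_{u,\delta}$ with $u$ near $1$ in the Minkowskian part (where $\theta^{+}\approx 2/r>0$), invokes the standard existence theory for marginally outer trapped surfaces between barriers, and perturbs the resulting MOTS slightly inward.

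The main obstacle is constructing the inner ``barrier'' $S_\phi$: one must close a surface that is genuinely trapped only over the small cap $U$ into a compact surface with $\theta^{+}\le 0$ everywhere, so the gradient terms in the deformed expansion have to defeat $\tr\chi$ in the bad directions even though the total $u$-variation of $\phi$ is constrained by $1-u_*$. This forces a multi-scale choice of $\phi$ — sharp, concave transitions concentrated where $\tr\chi$ is worst — together with a meticulous accounting of every lower-order term against the short-pulse hierarchy; it is precisely here, and nowhere else, that the anisotropy is genuinely used, since by the previous paragraph no coordinate sphere can work. A secondary matter is checking that the abstractly produced MOTS (in the fallback) actually lies in the regular region and is a single sphere, which the barrier placement and the near-flat ambient geometry supply.
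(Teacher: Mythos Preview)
The paper does not itself prove this theorem; it is quoted from \cite{KLR14}, and the method is only described later in Section~5.1 as background for An's MOTS construction. Comparing your proposal to that description: you have the correct geometric ansatz---the trapped surface is sought as a graph $\{\ubar=\delta,\ u=R(\omega)\}$ over the last incoming cone, the expansion is computed via the change-of-frame formula (Proposition~5.1 in the paper), and $\tr\chibar$ is handled for free.

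Where you diverge is in the construction of $R$. KLR do not build a multi-scale profile by hand, nor do they invoke a MOTS existence theorem between barriers. Instead they insert the slab estimate $\tr\chi\approx \tfrac{2}{R}-\tfrac{1}{R^2}\int_0^\delta|\chihat_0|^2(\ubar',\omega)\,d\ubar'$ into the exact formula for $\tr\chi'$, reducing $\tr\chi'<0$ to a scalar quasilinear elliptic inequality on $S^2$ involving only $R$, $\nabla R$, $\Delta' R$ and the data function $E(\omega)$. The substitution $R=e^{-\varphi}$ eliminates the first-order $|\nabla R|^2$ term entirely and leaves a semilinear inequality of the schematic form $\Delta_{\gamma_0}\varphi<\tfrac12 e^{\varphi}E(\omega)-1$, which they solve directly (the sup condition on $E$ is exactly what allows $\varphi\to+\infty$ near $\omega_0$). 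This is considerably sharper than your sketch.

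One specific concern: your claim that the $\tr\chibar\,|\nabla\phi|^2$ term is favourable should be rechecked against Proposition~5.1, where the quadratic term enters as $-\Omega^2\tr\chibar|\nabla R|^2$, hence is \emph{positive} and unfavourable; KLR do not exploit it but rather remove it via the logarithmic change of variable. Your barrier-and-MOTS fallback could in principle be made to work, but it is heavier machinery than needed, and the real content---solving the elliptic inequality coming from the anisotropic data---is precisely the step you leave vague.
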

In replacing `inf' with `sup', they only require the initial shear to be large in the neighborhood of a \textit{single} geodesic, thus yielding an \textit{isotropic} formation result. \\ \\
In a different direction, An-Luk \cite{AL17} proved a trapped surface formation under `scale critical' data for the Einstein vacuum system. Their result made use of techniques developed by Luk-Rodnianski \cite{LR14}, \cite{LR17}, which those authors had developed to study interacting impulsive waves. 
\begin{theorem}[An-Luk 2017] \label{AL17}
Consider the following characteristic initial value problem the Einstein vacuum system. The initial incoming hypersurface $\underline{H}_0$ is required to coincide with a backwards lightcone in Minkowski space with $0\leq u \leq 1$. On the initial outgoing hypersurface $H_1$, the initial $\chihat_0$ satisfies 
\begin{equation}
    \sum_{i\leq 7} ||\nabla^i \chihat_0||_{L^\infty_{\ubar} L^2(S_{u,\ubar})} \leq a^{1/2}
\end{equation}
for $0\leq \ubar \leq \delta$. There exists a universal large constant $b_0$ such that if $b_0\leq b\leq a$ and $\delta a^{1/2}b<1$, then the unique solution to (1.2) remains regular in the region $\delta a^{1/2}b\leq u\leq 1$, $0\leq \ubar,\leq \delta$. Moreover, if the initial data also verify the lower bound 
\begin{equation}
    \inf_{\omega\in S_{1,0}}\int_0^\delta |\chihat_0(\ubar',\omega)|^2d\ubar'\geq 4a^{1/2}b\delta
\end{equation}
then the sphere $S_{b\delta a^{1/2},\delta}$ is trapped. 
\end{theorem}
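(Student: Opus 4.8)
The plan is to run the double null foliation scheme on the slab $\{\,\delta a^{1/2}b\le u\le1,\ 0\le\ubar\le\delta\,\}$, with null lapse $\Omega$, Ricci coefficients $\chihat,\trch,\chihatbar,\trchb,\eta,\etabar,\omega,\omegabar$ and curvature components $\alpha,\beta,\rho,\sigma,\betabar,\alphabar$, and to close a bootstrap in the \emph{scale invariant} norms $\mathcal{L}^p_{(sc)}(S_{u,\ubar})$, $\mathcal{L}^2_{(sc)}(H_u)$ and $\mathcal{L}^2_{(sc)}(\underline{H}_{\ubar})$, obtained from the corresponding $L^p$ norms by inserting the power of $a$ (and of $\delta$) dictated by the ``signature for scaling'' of each geometric quantity. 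With these weights the data hypothesis (1.9) becomes an $O(1)$ bound on the scale invariant initial norms, and --- this is the point of the scale critical normalization, imported from the study of interacting impulsive waves --- the smallness actually required to absorb every nonlinear interaction is $\delta a^{1/2}b<1$, with $b$ a large universal constant, rather than $\delta a\ll1$. So first I would record the null structure equations, the Bianchi equations, and the scale invariant Sobolev, product and elliptic estimates on the spheres $S_{u,\ubar}$ (whose Gauss curvature equals $-|u|^{-2}$ up to controlled error), all expressed in the weighted norms.

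The heart of the argument is then a continuity/bootstrap argument: assume that on the slab the scale invariant norms of all Ricci coefficients, and of all curvature components on each $H_u$ and $\underline{H}_{\ubar}$, are bounded by a universal constant, and improve each of those bounds. The Ricci coefficients are recovered by integrating their $\nabla_4$ transport equations along the $\underline{H}$ generators and their $\nabla_3$ equations along the $H$ generators, followed by Gronwall in the weighted norms; the anomalous quantities --- those, like $\chihat$, $\omega$ and $\alpha$, that carry an extra $\delta^{-1/2}$ or whose behaviour in $\ubar$ is borderline --- have to be propagated while keeping the precise null structure, so that the would-be critical products in the structure equations (the source $\alpha$ in the $\nabla_4$ equation for $\chihat$, the $|\chihat|^2$ in the Raychaudhuri equation for $\trch$, the various $\eta\hot\eta$ and $\chihat\cdot\chihatbar$ couplings) are handled at subcritical size once $\delta a^{1/2}b<1$ is invoked. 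The curvature is controlled through energy estimates for the Bianchi system, grouping the components into the standard pairs and integrating by parts over pairs of null hypersurfaces, with all boundary and bulk error terms estimated again in the weighted norms; the top-order angular derivatives that the transport equations lose are restored by the $S_{u,\ubar}$-elliptic (Hodge system) estimates. I expect the main obstacle to be precisely the verification that every one of these borderline terms is genuinely subcritical under the chosen weights --- that is, that the hierarchy closes exactly at the scale critical threshold --- which is where the detailed structure of the Einstein equations must be used in full.

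Granting the a priori estimates, and hence regularity throughout the slab, trapped surface formation is a short two-step computation. First, propagate the shear lower bound (1.10) inwards: the $\nabla_3$ transport equation for $\chihat$ has leading coefficient $\tfrac12\trchb\approx-|u|^{-1}$, so, up to errors made small by the a priori estimates together with $\delta a^{1/2}b<1$, $|\chihat|^2$ is amplified by a factor $\sim|u|^{-2}$ as $u$ decreases from $1$ to $u_*:=b\delta a^{1/2}$; hence on $S_{u_*,\delta}$ one obtains, for every $\omega$, $\int_0^\delta|\chihat(u_*,\ubar',\omega)|^2\,d\ubar'\gtrsim u_*^{-2}\int_0^\delta|\chihat_0(\ubar',\omega)|^2\,d\ubar'\ge u_*^{-2}\cdot4a^{1/2}b\delta=\tfrac{4}{u_*}$ by (1.10), after shrinking $\delta$ once more to swallow the errors. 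Second, feed this into the Raychaudhuri equation in the $e_4$ direction, $\partial_{\ubar}(\Omega\trch)+\tfrac12\Omega^2(\trch)^2=-\Omega^2|\chihat|^2+(\text{lower order})$; dropping the favourable quadratic term and integrating from $\ubar=0$, where $\trch|_{\ubar=0}=\tfrac{2}{u}$ is the exact Minkowski value on $\underline{H}_0$, yields $\trch(u_*,\delta,\omega)\le\tfrac{2}{u_*}-\int_0^\delta|\chihat(u_*,\ubar',\omega)|^2\,d\ubar'+(\text{small})\le\tfrac{2}{u_*}-\tfrac{4}{u_*}+(\text{small})<0$. Since the a priori estimates also keep $\trchb$ close to $-\tfrac{2}{u}$, so that $\trchb<0$ on $S_{u_*,\delta}$, both null expansions of $S_{b\delta a^{1/2},\delta}$ are negative and it is a trapped surface. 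The constant $4$ in (1.10) and the choice $u_*=b\delta a^{1/2}$ are calibrated precisely so that the $|u|^{-2}$ amplification of the shear integral beats the background expansion $2/u_*$ with room to spare for the errors.
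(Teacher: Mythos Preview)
Your overall architecture is right---bootstrap on the slab, transport equations for the Ricci coefficients, energy estimates for the Bianchi system, then the Raychaudhuri computation for trappedness---and your endgame for $S_{b\delta a^{1/2},\delta}$ is essentially the correct one. But there is a genuine gap in the existence half: you propose to run the curvature energy estimates with the full set $(\alpha,\beta,\rho,\sigma,\betabar,\alphabar)$. At the scale-critical threshold this does not close. Since $\alpha=-\nabla_4\chihat-\trch\,\chihat+2\omega\chihat$ and the hypothesis (1.9) gives $\chihat_0$ only an $L^\infty_{\ubar}$ bound of size $a^{1/2}$ with \emph{no} smallness in its $\ubar$-variation, $\alpha$ carries no favourable $\delta$-weight; the $(\alpha,\beta)$ Bianchi pair cannot be bounded uniformly down to $u=\delta a^{1/2}b$. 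As the paper remarks explicitly (see the Remark following Proposition~4.8), ``with them it would be impossible to prove the existence of the spacetime up to $u=\delta a^{1/2}b$.'' This is exactly what separates the An--Luk result from the earlier short-pulse arguments.

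The missing idea---and the main innovation of \cite{AL17}, imported from Luk--Rodnianski's impulsive-wave papers---is to \emph{renormalize the Bianchi system}: replace $(\rho,\sigma)$ by the Gauss curvature $K-|u|^{-2}$ and $\check\sigma:=\sigma+\tfrac12\chibarhat\wedge\chihat$, and close the energy estimates for the reduced system $(\beta,\,K-|u|^{-2},\,\check\sigma,\,\betabar)$, never estimating $\alpha$ or $\alphabar$ at all. The norms the paper actually uses (\S2.4) are the explicit $u$-weighted norms $\mathcal{O},\tilde{\mathcal{O}}_{N+1,2},\mathcal{R}$ built on these renormalized components, not the Klainerman--Rodnianski $\mathcal{L}^p_{(sc)}$ scaling you invoke; the latter is adequate for Christodoulou/Klainerman--Rodnianski data but not here. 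One smaller point: you should not speak of ``shrinking $\delta$ once more'' to absorb errors. In this regime $\delta$ is not the smallness parameter; the smallness that swallows every nonlinear error is $\delta a^{1/2}b/|u|\le 1$ together with the largeness of the universal constant $b\ge b_0$.
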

Note that after choosing $a=B^2\delta^{-1}$ and $b=b_0$ one basically recovers a version of Theorem 1.1 as a corollary.
\begin{corollary}[An-Luk 2017]
Replace (1.9) with
\begin{equation}
    \sum_{i\leq 7} \delta^{1/2}||\chihat_0||_{L^\infty_{\ubar} L^2(S_{u,\ubar})}\leq B
\end{equation}
for $0\leq \ubar \leq \delta$. Then there exists a universal large constant $b_0$ such that the solution to (1.2) remains regular in $u_*\leq u\leq 1$, $0\leq \ubar\leq \delta$ for $u_*=b_0B\delta^{1/2}$. Moreover, if the initial data also verify the lower bound 
\begin{equation}
    \inf_{\omega\in S_{1,0}}\int_0^\delta |\chihat_0(\ubar',\omega)|^2d\ubar' \geq 4b_0B\delta^{1/2}
\end{equation}
then the sphere $S_{u_*,\delta}$ is a trapped surface.
\end{corollary}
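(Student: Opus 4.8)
The plan is to deduce the corollary from Theorem~\ref{AL17} directly, by carrying out the change of parameters announced just above the statement. Concretely, I would set
\[
a = B^{2}\delta^{-1}, \qquad b = b_{0},
\]
where $b_{0}$ is the universal constant provided by Theorem~\ref{AL17}, so that $a^{1/2}=B\delta^{-1/2}$, and then check that every hypothesis of Theorem~\ref{AL17} survives this substitution and that each of its conclusions rewrites into the asserted one.

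First I would verify the hypotheses. Dividing (1.11) by $\delta^{1/2}$ turns its left-hand side into $\sum_{i\le 7}\|\nabla^{i}\chihat_{0}\|_{L^{\infty}_{\ubar}L^{2}(S_{u,\ubar})}$ and its right-hand side into $B\delta^{-1/2}=a^{1/2}$, which is exactly the bound (1.9). The structural constraint $b_{0}\le b\le a$ becomes $b_{0}\le b_{0}\le B^{2}\delta^{-1}$, hence holds as soon as $\delta\le B^{2}/b_{0}$; and the smallness requirement $\delta a^{1/2}b<1$ becomes $b_{0}B\delta^{1/2}<1$, i.e. $\delta<(b_{0}B)^{-2}$. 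Both are conditions on $\delta$ in terms of $B$ and the already-fixed universal constant $b_{0}$, so choosing $\delta$ below the minimum of these two thresholds makes Theorem~\ref{AL17} applicable. Next I would read off the conclusions. The regularity region $\delta a^{1/2}b\le u\le 1$, $0\le\ubar\le\delta$, furnished by Theorem~\ref{AL17} is precisely $b_{0}B\delta^{1/2}\le u\le 1$, i.e. $u_{*}\le u\le 1$ with $u_{*}=b_{0}B\delta^{1/2}$. For the trapped-surface assertion, the lower bound (1.10) required by Theorem~\ref{AL17}, namely $\inf_{\omega\in S_{1,0}}\int_{0}^{\delta}|\chihat_{0}(\ubar',\omega)|^{2}\,d\ubar'\ge 4a^{1/2}b\delta$, equals $4b_{0}B\delta^{1/2}$, which is exactly the hypothesis (1.12) of the corollary; and the trapped sphere $S_{b\delta a^{1/2},\delta}$ delivered by Theorem~\ref{AL17} is $S_{u_{*},\delta}$.

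There is no genuine analytic obstacle here: all of the work is contained in Theorem~\ref{AL17}, and the corollary is nothing more than the observation that the Christodoulou-type normalisation $a\sim B^{2}/\delta$, $b\sim 1$ sits inside the An--Luk scale-critical regime. The only point deserving a word of care --- and the closest thing to a ``main obstacle'' --- is the compatibility of the two smallness conditions on $\delta$ isolated above; but since $b_{0}$ is fixed once and for all before $\delta$ is chosen, there is no circularity, and the implicit assumption that $\delta$ is small (already needed for $u_{*}<1$ to be meaningful) covers both at once.
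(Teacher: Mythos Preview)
Your proposal is correct and follows exactly the approach indicated in the paper: the sentence preceding the corollary already says that ``after choosing $a=B^2\delta^{-1}$ and $b=b_0$ one basically recovers a version of Theorem 1.1 as a corollary,'' and your argument is precisely the verification that this substitution transforms the hypotheses and conclusions of Theorem~\ref{AL17} into those of the corollary. There is nothing to add.
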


\par \noindent The significance of Theorem 1.4 lies in the fact that the size of the incoming radiation, given by
\[ \inf_{\theta \in S_{1,0}} \int_{0}^{\delta} \lvert \chihat \rvert^2 (\ubar^{\prime}, \theta) \dubarprime, \]can be of the same order of magnitude as the length scale $\delta$. In particular, there exist initial data satisfying the conditions of Theorem 1.4 for which the metric is only large in $H^{\f32}$ and small in $H^s$ for all $s < \f32$. More precisely, one can construct initial data satisfying the conditions of Theorem \ref{AL17} in which
\[ \lVert \gamma \rVert_{H^s} \approx \al \delta^{\f32 -s}. \]This is in contrast to Theorem 1.1, in which the data are large in $H^s$ for all $s>1$. The significance of the $H^{\f32}$ space is that it is a critical space in terms of scaling considerations for the Einstein vacuum equations. It is in this sense that the data are termed \textit{mild}. Broadly speaking, therefore, scale-critical data can be thought of as the smallest initial data, in terms of size, known to produce a trapped surface in evolution.

\vspace{3mm} 
\par \noindent More recently, based on the spacetime whose existence was shown in \cite{AL17}, An \cite{A17} found an additional initial condition permitting to prove the existence of a \textit{apparent horizon}, i.e., a hypersurface in the spacetime foliated by MOTS. An's argument extends the ideas set forth in \cite{KLR14}, and yields the following.
\begin{theorem}[An 2018]
Consider a characteristic initial value problem for the Einstein system, where the initial data satisfies 
\begin{equation}
    \sum_{0\leq i <\infty,0\leq j<\infty} \delta^{j}a^{-\frac{1}{2}}||\nabla^j_{e_4} \nabla^i \chihat_0||_{L^\infty_{\ubar} L^2(S_{1,\ubar})} \leq B
\end{equation}
Require in addition that $a^{1/2}>b$ and
\begin{equation}
    \int_0^{\ubar} |\chihat_0(\ubar',\omega)|^2 d\ubar'=f(\ubar,\omega)\ubar a \hspace{0.2in} \emph{for each} \hspace{0.2in} \delta b a^{-1/2}\leq \ubar\leq \delta
\end{equation}
where $f(\ubar,\omega)$ is a smooth function such that $\frac{20}{21}\leq f(\ubar,\omega)\leq \frac{22}{21}$ and $\partial^{i}_{\omega} f(\ubar,\omega)\lesssim 1$ for all $i \in \mathbb{N}$ and $\omega\in S^2$, then, along every $\underline{H}_{\ubar}$ with $\ubar\in [b \delta a^{-1/2},\delta ]$, there exists a unique MOTS $M_{\ubar}$ and these join to make a smooth hypersurface.
\end{theorem}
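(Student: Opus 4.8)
\medskip
\noindent The plan is to convert the MOTS condition on each incoming null cone into a scalar quasilinear elliptic equation on $\mathbb{S}^{2}$ and to solve it using barriers built from round spheres. Fix $\ubar\in[b\delta a^{-1/2},\delta]$ and restrict to the part of $\Hb_{\ubar}$ lying inside the region of existence of Theorem~\ref{AL17}, where it is foliated by the spheres $S_{u,\ubar}$. On that region all Ricci coefficients and null curvature components obey the scale-critical estimates of \cite{AL17}; combining these with the prescribed total shear $\int_{0}^{\ubar}|\chihat_{0}|^{2}(\ubar',\omega)\,d\ubar'=f(\ubar,\omega)\,\ubar\,a$ and integrating the Raychaudhuri equation along the outgoing cones gives, to leading order,
\[
\trch(u,\ubar,\omega)=\frac{1}{u^{2}}\bigl(2u-f(\ubar,\omega)\,\ubar\,a\bigr)+\mathrm{Err},
\]
with $\mathrm{Err}$ smaller than the displayed term by a factor $\sim b^{-1}$ throughout the relevant range $u\sim\ubar a$. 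Since the incoming expansion $\trchb$ is negative everywhere, $S_{u,\ubar}$ is trapped exactly where $\trch<0$ and untrapped where $\trch>0$; using $\tfrac{20}{21}\le f\le\tfrac{22}{21}$ one produces radii $u_{-}(\ubar)<u_{+}(\ubar)$, both comparable to $\ubar a/2$, with $\trch<0$ on $S_{u_{-},\ubar}$ and $\trch>0$ on $S_{u_{+},\ubar}$, both spheres still lying in the existence region exactly when $\ubar$ is in the asserted interval --- which is what pins the lower endpoint $b\delta a^{-1/2}$. The same formula gives the \emph{strict} monotonicity $\partial_{u}\trch>0$ on the slab $u\in[u_{-},u_{+}]$, the structural fact behind everything below. (In particular $\trch$ genuinely depends on $\omega$ through $f$, so one should not expect the MOTS to be a round sphere, which is why a graph is needed.)

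\medskip
\noindent Next I would represent a candidate MOTS in $\Hb_{\ubar}$ as a graph $u=R(\omega)$ over this foliation, $R\colon\mathbb{S}^{2}\to[u_{-},u_{+}]$. Writing the outgoing null expansion of the tilted section $\{u=R(\omega)\}$ in terms of $R$ produces a quasilinear elliptic equation $\mathcal{Q}_{\ubar}[R]=0$ whose second-order part is $-\slap R$, where $\slap$ is the Laplacian of the metric induced on $\{u=R(\omega)\}$, and whose undifferentiated part equals $\trch(R,\ubar,\omega)$ up to a positive factor and lower-order terms; thus its derivative in the ``height'' $R$ equals $\partial_{u}\trch$ up to a positive factor, hence is strictly positive on the slab. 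All coefficients of $\mathcal{Q}_{\ubar}$ are smooth in $\omega$, their angular regularity inherited from the higher-derivative bounds on $\chihat_{0}$ and from $\partial_{\omega}^{i}f\lesssim 1$. Because $\trch<0$ on $\{u=u_{-}\}$ and $\trch>0$ on $\{u=u_{+}\}$, the constant functions $R\equiv u_{-}$ and $R\equiv u_{+}$ are an ordered pair of sub- and super-solutions of $\mathcal{Q}_{\ubar}[R]=0$. Uniform ellipticity on the slab, Schauder estimates whose constants are controlled through the An--Luk hierarchy, and the standard monotone iteration for quasilinear elliptic equations between ordered barriers then produce a solution $R_{\ubar}\in C^{\infty}(\mathbb{S}^{2})$ with $u_{-}\le R_{\ubar}\le u_{+}$; its graph is the MOTS $M_{\ubar}$.

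\medskip
\noindent Uniqueness and the smooth joining of $\{M_{\ubar}\}$ both come from the strict positivity of $\partial_{u}\trch$ across the slab. If $M_{\ubar}=\{u=R\}$ and $M_{\ubar}'=\{u=R'\}$ were two MOTS --- necessarily both graphs valued in $[u_{-},u_{+}]$, by comparison with the round spheres of definite sign --- then $w=R-R'$ solves a linear second-order uniformly elliptic equation whose zeroth-order coefficient is, by the mean-value form of the undifferentiated term, strictly positive, and the maximum principle forces $w\equiv 0$. The same structure makes the linearization $D\mathcal{Q}_{\ubar}$ at $R_{\ubar}$ --- elliptic on the closed surface $\mathbb{S}^{2}$ with strictly positive zeroth-order coefficient, hence obeying the maximum principle and so of trivial kernel --- an isomorphism $C^{2,\alpha}(\mathbb{S}^{2})\to C^{\alpha}(\mathbb{S}^{2})$; the implicit function theorem applied to $(\ubar,R)\mapsto\mathcal{Q}_{\ubar}[R]$, which depends smoothly on $\ubar$ since both the double-null geometry and $f$ do, then shows $\ubar\mapsto R_{\ubar}$ is smooth. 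Hence $\bigcup_{\ubar\in[b\delta a^{-1/2},\delta]}M_{\ubar}$ is an embedded smooth hypersurface, the apparent horizon.

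\medskip
\noindent The hard part will not be the elliptic machinery but making it run inside the scale-critical hierarchy. One must verify that $\mathrm{Err}$, $\partial_{u}\mathrm{Err}$, and all angular derivatives of the coefficients of $\mathcal{Q}_{\ubar}$ are genuinely subleading --- each gaining a power of $b^{-1}$ or $a^{-1/2}$ --- uniformly on the thin slab $u\in[u_{-},u_{+}]$ of width $\sim\ubar a$, and in particular uniformly down to $\ubar=b\delta a^{-1/2}$, where that slab is narrowest and sits closest to the inner edge of the region of existence. It is exactly these quantitative bounds, extracted from \cite{AL17} together with the prescribed form of the total shear, that turn the constant spheres into genuine barriers, pin the strict sign $\partial_{u}\trch>0$, and keep the Schauder constants uniform in $\ubar$; carrying them out is the crux of the proof.
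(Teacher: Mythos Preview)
Your outline captures the right skeleton --- reduce the MOTS condition on each $\Hb_{\ubar}$ to a scalar quasilinear elliptic equation for a graph $u=R(\omega)$, use trapped and untrapped round spheres as barriers, get uniqueness from the maximum principle via the strict sign of $\partial_u\trch$, and deduce smoothness in $\ubar$ from invertibility of the linearization --- and this matches the architecture of An's argument as summarized in Section~5.1. You diverge from An in the existence step and in the regularity step.

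For existence, An does not run a monotone iteration between barriers. He sets up two auxiliary one-parameter families $F(R,\lambda)=0$ and $G(R,\lambda)=0$ that reduce at $\lambda=0$ to an explicit equation with solution $R=\tfrac{1}{2}\ubar a$ and at $\lambda=1$ to the MOTS equation, and runs the method of continuity. The decisive input is an a priori $C^{1}$ estimate $|\nabla R|\ll 1$, obtained by a Bochner-type computation applied to $h(R)\,|\nabla R|^{2}$ for the auxiliary function $h(R)=1+\tfrac{8}{\ubar^{2}a^{2}}(R-\tfrac{\ubar a}{2})^{2}$; this is where the scale-critical hierarchy is used in earnest, and it is the technical heart of the proof. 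Your sentence ``uniform ellipticity on the slab, Schauder estimates whose constants are controlled through the An--Luk hierarchy'' hides exactly this step: the Laplacian $\slap$ is on the tilted graph, not on a fixed round sphere, so its ellipticity constants depend on $\nabla R$, and the gradient-squared terms in $\mathcal{Q}_{\ubar}$ are only genuinely lower-order once $|\nabla R|\ll 1$ is known. Without that estimate neither the Schauder constants nor a monotone iteration for this quasilinear equation can be made uniform. If you keep the sub/super-solution route you still have to supply the same $C^{1}$ bound, and An's Bochner argument is the known mechanism; your barriers and your zeroth-order sign are correct, but they do not by themselves give gradient control. Your final paragraph flags the error terms in the \emph{coefficients} as the crux, but the harder object to control is $\nabla R$ itself.

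For smoothness in $\ubar$, your implicit-function-theorem argument is cleaner than the one recorded from \cite{A17}: there the smoothness of $\ubar\mapsto R_{\ubar}$ is extracted by writing the equation for the difference quotient $(R(\ubar',\omega)-R(\ubar,\omega))/(\ubar'-\ubar)$, introducing an auxiliary function $\mathfrak{h}$ that solves a related linear problem, and passing to the limit. Once the linearization is invertible --- which is exactly your uniqueness argument --- the IFT gives the same conclusion more directly.
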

The paper \cite{A17} contains other results, but we will only invoke the above. As in \cite{KLR14}, the argument involves studying an elliptic PDE that is singled out after computing the null expansion of spheres in the development. 
\vspace{3mm} 
\par \noindent
Most recently, Li-Mei \cite{LM20} extended \cite{LY15} by performing the gluing procedure of \cite{LY15} inside the black hole region of a pre-existing spacetime that is isometric to the Kerr solution in a neighborhood of future timelike infinity. By invoking \textit{Cauchy stability}, they can extend this interior region to a region outside the event horizon. In doing so, they thus obtain the following.
\begin{theorem}[Li-Mei 2020]
There exists a class of solutions $(M,g)$ to the Einstein vacuum equations that are isometric to the Kerr spacetime in a neighborhood of future timelike infinity such that
\begin{enumerate}
    \item there is a spacelike slice $\Sigma$ that does not intersect the black hole region $\mathcal{B}\subset M$,
    \item there is a trapped surface in the development of $\Sigma$.
\end{enumerate}
\end{theorem}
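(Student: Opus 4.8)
The plan is to sidestep the unknown long-time behaviour of a dynamically formed black hole --- controlling which would amount to a Kerr stability statement --- by surgically inserting a compactly supported dynamical region into a fixed Kerr background, so that the entire asymptotic structure near timelike infinity $i^+$ (in particular the existence of $\mathscr{I}^+$, of a black hole region, and of an event horizon) is Kerr by construction rather than something one must prove. The cost is that one then has to verify the insertion does not destroy the causal skeleton that has been built in by hand.

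First I would fix the reference spacetime: a Kerr solution $(\mathcal{K},g_{\mathcal{K}})$ with small parameters $(m,\mathbf{a})$, together with a Cauchy hypersurface for it and the associated black hole region $\mathcal{B}_{\mathcal{K}}$ and event horizon; a full future neighbourhood of $i^+$ in $\mathcal{K}$ is frozen as the output structure. Next I would manufacture the dynamical ingredient by running Christodoulou's characteristic construction (Theorem \ref{C09}), or equivalently its scale-critical refinement Theorem \ref{AL17}, with the angular normalisation (1.7) imposed so that $\frac14\int_0^\delta |\chihat_0(\ubar',\omega)|^2\,d\ubar'$ is independent of $\omega$. Together with the lower bound (1.6), this yields a vacuum slab crossed by a spacelike hypersurface that is itself free of MOTS, whose induced data near its outer boundary sphere are $C^k$-close to Schwarzschild data of mass $m_0$, and whose future development contains a trapped sphere.

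Then I would glue, exactly as in \cite{LY15}: apply the Corvino-Schoen local deformation theorem \cite{C00,CS05} to the outer annular part of the dynamical slice so that it matches, across a transition annulus, a spacelike slice of the reference Kerr spacetime --- but now that matching slice is taken inside the black hole region $\mathcal{B}_{\mathcal{K}}$, to the past of the frozen neighbourhood of $i^+$, so that nothing outside a compact set is modified. The resulting smooth vacuum initial data set (a Minkowskian core, the dynamical slab, a transition annulus, and exact Kerr data outside a compact set) has a maximal development $(M,g)$ which coincides with $\mathcal{K}$ outside the domain of influence of the compact gluing region; in particular $(M,g)$ is isometric to Kerr near $i^+$, hence carries a genuine $\mathscr{I}^+$ and black hole region $\mathcal{B}=M\setminus J^-(\mathscr{I}^+)$, and to the future of the glued slice it contains a trapped surface. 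This gives item (2).

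The remaining point, and the main obstacle, is item (1): that $(M,g)$ admits a Cauchy hypersurface $\Sigma$ with $\Sigma\cap\mathcal{B}=\emptyset$, i.e. that the black hole forms strictly to the future of a regular slice (as in gravitational collapse). Here I would invoke Cauchy stability: since the insertion is a small, compactly supported perturbation of $\mathcal{K}$, the event horizon and the whole exterior causal structure of $(M,g)$ should depend continuously on the data, so that one can locate a Cauchy slice $\Sigma$ lying entirely in the exterior $M\setminus\mathcal{B}$ and to the past of the dynamical region. The difficulty is that the event horizon is a teleologically defined object (through $\mathscr{I}^+$, hence ``at infinity''), so this is not a compact-set stability statement at all: one must propagate Cauchy-stability estimates from the gluing region out to null and timelike infinity in the Kerr-like portion of $(M,g)$. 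This is the technical heart of \cite{LM20}, and it is feasible only because it leans on the robustness of the Kerr \emph{exterior} --- where decay and stability estimates are available --- and never on any global control of the dynamical interior.
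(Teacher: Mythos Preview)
Your sketch is correct and matches the paper's description of the Li--Mei approach. Note that the paper does not itself prove Theorem~1.6: it is stated as a result from \cite{LM20}, accompanied only by the two-sentence summary that Li--Mei perform the \cite{LY15} gluing \emph{inside} the black hole region of a spacetime already isometric to Kerr near $i^+$, and then invoke Cauchy stability to push the interior region out past the event horizon. Your proposal reproduces exactly this architecture --- Christodoulou/An--Luk dynamical slab with the angular normalisation (1.7), Corvino--Schoen gluing to a Kerr slice chosen inside $\mathcal{B}_{\mathcal{K}}$, and Cauchy stability for item~(1) --- and you correctly flag that the Cauchy-stability step is the nontrivial one precisely because the event horizon is teleological. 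There is nothing further in the present paper to compare against.
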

We describe below how Theorems 1.2 and 1.6 differ from the current work.

\subsection{Statement of Theorems}
The following results are based on the initial data described in Section 1.3. This data yields two main theorems, whose applications we describe in Sections 1.4-5. The first theorem is a straightforward extension of \cite{A17}, Theorem 1.5 above. 
\begin{theorem}[Formation of Apparent Horizon]
\label{main2} Consider the following characteristic initial value problem for the Einstein vacuum equations. The initial incoming null hypersurface $\Hbar_0$ is required to coincide with a backwards light cone in Minkowski space with $0\leq u \leq 1$, and assume that, along $H_0$, the initial shear $\chihat_0$ satisfies the assumptions described in Section 1.3. Then the Einstein vacuum equations admit a unique solution in the region given by $\ubar\in [0,2\delta]$ and $1\leq u\leq ba^{1/2}\delta$. Moreover, along each $\Hbar_{\ubar}$ with $\gamma \frac{a^{1/2}}{b}\delta \leq \ubar \leq 2\delta$ where $\gamma$ is a free $o(1)$ parameter, there exists a unique smooth marginally outer trapped surface $M_{\ubar}$, and for different $\ubar$, the union \[\bigsqcup_{\gamma \frac{a^{\frac{1}{2}}}{b} \delta \leq \ubar \leq 2\delta} M_{\ubar}   \] forms a $3$-dimensional smooth hypersurface. Moreover, with an additional condition on the initial data, this horizon can be shown to be spacelike, and it tends to a null hypersurface in a way that can be controlled by the initial data. 
\end{theorem}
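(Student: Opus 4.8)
The plan is to follow the argument of An \cite{A17} (Theorem~1.5 above), which already establishes existence, uniqueness and smoothness of a MOTS hypersurface over the slab $\ubar\in[0,\delta]$; the genuinely new points are to run the construction over the doubled slab $\ubar\in[0,2\delta]$ carried by the data of Section~1.3, to pin the lower endpoint $\gamma\tfrac{\al}{b}\delta$ of the MOTS hypersurface down explicitly, and to establish the spacelikeness together with the quantitative approach to a null hypersurface. For the underlying spacetime: with $\chihat_0$ prescribed along $H_0$ as in Section~1.3 --- in particular satisfying the scale-critical estimates of Theorem~\ref{AL17} on the whole interval $\ubar\in[0,2\delta]$ together with the normalization of the total incoming shear used in \cite{A17} --- the a priori estimates and bootstrap argument underlying Theorem~\ref{AL17} go through with $\delta$ replaced by $2\delta$ everywhere; since the smallness driving that argument is measured by $\delta\al b<1$, doubling $\delta$ changes only universal constants. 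This yields a unique regular solution of the Einstein vacuum equations on $\{\ubar\in[0,2\delta],\ b\al\delta\le u\le1\}$, together with the full hierarchy of scale-invariant bounds on the Ricci coefficients and curvature components, which is what the MOTS analysis consumes.

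On a fixed incoming cone $\Hbar_{\ubar}$ I would look for $M_{\ubar}$ as a graph $\{u=R_{\ubar}(\omega)\}$ over the coordinate sphere, $R_{\ubar}\colon\mathbb{S}^2\to\mathbb{R}_{>0}$, exactly as in \cite{KLR14} and \cite{A17}. Imposing the vanishing of $\trch$ (the outgoing null expansion) of this graph produces a quasilinear elliptic equation for $R_{\ubar}$ whose linear part, after a suitable choice of unknown, is a small perturbation of a rescaled round-sphere Laplacian $\slap$, and whose lower-order terms are controlled by the bounds from the first step. The driving heuristic is that the outgoing expansion of the coordinate sphere $S_{u,\ubar}$ vanishes to leading order at $u\approx\tfrac14\int_0^{\ubar}|\chihat_0(\ubar',\omega)|^2\,d\ubar'$ --- the same threshold governing trapped-surface formation in \cite{AL17} --- so one constructs sub- and super-solutions of the form $(\tfrac14\pm\epsilon)\int_0^{\ubar}|\chihat_0|^2$, using that the $u$-derivative of the expansion has a strict sign there while the remaining terms are small in the relevant scale-invariant norms. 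The method of sub- and super-solutions then produces a solution; elliptic regularity makes it smooth; and the clean sign of the transition of the expansion through zero --- a strict-stability property of $M_{\ubar}$ --- drives a maximum-principle argument giving uniqueness. Requiring that $M_{\ubar}$ sit inside the region of existence, i.e. $R_{\ubar}$ bounded below by $\sim b\al\delta$ and above by $1$, together with the normalization of Section~1.3, is exactly what restricts $\ubar$ to $\ubar\ge\gamma\tfrac{\al}{b}\delta$ for a suitably small $\gamma=o(1)$.

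For the union: the elliptic equation depends smoothly on $\ubar$ (through the metric coefficients on $\Hbar_{\ubar}$ and through $\int_0^{\ubar}|\chihat_0|^2$), and its linearization at $R_{\ubar}$ --- the stability operator of $M_{\ubar}$, an elliptic operator on $\mathbb{S}^2$ with principal part a rescaling of $\slap$ --- is invertible precisely because $M_{\ubar}$ is strictly stable (the same fact used for uniqueness), so the implicit function theorem gives a smooth family $\ubar\mapsto R_{\ubar}$ and hence $\mathcal{A}:=\{u=R_{\ubar}(\omega)\}$ is a smooth embedded $3$-hypersurface foliated by the $M_{\ubar}$. For the causal character one computes the metric induced on $\mathcal{A}$ in the double-null coordinates: on $u=R_{\ubar}(\omega)$ the induced quadratic form has an $\ubar\ubar$-entry proportional to $\partial_{\ubar}R_{\ubar}$, an angular block $\gamma_{AB}$, and mixed entries proportional to $\snab R_{\ubar}$, so (modulo shift corrections) it is definite precisely when $\partial_{\ubar}R_{\ubar}$ has the expected sign and dominates $|\snab R_{\ubar}|^2_{\gamma}$. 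Since $\partial_{\ubar}R_{\ubar}$ is comparable to $|\chihat_0(\ubar,\omega)|^2$ and $R_{\ubar}$ --- hence $\snab R_{\ubar}$ --- is mild, the ``additional condition'' amounts to a pointwise-in-$\ubar$ lower bound on $|\chihat_0|^2$ (equivalently a sign/size condition on $\partial_{\ubar}\!\int_0^{\ubar}|\chihat_0|^2$), which makes $\mathcal{A}$ spacelike. The same computation measures the failure of $\mathcal{A}$ to be null: the $\ubar\ubar$-entry is of size $\sim|\chihat_0|^2$ while the angular block is of size $\sim R_{\ubar}^2$, and the ratio is an explicit function of $a$, $b$, $\delta$ and $\gamma$ that can be made small, so $\mathcal{A}$ tends to a null hypersurface in a way controlled by the data.

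The heart of the matter is the elliptic step: one must solve the MOTS equation --- with existence, uniqueness, quantitative control of $R_{\ubar}$, and smooth dependence on $\ubar$ --- using only coefficients bounded in the scale-critical norms of \cite{AL17}, in which several geometric quantities are genuinely large, so the barrier construction and the maximum-principle and implicit-function arguments have to be run against the precise hierarchy, exactly as in \cite{KLR14} and \cite{A17}. Once that is in place, the doubling of the slab and the causal-character refinement sit on top of \cite{A17} as essentially bookkeeping.
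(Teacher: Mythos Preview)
Your overall architecture matches the paper's: both arguments are adaptations of An \cite{A17}, run on the existence slab of \cite{AL17} extended to $\ubar\in[0,2\delta]$ with the data of Section~1.3, and both identify the MOTS as a graph $u=1-R_{\ubar}(\omega)$ solving a quasilinear elliptic equation whose zeroth-order balance fixes $R_{\ubar}\approx\tfrac12\int_0^{\ubar}|\chihat_0|^2$ (not $\tfrac14$ as you write). The spacelikeness analysis is also in the same spirit: the paper imposes a ``small disc'' condition (5.24)--(5.26) on $\chihat_0$ to force the uniform estimate $\partial_{\ubar}R=(\tfrac12+o(1))a$ needed for definiteness of the induced metric, which is a concrete instance of the ``pointwise lower bound on $|\chihat_0|^2$'' you describe.

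There is, however, a substantive methodological divergence in the existence step. You invoke the method of sub-- and super--solutions with constant barriers; the paper (following \cite{A17}) does \emph{not} do this. The difficulty is that the principal part $\Delta'$ is the Laplace--Beltrami operator on the graph $\{u=1-R(\omega)\}$ itself, so the equation is quasilinear in a way that does not fit the standard semilinear sub/super--solution machinery, and a comparison principle is not available a priori. Instead, the paper introduces two one--parameter families $F(R,\lambda)=0$ and $G(R,\lambda)=0$ interpolating from the explicit solution $R=\tfrac12 a\ubar$ at $\lambda=0$ to the MOTS equation at $\lambda=1$, and runs a method of continuity: the $C^0$, $C^1$ (via a Bochner argument with an auxiliary function $h(R)$), and $C^{2,\alpha}$ a priori estimates close the openness/closedness, and invertibility of the linearization is checked directly. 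Your barrier idea does recover the paper's $C^0$ estimate (that is exactly how (5.46) is proved), but it does not by itself supply existence; you would need either to justify a quasilinear sub/super theorem for this operator or to replace this step with the continuity argument.

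For smoothness of the union in $\ubar$ you propose the implicit function theorem, using invertibility of the stability operator. This is a legitimate alternative and arguably cleaner, but it is not what the paper does: the paper follows \cite{A17} and estimates the difference quotients $\tfrac{R(\ubar',\omega)-R(\ubar,\omega)}{\ubar'-\ubar}$ directly, introducing an auxiliary solution $\mathfrak{h}(\ubar,\omega;\ubar')$ of a linearized equation to extract the limit, and then iterates for higher $\partial_{\ubar}^k$. Both routes hinge on the same invertibility, so your shortcut should work, but be aware that the paper's approach also delivers the quantitative estimate $\partial_{\ubar}R=(\tfrac12+o(1))a$ as a by-product, which is what feeds the spacelikeness argument.
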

\begin{figure}
\adjincludegraphics[height=9cm,trim={0cm 12.5cm 7.5cm 4cm},clip]{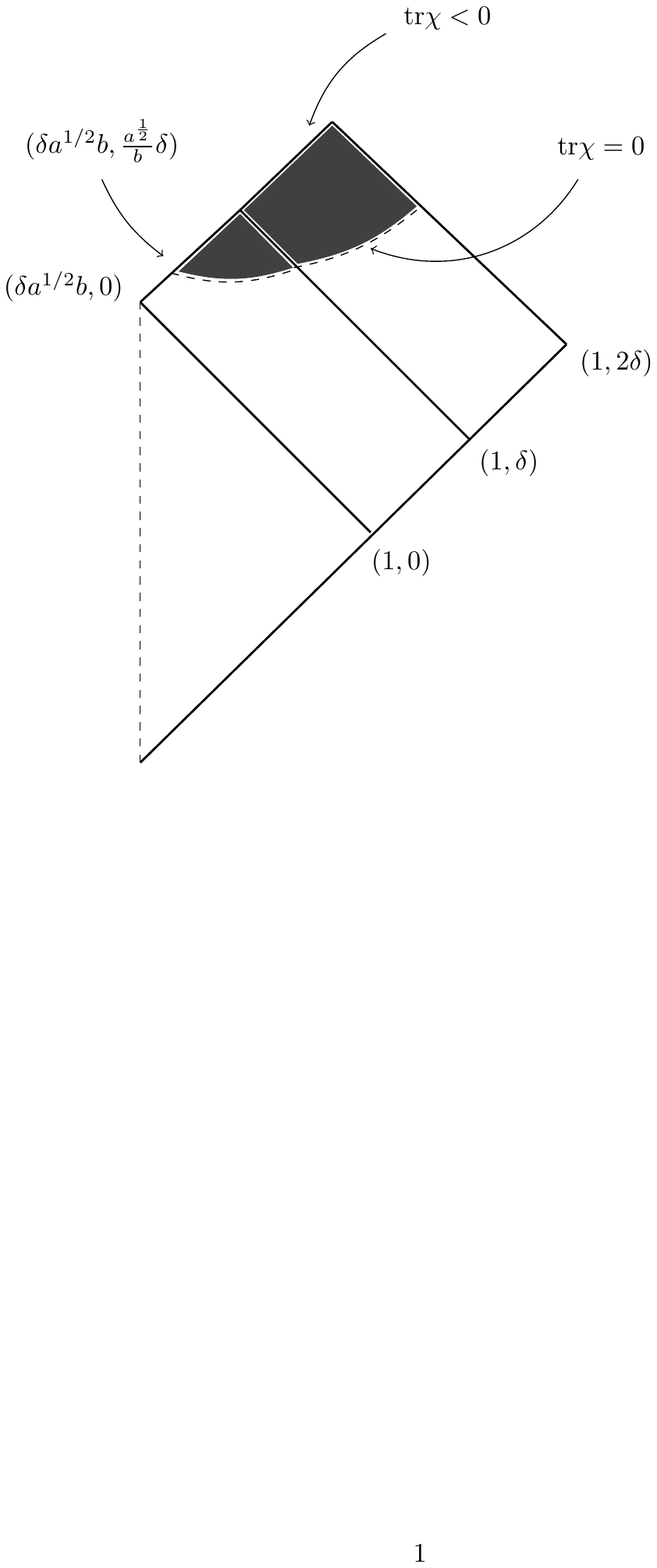}
\caption{Theorem 1.7.} 
\end{figure} To state the second theorem we briefly describe how to construct Cauchy initial data which can be glued onto the spacetime obtained in Theorem 1.7. Since the gluing takes place outside $\ubar = \delta$, the resulting spacetime will differ from that above in the region $\ubar\in [\delta,2\delta]$ above, and indeed this is why we cannot, as we did above for the region $\ubar\in [\delta,2\delta]$, use \cite{AL17} to infer existence.  \\ \\ 
Let $\Sigma$ be a $3-$dimensional differentiable manifold diffeomorphic to $\mathbb{R}^3$ and let $(x_1,x_2,x_3)$ be the standard coordinate system. Let $\lvert \cdot \rvert$ denote the usual radius function. Let $r_0>1$ be a given number. We divide $\Sigma$ into four concentric regions $\Sigma = \Sigma_M \cup \Sigma_{AL} \cup \Sigma_{S} \cup \Sigma_K$, where \[ \Sigma_M = \begin{Bmatrix}
x \hspace{.5mm} \mid \hspace{.5mm} \lvert x \rvert \leq r_0
\end{Bmatrix}, \hspace{2mm} \Sigma_{AL} = \begin{Bmatrix}
x \hspace{.5mm} \mid \hspace{.5mm} r_0 \leq \lvert x \rvert \leq  r_1
\end{Bmatrix},  \] \[ \Sigma_{S} =   \begin{Bmatrix}
x \hspace{.5mm} \mid \hspace{.5mm} r_1 \leq \lvert x \rvert \leq  r_2
\end{Bmatrix}, \hspace{2mm} \Sigma_K =   \begin{Bmatrix}
x \hspace{.5mm} \mid \hspace{.5mm} \lvert x\rvert \geq r_2
\end{Bmatrix}.   \]The numbers $r_1, r_2$ will be fixed later on such that $r_1-r_0 = O(\delta)$ and $r_2-r_1 = O(\epsilon_0)$ for some small positive $\epsilon_0$, $\delta$. \vspace{3mm}

\begin{theorem}[Cauchy Initial Data]
Take as starting point the region of existence covered by $1\geq u \geq \delta a^{1/2}b$ and $\ubar\in [0,\delta]$ in Theorem 1.7. Define
\begin{equation}
    \epsilon\equiv \frac{\delta^{1/2}a^{1/2}}{u^{1/2}}C>0
\end{equation} for some universal constant $C>0$. Then for any sufficiently small $\epsilon$, there exists an $\epsilon_{g}>0$, independent of $\epsilon$ once $\epsilon$ is picked sufficiently small, such that, to any of the spacetimes obtained in Theorem 1.7, we can associate a complete Cauchy initial data set $(\Sigma,g,k)$  
\begin{equation*}
\Sigma=\Sigma_M \cup \Sigma_{AL} \cup \Sigma_{S} \cup \Sigma_K
\end{equation*}
satisfying 
\begin{enumerate}
    \item $(g,k)=(\delta_{ij},0)$ on $\Sigma_M$,
    \item $\Sigma_{AL}$ is a spacelike hypersurface with boundary at $\ubar =0$ and $\ubar=\delta$ that traverses the region $\ubar\in [0,\delta]$ in Theorem 1.7,
    \item $\Sigma_K$ is isometric to a constant time slice all the way to spacelike infintiy in a Kerr spacetime with mass $m$ and angular momentum $\textbf{a}$ satisfying $|m-m_0|+|\textbf{a}|\lesssim \epsilon$,
    \item there is a unique solution to the Einstein vacuum equations in the region connecting 3 and 4 
    \[ (\ubar,u) \hspace{.5mm} \mid \hspace{.5mm}
\delta \leq \ubar \leq  \delta+\epsilon_{gr}, \hspace{.5mm} 1-\epsilon_{gr} < u \leq 1 \] traversed by $\Sigma_S$,
\item there are no trapped surfaces or MOTS on $\Sigma$, its future domain of dependence contains both such surfaces, and the MOTS are exactly those arising in Theorem 1.7 in the region $\ubar\in [\gamma\frac{a^{\frac{1}{2}}}{b} \delta,\delta]$.
\end{enumerate}
\end{theorem}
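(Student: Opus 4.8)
The construction follows the scheme of Li--Yu \cite{LY15}, with the scale-critical a priori estimates of An--Luk (Theorem \ref{AL17}), as propagated to the slab of Theorem \ref{main2}, playing the role that Christodoulou's estimates play in \cite{LY15}. There are four stages: (i) choose a spacelike hypersurface $\Sigma_{AL}$ inside the double-null region $\{\delta a^{1/2} b \le u \le 1,\ 0 \le \ubar \le \delta\}$ of Theorem \ref{main2}, with boundary spheres at $\ubar = 0$ and $\ubar = \delta$; (ii) compute the induced Cauchy data $(g,k)$ on $\Sigma_{AL}$ and quantify its deviation from a model stationary slice near the outer boundary; (iii) deform $(g,k)$ on the annulus $\Sigma_S$ to an exact Kerr slice by Corvino--Schoen gluing \cite{C00}, \cite{CS05}, then attach the flat ball $\Sigma_M$ at the inner boundary and the exact Kerr slice $\Sigma_K$ at the outer boundary; (iv) verify the global properties of $(\Sigma,g,k)$, in particular the absence of MOTS on $\Sigma$ together with their presence, at the stated location, in the development.

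For stage (i) I would express the metric of Theorem \ref{main2} in its double-null gauge and seek $\Sigma_{AL}$ as a graph $u = \mathfrak{u}(\ubar,\omega)$ joining the exactly Minkowskian sphere $S_{1,0}$ at $\ubar=0$ to a sphere near $S_{1,\delta}$. The spacelike condition is a pointwise inequality on $\partial_{\ubar}\mathfrak{u}$ and $\snab\mathfrak{u}$ with coefficients built from $\Omega$, $b$, $\sg$; the bootstrap bounds underlying Theorem \ref{AL17} keep these components $O(1)$-close to their Minkowski values for $u$ bounded away from the trapped region, so the inequality can be solved with $\Sigma_{AL}$ remaining near $u=1$, hence strictly to the past of the apparent horizon of Theorem \ref{main2}, which lives near $u \approx b\delta a^{1/2}$. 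For stage (ii), pulling back $\sg$, the null second fundamental forms $\chi,\chibar$ and the torsion $\eta,\etabar$ along $\Sigma_{AL}$ produces $(g,k)$; the relevant estimates are precisely the scale-critical ones of Theorem \ref{AL17} rescaled by powers of $\delta, a, b, u$, and it is here that the scale $\epsilon \equiv C\,\delta^{1/2}a^{1/2}u^{-1/2}$ of (1.15) emerges as the natural size of the deviation of $(g,k)$, on a collar inside $\Sigma_S$, from a Schwarzschild slice of mass $m_0$ --- the near-boundary geometry being spherically symmetric to leading order precisely because the normalization of the total shear along $[0,\delta]$ is essentially angle-independent.

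Stage (iii) is the Corvino--Schoen gluing: given that $(g,k)$ is $\epsilon$-close, in the appropriate weighted topology, to an exact Schwarzschild slice on the collar, one solves the projected constraint map to deform the data --- supported inside $\Sigma_S$ only --- to data that are \emph{exactly} a Kerr slice with parameters $(m,\mathbf{a})$ obeying $|m-m_0|+|\mathbf{a}| \lesssim \epsilon$; extending by that Kerr slice produces $\Sigma_K$, and gluing in the flat ball at $\ubar = 0$, where the spacetime is already Minkowskian, produces $\Sigma_M$. Standard local well-posedness for the Einstein vacuum equations then yields a unique development of $\Sigma_S$ in a slab $\{\delta \le \ubar \le \delta+\epsilon_{gr},\ 1-\epsilon_{gr} < u \le 1\}$ whose thickness $\epsilon_{gr}=\epsilon_g$ is controlled by the (now fixed, $\epsilon$-independent) $C^k$ size of the glued data, which is item 4. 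For stage (iv): $\Sigma_M$ is flat and $\Sigma_K$ is a Kerr exterior slice, so for $\epsilon$ (hence $\mathbf{a}$ and $m-m_0$) small neither contains a MOTS; $\Sigma_S$ contains none since the glued data are an $\epsilon$-perturbation of a MOTS-free Schwarzschild collar and null expansions depend continuously on the data; and $\Sigma_{AL}$ contains none because it lies strictly to the past of the apparent horizon and, by the null-expansion computation of \cite{A17} behind Theorem \ref{main2}, the outgoing expansion of every sphere on $\Sigma_{AL}$ is strictly positive in the relevant range of $u$ --- the quantitative meaning of the data being \emph{mild}. Finally, the glued spacetime agrees with that of Theorem \ref{main2} throughout $J^{+}(\Sigma_{AL}) \cap \{\ubar \le \delta\}$, which contains exactly the portion $\gamma\frac{a^{1/2}}{b}\delta \le \ubar \le \delta$ of the horizon, so the development of $\Sigma$ contains those MOTS (and, at $\ubar = \delta$, the trapped sphere $S_{b\delta a^{1/2},\delta}$ of Theorem \ref{AL17}), while the modification of the data for $\ubar > \delta$ discards the rest.

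The main obstacle is the interface between stages (ii) and (iii): one must show that the induced data on $\Sigma_{AL}$ are close \emph{enough} --- in the correct weighted norm, with enough derivatives --- to an exact Kerr slice for the Corvino--Schoen machinery to apply, and that the gluing can be carried out with a transition region $\Sigma_S$ whose thickness $\epsilon_g$ does not degenerate as $\epsilon \to 0$, all while creating no MOTS inside $\Sigma_S$. Pinning down the precise $\delta, a, b, u$ dependence of the An--Luk norms along the spacelike slice, and checking that it matches the scaling declared in (1.15), is the technical heart of the argument; the apparent-horizon bookkeeping in item 5 and the local existence in item 4 are comparatively routine once Theorems \ref{AL17} and \ref{main2} are available.
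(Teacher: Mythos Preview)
Your overall architecture (Li--Yu template, Corvino--Schoen gluing, maximum-principle check for MOTS) matches the paper, but you have skipped the step that actually does the work. The hypersurface $\Sigma_S$ lies in the slab $\{\delta \le \ubar \le \delta+\epsilon_g,\ 1-\epsilon_g < u \le 1\}$, which is \emph{outside} the An--Luk region; in your stages (ii)--(iii) you never construct a spacetime, or constraint-satisfying Cauchy data, in that slab, so there is nothing on $\Sigma_S$ for Corvino--Schoen to deform. Your sentence ``standard local well-posedness then yields a unique development of $\Sigma_S$'' inverts the logical order: you need the slab first to induce data on $\Sigma_S$, and only then can you glue and evolve.

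The paper fills this gap by posing a \emph{second} characteristic initial value problem: extend the outgoing data past $\ubar=\delta$ by setting $\chihat\equiv 0$ on $H_1^{[\delta,\delta+1]}$, and take as incoming data the geometry of $\underline{C}_\delta$ inherited from the An--Luk slab. The bulk of Section~4 proves, via propagation along $\underline{C}_\delta$ and along the trivially extended outgoing cone, that all Ricci and curvature components on these two cones are $\epsilon$-close to their Schwarzschild-$m_0$ values (this is where the averaged-angular-independence condition $\int_0^\delta |\chihat_0|^2=4m_0$ is used, and where the scale $\epsilon\sim \delta^{1/2}a^{1/2}u^{-1/2}$ actually appears). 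One then invokes Luk's local existence theorem \cite{L12} for this characteristic problem; because the data are of unit size (close to a fixed Schwarzschild), the slab produced has width $\epsilon_0=\epsilon_g$ depending only on $m_0$ and \emph{not} on $\delta$ --- this is exactly the mechanism behind the clause ``$\epsilon_g$ independent of $\epsilon$'' that you flagged as an obstacle but did not resolve. A further energy argument (renormalised Bianchi equations with $\rho$ replaced by $\rho-\rho_{m_0}$) shows $\|g-g_{m_0}\|_{C^{k-3}}\lesssim\epsilon$ throughout the slab; only then does one take $\Sigma_S$ inside it and run Corvino--Schoen. Items 3, 4 and the $\epsilon_g$-independence all come from this transition-region construction, not from estimates on $\Sigma_{AL}$ itself.
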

\begin{figure}
\adjincludegraphics[height=9cm,trim={2cm 10cm 0cm 5cm},clip]{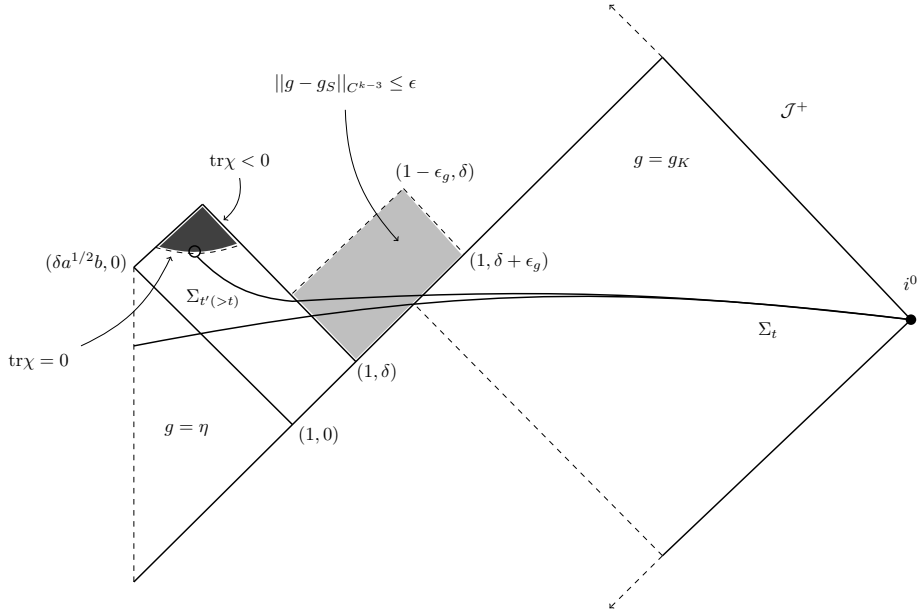}
\caption{Theorem 1.8.} 
\end{figure} 
In Figure 2 the dark region is trapped and bounded by MOTS which foliate a smooth hypersurface tending to a null hypersurface. The light gray region is close to Schwarzschild, the exterior region up to $i^0$ (and thus a portion of $\mathcal{J}^+$ is isometric to Kerr), and everything to the future of these regions would fall under the scope of Kerr Stability. Should stability hold for this data, $\Sigma_t$ would yield a non-spherically symmetric example of initial data realizing the conjectures of Weak Cosmic Censorship and Final State (that is not already isometric to Kerr in a neighborhood of future timelike infinity). $\Sigma_{t'>t}$ are the kind fo slices for which we give a dynamical test of the spacetime Penrose Inequality. \\ \\
We now note various differences with \cite{LY15} and \cite{LM20}, which might seem similar at first glance. In \cite{LY15} the goal is to translate \cite{C09} in the language of Cauchy data, and in this sense one could say that Theorem 1.8 is a translation of the scale critical result of \cite{AL17} into the language of Cauchy initial data. In doing so however, we find that scale critical data allows for greater control on the evolution and the following applications:
\begin{enumerate} 
    \item the dynamical formation of MOTS and of an \textit{apparent horizon},
    \item a different set of estimates which would connect Kerr Stability and the conjectures of Final State and Weak Cosmic Censorship,
    \item a quantitative understanding of how small $\delta$ needs to be relative to the other parameters, i.e., a theorem of the form `` for any $\delta<\cdot\cdot\cdot $ '' as opposed to `` for some $\delta$ sufficiently small '' as in \cite{C09}, \cite{KR12}, \cite{KLR14}, \cite{LY15}, \cite{LM20}. 
     \item a dynamical collapse setting in which to study and verify the conjectured spacetime Penrose inequality.
    \end{enumerate} 
We explain these points in more detail in Sections 1.5-6.

\subsection{The Initial Data}
Since the initial incoming null hypersurface $\Hbar_0$ is required to coincide with a backwards light cone in Minkowski space with $0\leq u \leq 1$, and since on $[\ubar=\delta,\delta+1]$ we ask for trivial data $\chihat=0$, the only non-trivial data is located on the null hypersurface $\ubar \in [0,\delta]$, henceforth $\chihat_0$. \\ \\
Our first requirement is that the data be smooth.
\begin{itemize}
    \item \textbf{Smooth.} Require that $\chihat_0 = 0$ on $\ubar=0$ and $\ubar=\delta$ and moreover that $\chihat \to 0$ smoothly, both in $\ubar$ and $\theta$, on approach of $\ubar=0$ and $\ubar=\delta$.
\end{itemize} 
We also require to be scale critical in the following sense.
\begin{itemize}
\item \textbf{Scale Critical.} From $0 \leq \ubar \leq \delta$, prescribe $\chihat_0$ such that
\begin{equation}
\label{AnLukansatz} \sum_{\substack{0\leq i < \infty, \\ 0 \leq j < \infty}} \delta^j a^{-\frac{1}{2}} \lVert \nabla_{\e_4}^j \nabla^i \chihat_0 \rVert_{L_{\ubar}^\infty L^2(S_{0,\ubar})} \leq B
\end{equation}
\end{itemize}
We also require a certain degree of angular independence.
\begin{itemize}
    \item \textbf{Averaged Angular Independence}. Once integrated along $\ubar\in [0,\delta]$,  the total shear is given by
    \begin{equation}
        \int_0^\delta |\chihat_0(\ubar,\omega)|^2d\ubar'=4m_0
    \end{equation}
    for a constant $m_0$ that is independent of $\omega$.
\end{itemize}
We now impose a condition on the $\ubar$ dependence of $|\chihat_0(\ubar,\omega)|$, where $\omega$ here denotes angular co-ordinates on the spheres along the null hypersurface $H_{u=1}$. This condition is key in An's work on the existence of a MOTS, and its specific form comes from the \cite{AL17} estimates in the dynamical slab.
\begin{itemize}
\item \textbf{$\ubar$-Dependence.} Let $\lambda$ be a free parameter such that $1>\lambda=1-o(1)$ for some $o(1)$ quantity, $\gamma$ a free $o(1)$ parameter, and $\mu$ a free parameter $>1$. Then require that for all $\ubar \in \left[\gamma \frac{a^{\frac{1}{2}}}{b} \delta, \lambda \delta \right]$ there holds
\begin{equation}
\int_0^{\ubar} | \chihat_0|^2 (\ubar', \theta) \hspace{.5mm} \text{d}\ubar' = a^{\frac{1}{2}}b^\mu f(\ubar,\omega) \ubar 
\end{equation} 
for a smooth, in both $\ubar$ and $\omega$, function $f(\ubar,\omega)$ satisfying $1-\frac{1}{c_1} \leq f(\ubar,\theta) \leq 1+\frac{1}{c_1}$ with $c_1>20$, and moreover $| \partial_{\omega}^i f(\ubar,\omega)| \lesssim 1$ for all $i \in \mathbb{N}$ and all $\omega \in \mathbb{S}^2$. \\ \\
For $\ubar \in [\lambda \delta, \lambda' \delta]$ where $\lambda'$ is a constant $\lambda <\lambda'<1-o(1)$, require that
\begin{equation}
    \int_0^{\ubar}|\chihat_0(\ubar',\omega)|^2 d\ubar' = a^{\frac{1}{2}} b^\mu  f(\ubar,\omega)\zeta(\ubar,\omega)\ubar + (1-\zeta(\ubar,\omega))4m_0
\end{equation}
where $\zeta(\ubar,\omega)$ is a smooth (in both $\omega$ and $\ubar$) cut-off function $=1$ for $\ubar \leq \lambda \delta$ and $=0$ for $\ubar \geq \lambda'\delta$, and moreover which satisfies $\zeta(\ubar) (1-\frac{1}{c_2})\leq \zeta(\ubar,\omega)\leq \zeta(\ubar)(1+\frac{1}{c_2}) $ for $c_2>20$, $|\partial^i_\omega \zeta(\ubar,\omega)|\lesssim 1$ for $ i \in \mathbb{N}$ and $\omega\in S^2$ where $\zeta(\ubar)$ is a smooth cut-off function in $\ubar$ which is $1$ at $\ubar=\lambda \delta$, and $0$ at $\ubar=\lambda'\delta$. 
\end{itemize}
For reasons which will become clear, we also require the data on $\ubar \in [\lambda \delta, \delta]$ to be small by an $o(1)$ factor compared to that on $\ubar\in [ \gamma  \frac{a^{\frac{1}{2}}}{b}\delta,\lambda \delta]$. 
\begin{itemize}
    \item \textbf{Dominant Contribution}. The contribution from $\ubar \in [0,\lambda \delta]$ is 
\begin{equation}
\begin{split}
\int_0^{\ubar} |\chihat_0(\ubar',\omega)|^2 d\ubar' = \int_0^{\lambda \delta} |\chihat_0(\ubar',\omega)|^2d\ubar' + \int_{\lambda \delta}^{\lambda'\delta} |\chihat_0(\ubar',\omega)|^2d\ubar' + \int_{\lambda'\delta}^{\ubar}|\chihat_0(\ubar',\omega)|^2d\ubar'\\
= M^*(\omega) +N(\lambda'\delta,\omega)+0 
\end{split}
\end{equation}
where $M^*(\omega)=\int_0^{\lambda \delta}|\chihat_0(\ubar',\omega)|^2d\ubar'$ and $M^*(\omega)\sim d_0N(\lambda'\delta,\omega)$ for some universal large constant $d_0$.
\end{itemize}
There turns out to be a topological fact which makes certain naive guesses for possible $|\chihat_0|$ unsuitable.
\begin{itemize}
    \item \textbf{A Topological Fact}. Due to the non-existence of non-vanishing linearly independent vector fields on $S^{2}$, a traceless two tensor on $S^{2}$ must vanish at least at one point. Thus we cannot simply choose  $|\chihat_0|$ to lie in some non-zero interval everywhere on each sphere. Moreover, in order to give $\chihat_0$ freedom to vary, its zero set ought not be fixed - for instance we do not wish to impose $\chihat_0=0$ along on a fixed null geodesic generator of the outgoing null hypersurface.
\end{itemize}
To see that initial data satisfying these examples can be constructed, it suffices to produce a symmetric example. \\ \\
\textbf{An Example}. Consider co-ordinates $(\omega,\ubar)$ on a $S^2\times [0,\delta]$ and separate $\ubar\in[0,\delta]$ into three intervals $I=[0,\frac{a^{\frac{1}{2}}}{b} \delta]$, $II=[\frac{a^{\frac{1}{2}}}{b}\delta,\lambda \delta]$ and $III=[\lambda\delta,\delta]$. \\ \indent
For $\omega \in \Omega$ for some solid angle $\Omega$ covering all but a region of size $o(1)$ of the sphere\footnote{$\Omega$ is such that $\int_{\Omega^2}\omega =4\pi-o(1)$.}, let  $\chihat_0(\omega,\ubar)=\mathfrak{f}(\ubar)$ be smooth and independent of $\omega$ so that $\int_0^{\ubar} |\chihat_0(\omega,\ubar')|^2d\ubar'$ grows monotonically in I, is linear in $\ubar$ in II, and reaches a constant $C$ at $\ubar=\delta$, and that the dominant contribution (by a factor given by a universal large constant) occurs in $I$ and $II$. \\ \indent
Throughout $I$, $II$, $III$, suppose that $\chihat_0(\ubar,\omega)=0$ only at a single point for each $\ubar\in (0,\delta)$, and that the zero set is constant in $\phi$ and visits an interval of size $o(1)$ in the $\theta \in [0,\pi]$ direction. Suppose that the zero set visits each point (say from $\frac{\pi}{2}- o(1)$ to $\frac{\pi}{2}+o(1)$) only once, and that for any $\ubar \in (0,\delta)$, $|\chihat_0|$ is constant outside of a neighborhood of $\omega_0$ where $\chihat_0(\omega_0)=0$ for that $\ubar$, and described by a smooth symmetric cut-off function around $0$. Require that
\begin{equation}
    \sum_{0\leq i< \infty}\delta^{j}||\nabla_{\e_4}^j \nabla^i\chihat_0(\ubar,\omega)||_{L^2(S_{\ubar})} \leq C_1 
\end{equation}
for some constant $C_1$ eventually chosen to satisfy (1.16). Clearly such a choice is possible.
\\ \indent 
Finally, to compensate for having $\chihat_0$ having been $0$ in the region $\omega\not\in \Omega$, we suppose that for $\omega\not\in \Omega$, $|\chihat_0(\ubar,\omega)|\in [0,\mathfrak{f}(\ubar)(1+o(1))]$ such that $\int_0^{\ubar} |\chihat_0(\ubar,\omega)|^2 d\ubar$ lies within $o(1)$ of $\int_0^{\ubar} |\chihat_0(\omega,\ubar)|^2$ and reaches the constant $C$ at $\ubar=\delta$.\\ \indent 
Note that since all the relevant quantities are $o(1)$, there is no potentially harmful `squeezing' of $\chihat_0$ in the angular directions.

\subsection{Outline of the Argument}

The argument combines ideas and results in \cite{CS05}, \cite{C09}, \cite{LY15}, \cite{LR14}, \cite{LR17}, \cite{AL17}, \cite{A17}, \cite{A19} and the basic strategy is as in \cite{LY15}.
\begin{enumerate}
\item Assume an averaged angular dependence on the initial data which permits obtaining greater control on the region of existence in \cite{AL17}.
\item Construct a transition region by posing initial data on a null hypersurface extending beyond the outgoing null cone in \cite{AL17}. 
\item Tune the initial data in a smooth and controlled fashion to obtain geometric closeness to Schwarzschild in the transition region. 
\item Invoke the gluing argument of \cite{LY15} and \cite{CS05} to study the Penrose inequality for the dynamically obtained MOTS. 
\end{enumerate}
Given that the strategy of the proof is broadly in the spirit of \cite{LY15}, let us note some crucial differences.
\begin{itemize}

\item{Since the initial data comes from \cite{AL17} and not \cite{C09}, we cannot resort, as in \cite{LY15}, to the exhaustive term-by-term analysis of \cite{C09} and must instead obtain certain key estimates directly.}
\item{Although our estimates are for renormalized quantities (following \cite{LR14}, \cite{LR17} and \cite{AL17}), we found that the most convenient way of constructing the transition region (especially the existence part) is to go back to the standard setting and employ \cite{L12}, rather than the more general \cite{LR17}. Going back and forth between the renormalized and standard setting is a useful method that can be exploited in other circumstances.} 
\item{Similar to how \cite{AL17} contains a version of \cite{C09} in Corollary 1.1 above, we can think of a version of \cite{LY15} as essentially being obtainable from Theorem 1.8. Moreover, the scale critical data allows for much more quantitative control on the size and relation of the parameters $a,b,\mu,\kappa, \delta,\lambda,\gamma$ which are involved in the problem. To say it another way, in \cite{C09} (and consequently \cite{LY15}, \cite{LM20}) the data does not permit (to our knowledge) obtaining quantitative control on $\delta$ relative to the other parameters involved. } 
\item{Due to the scale critical data and our use of renormalized quantities, our construction is readily applicable to more singular kinds of initial data.}
\end{itemize}
As for the structure of what follows, the rest of Section 1 describes the connection with Kerr Stability and the Penrose Inequality. In 2 we set up the relevant tools and notation. In 3 we carry out the higher order energy estimates. In 4 we construct the transition region, and in 5 we describe the formation of an apparent horizon.

\subsection{Kerr Stability, Weak Cosmic Censorship, and Final State}
\textbf{Kerr Stability} A rough statement of the conjecture reads as follows. \\ \\ \indent
\textbf{Conjecture}. \textit{Given vacuum initial data - characteristic or Cauchy - that is close, in some sense, to an initial data set for the Kerr spacetime - including part of its exterior and a neighborhood of its event horizon, its maximal Cauchy development will asymptote to a member of the Kerr family.} \\ \\ 
\textbf{Weak Cosmic Censorship}. The conjecture originates in the work of Penrose and a few terms are needed to express it. Given a spacetime $(\mathcal{M},\overline{g})$ that embeds conformally, ${\overline{g}\to \tilde{g}\equiv \Omega^2\overline{g}}$, into a spacetime with boundary, $(\mathcal{M},\overline{g}) \hookrightarrow (\tilde{\mathcal{M}},\tilde{g})$, one can define an event horizon, $\mathcal{M}\cap \partial I^-(\mathcal{J}^+;\tilde{\mathcal{M}})$ where $\mathcal{J}^+$ is the \textit{future conformal boundary} in $(\mathcal{\tilde{M}},\tilde{g})$. Both event horizons and conformal boundaries are well defined in the context of explicit black hole solutions, and the conjecture asserts that these notions are suitable much more generally.\\ \\ \indent
\textbf{Conjecture}. \textit{Given complete, generic vacuum initial data, its maximal Cauchy development admits a complete $\mathcal J^+$.} \\ \\
This conjecture is a fundamental problem in general relativity and the only result available, due to Christodoulou \cite{C91}, \cite{C99}, is for the spherically symmetric scalar field, for which the conjecture is true under a certain rendition of the term \textit{generic}. \\ \\
\textbf{Final State}. If we combine the Weak Cosmic Censorship, Kerr Stability, and Kerr Rigidity\footnote{Kerr Rigidity is the conjecture that the family of Kerr solutions is the only asymptotically flat vacuum black hole stationary solution. It is still open in full, and the most recent result in this direction is due to Alexakis-Ionescu-Klainerman \cite{AKI15}.}, one obtains the following. \\ \\ \indent 
\textbf{Conjecture}. \textit{Given complete, generic, vacuum initial data lacking trapped or marginally trapped surfaces, its maximal Cauchy development admits a complete $\mathcal J^+$, and if a trapped surface forms dynamically, then an event horizon will form $\mathcal{M}\cap \partial I^-(\mathcal{J}^+;\tilde{\mathcal{M}}) \neq \emptyset$, and the region exterior to it asymptotically approaches a member of the Kerr family.}\\ \\  
At present, we do not possess non-spherically symmetric dynamical examples of black hole formation for the Einstein vacuum system. The examples in \cite{LM20}, in virtue of being isometric to Kerr in a neighborhood of future timelike infinity, circumvent Weak Cosmic Censorship and Final State. The results in \cite{LY15} and Theorem 1.8 bring about a connection between Kerr Stability, Weak Cosmic Censorship, and Final State in the spirit of Dafermos-Luk \cite{DL17}, who show that Kerr Stability would imply the failure of the $C^0$ version of Strong Cosmic Censorship. \\ \\
Here the connection follows from the estimates obtained in Section 4. One can set-up a new characteristic initial value problem, where the outgoing one is exactly Kerr and the incoming cone $\ubar=\delta$, truncated at a MOTS locating at $u=\delta a^{1/2}b^\mu(1+o(1))$, satisfies the following estimates. \\ \\ 
For all integers $k\geq0$ we have
	\begin{gather} 
	\lVert u^k \nabla^k \chihat \rVert_{L^2(S_{u,\delta})} \lesssim \frac{\delta a}{u},\\ 
	\twoSudelta{ u^{i+1} \nabla^{i} \csigma}+  \twoSudelta{ u^{i+1} \nabla^{i} \sigma} \lesssim \frac{\delta \al}{u},\\
	\lVert u^{k+1} \nabla^k \beta \rVert_{L^2(S_{u,\delta})} \lesssim \frac{\delta a}{u} , \\ 
	\big\lVert u^{k+1}\nabla^k \left(\tr\chi - \frac2u +\frac{4m_0}{u^2}\right)\big\rVert_{L_{\ubar}^{\infty}L_{u}^{\infty} L^{2}(S_{u,\delta})} \lesssim \frac{\delta a}{u^{\f12}} + \frac{\delta \al}{u},  \\ 
	\big\lVert u  \left(\omega + \frac{m_0}{2u^2} \right)\big\rVert_{L^2(S_{u,\delta})} \lesssim \frac{\delta^{\frac{1}{2}}\al}{u^{\frac{1}{2}}},\\
	\big\lVert u^{k+1} \nabla^{k+1} \left(\omega + \frac{m_0}{2u^2} \right)\big\rVert_{L^2(S_{u,\delta})} \lesssim \frac{\delta^{\frac{1}{2}}\al}{u^{\frac{1}{2}}},\\  \TwoSudelta{u^{k+2} \nabla^k \left(\rho + \frac{2m_0}{u^3}\right)} \lesssim \frac{\dal\al}{u^{\f12}},  \\\twoSudelta{u^{k+1}\nabla^k \alphabar} \lesssim \frac{\dal \al}{u^{\f12}}, \hspace{2mm}  \twoSudelta{u^{k+1}\nabla^k \alpha} \lesssim \frac{\delta a}{u}. 
	\end{gather}
	where these estimates are with reference to their values in a Schwarzschild spacetime of mass $m_0$.\\ \\ 
Expressing the quantities in terms of $a$ 
\begin{equation*}
    b=a^{\kappa}, \hspace{0.2in} \delta=a^{-y} 
\end{equation*}
the initial data conditions $a^{1/2}/b<1$ and $\delta a^{1/2}b^\mu<1$ become
\begin{equation}
    \frac{1}{2}< \kappa < 1, \hspace{0.2in} 1<\mu, \hspace{0.2in} \kappa\mu  -y +\frac{1}{2} < 0
\end{equation}
and the worst estimate on the incoming cone is
\begin{equation}
\lesssim    \frac{\delta^{1/2}a^{1/2}}{u^{1/2}} 
\end{equation}
which for the MOTS located at $u=\delta a^{1/2}b^\mu(1+o(1))$ becomes
\begin{equation}
    \lesssim \frac{\delta^{1/2}a^{1/2}}{b^{\mu/2}a^{1/4}\delta^{1/2}}=a^{\frac{1}{4}-\frac{\kappa \mu}{2}}=a^{-C}
\end{equation}
Seeking to make $C=\frac{\kappa \mu}{2}-\frac{1}{4}$ large, (1.21) restricts us to $C<\frac{1}{2} y-\frac{1}{2}$, and so the estimates for the incoming cone are 
\begin{equation}
\leq C \delta^{1/2} \: \: \text{and} \: \: \leq C\delta, 
\end{equation} 
for a constant $C$ independent of $\delta$. \\ \\
Note that in \cite{LY15} the estimates obtained on the incoming cone $\underline{C}_{\ubar=\delta}$ up to the trapped surface (which lies at $u=u_*$ in the language of Theorem 1.1 above) are for a different set of quantities and are of the form 
\begin{equation}
    \lesssim \frac{\delta^{1/2}}{u^{k>0} }
\end{equation}  
for $k\in \mathbb{N}^+$ with $k$ different for various components. The estimates here are uniform in $u$ and there is an extra $\delta^{1/2}$ for some components. More importantly, the scale critical data of \cite{AL17} permits controlling the size of $\delta$ relative to the other parameters.

\subsection{The Penrose Inequality}
The Penrose inequality is a conjectured inequality between the ADM mass of asymptotically flat initial data sets $(M,g,k)$ and the area of the data set's boundary, $\partial M$, when the latter is taken to be a MOTS, see \cite{M09} for a review.\\ \\
The inequality was motivated by Penrose \cite{P73} which he derived by a heuristic argument representing the ``establishment viewpoint'' on gravitational collapse. His idea was to propose a test for the difficult evolutionary conjectures that were (and still are) far beyond reach, i.e. a counterexample to the inequality would put the conjectures in doubt. Penrose's heuristic was developed and its modern form is as follows. 
\begin{conjecture}
Let $(M,g,k)$ be an initial data set satisfying the dominant energy condition\footnote{The dominant energy condition (trivially satisfied in vacuum) says that $\mu \geq |J|$ where $\mu$ and $J$ appear on the right hand side of (1.3-4).}. Suppose that $\partial M$ has boundary given by a MOTS whose strictly minimizing hull has area $A$. 
Then \begin{equation*}
    m_{ADM}\geq \sqrt{\frac{A}{16\pi}}
\end{equation*} 
with equality if and only if $(M,g,k)$ belongs to the Schwarzschild spacetime.
\end{conjecture}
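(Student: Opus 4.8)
The plan, following the Jang--equation program of Schoen--Yau and its refinement by Bray--Khuri, is to reduce the general (spacetime) Penrose inequality to the time--symmetric Riemannian Penrose inequality, which is a theorem of Huisken--Ilmanen (via inverse mean curvature flow) and, independently, of Bray (via the conformal flow). The bridge is a \emph{generalized Jang equation}: one seeks a function $f$ on $M$ and a positive warping factor $\phi$ so that the graph $\bar M = \{(x,f(x))\} \subset (M\times\mathbb{R},\, g+\phi^2 df^2)$, with the induced metric $\bar g$, has two properties. First, the scalar curvature obeys $R_{\bar g} \geq 0$: the Gauss--Codazzi computation produces the Jang--equation defect plus a divergence term, and the dominant energy condition $\mu \geq |J|$ is exactly what is needed to make the remainder nonnegative after the divergence is integrated against $\phi$. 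Second, $f$ blows up to $+\infty$ at the boundary MOTS $\partial M$, so that $\bar M$ develops an asymptotically cylindrical end over $\partial M$; capping this off (or working in the outermost--minimal--surface formulation directly) turns $\partial M$ into an outermost minimal surface of $(\bar M,\bar g)$ of the \emph{same} area $A$. Since the Jang graph is asymptotically flat with $m_{ADM}(\bar g) = m_{ADM}(g)$, one then reads off $m_{ADM}(g) = m_{ADM}(\bar g) \geq \sqrt{A/16\pi}$ from the Riemannian inequality applied to $(\bar M,\bar g)$.

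The first step --- existence and correct blow--up behaviour of the generalized Jang equation --- is the analytic heart of the argument. The equation is quasilinear elliptic of prescribed--mean--curvature type, and it is classical (Schoen--Yau) that solutions on a manifold with MOTS boundary must blow up at apparent horizons; the delicate point is to show the blow--up is \emph{cylindrical}, i.e. $f(x) \sim -c\log \dist(x,\partial M)$ with level sets converging smoothly to $\partial M$, and that no area is created or destroyed at interior MOTS along the way. One would set up a regularized family of Dirichlet problems, obtain uniform interior gradient estimates away from $\partial M$, analyse the blow--up near $\partial M$ using the vanishing of the outer null expansion to identify the cylindrical model, and then verify that $(\bar M,\bar g)$ is a complete smooth asymptotically flat manifold, possibly with finitely many cylindrical ends over interior MOTS, whose outermost minimal surface --- and here one uses that $\partial M$ is taken together with its strictly minimizing hull --- has area exactly $A$.

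With $(\bar M,\bar g)$ constructed, I would invoke the form of the Riemannian Penrose inequality that allows several boundary components and cylindrical ends: Bray's conformal flow handles multiple components directly, and the inverse--mean--curvature--flow proof together with the minimizing--hull formulation handles the subtlety in the hypothesis. For rigidity, equality in the spacetime inequality forces equality in the Riemannian one, so $(\bar M,\bar g)$ is isometric to a Schwarzschild time slice; combined with the vanishing of the Jang defect and the saturated condition $\mu = |J|$, one concludes that the second fundamental form $k$ is that of a Schwarzschild slice, hence $(M,g,k)$ arises as a slice of the Schwarzschild spacetime.

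The hard part --- the reason the statement remains a conjecture in this generality --- is precisely the first step: there is at present no proof that the generalized Jang reduction can always be carried out so that the resulting Riemannian manifold inherits \emph{all} of the area $A$ as an outermost minimal surface, rather than leaking area into interior apparent horizons or into a noncompact cylindrical geometry, and the regularity/uniqueness theory for generalized Jang solutions with free warping factor is itself incomplete. Consequently a complete proof along these lines is not presently available; what one can honestly deliver is the reduction above, the theorem under the additional hypothesis that a Jang solution with the desired cylindrical blow--up exists, and full verifications in symmetric or otherwise tractable cases --- which is the spirit in which the present paper tests the inequality on its explicitly constructed collapsing data.
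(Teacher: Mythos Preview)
The statement you were asked to prove is labelled \emph{Conjecture} in the paper, and indeed the paper offers no proof of it: it is recalled as the open spacetime Penrose inequality, after which the authors note that only the Riemannian case ($k=0$) is settled (Huisken--Ilmanen, Bray) and then proceed, in Section~1.6 and Section~5, not to prove the conjecture but to \emph{verify} the inequality for the particular MOTS produced dynamically by their construction, via direct area and ADM--mass estimates specific to their initial data.

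Your proposal is an accurate and well--informed summary of the Bray--Khuri generalized--Jang--equation program, and you are right both about what the program would deliver if completed and about where it is currently stuck (existence and controlled blow--up of the generalized Jang graph so that no area leaks away). But you should be explicit at the outset that what you have written is a survey of a conjectural reduction, not a proof: your final paragraph concedes this, but the earlier paragraphs read as if the steps go through. In particular, the assertion that ``the Jang graph is asymptotically flat with $m_{ADM}(\bar g) = m_{ADM}(g)$'' and the claim that the outermost minimal surface in $(\bar M,\bar g)$ has area exactly $A$ are precisely the unproved inputs; they should be flagged as hypotheses, not stated as facts. Since the paper itself treats the statement as open and only tests it on its explicit examples, there is no ``paper's proof'' to compare against---your write--up is best framed as background on why the general inequality remains conjectural.
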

The special case when $k=0$, known as the Riemannian Penrose Inequality, was proved by Huisken-Ilmanen \cite{HI01} and Bray \cite{B01}. \\ \\ 
In the general case, outside of spherical symmetry, the only known spacetime Penrose-type inequality like Conjecture 1.1 that does not have unwanted constants on the right hand side is the recent result of Alaee-Lesourd-Yau \cite{ALY19}, which holds for a certain class of \textit{admissible} initial data sets. There is also a null Penrose Inequality, where the Bondi mass along $\mathcal{J}^-$ replaces the ADM mass. This inequality is in a sense stronger than the one above and it has been shown to hold by Roesch \cite{R16} in a certain setting. \\ \\
The current construction offers a dynamical test of the Penrose inequality. More precisely, in Section 5 we show that the MOTS $M_{\ubar}$ obtained in Theorem 1.8 in the region $\ubar\in [\gamma \frac{a^{\frac{1}{2}}}{b} \delta, \delta]$ satisfy the following area bounds
\begin{equation}
    (\frac{1}{4}-o(1))b^\mu a^{1/2}\ubar \leq  \sqrt{\frac{|M_{\ubar}|}{16\pi}} \leq (\frac{1}{4}+o(1))b^\mu a^{1/2}\ubar 
\end{equation}
for some $o(1)\ll 1$. By assumption, the initial data satisfies 
\begin{equation}
    b^\mu a^{1/2}\lambda \delta (1+o(1))=4m_0
\end{equation}
for a quantity $o(1)\ll 1$. The gluing procedure described in [CS05] and [LY12] yields
\begin{equation}
    m_{ADM}=m_0\pm \epsilon
\end{equation}
Combining these leads to  
\begin{equation}
    m_{ADM}-\sqrt{\frac{|M_{\ubar}|}{16\pi}} \geq b^\mu a^{1/2}(\frac{1}{4}\pm o(1))\left( \lambda \delta -\ubar \right) \pm \epsilon
\end{equation}
Evaluating the inequality at $\ubar =\gamma \frac{a^{\frac{1}{2}}}{b}\delta$ gives
\begin{equation}
    m_{ADM}-\sqrt{\frac{|M_{\ubar}|}{16\pi}} \geq \frac{1}{4}b^{\mu}a^{1/2}\delta (\lambda -\gamma \frac{a^{\frac{1}{2}}}{b}) \pm \epsilon
\end{equation}
with $\epsilon\lesssim \delta^{1/2} a^{1/2}$. To show that this is positive, we express quantities in terms of $a$ as above to get
\begin{equation}
    \frac{1}{4}b^{\mu}a^{1/2}\delta (\lambda -\gamma \frac{a^{\frac{1}{2}}}{b}) \pm \epsilon= \left(a^{\kappa \mu-\frac{y}{2}}o(1)\pm c_2\right) a^{\frac{1}{2}-\frac{y}{2}}
\end{equation}
where $c_2$ is an unknown constant $>0$ that is independent of $\delta=a^{-y}$. Our initial data requires  $\kappa \mu +\frac{1}{2} <y$, $\frac{1}{2}<\kappa<1$. \\ \\
So upon choosing for any $0<t<\frac{1}{2}$
\begin{equation}
    \kappa \mu +\frac{1}{2} =(\frac{1}{2}+t)y
\end{equation} 
we obtain
\begin{equation}
   m_{ADM}-\sqrt{\frac{|M_{\gamma \delta}|}{16\pi}} \geq (a^{ty-\frac{1}{2}}o(1)\pm c_2)a^{\frac{1}{2}-y}
\end{equation}
This means that, for any $0<t<\frac{1}{2}$, choosing $y$ sufficiently large, we obtain 
\begin{equation}
    m_{ADM}-\sqrt{\frac{|M_{\gamma \delta}|}{16\pi}} >0
\end{equation}
and thus there is an open region in the spacetime for which the Penrose inequality is satisfied. \\ \\
Note that this argument becomes inconclusive when $\ubar$ approaches $\lambda \delta$. In particular, when $\ubar-\lambda \delta \lesssim \delta^\frac{3}{2}$, then (1.38) becomes
\begin{equation}
    (a^{\kappa\mu+\frac{1}{2}-y-\frac{1}{2}}\pm c_2)a^{\frac{1}{2}-\frac{y}{2}}
\end{equation}
but since $\kappa \mu +\frac{1}{2}<y$ the sign of (1.44) now depends on the unknown constant $c_2$. \\ \\
Note that this test of the Penrose Inequality is made possible by \cite{A17} and the data being scale critical. In particular, there are no dynamically obtained MOTS or indeed dynamical horizons in either \cite{LY15} in \cite{LM20}. There are pre-existing MOTS in \cite{LM20} but these are not dynamically formed: they are there by virtue of the solutions being isometric to Kerr in a neighborhood of future timelike infinity, where the Penrose inequality is immediate. \\ \\
Note also that this argument does not permit violating the Penrose inequality since the error in $m-m_0$ is not known.

\subsection*{Acknowledgements} Both authors would like to acknowledge helpful discussions with Professors Xinliang An, Lydia Bieri, Mihalis Dafermos, Igor Rodnianski, Yakov Shlapentokh-Rothman, and Shing-Tung Yau. N.Athanasiou\footnote{nikolaos.athanasiou@maths.ox.ac.uk}. is supported by the Engineering and Physical Sciences Research Council grant [EP/L015811/1] and would like to acknowledge Harvard University's Black Hole Initiative for its hospitality during part of this work. M.Lesourd\footnote{mlesourd@fas.harvard.edu} would like to acknowledge the Gordon Betty Moore Foundation and Harvard University's Black Hole Initiative.

\section{Setting, equations and notations}

In this section, we will introduce the geometric setup and the double null foliation gauge. We then write the Einstein equations as a system of equations for the Ricci coefficients and curvature components adapted to this gauge. After that, we introduce the necessary notations and the norms that we will use.

\subsection{Double null foliation}

The concept of a double null foliation is introduced here. Given a spacetime $(\bar{\mathcal{M}},\bar{g})$ solving the Einstein vacuum equations $R_{\mu \nu}=0,$ we define a double null foliation by solving the eikonal equations

\[ (g^{-1})^{\mu \nu}\hspace{.5mm} \partial_\mu u \hspace{.5mm} \partial_\nu u = 0,   (g^{-1})^{\mu \nu}\hspace{.5mm} \partial_\mu \ubar \hspace{.5mm} \partial_\nu \ubar = 0,   \]for $u$ and $\ubar$ such that $u=1$ on $H_1$ and $\ubar=0$ on $\underline{H}_0$. According to our convention, $\ubar$ is increasing towards the future whilst $u$ is decreasing towards the future. 

\vspace{3mm}

\par \noindent 	Define the future-directed, null geodesic vector fields

\[L^{\prime \mu} = - 2\hspace{.4mm} (g^{-1})^{\mu\nu}\hspace{.4mm} \partial_\nu u, \hspace{2mm} \underline{L}^{\prime \mu} = 2\hspace{.4mm} (g^{-1})^{\mu\nu} \partial_\nu \ubar   \]and define the null lapse function $\Omega$ by 

\[ 2 \Omega^{-2} = -g(L^\prime, \Lbar^\prime).     \]Define the normalized vector fields

\[ e_3 = \Omega \hspace{.4mm}\Lbar^\prime, \hspace{2mm} e_4 = \Omega\hspace{.4mm} L^\prime, \]which satisfy, by construction, \[ g(e_3,e_4) =-2.\] These are the frames that we will use to decompose the Ricci coefficients and the curvature components. Define also the equivariant vector fields

\[      \Lbar= \Omega^2\hspace{.4mm} \Lbar^\prime, \hspace{2mm} L= \Omega^2\hspace{.4mm} L^\prime.\]We fix the gauge on the initial hypersurfaces such that

\[ \Omega=1, \hspace{2mm} \text{on $H_1$ and $\underline{H}_0$.}     \]\vspace{3mm}

\par \noindent Denote by $H_u$ the level sets of $u$ and by $\Hbar_u$ the level sets of $\ubar$. By the eikonal equations, $H_u$ and $\Hbar_u$ are in fact null hypersurfaces. The topology of the intersections $H_u \cap \Hbar_{\ubar}$ is that of a $2-$sphere. Denote these $2-$spheres by $S_{u,\ubar}$.

\subsection{The coordinate system} We define a coordinate system $(u,\ubar,\theta^1, \theta^2)$ in the spacetime. On the standard sphere $S_{1,0}$, define a coordinate system $(\theta^1, \theta^2)$ such that on each coordinate patch the metric $\g$ is smooth, bounded and positive definite. We then define the coordinates on the initial hypersurfaces by requiring $\theta^A$ to be constant along null generators of the initial hypersurface. In the spacetime, we define $u$ and $\ubar$ to be solutions to the eikonal equations, as described in the previous subsection. Moreover, we naturally extend $\theta^1, \theta^2$ by 

\[ \slashed{\mathcal{L}}_L \theta^A = 0,       \]where $\slashed{\mathcal{L}}$ denote the restriction of the Lie derivative to $TS_{u,\ubar}$. Relative to the coordinate system $(u,\ubar, \theta^1,\theta^2)$, the vector fields $e_3$ and $e_4$ can be written as

\[ e_3 = \Omega^{-1}\left( -\frac{\partial}{\partial u} + d^A \frac{\partial}{\partial \theta^A}\right), \hspace{2mm} e_4= \Omega^{-1}\frac{\partial}{\partial \ubar},    \]for some functions $d^A$ such that $d^A=0$ on $\Hbar_0$ and the metric $g$ takes the form

\[ g= -2\Omega^2 (du \otimes d\ubar + d\ubar \otimes du)+ \gamma_{AB}(d\theta^A - d^A du)\otimes(d\theta^B- d^B du).      \]

\subsection{The Einstein vacuum equations relative to a double null foliation}

We provide a decomposition of the Ricci coefficients and the null curvature components with respect to a null frame $e_3, e_4$ defined above and a frame $e_1, e_2$ tangent to the spheres $S_{u,\ubar}$. \vspace{3mm} Using the indices $A,B$ taking values in the set $\begin{Bmatrix} 1,2\end{Bmatrix}$, we define the Ricci coefficients relative to the null frame:

\begin{equation}
\label{2.1}
\begin{split}\
\chi_{AB} = g(D_A e_4, e_B),\hspace{2mm} \chibar_{AB} =g(D_A e_3, e_B), \\  \eta_A = -\frac{1}{2}\hspace{.5mm}g(D_3 e_A , e_4), \hspace{2mm} \etabar_A = -\frac{1}{2}\hspace{.5mm}g(D_4 e_A, e_3),  \\ \omega = - \frac{1}{4} \hspace{.5mm}g(D_4 e_3, e_4), \hspace{2mm} \omegabar = -\frac{1}{4} \hspace{.5mm}g(D_3 e_4, e_3), \\\zeta_A = \frac{1}{2}\hspace{.5mm} g(D_A e_4, e_3).
\end{split}
\end{equation}

\vspace{3mm}

\begin{equation}
\label{2.2}
\begin{split}
\alpha_{AB} = W(e_A, e_4, e_B, e_4), \hspace{2mm} \alphabar_{AB} = W(e_A, e_3, e_3), \\ \beta_A= \frac{1}{2}\hspace{.5mm} W(e_A, e_4, e_B, e_4), \hspace{2mm} \betabar_A = \frac{1}{2}\hspace{.5mm} W(e_A, e_3, e_B, e_3), \\ \rho= \frac{1}{4} \hspace{.5mm}W_(e_3, e_4,e_3,e_4), \hspace{2mm} \sigma = \frac{1}{4} \hspace{.5mm}\Hodge{W}(e_3,e_4,e_3,e_4).
\end{split}	 
\end{equation}Here $\Hodge{W}$ denotes the the Hodge dual of $W$, the Weyl curvature tensor. Let $\nabla$ be the induced covariant derivative operator on $S_{u,\ubar}$ and $\nabla_3, \nabla_4$ be the projections to $S_{u,\ubar}$ of the covariant derivatives $D_3,\hspace{.5mm}D_4$. We note the following useful identities regarding the Ricci coefficients:

\begin{equation} \label{2.3}
\begin{split}
\omega = -\frac{1}{2}\hspace{.5mm} \nabla_4 (\text{log} \hspace{.5mm} \Omega), \hspace{2mm} \omegabar = - \frac{1}{2}\hspace{.5mm} \nabla_3(\text{log} \hspace{.5mm} \Omega),          \\ \eta_A = \zeta_A + \nabla_A(\text{log}\Omega), \hspace{2mm} \etabar_A = -\zeta_A+ \nabla_A(log\Omega).
\end{split}
\end{equation}\\ Define the following contractions of the tensor product of $\phi^{(1)}$ and $\phi^{(2)}$ with respect to the metric $\gamma_{AB}$. For symmetric $2-$tensors $\phi_{AB}^{(1)}, \hspace{0.5mm} \phi_{AB}^{(2)},$ define 

\be \begin{split} \label{2.4} \phi^{(1)} \cdot \phi^{(2)} := (\gamma^{-1})^{AC} (\gamma^{-1})^{BD} \phi^{(1)}_{AB}\hspace{.5mm} \phi^{(2)}_{CD}, \\ \phi^{(1)} \wedge \phi^{(2)} := \slashed{\epsilon}^{AB}(\gamma^{-1})^{CD} \phi^{(1)}_{AB} \hspace{.5mm} \phi^{(2)}_{CD},  \end{split} \ee where $\slashed{\epsilon}$ is the volume form of the metric $\gamma$. Continuing our introduction of notation,  define for two 1-forms $\phi_A^{(1)}, \phi^{(2)}_A$, the following:

\be \begin{split}         
\label{2.5}        
\phi^{(1)} \cdot \phi^{(2)} := (\gamma^{-1})^{AB} \phi_A^{(1)} \phi_B^{(2)}, \\ \phi^{(1)}\wedge \phi^{(2)} := \slashed{\epsilon}_{AB} \phi_A^{(1)}\phi_B^{(2)}, \\ \big(\phi^{(1)}\hspace{.5mm} \widehat{\otimes} \hspace{.5mm} \phi^{(2)}\big)_{AB}:= \phi_A^{(1)} \phi_B^{(2)} +\phi_B^{(1)} \phi_A^{(2)} - \gamma_{AB} (\phi^{(1)} \cdot \phi^{(2)}).
\end{split}                        
\ee For a symmetric $2-$tensor $\phi_{AB}^{(1)}$ and a $1-$form $\phi_A^{(2)}$, define the contraction \be \label{2.6} \left( \phi^{(1)} \cdot \phi^{(2)}\right)_A := (\gamma^{-1})^{BC}\hspace{.5mm} \phi_{AB}^{(1)}\hspace{.5mm} \phi_C^{(2)}. \ee We also define the operation $^*$ for $1-$forms $\phi^{(1)}$ and symmetric $2-$tensors  $\phi^{(2)}$ respectively as follows:

\be  \label{2.7} \begin{split} \Hodge{\phi}^{(1)}_A := \gamma_{AC}\hspace{.5mm} \slashed{\epsilon}^{CB} \hspace{.5mm} \phi^{(1)}_B, \\ \Hodge{\phi}_{AB}^{(2)} := \gamma_{BD} \hspace{.5mm} \slashed{\epsilon}^{DC} \hspace{.5mm} \phi^{(2)}_{AC}.  \end{split}      \ee For totally symmetric tensors, the divergence and curl operators are defined by the
formulae

\be \label{2.8}\begin{split} ( \text{div} \hspace{.5mm} \phi)_{A_1 \dots A_r}  := \nabla^B \phi_{BA_1\dots A_r}, \\ (\text{curl} \hspace{.5mm} \phi)_{A_1 \dots A_r} :=  \slashed{\epsilon}^{BC} \nabla_B \phi_{CA_1\dots A_r}.  \end{split}    \ee Define the operator $\nabla \widehat{\otimes}$ on a $1-$form $\phi_A$ by

\be \label{2.9}  (\nabla \widehat{\otimes} \phi) := \nabla_A \phi_B + \nabla_B \phi_A -\gamma_{AB} \hspace{.5mm} \text{div} \phi.        \ee Finally, define the trace to be

\be (tr \hspace{.5mm} \phi)_{A_1\dots A_{r-1}} := (\gamma^{-1})^{BC} \phi_{BCA_1 \dots A_{r-1}}. \ee

\vspace{3mm}

\par \noindent Let $\chihat, \chibarhat$ be the traceless parts of $\chi$ and $\chibar$ respectively, so that

\[  \chi = \chihat + \frac{1}{2} tr\chi \gamma, \hspace{2mm} \chibar = \chibarhat + \frac{1}{2} tr\chibar \gamma.  \]	The components $\chi$ and $\chihat$ obey the following transport equations:

\begin{gather}
\nabla_4 \hspace{.5mm} tr\chi + \frac{1}{2}\hspace{.5mm}  (tr\chi)^2 = -\lvert \chihat \rvert^2- 2 \omega \hspace{.5mm} tr\chi \label{trchieq},\\ \label{chihateq}
\nabla_4\hspace{.5mm}  \chihat + tr\chi \hspace{.5mm} \chihat = -2\hspace{.5mm} \omega \hspace{.5mm} \chihat -\alpha,\\ \nabla_3\hspace{.5mm}  tr\chibar + \frac{1}{2}\hspace{.5mm}  (tr\chibar)^2 = -\lvert \chibarhat \rvert^2 -2\hspace{.5mm}  \omegabar \hspace{.5mm} tr\chibar, \\ \nabla_3 \hspace{.5mm}  \chibarhat + tr\chibar \hspace{.5mm} \chibarhat = -2\omegabar \hspace{.5mm} \chibarhat- \alphabar, \\ \nabla_4 \hspace{.5mm} tr\chibar + \frac{1}{2}\hspace{.5mm} tr\chi \hspace{.5mm} tr\chibar = 2\hspace{.5mm} \omega \hspace{.5mm} tr\chibar + 2\hspace{.5mm} \rho- \chihat\cdot \chibarhat +2 \hspace{.5mm} \text{div}\hspace{.5mm} \etabar +2 \lvert \etabar \rvert^2,\\ \nabla_4 \chibarhat + \frac{1}{2}\hspace{.5mm}  tr\chi\hspace{.5mm}  \chibarhat = \nabla \widehat{\otimes} \etabar + 2\hspace{.5mm} \omega \hspace{.5mm} \chibarhat -\frac{1}{2}\hspace{.5mm} tr\chibar \hspace{.5mm} \chihat +\etabar \widehat{\otimes} \etabar,\\ \nabla_3 tr\chi + \frac{1}{2}\hspace{.5mm} tr\chi \hspace{.5mm} tr\chibar = 2\hspace{.5mm} \omegabar \hspace{.5mm} tr\chi +2 \hspace{.5mm} \rho - \chihat\cdot \chibarhat + 2 \hspace{.5mm} \text{div}\hspace{.5mm}  \eta + 2\hspace{.5mm} \lvert \eta \rvert^2,\\ \nabla_3 \chihat + \frac{1}{2} tr\chibar \hspace{.5mm}\chihat = \nabla \widehat{\otimes} \eta + 2\hspace{.5mm} \omegabar\hspace{.5mm} \chihat - \frac{1}{2}\hspace{.5mm} tr\chibar \hspace{.5mm} \chihat + \eta \widehat{\otimes} \eta. \label{chihat3eq}
\end{gather}The other components satisfy the following transport equations:

\begin{gather}
\nabla_4 \eta = - \chi \cdot (\eta - \etabar) - \beta, \\ \nabla_3 \etabar = -\chibar \cdot (\etabar - \eta) - \betabar, \\ \nabla_4 \omegabar = 2\omega \omegabar -\eta\cdot \etabar + \frac{1}{2} \lvert \eta \rvert^2 +\frac{1}{2} \rho, \\ \nabla_3 \omega = 2\omega \omegabar -\eta\cdot \etabar + \frac{1}{2} \lvert \etabar \rvert^2 +\frac{1}{2} \rho,
\end{gather}as well as the constraint equations        

\begin{gather} 
\text{div}\hspace{.5mm}\chihat =  \frac{1}{2} \nabla tr\chi - \frac{1}{2}(\eta - \etabar) \cdot (\chihat - \frac{1}{2} tr\chi) -\beta, \\\text{div}\hspace{.5mm}\chibarhat =  \frac{1}{2} \nabla tr\chibar + \frac{1}{2}(\eta - \etabar) \cdot (\chibarhat - \frac{1}{2} tr\chibar) + \betabar,\\ \text{curl} \hspace{.5mm} \eta = - \text{curl}\hspace{.5mm} \etabar = \sigma + \frac{1}{2} \chibarhat \wedge \chihat, \\ K = -\rho- \frac{1}{4} tr\chi\hspace{.5mm} tr\chibar +\frac{1}{2} \hspace{.5mm}\chihat \cdot \chibarhat.
\end{gather}Here $K$ is the Gauss curvature of the spheres $S_{u,\ubar}$. The null curvature components satisfy the null Bianchi equations

\begin{gather}
\nabla_4 \beta + 2tr\chi \beta = \text{div} \alpha - 2\omega \beta +\eta\cdot \alpha, \\ \nabla_3 \beta + tr\chibar \beta = \nabla \rho + \prescript{*}{}{\nabla \sigma} +2\chihat\cdot \beta+2\omegabar \beta +3(\eta \rho + \prescript{*}{}{\eta}\sigma),\\ \nabla_4 \betabar +tr\chi \betabar = -\nabla \rho + \prescript{*}{}{\nabla \sigma} +2\chibarhat\cdot \beta + 2\omega \betabar - 3(\etabar \rho - \Hodge{\etabar}\sigma),\\\nabla_3 \betabar +2tr\chibar \hspace{.5mm}\betabar= -\text{div}\alphabar  -2 \omegabar \hspace{.5mm}\betabar +\etabar \alphabar ,\\ \nabla_4 \alphabar +\frac{1}{2}tr\chi \alphabar = -\nabla\widehat{\otimes} \betabar +4\omega \alphabar -3(\chibarhat\rho - \Hodge{\chibarhat}\sigma)+(\zeta - 4\etabar)\widehat{\otimes}\betabar, \\\nabla_3 \alpha + \frac{1}{2}tr\chibar \alpha = \nabla \widehat{\otimes}\beta+ 4 \omegabar \alpha -3(\chihat \rho +\Hodge{\chihat}\sigma)+(\zeta + 4 \eta) \widehat{\otimes} \beta,\\ \nabla_4 \rho + \frac{3}{2} tr\chi \rho = \text{div} \beta - \frac{1}{2}\chibarhat \cdot \alpha +\zeta\cdot  \beta + 2\etabar \cdot \beta,\\ \nabla_3 \rho + \frac{3}{2}tr\chibar \rho = -\text{div} \betabar - \frac{1}{2}\chihat\cdot\alphabar+ \zeta\cdot \betabar - 2\eta\cdot \betabar,\\ \nabla_4 \sigma +\frac{3}{2}tr\chi \sigma = -\text{div}\Hodge{\beta} + \frac{1}{2}\hspace{.5mm}\hat{\chibar}\cdot \Hodge{\alpha}- \zeta \cdot \Hodge{\beta} -2 \etabar \cdot \Hodge{\beta} , \\ \nabla_3 \sigma + \frac{3}{2} tr\chibar \sigma  = -\text{div} \Hodge{\betabar} + \frac{1}{2} \chihat \cdot \Hodge{\alphabar} -\zeta \cdot \Hodge{\betabar} - 2\eta \cdot \Hodge{\betabar}.\end{gather}
Defining $\check{\sigma}=\sigma+\frac{1}{2}\hat{\chibar}\wedge \hat{\chi}$, the null Bianchi equations (2.27)-(2.36) can be re-written so as to replace $\rho$, $\sigma$, $\alpha$ with $K$ and $\check{\sigma}$, yielding the following null structure equations for $\chi$ and $\chibar$:
\begin{gather*}
\nabla_3 \beta + \tr \chibar \beta = -\nabla K+ \Hodge{\nabla} \check{\sigma}+2
\omegabar \beta-3(\eta K-\Hodge{\eta} \check{\sigma})+\frac{1}{2}(\nabla(\chihat  \cdot \chibarhat)\\+\Hodge{\nabla}(\hat{\chi}\wedge \chibarhat))-\frac{3}{4}\eta \tr \chi \tr\chibar+ \frac{3}{2}(\eta \chihat \cdot \chibarhat+\Hodge{\eta} \chihat \wedge \chibarhat)-\frac{1}{4}(\nabla \tr \chi \tr \chibar + \tr \chi \nabla \tr \chibar), \\
\nabla_4 \check{\sigma} +\frac{3}{2}\tr{\chi}\check{\sigma} =-\div \Hodge{\beta} -\zeta \wedge \beta -2\eta \wedge \beta -\frac{1}{2}\chihat \wedge (\nabla \widehat{\otimes} \etabar)-\frac{1}{2}\chihat \wedge (\etabar \widehat{\otimes} \etabar)\\ 
\nabla_4 K+\tr{\chi}K=-\div \beta-\zeta\cdot \beta -2\etabar\cdot \beta+\frac{1}{2}\chihat\cdot \nabla \widehat{\otimes} \etabar +\frac{1}{2}\chihat\cdot(\etabar \widehat{\otimes} \etabar)\\ 
-\frac{1}{2}\tr{\chi}\div{\etabar} -\frac{1}{2}\tr{\chi}|\etabar|^2,\\
\nabla_3 \check{\sigma}+\frac{3}{2}\tr{\chibar}\check{\sigma}=-\div \Hodge{\betabar}+\zeta\wedge\betabar -2\etabar\wedge \betabar +\frac{1}{2}\chibarhat \wedge(\nabla\widehat{\otimes}\eta) +\frac{1}{2}\chibarhat \wedge (\eta \widehat{\otimes} \eta),\\
\nabla_3 K+\tr{\chibar}K=\div \betabar -\zeta\cdot \betabar +2\eta\cdot \betabar +\frac{1}{2}\chibarhat \cdot \nabla \widehat{\otimes} \eta +\frac{1}{2}\chibarhat \cdot (\eta \widehat{\otimes} \eta) -\frac{1}{2}\tr \chibarhat \div \eta -\frac{1}{2}\tr{\chibar}|\eta|^2,\\
\nabla_4 \betabar+\tr{\chi}\betabar =\nabla K+\Hodge{\nabla \check{\sigma}} +2\omega\betabar +2\chibarhat \cdot \beta+3(-\etabar K+\Hodge{\etabar}\check{\sigma})-\frac{1}{2}(\nabla(\chihat\cdot \chibarhat) -\Hodge{\nabla}(\chihat \wedge \chihat)) \\ 
+\frac{1}{4}(\nabla \tr{\chi}\tr{\chibar}+\tr{\chi}\nabla \tr{\chibar})-\frac{3}{2}(\etabar \chihat \cdot \chihat -\Hodge{\etabar} \chihat \wedge \chibarhat)
+\frac{3}{4}\etabar\tr{\chi}\tr{\chibar}.
\end{gather*}

\subsection{The norms}

Fix a positive integer $N \geq 4$. First we define the norms for the curvature components:

\begin{multline}
\mathcal{R}_N = \sum_{i \leq N} \Bigg( \sup_u  \Big( \frac{1}{\delta^{\frac{1}{2}} a^{\frac{1}{2}}}  \lVert u^{i+1}\nabla^i \beta \rVert_{L^2\left(H_{u}\right)}  \Big)    \\ +\sup_{\ubar} \Big( \frac{1}{\delta^{\frac{1}{2}}a^{\frac{1}{2}}} \Big\lVert u^{i+1}\nabla^i   \left(K- \frac{1}{\lvert u \rvert^2}, \check{\sigma} \right)\Big\rVert _{L^2(\Hbar_{\ubar})}\Big)\Bigg)\\ + \sum_{1\leq i \leq N} \Bigg(   \sup_u \Big( \frac{1}{\delta^{\frac{3}{2}} a^{\frac{3}{4}}} \Big\lVert u^{i+2}\nabla^i   \left(K- \frac{1}{\lvert u \rvert^2}, \check{\sigma} \right)\Big\rVert_{L^2(H_u)} \Big)                    \\  +\sup_{\ubar} \Big(  \frac{1}{\delta^{\frac{3}{2}} a^{\frac{3}{4}}}\lVert u^{i+2} \nabla^i \betabar \rVert_{L^2(\Hbar_{\ubar})}   \Big)  \Bigg)\\ + \sup_u \Big( \frac{1}{\delta^{\frac{3}{2}} a^{\frac{3}{4}}} \Big\lVert u^2\Big(K- \frac{1}{\lvert u \rvert^2}\Big) \Big\rVert_{L^2(H_u)} \Big). 
\end{multline}We then define the norms for the Ricci coefficients. We begin with those for the highest order derivatives:

\begin{multline}  \widetilde{\mathcal{O}}_{N+1,2}  = \sup_u \Big( \frac{1}{\delta^{\frac{1}{2}} a^{\frac{1}{2}} } \Hutwo{u^{N+1} \nabla^{N+1} (\chihat, tr\chi, \omega) }\Big) +\sup_{\ubar} \Big(\frac{1}{\delta^{\frac{1}{2}} a^{\frac{1}{2}} } \Hbarubartwo{u^{N+1} \nabla^{N+1} \eta}  \Big)\\ + \sup_u \Big( \frac{1}{\delta^{\frac{3}{2}} a^{\frac{3}{4}} } \Hutwo{u^{N+2} \nabla^{N+1} (\eta,\etabar) }\Big) \\ + \sup_{u,\ubar} \Bigg( \frac{\lvert u \rvert^{\frac{1}{2}}}{\delta\hspace{.5mm} a^{\frac{1}{2}} } \Hbarubartwo{u^{N+1} \nabla^{N+1} (\chibarhat, tr\chibar, \omegabar) }\Bigg).  \end{multline}For $i \leq N$ we define the following $L^2-$norms:

\[ \mathcal{O}_{i,2} = \sup_{u,\hspace{.5mm} \ubar}\left( \frac{1}{a^{\frac{1}{2}}} \lVert u^i \nabla^i (\chihat, \omega, tr\chi) \rVert_{L^2(S_{u,\hspace{.5mm}\ubar})}+ \frac{\lvert u \rvert}{\delta a^{\frac{1}{2}}} \Big \lVert  u^i \nabla^i \left( \eta, \etabar, \nabla \hspace{.5mm} \text{log} \Omega, \chibarhat, tr\chibar + \frac{2}{u}, \omegabar \right)           \Big\rVert_{L^2(S_{u,\ubar})} \right), \]while for $i\leq N-2$ we define the following $L^\infty-$norms: 
\[    \mathcal{O}_{i,\infty} = \sup_{u,\hspace{.5mm} \ubar}  \left( \frac{\lvert u \rvert}{a^{\frac{1}{2}}} \lVert u^i \nabla^i (\chihat, \omega, tr\chi) \rVert_{L^{\infty}(S_{u,\hspace{.5mm}\ubar})}+ \frac{\lvert u^2 \rvert}{\delta a^{\frac{1}{2}}} \Big \lVert  u^i \nabla^i \left( \eta, \etabar, \chibarhat, tr\chibar + \frac{2}{u}, \omegabar \right)           \Big\rVert_{L^{\infty}(S_{u,\ubar})} \right) . \]As a shorthand, we denote \[ \mathcal{O}_N= \sum_{i \leq N-2}\mathcal{O}_{i,\infty} + \sum_{i \leq N}\mathcal{O}_{i,2}.\]

\subsection{The theorem of An-Luk}
Based on the norms defined above, An-Luk were able to show that if $N=4$ in the section above we have the following:

\begin{theorem}
Consider the following characteristic initial value problem
for the Einstein vacuum equations. The initial incoming hypersurface $\Hbar_0$
is required to coincide with a backwards light cone in Minkowski space with $0 \leq u \leq 1$.  On the initial outgoing hypersurface $H_1$, the initial data are smooth and the shear $\chihat_0$ satisfies the following bounds \[ \sum_{i \leq 7} \lVert \nabla^i \chihat_0 \rVert_{L_{\ubar}^\infty L^2(S_{u,\ubar})} \leq \al,      \]uniformly in $0\leq \ubar \leq \delta$. Then there exists a universal large constant $b_0$ and two large numbers $b,a$ such that if $b_0 \leq b \leq a$ and $\delta \al b <1$, the unique solution to the Einstein vacuum equations exists and obeys the following estimates in the region $\delta \al \beta \leq u \leq 1, 0 \leq \ubar \leq \delta:$

\[ \mathcal{O} +\tilde{\mathcal{O}}_{5,2}+\mathcal{R} \lesssim 1.     \]Here the implicit constant is independent of $a,b$ and $\delta$.
\end{theorem}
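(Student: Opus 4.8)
The plan is to prove Theorem 2.1 by the standard bootstrap argument for the characteristic initial value problem in a double null gauge, in the spirit of \cite{C09}, \cite{KR12}, \cite{KLR14} and most directly \cite{AL17}. One fixes a large constant $\Delta_0$ and assumes a priori, on a sub-slab $\{\delta\al b\leq u\leq 1,\ 0\leq\ubar\leq\ubar_0\}$ on which a smooth solution is already known to exist, that $\mathcal{O}+\widetilde{\mathcal{O}}_{5,2}+\mathcal{R}\leq\Delta_0$; a nonempty such slab exists by the characteristic local existence and continuation theory of Rendall, as sharpened by Luk \cite{L12}. The goal is to show that on this sub-slab in fact $\mathcal{O}+\widetilde{\mathcal{O}}_{5,2}+\mathcal{R}\lesssim 1$ with an implicit constant independent of $\Delta_0$, $a$, $b$, $\delta$. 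Since this improves the bootstrap assumption, the set of $\ubar_0\in[0,\delta]$ for which the estimates hold is open, closed and nonempty, hence equals $[0,\delta]$; combined with the continuation criterion (the solution extends as long as these norms stay finite) this yields existence on the full region $\{\delta\al b\leq u\leq 1,\ 0\leq\ubar\leq\delta\}$ together with the stated bound. The entire argument is organized by the scale-invariant bookkeeping implicit in the definitions of $\mathcal{O}$ and $\mathcal{R}$: each Ricci coefficient and curvature component carries a prescribed power of $\delta$, $a$ and $u$ (its signature), the powers being chosen precisely so that the quadratic terms in the null structure and Bianchi equations are never supercritical.

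First I would recover the Ricci coefficient estimates from the transport (null structure) equations (2.11)--(2.26) together with the Codazzi/Gauss constraints (2.23)--(2.26). Every $\nabla_4$-equation is integrated along $H_u$ from $\ubar=0$, where the data obey $\sum_{i\leq 7}\lVert\nabla^i\chihat_0\rVert_{L^\infty_{\ubar}L^2(\Suu)}\leq\al$, and every $\nabla_3$-equation is integrated along $\Hbar_{\ubar}$ from $u=1$, where $\Omega\equiv 1$ and the geometry is Minkowskian. Because $u$ descends to the small value $\delta\al b$, the $u$-weights must be tracked with care, and the genuinely borderline input is the large quadratic term $\lvert\chihat\rvert^2$ driving the equations for $\trch$, $\trchb$ (and, through the constraints, feeding $\eta,\etabar,\chibarhat,\omegabar$): it is controlled because its $\ubar$-integral over $[0,\delta]$ produces a gain that, after the weighted estimates, is a positive power of $\delta\al b$ (which is $<1$) times possibly inverse powers of the large parameter $b$, hence absorbable once $b\geq b_0$. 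At the top order $i=N+1=5$ the transport estimates are supplemented by elliptic (Hodge div-curl) estimates on the spheres $\Suu$, whose constants depend on the isoperimetric constant and the Gauss curvature; here one uses that $K-\tfrac{1}{\lvert u\rvert^2}$ is controlled by $\mathcal{R}$ to show that, after rescaling by $\lvert u\rvert$, the spheres $\Suu$ remain uniformly close to the round metric, so the elliptic constants are uniform down to $u=\delta\al b$.

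Next I would close the curvature estimates via energy (divergence-theorem) identities for the Bianchi equations. The essential move, following \cite{LR14}, \cite{LR17}, \cite{AL17}, is to renormalize so as to avoid $\alpha$ (and $\alphabar$ at leading order), which are only $L^2$ on cones: one works with the system for $(\beta,\,K-\tfrac{1}{\lvert u\rvert^2},\,\csigma,\,\betabar)$ displayed after (2.36). Pairing the $\nabla_3$-equation for $\beta$ with the $\nabla_4$-equations for $(K-\tfrac{1}{\lvert u\rvert^2},\csigma)$, and the $\nabla_3$-equations for $(K-\tfrac{1}{\lvert u\rvert^2},\csigma)$ with the $\nabla_4$-equation for $\betabar$, and integrating over the characteristic region bounded by $H_u$ and $\Hbar_{\ubar}$, yields for each order $i\leq N$ an identity of the form $(\text{flux on }H_u)+(\text{flux on }\Hbar_{\ubar})=(\text{data flux})+(\text{spacetime error})$, where the errors are trilinear of type Ricci$\times$Ricci$\times$curvature or Ricci$\times$curvature$\times$curvature. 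Each error is estimated by Cauchy--Schwarz together with the $L^\infty$ and $L^4$ pieces of $\mathcal{O}$, and is seen to carry, relative to the energy it is bounded against, a gain that is again a positive power of $\delta\al b$ or an inverse power of $b$. The same scheme at order $i=N+1$, combined with the commutation/transport estimates above, delivers $\widetilde{\mathcal{O}}_{5,2}\lesssim 1$.

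Assembling the two halves gives $\mathcal{O}+\widetilde{\mathcal{O}}_{5,2}+\mathcal{R}\lesssim 1+(\delta\al b)^{c}\,Q(\Delta_0)$ for some $c>0$ and some polynomial $Q$, which, after choosing $b_0$ large enough and using $\delta\al b<1$, improves the bootstrap constant; the continuity argument then closes and gives both existence and the estimate on the whole slab. The main obstacle, as in all such constructions, is the bookkeeping: one must verify that \emph{every} term produced by the nonlinearities and by commuting $\nabla$ through the equations respects its assigned signature, so that nothing is supercritical, and then dispatch the finitely many honestly borderline terms — the $\lvert\chihat\rvert^2$ terms and the terms feeding $\trch-\tfrac{2}{\lvert u\rvert}$, $\omega$, $\etabar$ — by exploiting the short $\ubar$-length $\delta$ and the smallness $\delta\al b<1$. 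A secondary, but structurally decisive, difficulty is keeping the elliptic and Sobolev constants on $\Suu$ uniform as $u\to\delta\al b$, which is exactly what forces the lower cutoff $u\geq\delta\al b$ in the statement.
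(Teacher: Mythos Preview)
Your outline is essentially correct and matches the An--Luk methodology. Note, however, that in this paper Theorem~2.1 is not proved: it is quoted verbatim as the main result of \cite{AL17}, and the paper then proceeds in Section~3 to prove the higher-order generalisation (Theorem~3.1) by exactly the scheme you describe --- bootstrap, transport estimates for the Ricci coefficients, Hodge/div--curl elliptic estimates at top order, and energy estimates for the renormalised Bianchi system $(\beta,\,K-\tfrac{1}{|u|^2},\,\csigma,\,\betabar)$ that avoids $\alpha,\alphabar$.

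One point worth sharpening: in \cite{AL17} (and in the paper's Section~3.1) the bootstrap constant is not an abstract $\Delta_0$ but is taken to be $b^{1/4}$; the closing mechanism is that each nonlinear error carries a factor bounded by $b^{-3/4}$ (coming from $\tfrac{\delta\al b^{1/4}}{|u|}\leq b^{-3/4}$ in the region $|u|\geq\delta\al b$), so the improvement to $\lesssim 1$ follows once $b\geq b_0$ is chosen large. Your formula $1+(\delta\al b)^c\,Q(\Delta_0)$ is not quite the right accounting, since $\delta\al b$ can be close to $1$; the smallness really comes from inverse powers of $b$, not from $\delta\al b$.
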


\section{Higher order energy estimates}\label{section3}
The goal of this section is to show that the desired higher order energy estimates go through using the hierarchy of \cite{AL17}. More precisely, we will establish the following.

\begin{theorem}[Higher order energy estimates]
Fix a natural number $N \geq 4$ and consider the following characteristic initial value problem
for the Einstein vacuum equations. The initial incoming hypersurface $\Hbar_0$
is required to coincide with a backwards light cone in Minkowski space with $0 \leq u \leq 1$.  On the initial outgoing hypersurface $H_1$, the initial data are smooth and the shear $\chihat_0$ satisfies the following bounds \[ \sum_{i \leq N+3} \lVert \nabla^i \chihat_0 \rVert_{L_{\ubar}^\infty L^2(S_{u,\ubar})} \leq \al,      \]uniformly in $0\leq \ubar \leq \delta$. Then there exists a universal large constant $b_0$ and two large numbers $b,a$ such that if $b_0 \leq b \leq a$ and $\delta \al b <1$, the unique solution to the Einstein vacuum equations exists and obeys the following estimates in the region $\delta \al b \leq u \leq 1, 0 \leq \ubar \leq \delta:$

\[ \mathcal{O}_N + \tilde{\mathcal{O}}_{N+1,2} + \mathcal{R}_N \lesssim 1    \]
\end{theorem}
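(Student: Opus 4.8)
The plan is to run the continuity/bootstrap scheme of \cite{AL17} (Theorem 2.2 above) while carrying $N-4$ additional angular derivatives throughout. Existence and uniqueness of the solution in the region $\delta\al b\le u\le1$, $0\le\ubar\le\delta$ are already supplied by Theorem 2.2, since the hypothesis $\sum_{i\le N+3}\lVert\nabla^i\chihat_0\rVert_{L^\infty_\ubar L^2(S_{u,\ubar})}\le\al$ implies the $N=4$ hypothesis when $N\ge4$; by the standard persistence of regularity for the characteristic initial value problem the solution is as regular as the data wherever it exists, so the substance of the theorem is the a priori bound $\mathcal{O}_N+\tilde{\mathcal{O}}_{N+1,2}+\mathcal{R}_N\lesssim1$. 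I would therefore set up a single bootstrap in which $\mathcal{O}_N+\tilde{\mathcal{O}}_{N+1,2}+\mathcal{R}_N\le\Gamma$ for a large constant $\Gamma$, and show that throughout the region of existence this self-improves to a bound by a constant independent of $\Gamma$, $a$, $b$, $\delta$; a connectedness argument then closes the loop. The structural point that makes $N>4$ no harder in principle than $N=4$ is that $N\ge4$ forces $\lceil N/2\rceil\le N-2$, so in every nonlinear product arising from differentiating the structure equations at most $N+1$ times, at least one factor sits at order $\le N-2$ and can be placed in $L^\infty(S_{u,\ubar})$ via the scale-invariant Sobolev embedding, with $\delta,a$-weights supplied by $\mathcal{O}_{i,\infty}$.

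Three families of estimates are then carried out, all by commuting the equations of Section 2.3 with $\nabla^i$ and exploiting the \cite{AL17} hierarchy of $\delta$- and $a$-weights. First, \emph{transport estimates} for $\nabla^i$ applied to $\tr\chi,\omega,\omb,\tr\chibar+\tfrac2u,\eta,\etabar,\nabla\log\Omega$ ($i\le N$, and $i=N+1$ for $\tr\chi,\omega,\omb,\tr\chibar$): integrate the commuted $\nabla_4$- or $\nabla_3$-equations \eqref{trchieq}, \eqref{chihateq} and their companions in Section 2.3 along the null generators, bounding the commutators $[\nabla^i,\nabla_{3/4}]\psi$ and the differentiated nonlinearities by Leibniz and the $L^\infty$/$L^2$ split above, the curvature sources being controlled in $L^2(S_{u,\ubar})$ via the trace estimates of \cite{AL17} from $\mathcal{R}_N$, and the borderline term fed back through the bootstrap carrying a favorable power of $\delta$ or $a^{-1/2}$ from $\ubar$-integration and from $\delta\al b<1$. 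Second, \emph{elliptic estimates on the spheres} for the top-order $\nabla^{N+1}(\chihat,\chibarhat,\eta,\etabar)$: apply $\nabla^N$ to the Codazzi equations for $\div\chihat$, $\div\chibarhat$ and to $\curl\eta=\sigmac$, and use the $L^2$ Hodge estimate for $\div$/$\curl$ on $S_{u,\ubar}$ (with constants uniform because the induced metric and its Gauss curvature are controlled by $\mathcal{O}_{N-2,\infty}$), trading one derivative of $\chihat$ for $\nabla^{N+1}\tr\chi$, controlled above, plus $\nabla^N\beta$, $\nabla^N\sigmac$, controlled by $\mathcal{R}_N$. Third, \emph{energy estimates} for $\mathcal{R}_N$: use the renormalized Bianchi system for $\beta$, $(K-\lvert u\rvert^{-2},\sigmac)$ and $\betabar$ displayed at the end of Section 2.3, commute with $\nabla^i$ ($i\le N$), contract $\nabla^i$ of the $\nabla_3\beta$-equation with a suitably weighted $\nabla^i\beta$ and integrate by parts over the double-null region so that the borderline term $\int\nabla^{i+1}(K,\sigmac)\cdot\nabla^i\beta$ cancels against the contraction of $\nabla^i(K,\sigmac)$ with the $\nabla_4$-equations for $(K,\sigmac)$; the surviving boundary terms are precisely the fluxes defining $\mathcal{R}_N$ on $H_u$ and $\Hbar_\ubar$, while every remaining bulk term carries a gain ($\delta$, $a^{-1/4}$, or $b^{-1}$) and is absorbed.

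The main obstacle is the top-order closure: the elliptic step needs $\nabla^N$ of curvature, which lives in $\mathcal{R}_N$; the energy step needs $\nabla^N$ — and, through $\tilde{\mathcal{O}}_{N+1,2}$, a borderline $\nabla^{N+1}$ — of the Ricci coefficients; so the three families are genuinely coupled and one must verify the loop closes with a \emph{small} effective constant, not merely a finite one. Two points make this work and must be checked with care. First, one has to use the \cite{AL17}/\cite{LR17} renormalization throughout: had one tried to propagate $\nabla^N\alpha$, $\nabla^N\rho$, $\nabla^N\sigma$ directly, the component $\alpha$ — which is only as regular and as large ($\sim\al$) as the data $\chihat_0$ — would enter the top-order energy identities without a smallness gain; passing to $(K-\lvert u\rvert^{-2},\sigmac)$ removes $\alpha$ from the top-order hierarchy, and one must confirm that commuting the renormalized equations $N$ times produces no borderline term beyond those already present at $N=4$. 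Second, one must track that every commutator $[\nabla^i,\nabla_{3/4}]$ and every differentiated product, measured in the weighted norms $\mathcal{O}_N$, $\tilde{\mathcal{O}}_{N+1,2}$, $\mathcal{R}_N$, respects the \cite{AL17} hierarchy — that the extra $N-4$ derivatives convert no $o(1)$ coefficient into an $O(1)$ one — which is a bookkeeping argument of the type already in \cite{AL17}, using that $\lVert u^i\nabla^i(\text{Ricci})\rVert_{L^\infty(S_{u,\ubar})}$ for $i\le N-2$ carries the same $\delta,a$-weights as its $i\le2$ counterpart and that $b\ge b_0$ and $\delta\al b<1$ supply the requisite smallness. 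Once these are in place the bound improves to $\lesssim1$ and the bootstrap closes.
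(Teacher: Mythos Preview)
Your proposal is correct and follows essentially the same approach as the paper: both run the \cite{AL17} bootstrap scheme carrying $N-4$ extra angular derivatives, organize the argument into the same three blocks (transport estimates yielding $\mathcal{O}_N\lesssim1+\tilde{\mathcal{O}}_{N+1,2}+\mathcal{R}_N$, elliptic/Hodge estimates yielding $\tilde{\mathcal{O}}_{N+1,2}\lesssim1+\mathcal{R}_N$, and energy estimates on the renormalized Bianchi system for $(\beta,K-|u|^{-2},\check\sigma,\betabar)$ yielding $\mathcal{R}_N\lesssim1$), and rely on the same structural observation that the commutation formula holds at arbitrary order with at most one factor sitting above $L^\infty$-controllable regularity. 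The only cosmetic difference is that the paper uses $b^{1/4}$ as the explicit bootstrap constant rather than a generic $\Gamma$, and spells out the product estimate (your $\lceil N/2\rceil\le N-2$ point) as a separate lemma (Proposition~\ref{riccihelpful}).
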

The proof of the theorem is carried out in Section 3 and can be summarized by the following observation: estimates for higher orders of derivatives in the transport equations are done using the commutation formula to obtain transport equations for $\nabla^i \psi$. A similar philosophy is followed in the elliptic and the energy estimates. The point here is that the commutation formula holds for arbitrary natural numbers, i.e. for any number of angular derivatives.

\subsection{The extended bootstrap assumptions}

In analogy to the lower order estimates, the higher order estimates require similar bootstrap assumptions on more derivatives.  We make the following bootstrap assumptions on the first $N$ derivatives
of the Ricci coefficients:

\begin{gather}
\sum_{i \leq N}  \frac{1}{\delta \al} \twoSu{u^{i+1} \nabla^i \underline{\psi}} + \sum_{i \leq N-2}  \frac{1}{\delta \al}\inftySu{u^{i+2} \nabla^i \underline{\psi}} \leq b^{\frac{1}{4}}, \label{bootstrap1}\\ \sum_{i \leq N}  \frac{1}{\al} \twoSu{u^{i} \nabla^i \psi} + \sum_{i \leq N-2}  \frac{1}{ \al}\inftySu{u^{i+1} \nabla^i \psi} \leq b^{\frac{1}{4}}. \label{bootstrap2}
\end{gather}
We also make the following bootstrap assumptions on the top order norms of the Ricci coefficients and the curvature norms
\begin{gather}
\widetilde{\mathcal{O}}_{N+1,2} + \mathcal{R}_N \leq b^{\frac{1}{4}}. \label{bootstrap3}
\end{gather}
We make the final bootstrap assumption, useful when developing elliptic estimates:

\begin{equation}
\sum_{i \leq N-1} \Big\lVert u^{i+1} \nabla^i \left(K- \frac{1}{\lvert u \rvert^2} \right) \Big\rVert_{L_u^\infty L_{\ubar}^\infty L^2(S_{u,\ubar})}\leq 1. \label{bootstrap4}
\end{equation}

\par \noindent In particular, using the same arguments as in \cite{AL17}, we can give the following improvements on \eqref{bootstrap1}-\eqref{bootstrap4}:

\begin{gather}
\sum_{i \leq N}  \frac{1}{\delta \al} \twoSu{u^{i+1} \nabla^i \underline{\psi}} + \sum_{i \leq N-2}  \frac{1}{\delta \al}\inftySu{u^{i+2} \nabla^i \underline{\psi}} \leq 1, \label{bootstrap1improved}\\ \sum_{i \leq N}  \frac{1}{\al} \twoSu{u^{i} \nabla^i \psi} + \sum_{i \leq N-2}  \frac{1}{ \al}\inftySu{u^{i+1} \nabla^i \psi} \leq 1. \label{bootstrap2improved}\\\widetilde{\mathcal{O}}_{N+1,2} + \mathcal{R} \leq 1. \label{bootstrap3improved} \\ \sum_{i \leq N-1} \Big\lVert u^{i+1} \nabla^i \left(K- \frac{1}{\lvert u \rvert^2} \right) \Big\rVert_{L_u^\infty L_{\ubar}^\infty L^2(S_{u,\ubar})}\leq \frac{1}{b^{\frac{3}{4}}}. \label{bootstrap4improved}
\end{gather}We will rigorously argue, in the remainder of this section, that these higher order bootstrap assumptions can be improved following the same philosophy/approach as in \cite{AL17}.

\subsection{Metric Components}
The following straightforward estimates hold, cf. Section 5.1 of \cite{AL17}.
\begin{proposition}
Under the assumptions of Theorem 2.1 and the bootstrap assumptions (3.1)-(3.4), we have \[||\Omega^{-1}-1||_{L^\infty(S_{u,\ubar})} \lesssim \frac{\delta a^{\frac{1}{2}}b^{\frac{1}{4}}}{|u|}\]
\end{proposition}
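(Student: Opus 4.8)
The plan is to control $\Omega^{-1}$ by integrating its transport equation in the $e_4$-direction, starting from the gauge condition $\Omega = 1$ on $H_1$. Recall from \eqref{2.3} that $\omega = -\frac12 \nabla_4(\log\Omega)$, so that in terms of the coordinate vector field $e_4 = \Omega^{-1}\partial_{\ubar}$ we have $\partial_{\ubar}(\log\Omega) = 2\Omega\omega$, or equivalently an equation for $\Omega^{-1}-1$ of the schematic form $\partial_{\ubar}(\Omega^{-1}) = -2\omega\Omega^{-1}$. First I would recast this as an integral equation along the null generators of each $\Hbar$-foliation leaf (parametrized by $\ubar$ from $0$), using that $\Omega^{-1}-1$ vanishes on $H_1$; since $\ubar$ ranges over $[0,\delta]$, the integration is over an interval of length at most $\delta$.

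The key step is then to feed in the bound on $\omega$. Since $\omega$ is one of the $\psi$-type Ricci coefficients, the bootstrap assumption \eqref{bootstrap2} (or rather its role inside the norm $\mathcal{O}_N$) gives $\inftySu{\omega} \lesssim \al/|u|$ after accounting for the weights — more precisely $\|\omega\|_{L^\infty(S_{u,\ubar})} \lesssim \al b^{1/4}/|u|$ is what is available at the level of the crude bootstrap, consistent with the $b^{1/4}$ appearing in the claimed bound. Integrating $|\partial_{\ubar}(\Omega^{-1})| \lesssim |\omega|(1 + |\Omega^{-1}-1|)$ over $\ubar \in [0,\delta]$, using $\int_0^\delta \|\omega\|_{L^\infty(S_{u,\ubar'})}\,d\ubar' \lesssim \delta \al b^{1/4}/|u|$, and absorbing the quadratic term by a Gr\"onwall/continuity argument (which is harmless because $\delta\al b^{1/4}/|u| \lesssim \delta\al b \leq 1$ is small in the region of existence), yields
\[
\|\Omega^{-1} - 1\|_{L^\infty(S_{u,\ubar})} \lesssim \frac{\delta\, a^{\frac12}\, b^{\frac14}}{|u|},
\]
as desired.

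The main obstacle is a bookkeeping one rather than a conceptual one: one must be careful that the transport is genuinely along $\partial_{\ubar}$ at fixed $u$ and fixed angular coordinates $\theta^A$ (which is exactly how the coordinate system in Section 2.2 is set up, since $e_4 = \Omega^{-1}\partial_{\ubar}$ with no angular component), so no additional terms from the shift $d^A$ enter this particular equation. A secondary subtlety is extracting the correct power of $|u|$: the weight $|u|/a^{1/2}$ in $\mathcal{O}_{i,\infty}$ combined with the integration in $\ubar$ must reproduce precisely $\delta a^{1/2} b^{1/4}/|u|$, so one should track that the $L^\infty$ bound on $\omega$ carries the single power $|u|^{-1}$ and that the $\ubar$-integration contributes the factor $\delta$. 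This is exactly the estimate carried out in Section 5.1 of \cite{AL17}, and the argument here is identical in structure, the only change being the bookkeeping of the constant $b^{1/4}$ from the bootstrap assumptions \eqref{bootstrap1}--\eqref{bootstrap4} in place of the sharp bounds.
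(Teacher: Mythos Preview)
Your approach is correct and essentially the same as the argument in Section~5.1 of \cite{AL17} that the paper defers to. Two cosmetic points: the initial condition $\Omega=1$ you actually use when integrating in $\ubar$ from $0$ is the one on $\Hbar_0$, not $H_1$, and the exact identity is $\partial_{\ubar}\Omega^{-1}=2\omega$ (no extra $\Omega^{-1}$ factor, so the Gr\"onwall step is unnecessary); neither affects the estimate.
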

\begin{proposition}
Under the assumptions of Theorem 2.1 and the bootstrap assumptions (3.1)-(3.4), we have 
\[\sup_{u,\ubar} |\text{Area}(S_{u,\ubar})-\text{Area}(S_{u,0})|\lesssim \frac{\delta a^{\frac{1}{2}}b^{\frac{1}{4}}}{|u|} \]
\end{proposition}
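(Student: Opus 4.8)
The plan is to control the evolution of the area element $\sqrt{\det\gamma}$ along the outgoing direction $e_4$ and integrate in $\ubar$. Recall the first variation formula for the area form of $S_{u,\ubar}$: along $e_4$ one has $\partial_{\ubar}\big(\sqrt{\det\gamma}\big) = \Omega \,\mathrm{tr}\chi\,\sqrt{\det\gamma}$, so that
\begin{equation}
\mathrm{Area}(S_{u,\ubar}) - \mathrm{Area}(S_{u,0}) = \int_{S_{u,\ubar}}\sqrt{\det\gamma}\,\dtext\theta^1\dtext\theta^2 - \int_{S_{u,0}}\sqrt{\det\gamma}\,\dtext\theta^1\dtext\theta^2 = \int_0^{\ubar}\!\!\int_{S_{u,\ubar'}} \Omega\,\mathrm{tr}\chi\,\sqrt{\det\gamma}\,\dtext\theta^1\dtext\theta^2\,\dtext\ubar'.
\end{equation}
Equivalently, in an area-normalised form, $\big|\mathrm{Area}(S_{u,\ubar})-\mathrm{Area}(S_{u,0})\big| \le \int_0^{\ubar}\big\lVert \Omega\,\mathrm{tr}\chi\big\rVert_{L^\infty(S_{u,\ubar'})}\,\mathrm{Area}(S_{u,\ubar'})\,\dtext\ubar'$, and since $\mathrm{Area}(S_{u,\ubar'})\lesssim |u|^2$ uniformly (a consequence of the same bootstrap/Grönwall argument applied first to show the area stays comparable to its Minkowskian value), it suffices to estimate $\lVert \Omega\,\mathrm{tr}\chi\rVert_{L^\infty(S_{u,\ubar'})}$.

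The key step is therefore the pointwise bound on $\Omega\,\mathrm{tr}\chi$. Write $\mathrm{tr}\chi = \big(\mathrm{tr}\chi - \tfrac{2}{u}\big) + \tfrac{2}{u}$; Proposition 3.1 gives $\lVert \Omega^{-1}-1\rVert_{L^\infty(S_{u,\ubar})}\lesssim \delta a^{1/2}b^{1/4}/|u|$, hence $\lVert \Omega - 1\rVert_{L^\infty}\lesssim \delta a^{1/2}b^{1/4}/|u|$ as well. The term $\mathrm{tr}\chi - \tfrac{2}{u}$ is one of the Ricci coefficients controlled at $L^\infty$ level by $\mathcal{O}_{0,\infty}$: the improved bootstrap assumption \eqref{bootstrap2improved} (the $i=0$, $L^\infty$ piece for $\psi$) gives $\lVert \mathrm{tr}\chi - \tfrac{2}{u}\rVert_{L^\infty(S_{u,\ubar})}\lesssim a^{1/2}/|u|^2$ — wait, more carefully, one must isolate from the $\psi$-norm the component $\mathrm{tr}\chi$, whose natural size is $a^{1/2}/|u|^2$; combined with $\tfrac{2}{u}\cdot(\Omega - 1)$ of size $\delta a^{1/2}b^{1/4}/|u|^2$, one obtains
\begin{equation}
\big\lVert \Omega\,\mathrm{tr}\chi - \tfrac{2}{u}\big\rVert_{L^\infty(S_{u,\ubar'})} \lesssim \frac{\delta a^{1/2}b^{1/4}}{|u|^2} + \frac{a^{1/2}}{|u|^2}.
\end{equation}
Since the $\tfrac{2}{u}$ contribution is exactly the Minkowskian rate of area change of a sphere in the backgrounds $S_{u,0}$, its integral against $\mathrm{Area}(S_{u,\ubar'})$ is already present in $\mathrm{Area}(S_{u,0})$'s own evolution (equivalently, one compares against the round sphere of radius $|u|$ and the $\tfrac{2}{u}$ terms cancel in the difference); what remains after this cancellation is the genuine perturbation, controlled by the displayed bound. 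Integrating in $\ubar'$ over an interval of length $\le \delta$ and using $\mathrm{Area}(S_{u,\ubar'})\lesssim |u|^2$ then yields $\big|\mathrm{Area}(S_{u,\ubar})-\mathrm{Area}(S_{u,0})\big|\lesssim \delta\cdot a^{1/2}b^{1/4}/|u|^2\cdot|u|^2 \cdot |u|^{-1}$...

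The bookkeeping of powers is the one place to be careful: the claimed bound is $\delta a^{1/2}b^{1/4}/|u|$, so one wants the net integrand (after the $\tfrac{2}{u}$ cancellation) times $\mathrm{Area}$ to be of size $a^{1/2}b^{1/4}/|u|$ — i.e., the relevant pointwise quantity multiplied by $|u|^2$ should be $\lesssim a^{1/2}b^{1/4}/|u|$. Tracking this, the dominant contribution is the $\lVert\Omega-1\rVert_{L^\infty}\cdot|\mathrm{tr}\chi|\sim (\delta a^{1/2}b^{1/4}/|u|)\cdot(1/|u|)$ cross term from $\Omega\cdot\tfrac2u$, whose integral over $[0,\ubar]$ against $|u|^2$ gives precisely $\delta a^{1/2}b^{1/4}/|u|$; the purely quadratic-in-smallness term $\delta^2 a b^{1/2}/|u|^2$ is lower order. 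The main obstacle is thus not any single estimate but the careful cancellation of the background $\tfrac{2}{u}$-terms in the difference $\mathrm{Area}(S_{u,\ubar})-\mathrm{Area}(S_{u,0})$ — handled cleanly by noting that along a fixed $u$, both spheres sit over the same coordinate $\theta$-patch and the first variation formula is exact — together with extracting the sharp $L^\infty$ bound on the individual component $\mathrm{tr}\chi$ (not merely the aggregate $\psi$-norm) and on $\log\Omega$ from Proposition 3.1. This mirrors Section 5.1 of \cite{AL17} verbatim with $N$ in place of $4$.
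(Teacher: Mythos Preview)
Your starting point (the first variation formula $\partial_{\ubar}\sqrt{\det\gamma}=\Omega\,\tr\chi\,\sqrt{\det\gamma}$) is exactly the right one, and it is what the argument in Section~5.1 of \cite{AL17} uses. But the proof then goes off the rails at the decomposition step.

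The claimed ``cancellation'' of the $\tfrac{2}{u}$ contribution is simply false. You are comparing $\mathrm{Area}(S_{u,\ubar})$ to the \emph{fixed} number $\mathrm{Area}(S_{u,0})$ at the same $u$; there is no ``evolution of $S_{u,0}$'' to subtract against, and the $\tfrac{2}{u}$ part of $\tr\chi$ contributes in full to the area difference (even in exact Minkowski the area changes along $\ubar$ at fixed $u$). Once this spurious cancellation is removed, the only term you actually estimate --- the cross-term $(\Omega-1)\cdot\tfrac{2}{u}$ --- is not the dominant one, and your power-count for it is also off: integrating $(\delta a^{1/2}b^{1/4}/|u|)\cdot(1/|u|)\cdot|u|^2$ over $\ubar\in[0,\delta]$ gives $\delta^2 a^{1/2}b^{1/4}$, not $\delta a^{1/2}b^{1/4}/|u|$.

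The actual argument (as in \cite{AL17}) is much simpler and avoids the decomposition entirely. The bootstrap assumption \eqref{bootstrap2} controls the \emph{full} $\tr\chi$, not a renormalised version: $\tr\chi$ is one of the $\psi$, so at $i=0$ one reads off $\lVert\tr\chi\rVert_{L^\infty(S_{u,\ubar})}\le a^{1/2}b^{1/4}/|u|$ directly. Combined with Proposition~3.1 this gives $\lVert\Omega\,\tr\chi\rVert_{L^\infty}\lesssim a^{1/2}b^{1/4}/|u|$, and Gr\"onwall on the first variation yields $\big|\sqrt{\det\gamma}(u,\ubar,\theta)/\sqrt{\det\gamma}(u,0,\theta)-1\big|\lesssim \delta a^{1/2}b^{1/4}/|u|$ pointwise, from which the area statement follows. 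No subtraction of $2/u$, no cancellation, no cross-terms.
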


\subsection{Transport Equations}
Next, we introduce two propositions from \cite{AL17} that will be useful in controlling transport equations.

\begin{proposition}\label{transportprop1}
Let $\phi$ be an $S_{u,\ubar}-$tangent tensor of arbitrary rank. Under the assumptions of Theorem \ref{main2} and the bootstrap assumptions \eqref{bootstrap1}-\eqref{bootstrap4} there holds
\[ \twoSu{\phi} \lesssim \lVert \phi \rVert_{L^2(S_{u,\ubar^{\prime}})} + \int_{\ubar^\prime}^{\ubar} \lVert \nabla_4 \phi \rVert_{L^{\infty}(S_{u,\ubar^{\prime\prime}})} \hspace{.5mm} \text{d}\ubar^{\prime\prime}.    \]
\end{proposition}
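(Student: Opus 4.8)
\noindent The plan is to run a Gr\"onwall argument along the $e_4$-direction on the family of $2$-spheres $S_{u,\ubar}$, exploiting that in the gauge of Section 2.1 one has $e_4 = \Omega^{-1}\partial_{\ubar}$ with \emph{no} shift, so $S_{u,\ubar}$ is obtained from $S_{u,\ubar^{\prime}}$ by flowing along $\partial_{\ubar}$. First I would record the two relevant evolution identities. Since $\nabla_4$ is the projection of $D_4$ to $TS_{u,\ubar}$ and is compatible with the induced metric $\gamma$ (i.e.\ $\nabla_4\gamma = 0$), and since for a scalar $f$ one has $\partial_{\ubar}f = \Omega\,\nabla_4 f$, it follows that for an $S_{u,\ubar}$-tangent tensor $\phi$ of \emph{any} rank
\[ \partial_{\ubar}\big(|\phi|^2_{\gamma}\big) = 2\,\Omega\,\langle \nabla_4\phi,\phi\rangle_{\gamma}, \]
where the rank of $\phi$ enters only through the $\gamma^{-1}$-contractions in $|\phi|^2_{\gamma}$, each annihilated by $\nabla_4$, so that no $\chi$-terms appear. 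Second, the metric evolves in this gauge by $\partial_{\ubar}\gamma_{AB} = 2\,\Omega\,\chi_{AB}$, whence the area form satisfies $\partial_{\ubar}(d\mu_{\gamma}) = \Omega\,\tr\chi\,d\mu_{\gamma}$. Combining,
\[ \frac{d}{d\ubar}\int_{S_{u,\ubar}} |\phi|^2_{\gamma}\,d\mu_{\gamma} = \int_{S_{u,\ubar}}\Big( 2\,\Omega\,\langle\nabla_4\phi,\phi\rangle_{\gamma} + \Omega\,\tr\chi\,|\phi|^2_{\gamma}\Big)\,d\mu_{\gamma}. \]

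Next I would bound the right-hand side. By Proposition 3.1 one has $|\Omega| + |\Omega^{-1}|\lesssim 1$; by the bootstrap assumptions, together with the fact that $\tr\chi$ equals its Minkowskian value $2/|u|$ up to a perturbation of size $o(|u|^{-1})$, one gets $\lVert \Omega\,\tr\chi\rVert_{L^\infty(S_{u,\ubar})}\lesssim |u|^{-1}$. Applying Cauchy--Schwarz on $S_{u,\ubar}$ and using $\mathrm{Area}(S_{u,\ubar})^{1/2}\lesssim |u|\le 1$ (from Proposition 3.2 and the region of existence), this yields
\[ \frac{d}{d\ubar}\,\twoSu{\phi} \lesssim \lVert \nabla_4\phi\rVert_{L^\infty(S_{u,\ubar})} + \frac{1}{|u|}\,\twoSu{\phi}. \]
Integrating from $\ubar^{\prime}$ to $\ubar$ and invoking Gr\"onwall's inequality gives
\[ \twoSu{\phi} \lesssim \exp\!\Big(\tfrac{C(\ubar-\ubar^{\prime})}{|u|}\Big)\Big( \lVert \phi\rVert_{L^2(S_{u,\ubar^{\prime}})} + \int_{\ubar^{\prime}}^{\ubar} \lVert \nabla_4\phi\rVert_{L^\infty(S_{u,\ubar^{\prime\prime}})}\,d\ubar^{\prime\prime}\Big), \]
and since $0\le \ubar-\ubar^{\prime}\le\delta$ while $|u|\ge\delta\al b$ throughout, the exponent is $\le C/(\al b)\ll 1$, so the prefactor is $\lesssim 1$ and the claim follows.

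The only genuinely delicate point is the uniform $L^\infty(S_{u,\ubar})$ bound $\lVert\Omega\,\tr\chi\rVert_{L^\infty(S_{u,\ubar})}\lesssim |u|^{-1}$, since $\tr\chi$ is itself large (of size $|u|^{-1}$) and must be compared with its background value; this is exactly where Proposition 3.1 and the bootstrap assumptions \eqref{bootstrap1}--\eqref{bootstrap2} enter, and it is what forces the Gr\"onwall exponent to be merely $O\big((\al b)^{-1}\big)$ rather than something that could deteriorate. Everything else is routine and, as emphasised after the statement of the higher order energy estimates, is entirely insensitive to the rank of $\phi$ or to the number of angular derivatives that may have been commuted in.
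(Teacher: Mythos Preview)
Your argument is correct and is precisely the standard approach; the paper itself does not give a proof but simply cites \cite{AL17}, whose argument is the same Gr\"onwall computation you carry out.

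One small imprecision: under the bootstrap assumptions \eqref{bootstrap1}--\eqref{bootstrap4} alone, you do not actually have $\lVert\Omega\,\tr\chi\rVert_{L^\infty(S_{u,\ubar})}\lesssim |u|^{-1}$ with a universal constant. The bootstrap \eqref{bootstrap2} only gives $\lVert\tr\chi\rVert_{L^\infty}\le a^{1/2}b^{1/4}/|u|$; it bounds $\tr\chi$ directly rather than splitting it as $2/u$ plus an $o(|u|^{-1})$ remainder. This does not harm your argument, however: the Gr\"onwall exponent becomes $\lesssim \delta a^{1/2}b^{1/4}/|u|\le b^{-3/4}\ll 1$ rather than $O((a^{1/2}b)^{-1})$, so the prefactor is still $\lesssim 1$ and the conclusion stands unchanged.
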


\begin{proposition}\label{transportprop2}
Under the same assumptions as in Proposition \ref{transportprop1}, let $\phi$ and $F$ be $S_{u,\ubar}$-tangent tensor fields of rank $k$ satisfying the following transport equation:
\[ \nabla_{3}\phi_{A_1\dots A_k} + \lambda_0 \tr\chibar \phi_{A_1\dots A_k} = F_{A_1\dots A_k}.     \]Define $\lambda_1:=2\lambda_0-1$. Then there holds

\[  u^{\lambda_1} \twoSu{\phi} \lesssim \lVert \phi \rVert_{L^2(S_{1,\ubar})}+ \int_{u}^{1} \lvert u^\prime \rvert^{\lambda_1} \lVert F \rVert_{L^2(S_{u^\prime, \ubar})} \hspace{.5mm}\text{d}u^\prime.   \]
\end{proposition}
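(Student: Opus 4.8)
The plan is to run the standard transport estimate in the incoming ($e_3$) direction, in the same spirit as the corresponding lemma of \cite{AL17}: derive a scalar evolution equation for $\snr{\phi}^2$, couple it with the first variation of the area element of $S_{u,\ubar}$ along $e_3$ (which is governed by $\tr\chibar$), and then close a Gronwall argument after multiplying by the weight $u^{2\lambda_1}$, with $\lambda_1=2\lambda_0-1$ chosen precisely so as to annihilate the leading-order term.

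Concretely, first I would set $F(u):=\twoSu{\phi}^2=\int_{S_{u,\ubar}}\snr{\phi}^2\,d\mu_{\gamma}$ at fixed $\ubar$ and differentiate in $u$. Using the coordinate description of Section 2.2, the operator $\partial_u$ acts as the derivation $-\Omega e_3+d^A\partial_{\theta^A}$, and the area element evolves by $\tfrac{d}{du}\,d\mu_{\gamma}=\big(-\Omega\,\tr\chibar+(\text{tangential divergence})\big)\,d\mu_{\gamma}$; combining this with $e_3\snr{\phi}^2=2\langle\phi,\nabla_3\phi\rangle$ (valid since $\nabla_3$ is compatible with $\gamma$) gives
\[ \frac{dF}{du}=\int_{S_{u,\ubar}}\Big(-2\Omega\,\langle\phi,\nabla_3\phi\rangle-\Omega\,\tr\chibar\,\snr{\phi}^2+E\Big)\,d\mu_{\gamma}, \]
where $E$ collects total tangential divergences (which integrate to zero over the closed sphere $S_{u,\ubar}$) together with terms carrying a factor of $d^A$ or $\nabla\log\Omega$. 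Substituting the transport equation $\nabla_3\phi=-\lambda_0\tr\chibar\,\phi+F$ turns the first term into $2\lambda_0\Omega\tr\chibar\snr{\phi}^2-2\Omega\langle\phi,F\rangle$, and the metric estimates proved above (in particular $\lVert\Omega^{-1}-1\rVert_{L^\infty(S_{u,\ubar})}\lesssim\delta\al b^{1/4}/\snr{u}$ and the bootstrap bound on $\tr\chibar+\tfrac2u$) give $\Omega\tr\chibar=-\tfrac2u+O\!\big(\tfrac{\delta\al b^{1/4}}{\snr{u}}\big)$, so that
\[ \frac{dF}{du}=-\frac{2\lambda_1}{u}\,F+\int_{S_{u,\ubar}}\Big(-2\Omega\,\langle\phi,F\rangle+\widetilde E\Big)\,d\mu_{\gamma}, \]
with $\widetilde E$ of the same divergence-or-small character as $E$. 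Multiplying by $u^{2\lambda_1}$ cancels the term $-\tfrac{2\lambda_1}{u}F$; integrating from $u$ to $1$, applying Cauchy--Schwarz on $S_{u',\ubar}$ so that $\int_{S_{u',\ubar}}\snr{\phi}\snr{F}\leq F(u')^{1/2}\lVert F\rVert_{L^2(S_{u',\ubar})}$, and absorbing $\widetilde E$ (bounded by $\lesssim\int_u^1\tfrac{\delta\al b^{1/4}}{u'}\,(u')^{2\lambda_1}F(u')\,du'$ modulo vanishing divergences) by a preliminary Gronwall step, one is left with an inequality of the form $u^{2\lambda_1}F(u)\lesssim F(1)+\int_u^1(u')^{\lambda_1}\big((u')^{2\lambda_1}F(u')\big)^{1/2}\lVert F\rVert_{L^2(S_{u',\ubar})}\,du'$; a standard square-root Gronwall argument then yields $u^{\lambda_1}F(u)^{1/2}\lesssim F(1)^{1/2}+\int_u^1(u')^{\lambda_1}\lVert F\rVert_{L^2(S_{u',\ubar})}\,du'$, which, since $F(1)^{1/2}=\lVert\phi\rVert_{L^2(S_{1,\ubar})}$, is exactly the claim.

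The only point demanding genuine care is the bookkeeping of the error term $E$: one must check that every contribution produced by commuting $e_3$ past the coordinate vector fields $\partial_{\theta^A}$ and by the first variation of $d\mu_{\gamma}$ is either a total tangential divergence (hence killed upon integration over $S_{u,\ubar}$) or is proportional to one of the geometrically small quantities $\Omega-1$, $\nabla\log\Omega$, $d^A$, $\tr\chibar+\tfrac2u$, each of which is controlled with an $O(\delta\al b^{1/4}/\snr{u})$-type bound by the bootstrap assumptions \eqref{bootstrap1}--\eqref{bootstrap4} and the metric-component propositions above; only then is the $\widetilde E$-integral harmless in the Gronwall step. Everything else is the routine argument indicated, and indeed, once the geometry of Section 3.1 is available, this is precisely the proof of the analogous lemma in \cite{AL17}, applied verbatim.
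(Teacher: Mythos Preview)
Your proof is correct and is exactly the argument from \cite{AL17} that the paper invokes: the paper does not give its own proof of this proposition but simply imports it from \cite{AL17}, and your derivation---evolve $\int_{S_{u,\ubar}}|\phi|^2$ in $u$, use $\Omega\tr\chibar=-\tfrac2u+(\text{small})$ from the bootstrap to identify the weight $u^{2\lambda_1}$, and close with Cauchy--Schwarz and Gronwall---is precisely that argument. One cosmetic correction: the error in $\Omega\tr\chibar+\tfrac2u$ is of order $\delta\al b^{1/4}/|u|^{2}$ rather than $\delta\al b^{1/4}/|u|$ (since $\tr\chibar+\tfrac2u$ is a $\psibar$-quantity), which makes the Gronwall step cleaner, but your stated bound is still integrable and the argument goes through either way.
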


\subsection{Sobolev Embedding}
We will also need the following Sobolev Embedding statements, cf. Sec 5.3 of \cite{AL17}.\\ \\
Define the isoperimetric constant 
\[ I(S)=\sup_{U, \partial U\in C^1} \frac{\min\{\text{Area}(U),\text{Area}(U^c)\}}{(\text{Perimeter}(\partial U))^2}\]
where $S$ is one of the $2$-spheres $S_{u,\ubar}$ adapted to the double null foliation. 
\begin{proposition}[Lemma 5.6 of \cite{AL17}]
Under the assumptions of Theorem 2.1 and the  bootstrap assumptions (3.1)-(3.8), the isoperimetric constant obeys the bound \[I(S_{u,\ubar})\leq \frac{1}{\pi}\]
for $0\leq \ubar \leq \delta$ and $\delta a^{1/2}b\leq u\leq 1$.
\end{proposition}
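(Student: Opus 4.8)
\emph{Proof sketch.} The plan is to show that, uniformly in $u$, $\ubar$ and the angular variable $\theta$, the induced metric $\gamma$ on $S_{u,\ubar}$ differs from the round metric on $S_{u,0}$ only by a multiplicative factor $1+o(1)$ as a quadratic form, and then to use two elementary facts about the isoperimetric quotient $\frac{\min\{\mathrm{Area}(U),\mathrm{Area}(U^{c})\}}{(\mathrm{Perimeter}(\partial U))^{2}}$: it is invariant under constant rescalings of the metric, and it changes by at most a factor $(1+\epsilon)^{2}$ if the metric is multiplied by a function lying between $1-\epsilon$ and $1+\epsilon$. Since $S_{u,0}$ is a round sphere of radius $|u|$ in Minkowski space, its isoperimetric constant equals that of the round unit sphere, namely $\frac{1}{2\pi}$ (attained by the equatorial region), and $\frac{1}{2\pi}<\frac{1}{\pi}$ leaves room to absorb the $(1+o(1))^{2}$ loss. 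This is the argument of Lemma~5.6 of \cite{AL17}, adapted to the present hierarchy.

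First I would record the transport equation for the induced metric along $e_{4}=\Omega^{-1}\partial_{\ubar}$. Because $e_{4}$ is null and orthogonal to $S_{u,\ubar}$ and $[\partial_{\ubar},\partial_{\theta^{A}}]=0$, one obtains
\[ \frac{\partial}{\partial\ubar}\gamma_{AB}=2\Omega\,\chi_{AB}=\Omega\,\tr\chi\,\gamma_{AB}+2\Omega\,\chihat_{AB}, \]
with the initial condition $\gamma_{AB}(u,0,\theta)=|u|^{2}\mathring{\gamma}_{AB}(\theta)$, where $\mathring{\gamma}$ is the fixed round metric on $S_{1,0}$, smooth and positive definite on each coordinate patch. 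Treating $\Omega\,\tr\chi$ as an integrating factor and $\chihat$ as an inhomogeneity, integration over $\ubar\in[0,\delta]$ together with a Gronwall argument expresses $\gamma_{AB}(u,\ubar,\theta)$ as $\gamma_{AB}(u,0,\theta)\exp\!\big(\int_{0}^{\ubar}\Omega\,\tr\chi\,\mathrm{d}\ubar'\big)$ plus a term bounded by $\int_{0}^{\ubar}2|\Omega\,\chihat_{AB}|\,\mathrm{d}\ubar'$.

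Next I would insert the available bounds. Proposition~3.1 gives $\|\Omega^{-1}-1\|_{L^{\infty}(S_{u,\ubar})}\lesssim \delta\,a^{\frac12}b^{\frac14}/|u|=o(1)$ on the range $|u|\geq\delta\,a^{\frac12}b$; the bootstrap assumptions \eqref{bootstrap2}--\eqref{bootstrap2improved} keep $\tr\chi$ near its Minkowski value $2/|u|$, hence $|\Omega\,\tr\chi|\lesssim|u|^{-1}$, and give $\inftySu{\chihat}\lesssim a^{\frac12}b^{\frac14}/|u|$, i.e. $|\chihat_{AB}|\lesssim a^{\frac12}b^{\frac14}|u|$ in coordinates relative to $\mathring{\gamma}$. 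Therefore the integrating factor is $1+O(\delta/|u|)=1+O(1/(a^{\frac12}b))$, and the $\chihat$-contribution is $O(\delta\,a^{\frac12}b^{\frac14}/|u|)\cdot|u|^{2}\mathring{\gamma}_{AB}=o(1)\,\gamma_{AB}(u,0,\theta)$. This produces a single $\epsilon=\epsilon(a,b,\delta)=o(1)$, uniform in $(u,\ubar,\theta)$, with $(1-\epsilon)\gamma_{AB}(u,0)\leq\gamma_{AB}(u,\ubar)\leq(1+\epsilon)\gamma_{AB}(u,0)$. Combining with the two facts from the first paragraph yields $I(S_{u,\ubar})\leq(1+\epsilon)^{2}/(2\pi)$, which is $\leq\frac{1}{\pi}$ once $b$ is large and $\delta$ small, as required.

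The main obstacle is the metric-comparison step, and specifically the fact that the coefficient $\Omega\,\tr\chi\approx 2/|u|$ in the transport equation is \emph{not} small: one cannot simply integrate, but must keep the exponential integrating factor $\exp\!\big(\int_{0}^{\ubar}\Omega\,\tr\chi\,\mathrm{d}\ubar'\big)$ under control, which is exactly where the lower bound $|u|\geq\delta\,a^{\frac12}b$, making $\delta/|u|$ small, enters. Everything else reduces to the Gronwall estimate together with the bootstrap bounds, and uniformity over the finitely many fixed coordinate patches is automatic since they and the round background metric are chosen once and for all on $S_{1,0}$.
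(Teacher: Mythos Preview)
Your argument is correct and is precisely the approach of Lemma~5.6 in \cite{AL17}, which the paper invokes without reproducing: establish the uniform comparison $(1-\epsilon)\gamma(u,0)\leq\gamma(u,\ubar)\leq(1+\epsilon)\gamma(u,0)$ as quadratic forms via the transport equation $\partial_{\ubar}\gamma_{AB}=2\Omega\chi_{AB}$ and the bootstrap bounds, then use scale invariance and the perturbation stability of the isoperimetric quotient together with $I(\text{round }S^{2})=\tfrac{1}{2\pi}$. One small remark: you slightly overstate the difficulty of the integrating factor---since $\int_{0}^{\ubar}\Omega\,\tr\chi\,\mathrm{d}\ubar'=O(\delta/|u|)\leq O((a^{1/2}b)^{-1})$ is itself small, no genuine Gronwall is needed and a direct integration already gives the $(1\pm\epsilon)$ comparison; but this does not affect the validity of your sketch.
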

We now quote two Sobolev embedding theorems from \cite{C09}.
\begin{proposition}[Lemma 5.1 \cite{C09}]
For any Riemannian $2$-manifold $(S,\gamma)$, we have the estimate 
\[(\emph{Area}(S))^{-\frac{1}{p}}||\phi||_{L^p(S)}\leq C_p \sqrt{\max\{I(S),1\}} \left( ||\nabla \phi||_{L^2(S)}+(\emph{Area}(S))^{-\frac{1}{2}} ||\phi||_{L^2(S)} \right)\]
for any $2<p<\infty$ and for any tensor $\phi$.
\end{proposition}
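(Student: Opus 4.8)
The plan is to reduce the tensorial inequality to a scalar one, deduce a Federer--Fleming type $L^1\to L^2$ Sobolev inequality from the isoperimetric bound, and then upgrade it to $L^p$ by the usual power substitution together with interpolation and absorption. First I would reduce to nonnegative scalars: since the induced connection $\nabla$ is compatible with the metric, Kato's inequality $|\nabla|\phi||\le|\nabla\phi|$ holds pointwise a.e., so it suffices to prove the estimate for $f=|\phi|\ge 0$. Moreover a homothety $\gamma\mapsto\lambda^2\gamma$ leaves $I(S)$ invariant and scales both sides of the asserted inequality by the same power of $\lambda$, so I may normalize $\mathrm{Area}(S)=1$, the general case following by scaling back.

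The core step is the isoperimetric Sobolev inequality: for every nonnegative $f\in W^{1,1}(S)$,
\[ \|f\|_{L^2(S)} \le c\,\sqrt{\max\{I(S),1\}}\,\big(\|\nabla f\|_{L^1(S)}+\|f\|_{L^1(S)}\big). \]
This is obtained from the coarea formula $\int_S|\nabla f|=\int_0^\infty\mathrm{Per}(\{f>t\})\,dt$, the isoperimetric inequality $\sqrt{\min\{\mathrm{Area}(U),\mathrm{Area}(U^c)\}}\le\sqrt{I(S)}\,\mathrm{Per}(\partial U)$ applied with $U=\{f>t\}$, and a layer-cake comparison relating $\int_0^\infty\sqrt{m(t)}\,dt$ to $\|f\|_{L^2}$, where $m(t)=\mathrm{Area}(\{f>t\})$ is the distribution function (this last comparison being Minkowski's integral inequality plus the nesting of superlevel sets). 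The one delicate point, and the source of the lower-order term, is the range of $t$ for which $m(t)$ exceeds $\tfrac{1}{2}\mathrm{Area}(S)$, where the isoperimetric inequality controls only $\mathrm{Area}(\{f\le t\})$; there one truncates $f$ at a median value and estimates the residual mass by $\|f\|_{L^1}$. This is precisely Lemma 5.1 of \cite{C09}.

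Finally I would pass to $L^p$ by applying the inequality above to $g=f^{p/2}$, so the left side becomes $\|f\|_{L^p}^{p/2}$. Writing $\|\nabla g\|_{L^1}=\tfrac{p}{2}\int f^{(p-2)/2}|\nabla f|\le\tfrac{p}{2}\|\nabla f\|_{L^2}\,\|f\|_{L^{p-2}}^{(p-2)/2}$ by Cauchy--Schwarz, and $\|g\|_{L^1}=\|f\|_{L^{p/2}}^{p/2}$, I would interpolate the intermediate Lebesgue norms between $L^2$ and $L^p$ (for $2<p\le 4$ one simply has $\|f\|_{L^{p-2}},\|f\|_{L^{p/2}}\le\|f\|_{L^2}$ on the unit-area surface, which is even easier). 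A short computation shows that in every resulting term $\|f\|_{L^p}$ occurs raised to a power strictly below $p/2$, so Young's inequality absorbs these terms into $\|f\|_{L^p}^{p/2}$ on the left; what remains is a bound by $C_p\big(\max\{I(S),1\}\big)^{p/4}\big(\|\nabla f\|_{L^2}+\|f\|_{L^2}\big)^{p/2}$, and taking $2/p$-th roots and undoing the area normalization gives the statement.

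The main obstacle is the isoperimetric Sobolev step, and within it the treatment of the superlevel sets of area greater than half the total area: this is what forces the lower-order term $\mathrm{Area}(S)^{-1/2}\|\phi\|_{L^2}$ into the estimate and requires the careful median/truncation argument. By contrast the Kato reduction, the substitution $g=f^{p/2}$, and the interpolation-and-absorption in the last step are routine; the only bookkeeping is to verify that $C_p$ depends on $p$ alone and blows up as $p\to\infty$, consistently with the failure of $W^{1,2}\hookrightarrow L^\infty$ in two dimensions.
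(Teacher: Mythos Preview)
The paper does not prove this statement; it is quoted verbatim as Lemma~5.1 from Christodoulou's monograph \cite{C09} and used as a black box (see the sentence ``We now quote two Sobolev embedding theorems from \cite{C09}''). Your proposal is a correct sketch of the standard proof and is essentially the argument in \cite{C09}: Kato's inequality to reduce to scalars, the Federer--Fleming isoperimetric $L^1\to L^2$ Sobolev inequality via the coarea formula (with the median/truncation step producing the lower-order term), and then the power substitution $g=f^{p/2}$ with interpolation and absorption. There is nothing to compare against in the present paper.
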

\begin{proposition}[Lemma 5.2 \cite{C09}]
For any Riemannian $2$-manifold $(S,\gamma)$, we have the estimate 
\[||\phi||_{L^\infty(S)}\leq C_p \sqrt{\max\{I(S),1\}} \times \emph{Area}(S)
^{\frac{1}{2}-\frac{1}{p}} \left( ||\nabla \phi||_{L^p(S)}+(\emph{Area}(S))^{-\frac{1}{2}} ||\phi||_{L^p(S)} \right)\]
for any $2<p$ and for any tensor $\phi$.
\end{proposition}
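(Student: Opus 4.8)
The plan is to derive this $L^\infty$ bound from the isoperimetric inequality by a Moser-type iteration, following the structure of Christodoulou's original argument. The first move is a reduction to scalars: by the Kato inequality $|\nabla|\phi||\le|\nabla\phi|$ (applied after the standard regularization replacing $|\phi|$ by $(|\phi|^{2}+\varepsilon^{2})^{1/2}$ and letting $\varepsilon\to 0$), it suffices to prove the estimate for a nonnegative scalar $f=|\phi|$, which may be taken smooth and positive. So the goal becomes to bound $\|f\|_{L^\infty(S)}$ in terms of $\|\nabla f\|_{L^p(S)}$ and $(\mathrm{Area}\,S)^{-1/2}\|f\|_{L^p(S)}$, with a constant of the stated shape.

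Next I would record the $L^1$-gradient analogue of the preceding proposition: for $h\in W^{1,1}(S)$,
\[ \|h\|_{L^2(S)}\ \le\ C\,\sqrt{\max\{I(S),1\}}\,\Big(\|\nabla h\|_{L^1(S)}+(\mathrm{Area}\,S)^{-1/2}\,\|h\|_{L^1(S)}\Big). \]
This is proved exactly as Lemma 5.1 of \cite{C09}, but with $L^1$ in place of $L^2$ gradients: apply the co-area formula to the super-level sets $\{h>t\}$ and feed the isoperimetric inequality $\mathrm{Perimeter}(\partial\{h>t\})^2\ge I(S)^{-1}\min\{\mathrm{Area}\{h>t\},\mathrm{Area}\{h\le t\}\}$ into the resulting layer-cake integral; the second term on the right arises from first subtracting off the mean of $h$.

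The heart of the argument is then an iteration. Applying the inequality above to $h=f^{s}$, using $|\nabla f^{s}|\le s\,f^{s-1}|\nabla f|$ and Hölder with conjugate exponents $p$ and $p'=p/(p-1)$, I get
\[ \|f\|_{L^{2s}}^{s}\ \le\ C\sqrt{\max\{I(S),1\}}\Big(s\,\|f\|_{L^{(s-1)p'}}^{\,s-1}\,\|\nabla f\|_{L^p}+(\mathrm{Area}\,S)^{-1/2}\|f\|_{L^{s}}^{s}\Big). \]
Starting from $q_0=p$ and choosing $q_{n+1}=2s_n$ with $(s_n-1)p'=q_n$, the exponents obey $q_{n+1}=2+\tfrac{2(p-1)}{p}q_n$; since $p>2$ the multiplier $\tfrac{2(p-1)}{p}=2-\tfrac2p$ exceeds $1$, so $q_n\to\infty$ geometrically and $\sum_n q_n^{-1}<\infty$. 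Note $q_0=p$ yields $s_0=p$, so the norms $\|\nabla f\|_{L^p}$ and $\|f\|_{L^p}$ appearing at the first step are exactly those in the statement. Iterating, absorbing the lower-order term by Young's inequality at each stage, and passing to the limit (using $\mathrm{Area}(S)<\infty$, so $\|f\|_{L^{q_n}}\to\|f\|_{L^\infty}$) gives $\|f\|_{L^\infty}\le C_p\sqrt{\max\{I(S),1\}}\,\big(\|\nabla f\|_{L^p}+(\mathrm{Area}\,S)^{-1/2}\|f\|_{L^p}\big)$ up to a power of $\mathrm{Area}(S)$, which is then pinned down by homogeneity: under $\gamma\mapsto\lambda^2\gamma$ the left side is invariant while $\|\nabla\phi\|_{L^p}$ and $\|\phi\|_{L^p}$ scale by $\lambda^{2/p-1}$ and $\lambda^{2/p}$ respectively, forcing the prefactor $(\mathrm{Area}\,S)^{1/2-1/p}$.

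The main obstacle is the bookkeeping in this iteration: one must check that the recursion for $q_n$ closes, that $p>2$ is precisely the borderline ensuring $q_n\to\infty$ geometrically, and — most delicately — that the accumulated numerical constants together with the growing powers of $s_n\sim q_n$ produced at each step combine into a product converging to a finite $C_p$ depending only on $p$ (this is where $\sum_n q_n^{-1}<\infty$ is used), all while keeping the powers of $\mathrm{Area}(S)$ and of $\sqrt{\max\{I(S),1\}}$ tracked correctly. A conceptually simpler but technically heavier alternative would be to cover $S$ by coordinate charts on which $\gamma$ is uniformly comparable to the Euclidean metric and invoke the Euclidean Morrey embedding $W^{1,p}(\mathbb{R}^2)\hookrightarrow C^{0,1-2/p}$; but controlling the geometry of such a cover purely in terms of $I(S)$ and $\mathrm{Area}(S)$ is itself awkward, so the intrinsic iteration is the cleaner route.
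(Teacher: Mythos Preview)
The paper gives no proof of this proposition: it is simply quoted from Christodoulou's monograph \cite{C09} (Lemma~5.2 there) and used as a black box, so there is nothing in the present paper to compare your argument against. Your Moser-iteration sketch is the standard route to an $L^\infty$ isoperimetric Sobolev inequality and is essentially the argument Christodoulou gives in \cite{C09}.

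One point of bookkeeping deserves care. Each step of your iteration produces, after taking the $s_n$-th root, a factor $(C\sqrt{\max\{I,1\}})^{1/s_n}$; the accumulated isoperimetric factor at the end is therefore $(\max\{I,1\})^{\sigma_p/2}$ with $\sigma_p=\sum_n 1/s_n$, a finite constant depending on $p$ but not obviously equal to $1$. So as written your iteration yields $C_p(\max\{I,1\})^{\sigma_p/2}$ rather than $C_p\sqrt{\max\{I,1\}}$. For the application in this paper the discrepancy is immaterial, since the preceding proposition gives $I(S_{u,\ubar})\le 1/\pi<1$ and hence $\max\{I,1\}=1$; but if you want to match the stated exponent exactly you should either verify Christodoulou's precise formulation in \cite{C09} or sharpen this part of the argument.
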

Combining Propositions 3.1,3.2,3.3, using preliminary estimates for metric components $\text{Area}(S_{u,\ubar})\sim |u|^2$, cf. Prop 5.3 of \cite{AL17}, one obtains the following.
\begin{proposition}
Under the assumptions of Theorem 2.1 and the the bootstrap assumptions (3.1)-(3.8), we have 
\[||\phi||_{L^{\infty}}(S_{u,\ubar})\lesssim \sum_{i\leq 2} ||u^{i-1}\nabla^i\phi||_{L^2(S_{u,\ubar})}\]
\end{proposition}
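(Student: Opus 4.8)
The plan is to chain the two Sobolev embeddings of \cite{C09} quoted above — first passing from $L^2$ to $L^4$ on the sphere, then from $L^4$ to $L^\infty$ — and to replace the geometric quantities $I(S)$ and $\mathrm{Area}(S)$ appearing in them by explicit powers of $|u|$. Fixing the intermediate exponent to be $p=4$ throughout keeps the constants $C_p$ in both lemmas universal.

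First I would record the two inputs this requires. The isoperimetric bound $I(S_{u,\ubar})\leq \tfrac{1}{\pi}$ gives $\max\{I(S_{u,\ubar}),1\}=1$, and the metric estimates of this section (together with $\mathrm{Area}(S_{u,0})\sim|u|^2$, valid since $\Hbar_0$ is a truncated backwards Minkowski light cone) give $\mathrm{Area}(S_{u,\ubar})\sim|u|^2$ uniformly in the existence region. Inserting these into the first embedding of \cite{C09} with $p=4$ yields, for any $S_{u,\ubar}$-tangent tensor $\psi$,
\[ \fourSu{\psi} \lesssim |u|^{\frac12}\bigl( \twoSu{\nabla\psi} + |u|^{-1}\twoSu{\psi} \bigr), \]
and inserting them into the second embedding of \cite{C09} with $p=4$ yields
\[ \inftySu{\phi} \lesssim |u|^{\frac12}\bigl( \fourSu{\nabla\phi} + |u|^{-1}\fourSu{\phi} \bigr). \]

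Then I would apply the first inequality once with $\psi=\nabla\phi$ and once with $\psi=\phi$, substitute both into the second, and collect the powers of $|u|$. The four terms that arise reorganize into
\[ \inftySu{\phi} \lesssim |u|\,\twoSu{\nabla^2\phi} + \twoSu{\nabla\phi} + |u|^{-1}\twoSu{\phi} = \sum_{i\leq 2}\twoSu{u^{i-1}\nabla^i\phi}, \]
which is the assertion.

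I do not anticipate any real obstacle: the entire content sits in the two cited Sobolev lemmas and in the bootstrap-controlled geometry recorded earlier in this section. The one point that must be checked is that the implicit constant is genuinely uniform in $u,\ubar,a,b,\delta$; this holds because $C_4$ is a universal constant, the isoperimetric factor is literally $1$ by the bound above, and $\mathrm{Area}(S_{u,\ubar})$ stays in a fixed ratio to $|u|^2$ throughout $\delta\al b\leq u\leq 1$, $0\leq\ubar\leq\delta$. Beyond faithfully tracking the $|u|$-weights through the two substitutions, there is nothing delicate.
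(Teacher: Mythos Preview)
Your proposal is correct and is exactly the approach the paper has in mind: the paper does not spell out a proof at all but simply says to combine the two Christodoulou Sobolev lemmas with the isoperimetric bound and the area comparison $\mathrm{Area}(S_{u,\ubar})\sim|u|^2$, which is precisely the chaining $L^2\to L^4\to L^\infty$ you carry out. Your tracking of the $|u|$-weights and the remark on uniformity of the constant are accurate and fill in the details the paper leaves implicit.
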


\subsection{Commutation formula}
We will make repeated use of the following commutation formulae.

\begin{proposition}
For a scalar function $f$, there holds 
\[ [\nabla_4, \nabla]f = \frac{1}{2}(\eta+ \etabar)\cdot \nabla_4 f - \chi \cdot \nabla f, \]\[  [\nabla_3, \nabla] f = \frac{1}{2}(\eta+\etabar) \nabla_3 f - \chibar \cdot \nabla f.      \]
\end{proposition}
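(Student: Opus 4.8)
The commutation formulae for scalar functions are a direct consequence of the definitions of the Ricci coefficients $\chi$, $\chibar$, $\eta$, $\etabar$ and the torsion-free property of the spacetime connection $D$. The plan is to compute the commutator $[\nabla_4,\nabla]f$ acting on a scalar $f$ by first recalling that $\nabla f$ is the $S_{u,\ubar}$-projection of the spacetime gradient of $f$, i.e. $(\nabla f)_A = e_A(f)$, and that $\nabla_4 f = e_4(f)$. Thus $[\nabla_4,\nabla]f$ measures the difference between differentiating along $e_4$ then along the sphere directions, versus the reverse, which is controlled by the commutators $[e_4,e_A]$ of the frame vector fields.

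First I would write $[e_4, e_A] = D_{e_4} e_A - D_{e_A} e_4$ using that $D$ is torsion-free, and then decompose each term in the null frame $\{e_3, e_4, e_1, e_2\}$ using the definitions in \eqref{2.1}. The component of $D_{e_A} e_4$ tangent to the sphere is precisely $\chi_A{}^B e_B$ (raising with $\gamma$), its $e_3$-component involves $\zeta$, and its $e_4$-component vanishes since $g(e_4,e_4)=0$ and $g(e_4,e_3)=-2$ is constant. Similarly $D_{e_4} e_A$ decomposes with its $e_3$-component governed by $\etabar_A$ (via the definition $\etabar_A = -\tfrac12 g(D_4 e_A, e_3)$) and its $e_4$-component governed by the analogous pairing; the tangential part gets absorbed into the induced connection and drops out of the projected commutator. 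Collecting the $e_3$- and $e_4$-directed pieces, one finds that $[e_4,e_A]$ has a sphere-tangential part $-\chi_A{}^B e_B$ plus a term proportional to $e_4$ with coefficient built from $\eta$ and $\etabar$; applying this to $f$ and using $e_3(f)$, $e_4(f)$ appropriately, together with the identity $\eta_A = \zeta_A + \nabla_A \log\Omega$, $\etabar_A = -\zeta_A + \nabla_A\log\Omega$ from \eqref{2.3} to rewrite things symmetrically, yields $[\nabla_4,\nabla]_A f = \tfrac12(\eta+\etabar)_A \nabla_4 f - \chi_A{}^B \nabla_B f$. The computation for $[\nabla_3,\nabla]f$ is entirely parallel, exchanging the roles of $e_3\leftrightarrow e_4$, $\chi\leftrightarrow\chibar$, and $\etabar\leftrightarrow\eta$ in the pairings, giving the second identity.

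The only mildly delicate point — and the step I would be most careful with — is bookkeeping the $e_4$-directed (and $e_3$-directed) components of $D_{e_4}e_A$ and $D_{e_A}e_4$ correctly, since these are the terms that produce the $\tfrac12(\eta+\etabar)\cdot\nabla_4 f$ contribution rather than cancelling; one must use that the frame is only null-orthonormal (so $g(e_3,e_4)=-2$, $g(e_4,e_4)=0$) and that $e_A(f)$ is a genuine directional derivative, not a covariant one. Everything else is a routine unwinding of definitions, and since scalars carry no tensorial indices there are no curvature terms or Christoffel corrections beyond what the frame commutators supply.
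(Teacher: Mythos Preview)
The paper states this proposition without proof, as one of several standard commutation formulae quoted for later use; there is nothing to compare against. Your sketch is correct: writing $[\nabla_4,\nabla_A]f = [e_4,e_A](f) - (\nabla_4 e_A)(f)$, decomposing $D_A e_4 = -\zeta_A e_4 + \chi_A{}^B e_B$ and $D_4 e_A = \etabar_A e_4 + (\nabla_4 e_A)$ in the null frame (using $D_4 e_4 = -2\omega e_4$, which follows from $e_4 = \Omega L'$ with $L'$ geodesic), and then using $\zeta_A + \etabar_A = \tfrac12(\eta_A+\etabar_A)$ from \eqref{2.3}, gives exactly the stated formula; the $e_3$ case is symmetric.
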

\begin{proposition}
For an $S_{u,\ubar}-$tangent $1-$form $U_b$, there holds
\[   [\nabla_4, \nabla_a] U_b = -\chi_{ac} \nabla_c U_b +\epsilon_{ac}\Hodge{\beta}_b U_c + \frac{1}{2}(\eta_a + \etabar_a)\nabla_4 U_b -\chi_{ac}\etabar_b U_c + \chi_{ab} \etabar\cdot U,   \]\[  [\nabla_3, \nabla_a] U_b = -\chibar_{ac} \nabla_c U_b +\epsilon_{ac}\Hodge{\betabar}_b U_c + \frac{1}{2}(\eta_a + \etabar_a)\nabla_3 U_b -\chi_{ac}\etabar_b U_c + \chibar_{ab} \eta\cdot U.     \]
\end{proposition}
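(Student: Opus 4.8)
The plan is to obtain both identities from the spacetime Ricci identity, by letting the two derivatives in $\nabla_4\nabla_a U_b - \nabla_a\nabla_4 U_b$ act as ambient covariant derivatives $D$ and then projecting the outcome back onto $TS_{u,\ubar}$, keeping track of every term produced by the projections. Writing $\Pi$ for the orthogonal projection onto $TS_{u,\ubar}$ (so $\nabla = \Pi D$ in each slot), the computation rests on the frame identities
\[ D_a e_4 = \chi_{ab}e_b - \zeta_a e_4, \qquad D_a e_3 = \chibar_{ab}e_b + \zeta_a e_3, \qquad D_a e_b = \nabla_a e_b + \tfrac12\chibar_{ab}e_4 + \tfrac12\chi_{ab}e_3, \]
together with the companion formulae for $D_4 e_a$, $D_4 e_3$, $D_4 e_b$ whose off-$S$ parts are read off from $\etabar_a = -\tfrac12 g(D_4 e_a,e_3)$ and $\omega = -\tfrac12\nabla_4\log\Omega$, and the relations $\eta_a = \zeta_a + \nabla_a\log\Omega$, $\etabar_a = -\zeta_a + \nabla_a\log\Omega$ of (2.3). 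First I would expand $\nabla_4(\nabla_a U)_b$ and $\nabla_a(\nabla_4 U)_b$ with these and subtract; the genuine second-order part then reorganizes into $\Pi$ applied to $D_4 D_a U - D_a D_4 U$, which the Ricci identity rewrites as $-R(e_4,e_a,e_b,e_c)U_c - D_{[e_4,e_a]}U_b$ (signs and index placement per the chosen Riemann convention).

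Two facts close the computation. First, $[e_4,e_a] = D_4 e_a - D_a e_4$ has no $e_3$-component, and its $e_4$-component equals $\nabla_a\log\Omega = \tfrac12(\eta_a+\etabar_a)$ — immediate from the coordinate construction of Section 2.2, where $e_4 = \Omega^{-1}\partial_{\ubar}$ and the $\theta^A$ are Lie-transported along $L$ — so $D_{[e_4,e_a]}U_b$ splits as $\tfrac12(\eta_a+\etabar_a)\nabla_4 U_b$ plus a tangential derivative of $U$ whose intrinsic-connection part cancels the matching terms in $\nabla_4(\nabla_a U)_b$, leaving $-\chi_{ac}\nabla_c U_b$. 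Second, in vacuum $R=W$, and the component $R(e_4,e_a,e_b,e_c)$ — one null leg $e_4$, three $S$-tangent legs — collapses, by the trace-free property and pair symmetries of $W$, to a $\beta$-type expression proportional to $\gamma_{ab}\beta_c-\gamma_{ac}\beta_b$ (with $\beta$ the null curvature component of (2.2)); contracting against $U_c$ and using the two-dimensional identity $\epsilon_{ac}\epsilon_{bd}=\gamma_{ab}\gamma_{cd}-\gamma_{ad}\gamma_{cb}$ reproduces the stated curvature term $\epsilon_{ac}\Hodge{\beta}_b U_c$ (signs per the Riemann and Hodge conventions in force). The residual first-order terms are bookkeeping: the off-$S$ components of $D_a e_b$ and of $D_4 e_3$, together with the $-\zeta_a e_4$ in $D_a e_4$, combine — once $\zeta$ is eliminated through (2.3) — into $-\chi_{ac}\etabar_b U_c + \chi_{ab}\,\etabar\cdot U$.

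The identity for $[\nabla_3,\nabla_a]$ follows by running the same argument under the formal exchange $e_4\leftrightarrow e_3$, $\chi\leftrightarrow\chibar$, $\beta\leftrightarrow\betabar$, $\eta\leftrightarrow\etabar$, $\omega\leftrightarrow\omegabar$. The one genuine asymmetry is that $D_a e_3$ carries $+\zeta_a e_3$ while $D_a e_4$ carried $-\zeta_a e_4$; carrying that sign through the bookkeeping is precisely what prevents the $\nabla_3$-formula from being the literal mirror image of the $\nabla_4$-formula in its lowest-order (Ricci-coefficient-squared) terms.

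The main obstacle is organizational rather than conceptual: the expansion produces on the order of a dozen off-$S$ ($e_3$- and $e_4$-directed) contributions, and it is easy to lose a factor of $\tfrac12$, flip a sign, or conflate $\zeta$ with $\eta$ or $\etabar$; the discipline that keeps the answer in the stated form is to carry $\zeta$ explicitly throughout and only invoke (2.3) to eliminate it at the very end. Since these commutation formulae are by now standard, a legitimate shortcut is to cite the derivations in the monographs of Klainerman--Nicol\`o and Christodoulou \cite{C09}, with the routine translation of conventions; I would nevertheless include the self-contained computation above, organizing the $1$-form case to parallel the scalar formula of the preceding Proposition as closely as possible — the two genuinely new features being the curvature term $\epsilon_{ac}\Hodge{\beta}_b U_c$ and the couplings $-\chi_{ac}\etabar_b U_c + \chi_{ab}\,\etabar\cdot U$.
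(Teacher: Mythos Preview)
Your approach is correct and is the standard derivation of these formulae. The paper, however, does not supply a proof at all: in Section~3.5 the three commutation propositions (for scalars, $1$-forms, and $2$-tensors) are simply stated as known facts, to be used as tools, with no argument given. Your suggestion of citing the monographs of Klainerman--Nicol\`o and Christodoulou~\cite{C09} is in fact closer to what the paper does than the self-contained computation you outline; the latter is a genuine addition rather than a reproduction of the paper's proof.
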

\begin{proposition}
For an $S_{u,\ubar}-$tangent $2-$form $V_{bc}$, there holds
\begin{multline*}
[\nabla_4, \nabla_a] V_{bc} =   \frac{1}{2}(\eta_a + \etabar_a)\nabla_4 V_{bc}- \etabar_b V_{dc} \chi_{ad} -\etabar_c V_{bd}\chi_{ad} - \epsilon_{bd}\Hodge{\beta}_a       V_{dc} -\epsilon_{cd} \Hodge{\beta}_c V_{bd}\\ +\chi_{ac}V_{bd} \etabar_{d} + \chi_{ab} V_{dc} \etabar_{d} - \chi_{ad} \nabla_d V_{bc},
\end{multline*}\begin{multline*}
[\nabla_3, \nabla_a] V_{bc} =   \frac{1}{2}(\eta_a + \etabar_a)\nabla_3 V_{bc}- \eta_b V_{dc} \chibar_{ad} -\eta_c V_{bd}\chibar_{ad} - \epsilon_{bd}\Hodge{\betabar}_a       V_{dc} -\epsilon_{cd} \Hodge{\betabar}_c V_{bd}\\ +\chibar_{ac}V_{bd} \eta_{d} + \chibar_{ab} V_{dc} \eta_{d} - \chibar_{ad} \nabla_d V_{bc}.
\end{multline*}
\end{proposition}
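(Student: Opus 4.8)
The plan is to deduce the $2$-form identities from the scalar commutation formula (Proposition 3.7) and the $1$-form commutation formula (Proposition 3.8) by exploiting the elementary fact that the commutator of two connections acts as a derivation on tensor products. Both $\nabla$ (the Levi-Civita connection of $\gamma$) and $\nabla_4$, $\nabla_3$ (the projections to $S_{u,\ubar}$ of $D_4$, $D_3$) are derivations on the algebra of $S_{u,\ubar}$-tangent tensor fields, and each of them annihilates the metric $\gamma$ and the area form $\seps$; hence raising and lowering of indices and Hodge dualisation commute with all three. Consequently, for arbitrary $S_{u,\ubar}$-tangent vector fields $X$ and $Y$, one has the Leibniz identity
\begin{multline*}
[\nabla_4,\nabla_a]\big(V_{bc}X^bY^c\big) = \big([\nabla_4,\nabla_a]V\big)_{bc}X^bY^c \\ {}+ V_{bc}\big([\nabla_4,\nabla_a]X\big)^bY^c + V_{bc}X^b\big([\nabla_4,\nabla_a]Y\big)^c,
\end{multline*}
where the terms carrying one $\nabla$-derivative and one $\nabla_4$-derivative of $X$ or of $Y$ cancel in pairs.

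\textbf{Main steps.} First I would apply Proposition 3.7 to the scalar $f = V_{bc}X^bY^c$ and expand $\nabla_4 f$ and $\nabla_c f$ using the Leibniz rule, and then apply Proposition 3.8 to the $1$-forms obtained from $X$ and $Y$ by lowering an index. Substituting into the identity above and solving for $\big([\nabla_4,\nabla_a]V\big)_{bc}X^bY^c$, I expect every term in which a derivative $\nabla_4 X$, $\nabla X$, $\nabla_4 Y$ or $\nabla Y$ appears to cancel, leaving the universal transport part $\tfrac12(\eta_a+\etabar_a)\nabla_4 V_{bc} - \chi_{ad}\nabla_d V_{bc}$ — inherited unchanged from Proposition 3.7 — together with one copy, for each tensor slot of $V$, of the zeroth-order ``curvature'' term in $\Hodge{\beta}$ and of the Ricci-coefficient terms in $\chi$, $\eta$, $\etabar$ that appear in Proposition 3.8. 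Since $X$ and $Y$ are arbitrary this determines $[\nabla_4,\nabla_a]V_{bc}$ and yields the first identity; the second identity follows by the same argument with the $\nabla_3$ parts of Propositions 3.7 and 3.8 and with $\chi$, $\beta$ replaced by $\chibar$, $\betabar$.

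\textbf{Expected main obstacle.} The argument is conceptually routine, and essentially all of the work is in the index bookkeeping; the one genuinely delicate step is the reorganisation of the $\Hodge{\beta}$- (resp.\ $\Hodge{\betabar}$-) contributions. The raw output of the Leibniz computation produces contractions of the shape $\epsilon_{ad}\Hodge{\beta}_b V_{dc}$, which have to be massaged into the index arrangement displayed in the statement using the two-dimensional identity $\seps_{AB}\seps_{CD} = \gamma_{AC}\gamma_{BD} - \gamma_{AD}\gamma_{BC}$ together with the definition of $\Hodge{(\cdot)}$, and one must be careful with the signs as well as with which of the two $\etabar$-contractions ($-\chi_{ad}\etabar_b V_{dc}$ versus $\chi_{ab}\etabar_d V_{dc}$) belongs to which slot. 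As a cross-check I would also carry out the more self-contained computation of $D_4 D_a V_{bc} - D_a D_4 V_{bc}$ directly: there the spacetime Riemann tensor — the Weyl tensor, in vacuum — enters via $R(e_4,e_a)$ acting on the two slots of $V$, and of its null components only $\beta$ and its Hodge dual survive after projection to $S_{u,\ubar}$, while the discrepancy between the spacetime connection $D$ and the projected connection $\nabla$ supplies the $\chi$-, $\eta$- and $\etabar$-terms through $D_a e_b$, $D_4 e_b$ and the second fundamental forms. Both routes reduce the proof to the same short checklist of contractions.
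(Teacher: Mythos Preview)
The paper does not actually prove this proposition: the three commutation formulae for scalars, $1$-forms and $2$-forms are simply listed in succession with no argument, as is customary in this literature (these identities are inherited from Christodoulou--Klainerman and are quoted without proof in \cite{AL17}, \cite{KR12}, \cite{LR17}, etc.). So there is nothing to compare your proposal against.

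That said, your approach is sound and is exactly how one would prove the $2$-form identity from the preceding ones. The observation that $[\nabla_4,\nabla_a]$ acts as a derivation on tensor products (because both $\nabla_4$ and $\nabla_a$ do, and they annihilate $\gamma$ and $\seps$) is the right structural input, and contracting $V$ against arbitrary vector fields $X,Y$ to reduce to the scalar and $1$-form cases is the standard trick. Your alternative route via the spacetime curvature identity $D_4D_a - D_aD_4 = R(e_4,e_a)$ plus the projection terms is equally valid and is how these formulae are originally derived. Your diagnosis of the main obstacle is also accurate: the raw Leibniz output places the $\epsilon$ on the $a$-index, i.e.\ gives terms of the form $\epsilon_{ad}\Hodge{\beta}_b V_{dc}$, and one has to use the two-dimensional identity $\seps_{AB}\seps_{CD}=\gamma_{AC}\gamma_{BD}-\gamma_{AD}\gamma_{BC}$ together with $\Hodge{\beta}_A=\seps_{AB}\beta^B$ to rearrange. (Incidentally, your derivation would reveal that the term $\epsilon_{cd}\Hodge{\beta}_c V_{bd}$ in the displayed statement is a typo — the subscript on $\Hodge{\beta}$ should be $a$, not $c$, by symmetry with the first curvature term.)
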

\begin{proposition}\label{commutation}
Assume $\nabla_4 \phi = F_0$. Let $\nabla_4 \nabla^i \phi = F_i$. Then \begin{multline*} F_i= \sum_{i_1+ i_2 + i_3 =i} \nabla^{i_1} (\eta+\etabar)^{i_2} \nabla^{i_3}F_0 + \sum_{i_1+ i_2 + i_3 +i_4 =i-1} \nabla^{i_1} (\eta+\etabar)^{i_2} \nabla^{i_3} \beta \nabla^{i_4}\phi   \\    +       \sum_{i_1+ i_2 + i_3 +i_4 =i} \nabla^{i_1} (\eta+\etabar)^{i_2} \nabla^{i_3} \chi \nabla^{i_4}\phi.                \end{multline*}Assume now that $\nabla_3 \phi =G_0$. Let $\nabla_3 \nabla^i \phi= G_i$. Then \begin{multline*}
G_i + \frac{i}{2}\tr \chibar \nabla^i \phi = \sum_{i_1+ i_2 + i_3 =i}\nabla^{i_1} (\eta+\etabar)^{i_2} \nabla^{i_3}G_0 \\ + \sum_{i_1+ i_2 + i_3 +i_4 =i-1} \nabla^{i_1}(\eta+\etabar)^{i_2} \nabla^{i_3} \betabar \nabla^{i_4} \phi \\ + \sum_{i_1+ i_2 + i_3 +i_4 =i-1} \nabla^{i_1}(\eta+\etabar)^{i_2} \nabla^{i_3} (\chibarhat, \widetilde{\tr\chibar})\nabla^{i_4} \phi \\ + \sum_{i_1+ i_2 + i_3 +i_4 =i-1} \nabla^{i_1}(\eta+\etabar)^{i_2+1} \nabla^{i_3} \tr\chibar \nabla^{i_4} \phi 
\end{multline*}
\end{proposition}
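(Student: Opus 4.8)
The plan is to prove both identities by induction on $i$, with the base case $i=0$ being precisely the defining relations $\nabla_4\phi=F_0$ and $\nabla_3\phi=G_0$ (every index sum vanishes). There is no analytic input here: this is an algebraic identity, obtained by commuting one more angular derivative past $\nabla_4$ (resp.\ $\nabla_3$) at each step and then re-inserting the inductive hypothesis. Throughout I suppress numerical coefficients and write $\cdot$ for an arbitrary $\gamma$-contraction; only the schematic shapes matter. The first thing I would do is upgrade the first-order commutation identities of Propositions 3.9--3.11 to an $S_{u,\ubar}$-tensor $\Psi$ of \emph{arbitrary} rank, in the schematic form $[\nabla_4,\nabla]\Psi=(\eta+\etabar)\cdot\nabla_4\Psi+\chi\cdot\nabla\Psi+\beta\cdot\Psi+\chi\cdot(\eta+\etabar)\cdot\Psi$ and $[\nabla_3,\nabla]\Psi=(\eta+\etabar)\cdot\nabla_3\Psi+\chibar\cdot\nabla\Psi+\betabar\cdot\Psi+\chibar\cdot\eta\cdot\Psi$. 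The volume-form factors, Hodge duals and particular index placements in Propositions 3.9--3.11 do not affect this structure, so the general-rank version follows by the identical computation; it is precisely this rank-independence that lets the final formula hold for every $i$.

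For the $\nabla_4$ statement, assume the claimed expansion for $F_i$ and write $\nabla_4\nabla^{i+1}\phi=\nabla(\nabla_4\nabla^i\phi)+[\nabla_4,\nabla]\nabla^i\phi=\nabla F_i+[\nabla_4,\nabla]\nabla^i\phi$. I would expand $\nabla F_i$ by the Leibniz rule: a derivative landing on the leading block of derivatives, on one of the $(\eta+\etabar)$ factors, on $F_0$, on $\beta$, on $\chi$, or on $\phi$ raises exactly one of the indices by one, so $i_1+i_2+i_3=i$ becomes $i_1+i_2+i_3=i+1$, while $i_1+\cdots+i_4=i-1$ and $i_1+\cdots+i_4=i$ become $=i$ and $=i+1$. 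For the commutator term I would use the schematic identity above and re-substitute $\nabla_4\nabla^i\phi=F_i$ into the piece $(\eta+\etabar)\cdot\nabla_4\nabla^i\phi$; multiplying the $F_i$-expansion by one further factor $(\eta+\etabar)$ raises $i_2$ by one (so the sums become $i+1$, $i$, $i+1$), while $\chi\cdot\nabla^{i+1}\phi$ is the $\chi$-family term with $i_4=i+1$, $\beta\cdot\nabla^i\phi$ is the $\beta$-family term with $i_4=i$, and $\chi\cdot(\eta+\etabar)\cdot\nabla^i\phi$ is the $\chi$-family term with $i_2=1$, $i_4=i$. All of these have one of the three admissible shapes with indices summing to $i+1$ (or to $i$ for the $\beta$-family), which closes the induction.

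The $\nabla_3$ statement goes the same way, with the one extra point being the $\tfrac i2\tr\chibar\,\nabla^i\phi$ coefficient on the left. Writing the inductive hypothesis as $\nabla_3\nabla^i\phi=\mathcal S_i-\tfrac i2\tr\chibar\,\nabla^i\phi$, where $\mathcal S_i$ collects the four schematic families, I would expand $\nabla_3\nabla^{i+1}\phi=\nabla(\nabla_3\nabla^i\phi)+[\nabla_3,\nabla]\nabla^i\phi$. Applying $\nabla$ to the subtracted term produces $\tfrac i2\tr\chibar\,\nabla^{i+1}\phi$ plus $\tfrac i2(\nabla\tr\chibar)\cdot\nabla^i\phi$, and the latter is of $(\chibarhat,\widetilde{\tr\chibar})$-type once one writes $\tr\chibar=\widetilde{\tr\chibar}-\tfrac2u$; meanwhile the commutator term $\chibar\cdot\nabla^{i+1}\phi=\tfrac12\tr\chibar\,\nabla^{i+1}\phi+\chibarhat\cdot\nabla^{i+1}\phi$ contributes a further $\tfrac12\tr\chibar\,\nabla^{i+1}\phi$, and the two $\tr\chibar$-pieces sum to $\tfrac{i+1}{2}\tr\chibar\,\nabla^{i+1}\phi$, exactly the coefficient required on the left of the $(i+1)$-th identity. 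Re-substituting $\nabla_3\nabla^i\phi=\mathcal S_i-\tfrac i2\tr\chibar\,\nabla^i\phi$ into $(\eta+\etabar)\cdot\nabla_3\nabla^i\phi$ reproduces the first three families with one more $(\eta+\etabar)$ and, from the $-\tfrac i2\tr\chibar\,\nabla^i\phi$ part, exactly the fourth family $\nabla^{i_1}(\eta+\etabar)^{i_2+1}\nabla^{i_3}\tr\chibar\,\nabla^{i_4}\phi$; the remaining terms $\betabar\cdot\nabla^i\phi$, $\chibarhat\cdot\nabla^{i+1}\phi$, $\chibar\cdot\eta\cdot\nabla^i\phi$ and the Leibniz descendants of $\mathcal S_i$ land in the stated families by the index count already used.

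I expect the main obstacle to be organizational rather than conceptual: keeping the schematic index accounting honest, in particular verifying in the $\nabla_3$ case that the $\tr\chibar$ coefficient upgrades cleanly from $\tfrac i2$ to $\tfrac{i+1}{2}$, and that no stray term of the wrong shape survives --- for instance one carrying too many derivatives on a curvature factor, or one with $\tr\chibar$ appearing outside the fourth family. Once the arbitrary-rank first-order commutators are recorded, the rest is a disciplined bookkeeping induction with no hidden analytic content, which is exactly why it goes through for all $i$ and not merely up to some fixed order.
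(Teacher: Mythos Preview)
Your inductive approach is correct and is precisely the standard way these commutation formulae are established. The paper does not actually supply a proof of this proposition; it is stated without argument, as a standard identity imported from \cite{AL17} (where it is Proposition~5.11), so there is nothing to compare against beyond noting that your method is the intended one.

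One small point worth flagging: as written in the paper, the third family in the $\nabla_3$ expansion carries the constraint $i_1+i_2+i_3+i_4=i-1$, but your term $\chibarhat\cdot\nabla^{i+1}\phi$ (coming from the trace-free part of $\chibar\cdot\nabla\nabla^i\phi$ in the commutator) has $i_4=i+1$ and so would require the sum to equal $i$ at level $i+1$, not $i-1+1=i$ \emph{minus} the extra unit. In fact this is a typo in the statement: the corresponding family in Proposition~\ref{commutationformulaeprop} immediately below (and in the source \cite{AL17}) has sum $=i$, not $=i-1$, exactly matching the $\chi$-family in the $\nabla_4$ case. Your bookkeeping is consistent with the correct version; you simply glossed over the discrepancy when asserting that $\chibarhat\cdot\nabla^{i+1}\phi$ ``lands in the stated family.'' With that index corrected, your induction closes cleanly.
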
Finally, we can replace $\beta, \betabar$ by expressions involving Ricci coefficients, under the Codazzi equations:

\begin{align*}   \beta = - \text{div} \chihat + \frac{1}{2}\nabla \tr\chi - \frac{1}{2}(\eta-\etabar)\cdot (\chihat  - \frac{1}{2}\tr \chi), \\ \betabar=  \text{div} \chibarhat - \frac{1}{2}\nabla \tr \chi - \frac{1}{2}(\eta-\etabar)(\chibarhat - \frac{1}{2}\tr\chibar) .        \end{align*}That way, we arrive at the following:

\begin{proposition}\label{commutationformulaeprop}
Suppose $\nabla_4 \phi = F_0$. Let $\nabla_4 \nabla^i \phi = F_i$. Then \[ F_i = \sum_{i_1+ i_2 + i_3 =i}   \nabla^{i_1} \psi^{i_2} \nabla^{i_3}F_0 +   \sum_{i_1+ i_2 + i_3 +i_4 =i} \nabla^{i_1}\psi^{i_2} \nabla^{i_3}(\psi, \chihat) \nabla^{i_4}\phi.       \]Similarly, suppose $\nabla_3 \phi = G_0$. Let $\nabla_3\nabla^i \phi = G_i$. Then 

\begin{gather*}
G_i + \frac{i}{2}\tr\chibar \nabla^i \phi = \sum_{i_1+ i_2 + i_3 =i} \nabla^{i_1} \psi^{i_2} \nabla^{i_3} G_0 \\ + \sum_{i_1+ i_2 + i_3 +i_4 =i} \nabla^{i_1} \psi^{i_2}\nabla^{i_3}(\psi, \chibarhat, \widetilde{\tr \chibar}) \nabla^{i_4}\phi \\+ \sum_{i_1+ i_2 + i_3 +i_4 =i-1} \nabla^{i_1} \psi^{i_2+1} \nabla^{i_3} \tr \chibar \nabla^{i_4}\phi.
\end{gather*}
\end{proposition}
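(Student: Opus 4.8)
The plan is to read this statement off from Proposition~\ref{commutation} by substituting the two Codazzi equations displayed just above — which trade $\beta$ and $\betabar$ for angular derivatives of Ricci coefficients — and then regrouping the output into the schematic brackets of the statement. Since in Proposition~\ref{commutation} the curvature components occur linearly ($\beta$, resp.\ $\betabar$, appearing once and carrying $\nabla^{i_3}$ in each summand), a single Codazzi substitution per occurrence suffices, with no further use of the commutator identities. Alongside this I would record three elementary reductions that power the schematic bracket notation: by the relations \eqref{2.3} each $\eta+\etabar$ equals $2\nabla(\log\Omega)$ and is schematically a $\psi$; by $\chi=\chihat+\tfrac12\,\tr\chi\,\gamma$ and $\chibar=\chibarhat+\tfrac12\,\tr\chibar\,\gamma$ the second fundamental forms reduce, modulo the bounded metric $\gamma$, to $(\psi,\chihat)$ and to $(\psi,\chibarhat,\tr\chibar)$; and, since $u$ is constant on each $S_{u,\ubar}$ so that $\nabla(2/u)=0$, any \emph{differentiated} $\tr\chibar$ coincides with the corresponding derivative of the renormalised $\widetilde{\tr\chibar}$.

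For $F_i$ I would insert $\beta\sim\nabla(\psi,\chihat)+\psi\cdot(\psi,\chihat)$ into the $\beta$-sum of Proposition~\ref{commutation} and apply the product rule, so that $\nabla^{i_3}\beta$ generates terms $\nabla^{i_3+1}(\psi,\chihat)$ and $\nabla^{j_1}\psi\,\nabla^{j_2}(\psi,\chihat)$ with $j_1+j_2=i_3$. The bookkeeping point is that the $\beta$-sum ranges over $i_1+i_2+i_3+i_4=i-1$, while each of these terms carries exactly one extra unit of weight — one more derivative in the first case, one more $\psi$-factor in the second — so after relabelling every such term has the form $\nabla^{i_1}\psi^{i_2}\nabla^{i_3}(\psi,\chihat)\nabla^{i_4}\phi$ with $i_1+i_2+i_3+i_4=i$ and merges into the $\chi$-sum (already of this form after the $\chi$-reduction). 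Together with the $F_0$-sum, whose reduction is immediate, this gives the claimed identity, with a constraint $\sum=i$ understood as shorthand for $\sum\le i$, the strict cases producing better-behaved terms with more favourable powers of $\delta,a,|u|$.

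For $G_i$ the argument is identical but for the treatment of $\tr\chibar$. Writing $\betabar\sim\nabla(\psi,\chibarhat,\tr\chibar)+\psi\cdot(\psi,\chibarhat,\tr\chibar)$ and expanding, any contribution in which some derivative actually lands on $\chibarhat$ or on $\tr\chibar$ is renormalisable (the latter becoming $\widetilde{\tr\chibar}$ by the third reduction above), as is any contribution with an undifferentiated $\tr\chibar$ multiplied only by differentiated $\psi$'s; by the weight count above all of these fall inside $\sum_{i_1+i_2+i_3+i_4=i}\nabla^{i_1}\psi^{i_2}\nabla^{i_3}(\psi,\chibarhat,\widetilde{\tr\chibar})\nabla^{i_4}\phi$. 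The sole exception is an undifferentiated $\tr\chibar$ produced by the $(\eta-\etabar)\cdot\bigl(\chibarhat-\tfrac12\tr\chibar\bigr)$ factor of Codazzi; it cannot be renormalised, and is kept — together with the extra $\psi$ it multiplies — in the third sum $\sum_{i_1+i_2+i_3+i_4=i-1}\nabla^{i_1}\psi^{i_2+1}\nabla^{i_3}\tr\chibar\,\nabla^{i_4}\phi$, which is precisely the $(\eta+\etabar)^{i_2+1}\tr\chibar$ bin of Proposition~\ref{commutation}. The term $\tfrac{i}{2}\tr\chibar\,\nabla^{i}\phi$ comes from the resonant leading part $-\tfrac1u\gamma$ of $\chibar$, is untouched by the substitution, and stays on the left.

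The only step I expect to require genuine care is this weight accounting: checking that no term produced by the product rule on $\nabla^{i_3}\beta$ or $\nabla^{i_3}\betabar$ escapes the two (for $F_i$) or three (for $G_i$) advertised classes, that the single extra derivative or $\psi$ supplied by Codazzi is precisely what reconciles the $i-1$ range of the curvature sums with the $i$ range of the conclusion, and that the lone non-renormalisable object, the bare $\tr\chibar$, never appears with fewer than the stated number of $\psi$-factors. Everything else is a routine use of the product rule and of the commutator identities for scalars, $1$-forms and $2$-tensors recorded above, the manipulation being parallel to the derivation of the corresponding schematic commutation formula in \cite{AL17}.
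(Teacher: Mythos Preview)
Your proposal is correct and follows exactly the paper's approach: the paper's entire argument is the one-line remark that one substitutes the Codazzi expressions for $\beta$ and $\betabar$ into Proposition~\ref{commutation}, and you carry out precisely that substitution with a careful (and accurate) account of the weight bookkeeping that the paper leaves implicit. Your handling of the $\tr\chibar$ versus $\widetilde{\tr\chibar}$ distinction and of the shift from the $i-1$ range to the $i$ range via the extra derivative or extra $\psi$-factor supplied by Codazzi is exactly right.
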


\subsection{General elliptic estimates for Hodge systems}

We will also make use of the following statements on Hodge systems coming from \cite{AA20}.

\begin{proposition}\label{Hodgeprop1}
Under the assumptions of Theorem \ref{main2} and the bootstrap assumptions \eqref{bootstrap1}-\eqref{bootstrap4}, if $\phi$ is a totally symmetric $(r+1)$-covariant tensor field on a $2-$sphere $(\mathbb{S}^2,\gamma)$ satisfying
\[  \div \phi = f, \curl \phi =g, \tr\phi =h,    \]then for $1\leq i \leq N$ there holds \begin{equation*}
\twoSu{u^i \nabla^i \phi} \lesssim  \sum_{j=0}^{i-1} \left( \twoSu{u^{j+1}\nabla^{j}(f,g)} + \twoSu{u^j \nabla^j h} + \twoSu{u^j \nabla^j \phi }  \right).
\end{equation*}
\end{proposition}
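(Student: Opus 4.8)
The plan is to argue by induction on $i$, reducing each step to the case $i=1$ and feeding the conclusion back in by commuting angular derivatives through the Hodge operators $\div$, $\curl$ and $\tr$.

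\textbf{Base case $i=1$.} The essential ingredient is the standard integration-by-parts (Bochner) identity on a Riemannian $2$-sphere $(S,\gamma)$: for a totally symmetric $(r+1)$-tensor $\phi$ there is an identity of the schematic form
\[ \int_S \lvert \nabla \phi \rvert^2 + c_{r}\int_S K \lvert \phi \rvert^2 \;=\; \int_S \left( \lvert \div \phi \rvert^2 + \lvert \curl \phi \rvert^2 \right) \;+\; (\text{terms linear in } \phi \text{ and } h), \]
with $c_{r}>0$ a dimensional constant; the version valid for arbitrary rank is exactly the Hodge-theoretic input quoted from \cite{AA20}. The one analytic point is the \emph{sign} of the curvature term: by the improved bootstrap assumption \eqref{bootstrap4improved}, together with the preliminary metric estimates ($\text{Area}(S_{u,\ubar}) \sim \lvert u \rvert^2$ and the isoperimetric bound $I(S_{u,\ubar}) \leq \tfrac{1}{\pi}$), the Gauss curvature of each $S_{u,\ubar}$ in the region $\delta \al b \leq u \leq 1$, $0 \leq \ubar \leq \delta$ obeys $K \geq \tfrac{1}{2}\lvert u \rvert^{-2} > 0$. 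Hence the curvature term has a good sign (and in fact, for free, controls $\lvert u\rvert^{-2}\lVert \phi\rVert_{L^2(S)}^2$), and a Cauchy--Schwarz on the remaining lower-order terms on the right yields
\[ \lVert \nabla \phi \rVert_{L^2(S_{u,\ubar})} \;\lesssim\; \lVert (\div \phi, \curl \phi)\rVert_{L^2(S_{u,\ubar})} + \lvert u \rvert^{-1}\left( \lVert h \rVert_{L^2(S_{u,\ubar})} + \lVert \phi \rVert_{L^2(S_{u,\ubar})}\right). \]
Multiplying by $\lvert u\rvert$, constant on $S_{u,\ubar}$, gives the $i=1$ case. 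Decomposing $\phi$ into its trace-free part plus the explicit $\gamma$-contraction built from $h$ is what generates the $h$-terms; tracking these, and keeping the $u$-weights in the displayed form, is purely bookkeeping and uses the algebra of the operators $\div,\curl,\tr$ on $S^{2}$.

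\textbf{Inductive step.} Assume the estimate for $i-1$. Commuting $i-1$ angular derivatives through the given Hodge system and using that on a $2$-sphere $[\nabla_a,\nabla_b]$ acting on a tensor is pure Gauss curvature, one obtains a Hodge-type system for $\nabla^{i-1}\phi$ whose sources are $\nabla^{i-1}(f,g)$, $\nabla^{i-1}h$, and a finite sum of commutator terms, schematically $\nabla^{j_1}K \star \nabla^{j_2}\phi$ with $j_1+j_2\leq i-2$ (this is the same mechanism as the commutation formulae of Proposition \ref{commutationformulaeprop}). Applying the $i=1$ estimate to this system and then expanding, the curvature factors are controlled by \eqref{bootstrap4improved} — so $K$ and its angular derivatives behave, to leading order and with the correct powers of $u$, like those of $\lvert u\rvert^{-2}$ — and, after a product estimate on $S_{u,\ubar}$, one is left with terms of the form $\sum_{j\leq i-1}\lVert u^{j+1}\nabla^j(f,g)\rVert_{L^2(S_{u,\ubar})}$, $\sum_{j\leq i-1}\lVert u^j \nabla^j h\rVert_{L^2(S_{u,\ubar})}$ and $\sum_{j\leq i-1}\lVert u^j\nabla^j\phi\rVert_{L^2(S_{u,\ubar})}$. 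Since every commutator term carries at most $i-1$ derivatives on $\phi$, these are already of the claimed form, or are re-estimated by the inductive hypothesis, which closes the induction.

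\textbf{Main obstacle.} I expect the hard part to be the weight accounting in the inductive step rather than any single estimate: one must verify that each commutator term is \emph{strictly} lower order in the number of derivatives landing on $\phi$, and that, writing $K = \lvert u\rvert^{-2} + (\text{error})$, the error — controlled only in the weak norm \eqref{bootstrap4improved} with its $b^{-3/4}$ gain — is genuinely absorbable at every order. This is precisely where the hierarchy of \cite{AL17} and the specific structure of the curvature bootstrap enter; the individual steps are routine, but matching $u$-powers against derivative counts throughout the iteration is where care is required.
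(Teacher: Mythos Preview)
Your proposal is correct and follows essentially the same approach as the paper: the $i=1$ case is the Bochner--Hodge identity on $S_{u,\ubar}$ (the paper quotes it from Chapter~7 of \cite{C09}, with $K$ controlled via the bootstrap), and the inductive step commutes angular derivatives through the div--curl--tr system and absorbs the resulting $K$-commutators using \eqref{bootstrap4}. The one point the paper makes explicit that you leave schematic is that $\nabla^{i-1}\phi$ is not totally symmetric, so the base identity does not apply to it directly; the paper works instead with the \emph{symmetrized} angular derivative $(\nabla\phi)^s$, for which \cite{C09} records an explicit div--curl--tr system with sources built from $\nabla f$, $\nabla g$, $\nabla h$, $K\phi$ and $Kh$, and iterates on that --- this is precisely the device behind your phrase ``a Hodge-type system for $\nabla^{i-1}\phi$,'' and it is what makes the induction close cleanly.
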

\begin{proof}
 Recall the following identity from Chap. 7 in \cite{C09} that for $\phi$, $f$, $g$, and $h$ as above 
\begin{equation}
    \int_{S_{u,\ubar}}\left( |\nabla \phi|^2+(r+1)K|\phi|^2 \right) = \int_{S_{u,\ubar}} \left( |f|^2+|g|^2+K|h|^2\right) 
\end{equation}
Notice that $||K||_{L^{\infty}_u L^{\infty}_{\ubar} L^{\infty}(S_{u,\ubar})} \lesssim \frac{1}{|u|^2}$ by the bootstrap assumption (3.4). This implies the conclusion for $i=1$ after multiplying by $u^2$. For $i>1$ we recall again from \cite{C09} that the symmetrized angular derivative of $\phi$ defined by 
\begin{gather*}
     (\nabla \phi)^s_{BA_1 \cdot\cdot\cdot A_{r+1}} \equiv \frac{1}{r+2}\left(  \nabla_B \phi_{A_1\cdot\cdot\cdot A_r}+\sum^{r+1}_{i=1}\nabla_{A_i}\phi_{A_1 \cdot \cdot \cdot <A_i>B\cdot \cdot \cdot A_{r+1}} \right)
\end{gather*}
 
satisfies the div-curl system 
\begin{gather*}
    \div (\nabla \phi)^s =(\nabla f)^s-\frac{1}{r+2}(^*\nabla g)^s+(r+1)K\phi -\frac{2K}{r+1}(\gamma \otimes^s h), \\
    \curl (\nabla \phi)^s=\frac{r+1}{r+2}(\nabla g)^s+(r+1)K(^*\phi)^s, \\ 
\text{tr} (\nabla \phi)^s=\frac{2}{r+2}f+\frac{r}{r+2}(\nabla h)^s, 
\end{gather*}
    where 
    \[ (\gamma \otimes^s h)_{A_1 \cdot \cdot \cdot A_{r+1}}\equiv \gamma_{A_i A_j} \sum_{i<j=1,\cdot\cdot\cdot,r+1}h_{A_1\cdot\cdot\cdot<A_i>\cdot\cdot\cdot<A_j>\cdot\cdot\cdot A_{r+1}}
    \]
    and 
    \[ (^*\phi)^s_{A_1\cdot \cdot \cdot A_{r+1}} \equiv \frac{1}{r+1}\sum_{i=1}^{r+1} \slashed{\epsilon}_{A_i}^B\phi_{A_1\cdot  \cdot\cdot <A_i>B\cdot \cdot \cdot A_r }
    \]
Using (3.9) and iterating we get for $i\leq N$
\begin{gather*}
    ||\nabla^i \phi||^2_{L^2(S){u,\ubar})} \lesssim ||\nabla^{i-1}(f,g)||^2_{L^2(S_{u,\ubar})}+||K(|\nabla^{i-2}(f,g)|^2+|\nabla^{i-1}(\phi,h)|^2)||_{L^1(S_{u,\ubar})}\\
    +||K\left( \sum_{i_1+2i_2+i_3=i-3} \nabla^{i_1}K^{i_2+1}\nabla^{i_3}(\phi,h)\right)^2||_{L^1(S_{u,\ubar})} +||K\left(\sum_{i_1+2i_2+i_3=i-4}\nabla^{i_1}K^{i_2+1}\nabla^{i_3}f\right)^2||_{L^1(S_{u,\ubar})}
    \\+ \sum_{i_1+2i_2+i_3=i-2}||\nabla^{i_1}K^{i_2+1}\nabla^{i_3}(\phi,h)||^2_{L^2(S_{u,\ubar})}+\sum_{i_1+2i_2+i_3=i-3}||\nabla^{i_1}K\nabla^{i_2}(f,g)||^2_{L^2(S_{u,\ubar})}
\end{gather*}
where we have adopted the convention that $\sum_{i\leq -1}=0$. Whenever a $K$-term appears with at most $N-4$ derivatives, we estimate it in $L^\infty$. Whenever a $K$-term contains between $N-3$ and $N-2$ derivatives we shall estimate it in $L^2$ and the rest of the terms in $L^\infty$, noting that we can estimate terms of the form $||\nabla^i(f,g,h)||_{L^\infty}$ with $i\leq N-4$ by the corresponding norms in $L^2$ through the standard Sobolev embedding. Using the argument of Lemma 6.1 in \cite{AA20}, after translating back to standard $L^p$ norms, there holds
\[
\sum_{i\leq N-4} || |u|^i\nabla^iK||_{L^\infty(S_{u,\ubar})}+\sum_{j\leq N-2}|| |u|^j \nabla^j K||_{L^2(S_{u,\ubar})} \lesssim 1
\]
Therefore, for $i\leq N$, we have 
\[
|| |u|^i \nabla^i \phi||^2_{L^2(S_{u,\ubar})} \lesssim \sum_{j\leq i-1}\left( || |u|^{j-1}\nabla^j(f,g)||^2_{L^2(S_{u,\ubar})}+|| |u|^j\nabla^j(\phi,h)||^2_{L^2(S_{u,\ubar})}\right) 
\]
which finishes the proof.
\end{proof}

\begin{proposition}\label{Hodgeprop2}
Suppose $\phi$ is a symmetric traceless $2-$tensor satisfying \[ \div \phi =f.     \]Then, under the assumptions of Theorem \ref{main2} and the bootstrap assumptions \eqref{bootstrap1}-\eqref{bootstrap4}, there holds \[  \twoSu{u^i \nabla^i \phi} \lesssim \sum_{j=0}^{i-1} \left( \twoSu{u^{j+1}\nabla^{j} f} + \twoSu{u^j \nabla^j \phi} \right).    \]
\end{proposition}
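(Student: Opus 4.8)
The plan is to deduce Proposition \ref{Hodgeprop2} directly from Proposition \ref{Hodgeprop1}. A symmetric traceless $2$-tensor $\phi_{AB}$ on a sphere $S_{u,\ubar}$ is a totally symmetric $(r+1)$-covariant tensor field with $r=1$ and vanishing trace, $h\equiv 0$, so in order to apply Proposition \ref{Hodgeprop1} it only remains to identify the curl $g:=\curl\phi$. On a $2$-dimensional surface the curl of a symmetric traceless $2$-tensor is completely determined by its divergence: computing in an orthonormal frame at a point (where $\phi$ has the form $\mathrm{diag}(a,-a)$ plus off-diagonal entry $b$) one verifies the pointwise identity
\begin{equation*}
(\curl\phi)_A \;=\; \slashed{\epsilon}^{BC}\nabla_B\phi_{CA} \;=\; \Hodge{(\div\phi)}_A ,
\end{equation*}
that is, $g=\Hodge f$. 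Equivalently, $\Hodge\phi$ is again symmetric traceless and $\div(\Hodge\phi)=\Hodge(\div\phi)$, which forces $\curl\phi=\Hodge\div\phi$.

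Applying Proposition \ref{Hodgeprop1} to $\phi$ with data $(f,g,h)=(f,\Hodge f,0)$ then gives, for $1\le i\le N$,
\begin{equation*}
\twoSu{u^i\nabla^i\phi}\;\lesssim\;\sum_{j=0}^{i-1}\left(\twoSu{u^{j+1}\nabla^j(f,\Hodge f)}+\twoSu{u^j\nabla^j\phi}\right).
\end{equation*}
Since the volume form $\slashed{\epsilon}$ is parallel, $\nabla^j(\Hodge f)=\Hodge(\nabla^j f)$, and since the Hodge dual on $1$-forms is a pointwise isometry, $|\nabla^j(\Hodge f)|=|\nabla^j f|$ at every point of $S_{u,\ubar}$. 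Hence $\twoSu{u^{j+1}\nabla^j(\Hodge f)}=\twoSu{u^{j+1}\nabla^j f}$, the curl contribution collapses onto the $f$-term, and one obtains exactly the asserted estimate
\begin{equation*}
\twoSu{u^i\nabla^i\phi}\;\lesssim\;\sum_{j=0}^{i-1}\left(\twoSu{u^{j+1}\nabla^j f}+\twoSu{u^j\nabla^j\phi}\right).
\end{equation*}

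There is no genuine obstacle here beyond the elementary Hodge-duality bookkeeping above; the substance has already been carried out in Proposition \ref{Hodgeprop1}. For a self-contained alternative one could instead re-run the iteration of Proposition \ref{Hodgeprop1} directly for $\phi$, starting from the $r=1$ instance of the identity (3.9), namely $\int_{S_{u,\ubar}}(|\nabla\phi|^2+2K|\phi|^2)=2\int_{S_{u,\ubar}}|f|^2$, and controlling all curvature factors through the bound $\sum_{i\le N-4}\||u|^i\nabla^i K\|_{L^\infty(S_{u,\ubar})}+\sum_{j\le N-2}\||u|^j\nabla^j K\|_{L^2(S_{u,\ubar})}\lesssim 1$ together with $\|K\|_{L^\infty(S_{u,\ubar})}\lesssim|u|^{-2}$ from \eqref{bootstrap4}; the only mildly delicate point in that route is again that the first-order estimate needs the $L^\infty$ bound on $K$, which is exactly what the last bootstrap assumption supplies. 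This is, however, strictly more work than necessary.
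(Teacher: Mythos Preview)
Your proof is correct and follows exactly the paper's approach: apply Proposition \ref{Hodgeprop1} after observing that for a symmetric traceless $2$-tensor one has $\curl\phi=\Hodge f$, so the curl contribution is absorbed into the $f$-term. The paper states this in a single sentence, while you have spelled out the pointwise verification and the isometry of the Hodge dual, but the substance is identical.
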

\begin{proof}
This is an application of Proposition 3.14 by noticing that $\curl \phi=^*f$. This is a straightforward calculation, using that the $2$-tensor $\phi$ is symmetric and traceless.
\end{proof}

\subsection{Estimates for the Ricci coefficients}
Our goal in this section will be to rigorously verify the inequality

\[ \mathcal{O}_N \lesssim 1+ \widetilde{O}_{N+1,2} + \mathcal{R}_N.     \]
We shall give the proof for an example term, in particular the first term $\chihat$ and the other proofs are done by following \cite{AL17} in the same way as for $\chihat$.

\begin{proposition}\label{riccihelpful}
Under the assumptions of Theorem \ref{main2} and the bootstrap assumptions \eqref{bootstrap1}, \eqref{bootstrap2}, \eqref{bootstrap3} and \eqref{bootstrap4} we have

\[   \sum_{i_1+i_2 \leq N} \twoSu{u^{i_1+i_2+1}\nabla^{i_1} \psibar^{i_2+1}} \lesssim \sum_{i \leq N} \twoSu{u^{i+1}\nabla^i \psibar}     \]and \[ \sum_{i_1+i_2 \leq N-2}  \inftySu{u^{i_1+i_2+2}\nabla^{i_1} \psibar^{i_2+1}} \lesssim \sum_{i \leq N-2} \inftySu{u^{i+2}\nabla^i \psibar}.    \]In particular, using the bootstrap assumption \eqref{bootstrap1} we obtain
\[   \sum_{i_1+i_2 \leq N} \twoSu{u^{i_1+i_2+1}\nabla^{i_1} \psibar^{i_2+1}} +\sum_{i_1+i_2 \leq N-2}  \inftySu{u^{i_1+i_2+2}\nabla^{i_1} \psibar^{i_2+1}} \lesssim \delta \al b^{\frac{1}{4}}.   \]

\end{proposition}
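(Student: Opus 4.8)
The plan is to reduce the product estimate to the single‑factor norms on the right‑hand sides by Hölder's inequality on each sphere $S_{u,\ubar}$, using that every product $\nabla^{i_1}\psibar^{i_2+1}$ with $i_2\ge 1$ carries at least two factors. First I would expand $\nabla^{i_1}\psibar^{i_2+1}$ by the Leibniz rule into a schematic sum of finitely many terms $\nabla^{a_1}\psibar\cdots\nabla^{a_{i_2+1}}\psibar$ with $a_1+\cdots+a_{i_2+1}=i_1$, the number of such terms being bounded by a combinatorial constant depending only on the fixed integer $N$. Relabelling, assume $a_1=\max_j a_j$. When $i_2\ge 1$ there are at least two factors, so for every $j\ge 2$ one has $a_j\le i_1-a_1\le i_1-a_j$, hence $a_j\le\lfloor i_1/2\rfloor\le\lfloor N/2\rfloor\le N-2$, the last step using $N\ge 4$. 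This is the only place the hypothesis $N\ge 4$ enters, and it is exactly what lets all but the top factor be controlled by the $L^\infty$ part of the bootstrap assumption \eqref{bootstrap1}, which only reaches $N-2$ derivatives.

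Next I would apply Hölder on $S_{u,\ubar}$, estimating the top factor in $L^2$ and the remaining $i_2$ factors in $L^\infty$, distributing the weight $u^{i_1+i_2+1}$ as $u^{a_1+1}$ on the top factor and $u^{a_j+1}$ on each of the others; since $(a_1+1)+\sum_{j\ge 2}(a_j+1)=i_1+i_2+1$, the weight bookkeeping closes exactly. The top factor contributes $\twoSu{u^{a_1+1}\nabla^{a_1}\psibar}\le\sum_{i\le N}\twoSu{u^{i+1}\nabla^i\psibar}$, the right‑hand side of the first inequality. For each remaining factor, the $L^\infty$ bootstrap bound \eqref{bootstrap1} gives $\inftySu{u^{a_j+2}\nabla^{a_j}\psibar}\le\delta\al b^{1/4}$, whence $\inftySu{u^{a_j+1}\nabla^{a_j}\psibar}\le|u|^{-1}\delta\al b^{1/4}\le b^{-3/4}\le 1$, where I use that in the region of existence of Theorem \ref{main2} one has $|u|\ge\delta\al b$ and that $b\ge b_0$ is large. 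Multiplying the (boundedly many) terms yields the first inequality with an absolute implied constant; the case $i_2=0$ is the trivial identity.

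The $L^\infty$ inequality is handled identically with $i_1+i_2\le N-2$: expand by Leibniz, put the top factor in $L^\infty$ with weight $u^{a_1+2}$ (bounded by $\sum_{i\le N-2}\inftySu{u^{i+2}\nabla^i\psibar}$, legitimate since $a_1\le i_1\le N-2$), and each of the remaining $i_2$ factors in $L^\infty$ with weight $u^{a_j+1}$, each bounded by $1$ as above; the weights again sum to $i_1+i_2+2$. Finally, the ``in particular'' statement follows by inserting the bootstrap bounds $\sum_{i\le N}\twoSu{u^{i+1}\nabla^i\psibar}\le\delta\al b^{1/4}$ and $\sum_{i\le N-2}\inftySu{u^{i+2}\nabla^i\psibar}\le\delta\al b^{1/4}$ from \eqref{bootstrap1} into the two inequalities just proved and adding.

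There is no serious analytic obstacle; the argument is the standard An--Luk product estimate (cf. \cite{AL17}). The two points needing care are purely bookkeeping: (i) verifying that $N\ge 4$ forces every non‑top factor to carry at most $N-2$ angular derivatives, so the $L^\infty$ bootstrap applies to it, and (ii) checking that the $u$‑weights distribute so that each surplus factor of $\psibar$ costs a factor $\le\delta\al b^{1/4}/|u|\le b^{-3/4}\le 1$ in the region $|u|\ge\delta\al b$, which is what makes the implied constants absolute and, after feeding in \eqref{bootstrap1}, produces the claimed $\delta\al b^{1/4}$ bound.
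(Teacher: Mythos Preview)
Your proof is correct and follows essentially the same approach as the paper's: Leibniz-expand, put the top-derivative factor in $L^2$ and the remaining factors in $L^\infty$, verify the latter each carry at most $N-2$ derivatives so the $L^\infty$ part of \eqref{bootstrap1} applies, and absorb each extra factor via $\delta\al b^{1/4}/|u|\le b^{-3/4}$. The only differences are cosmetic: the paper distributes the $u$-weights as $u^{i_2}\cdot u^{a_1+1}\cdot\prod_{j\ge 2} u^{a_j}$ (identical to your $u^{a_1+1}\cdot\prod_{j\ge 2} u^{a_j+1}$), and it obtains the bound $a_j\le N-2$ on the non-top factors by a one-line contradiction argument (already valid for $N\ge 3$) rather than via your sharper $a_j\le\lfloor i_1/2\rfloor$.
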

\begin{proof} Notice that we can write the expression $\nabla^{i_1}\psibar^{i_2+1}$ as 

\[  \nabla^{i_1}\psibar^{i_2+1} = \sum_{k=1}^{i_2+1} \nabla^{j_k} \psibar,        \]where $j_1+\dots+j_{i_2+1} = i_1$. Assume, without loss of generality, that the term $j_{i_2+1}$ is the largest. We then write \[ u^{i_1+i_2+1} \nabla^{i_1}\psibar^{i_2+1} = u^{i_2} \cdot(u^{j_{i_2+1}+1} \nabla^{j_{i_{2}+1}} \psibar) \cdot \prod_{k=1}^{i_2}(u\cdot \nabla)^{j_k}\psibar.     \]Our philosophy is to choose the $\psibar$ term with the highest order of derivatives and estimate it in $L^2(S_{u,\ubar})$. The crucial point to notice is that, in the expression above, each of the terms $j_1, \dots, j_{i_2}$ is at most $N-2$. This is easy to see by contradiction. Indeed, assume without loss of generality that $j_1 \geq N-1$. Then since $j_1 \leq j_{i_2+1}$ we have \[ 2N-2 \leq j_1+j_{i_2+1} \leq i_1 \leq i_1+i_2\leq N,     \]a contradiction for $N \geq 3$. What follows is that we can estimate every other term in $L^\infty(S_{u,\ubar})$ using the bootstrap assumptions, since our bootstrap assumptions for $L^\infty$ norms include derivatives up to order $N-2$.
Thus, there holds \begin{multline*}
\sum_{i_1+i_2 \leq N}\twoSu{u^{i_1+i_2+1}\nabla^{i_1}\psibar^{i_2+1}} = \sum_{i_1+i_2 \leq N}\twoSu{u^{i_2} \cdot(u^{j_{i_2+1}+1} \nabla^{j_{i_{2}+1}} \psibar) \cdot \prod_{k=1}^{i_2}(u\cdot \nabla)^{j_k}\psibar} \\ \lesssim u^{i_2} \cdot \twoSu{u^{j_{i_2+1}+1} \nabla^{j_{i_{2}+1}} \psibar}\cdot \prod_{k=1}^{i_2} \inftySu{(u\cdot \nabla)^{j_k}\psibar}. 
\end{multline*}Each term estimated in $L^\infty$ can be bounded by $\frac{\delta \al b^{\frac{1}{4}}}{u^2}$ by the bootstrap assumption \eqref{bootstrap1}. It follows that

\begin{multline*}
\sum_{i_1+i_2 \leq N}\twoSu{u^{i_1+i_2+1}\nabla^{i_1}\psibar^{i_2+1}} \lesssim  u^{i_2} \cdot \twoSu{u^{j_{i_2+1}+1} \nabla^{j_{i_{2}+1}} \psibar}\cdot \left(\frac{\delta \al b^{\frac{1}{4}}}{u^2}\right)^{i_2} \\\lesssim \twoSu{u^{j_{i_2+1}+1} \nabla^{j_{i_{2}+1}} \psibar} \lesssim \sum_{i \leq N} \twoSu{u^{i+1}\nabla^{i}\psibar},
\end{multline*}since $\delta \al b^{\frac{1}{4}}/ u \leq 1/b^{\frac{3}{4}}\lesssim 1.$ The second statement follows similarly.
\end{proof}

\par \noindent We are now in a position to move on and estimate $\chihat$.

\begin{proposition}\label{riccichihat}
Under the assumptions of Theorem \ref{main2} and the bootstrap assumptions \eqref{bootstrap1}-\eqref{bootstrap4}, there holds \[ \sum_{i \leq N} \twoSu{u^i \nabla^i \chihat} \leq \al.     \]	
\end{proposition}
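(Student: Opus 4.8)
The plan is to reproduce the argument of \cite{AL17} for $\chihat$, now carried out up to $N$ angular derivatives. Since the transport estimates of Propositions \ref{transportprop1} and \ref{transportprop2}, the commutation formulae of Proposition \ref{commutationformulaeprop}, the Hodge estimates of Propositions \ref{Hodgeprop1} and \ref{Hodgeprop2}, and Proposition \ref{riccihelpful} are all already stated for an arbitrary order of differentiation, no new analytic ingredient is required — only bookkeeping. I would isolate the case $i=0$, the only place where curvature enters: integrate the null structure equation \eqref{chihateq}, $\nabla_4\chihat+\tr\chi\,\chihat=-2\omega\chihat-\alpha$, in the $e_4$-direction from $S_{u,0}\subset\Hbar_0$, on which $\chihat$ vanishes since $\Hbar_0$ is a piece of Minkowski space. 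An $L^2$ transport estimate of the type of Proposition \ref{transportprop1} gives $\twoSu{\chihat}\lesssim\int_0^{\ubar}\big(\twoSuubarprime{\tr\chi\,\chihat}+\twoSuubarprime{\omega\,\chihat}+\twoSuubarprime{\alpha}\big)\,\dubarprime$. For the quadratic Ricci terms one uses $\inftySu{\tr\chi}+\inftySu{\omega}\lesssim\al b^{1/4}/|u|$ and $\twoSu{\chihat}\lesssim\al b^{1/4}$ from \eqref{bootstrap1}--\eqref{bootstrap2}; since the window has length $\le\delta$ and $|u|\ge\delta\al b$, these contribute $\lesssim\delta\cdot\al b^{1/4}/|u|\cdot\al b^{1/4}\lesssim\al/b^{1/2}$. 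For $\alpha$ one argues as in \cite{AL17}: even though $\alpha$ is not one of the renormalized quantities in $\mathcal{R}_N$, it is recovered from its Bianchi equation $\nabla_3\alpha+\tfrac12\tr\chibar\,\alpha=\nabla\widehat{\otimes}\beta+\cdots$ transported in the $e_3$-direction from its datum on $H_1$ (fixed by the characteristic data and the ansatz of Section 1.3), using the $\mathcal{R}_N$-control of $\beta$ and of $(K-|u|^{-2},\csigma)$; this yields $\twoSuubarprime{\alpha}\lesssim\al/\delta$, so $\int_0^{\ubar}\twoSuubarprime{\alpha}\,\dubarprime\lesssim\al$. Altogether $\twoSu{\chihat}\le\al$.

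For $1\le i\le N$, rather than commuting \eqref{chihateq} directly — which would reintroduce $\nabla^i\alpha$ — I would use the Codazzi equation $\div\chihat=\tfrac12\nabla\tr\chi-\tfrac12(\eta-\etabar)\cdot(\chihat-\tfrac12\tr\chi)-\beta$ and the fact that $\chihat$ is symmetric and trace-free, so that Proposition \ref{Hodgeprop2} applies with $f$ the right-hand side above and yields $\twoSu{u^i\nabla^i\chihat}\lesssim\sum_{j=0}^{i-1}\big(\twoSu{u^{j+1}\nabla^j f}+\twoSu{u^j\nabla^j\chihat}\big)$. The terms $\nabla^j f$ involve $\nabla^{j+1}\tr\chi$ (controlled by the analogue of the present proposition for $\tr\chi$, proved in the same fashion), $\nabla^j\beta$ (controlled through $\mathcal{R}_N$ and the Bianchi equations as in \cite{AL17}), and schematic quadratic Ricci terms $\nabla^{i_1}\psi^{i_2}\nabla^{i_3}(\psi,\chihat)$ dispatched by Proposition \ref{riccihelpful}; the terms $\twoSu{u^j\nabla^j\chihat}$ with $j\le i-1$ are absorbed by downward induction whose base case is the $i=0$ estimate. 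Every contribution except the one inherited from $i=0$ carries a spare factor $\delta\al b<1$ or $b^{-1/2}$, which is precisely why the resulting bound $\le\al$ improves on the bootstrap value $\al b^{1/4}$.

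The main obstacle is the curvature term $\alpha$ in \eqref{chihateq}. As it does not appear in $\mathcal{R}_N$, it must be recovered from its Bianchi equation and the datum on $H_1$, and one has to check that its $L^2(S)$-size is exactly of order $\al/\delta$, so that integration over a window of length $\delta$ returns $\al$ with no loss; keeping the $(\delta,a,b)$-numerology sharp at this step — and at the analogous steps for $\tr\chi$ and $\omega$ — is what delivers the improvement over the bootstrap constant. Everything else is routine propagation of the \cite{AL17} estimates through one further layer of angular derivatives.
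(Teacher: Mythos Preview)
Your approach is genuinely different from the paper's, and the difference is not cosmetic. The paper never touches the $\nabla_4$-equation \eqref{chihateq} (which contains $\alpha$) or the Codazzi system for $1\le i\le N$. Instead it uses the $\nabla_3$-structure equation \eqref{chihat3eq}, which schematically reads
\[
\nabla_3\chihat+\tfrac12\tr\chibar\,\chihat=\nabla\eta+(\psi,\psibar)\psibar
\]
and contains \emph{no curvature term whatsoever}. After commuting with $\nabla^i$, one applies the weighted transport estimate of Proposition~\ref{transportprop2} from $u=1$, where the datum is the prescribed $\chihat_0$ with $\sum_{i\le N}\|\nabla^i\chihat_0\|_{L^2(S_{1,\ubar})}\le\al$. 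The only source terms are $\nabla^{i+1}\eta$, controlled for $i\le N-1$ by the $\psibar$-bootstrap \eqref{bootstrap1} and for $i=N$ by the $L^2(\Hbar_{\ubar})$-bound on $\nabla^{N+1}\eta$ inside $\widetilde{\mathcal{O}}_{N+1,2}$ via \eqref{bootstrap3}, together with quadratic Ricci terms handled by Proposition~\ref{riccihelpful}. The same argument covers all $0\le i\le N$ uniformly; no case distinction at $i=0$ is needed, and $\alpha$ never appears.

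Your route, by contrast, has real gaps. First, $\alpha$ is deliberately excluded from the \cite{AL17} hierarchy and from the bootstrap \eqref{bootstrap1}--\eqref{bootstrap4}; recovering $\twoSu{\alpha}\lesssim\al/\delta$ from $\nabla_3\alpha=\nabla\widehat{\otimes}\beta+\chihat\rho+\cdots$ requires an $L^2_uL^2(S)$ (that is, $\Hbar_{\ubar}$-flux) bound on $\nabla\beta$, whereas $\mathcal{R}_N$ only supplies an $L^2(H_u)$ flux, together with $L^2(S)$-control on $\rho,\sigma$. None of this is available at this stage; producing it would amount to enlarging the bootstrap, which is precisely what the renormalized scheme was designed to avoid. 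Second, your Codazzi--Hodge step for $i\ge 1$ needs $\twoSu{u^{j+1}\nabla^j\beta}$ pointwise on spheres, again not in $\mathcal{R}_N$; substituting $\beta=-\div\chihat+\tfrac12\nabla\tr\chi+\psi\psibar$ reintroduces $\nabla^{j+1}\chihat$ with $j+1\le i$ on the right-hand side, so at the top term $j=i-1$ you recover only the bootstrap bound $\al b^{1/4}$ and the estimate fails to close. The paper's choice of the $\nabla_3$-equation sidesteps both obstacles by eliminating curvature from the problem entirely.
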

\begin{proof}
There holds \[ \nabla_3 \chihat + \frac{1}{2}\tr\chibar\hspace{.5mm} \chihat = \nabla \eta + (\psi,\psibar)\psibar.     \]Using Proposition \ref{commutation} and commuting with $i$ angular derivatives, we have that for any $i$ there holds \begin{multline*}
\nabla_3 \nabla^i \chihat + \frac{i+1}{2}\tr\chibar \nabla^i \chihat  = \nabla^{i+1}\eta +\sum_{i_1+i_2=i}\nabla^{i_1} \psibar^{i_2+2} + \sum_{i_1+ i_2 + i_3 =i} \nabla^{i_1}\psibar^{i_2+1}\nabla^{i_3}\psi \\+ \sum_{i_1+ i_2 + i_3 =i-1} \frac{1}{u} \nabla^{i_1} \psibar^{i_2+1}\nabla^{i_3}\psi.
\end{multline*}We apply Proposition \ref{transportprop2} which tells us that the quantity $\lVert u^i \nabla^i \chihat \rVert_{L_u^\infty L_{\ubar}^\infty L^2(S_{u,\ubar})}$ can be controlled by the sum of $\lVert u^i \nabla^i \chihat \rVert_{L_u^\infty L_{\ubar}^\infty L^2(S_{1,\ubar})}$ and the $\lVert u^i \cdot\rVert_{L_u^\infty L_{\ubar}^\infty L^2(S_{u,\ubar})}$ norm of the right-hand side in the equation above. We now estimate each of the terms on the right-hand side for $i \leq N$:

\begin{itemize}
	\item We first look at the linear term in $\eta$. For $i \leq N-1$, the $\nabla^{i+1} \eta$ term can be controlled by the bootstrap assumptions on $\mathcal{O}_N$. In particular, we have \begin{equation} \sum_{i \leq N-1}  \lVert u^{i} \nabla^{i+1} \eta \rVert_{L_u^1 L^2(S_{u,\ubar})} \leq\Big \lVert \frac{1}{u^2} \Big \rVert_{L_u^1}   \lVert u^{i+2}\nabla^{i+1}\psibar \rVert_{L_{\ubar}^\infty L^2(S_{u,\ubar})}  \lesssim \frac{\delta \al b^{\frac{1}{4}}}{u},        \end{equation} where we have used the bootstrap assumption \eqref{bootstrap1}. For the top-order term $i=N$ we estimate \begin{equation}   \lVert u^{N} \nabla^{N+1} \eta \rVert_{L_u^1 L^2(S_{u,\ubar})} \leq \Big \lVert \frac{1}{u} \Big \rVert_{L_u^2}\lVert u^{N+1} \nabla^{N+1} \eta \rVert_{L_u^2 L^2(S_{u,\ubar})} \lesssim \frac{\delta^{\frac{1}{2}}\al}{u^{\frac{1}{2}}}\widetilde{O}_{N+1,2} \lesssim \frac{\delta^{\frac{1}{2}} \al b^{\frac{1}{4}}}{u^{\frac{1}{2}}}, \end{equation} where in the last inequality we used the bootstrap assumption \eqref{bootstrap3}.
	\item We then control the second and third terms together. Here, we use the
	estimates derived in Proposition \ref{riccihelpful}. There holds \begin{multline}
	\sum_{i \leq N} \hspace{.5mm} 
	\Big \lVert \sum_{i_1+i_2 +i_3=i} u^i \nabla^{i_1}\psibar^{i_2+1} \nabla^{i_3}(\psi,\psibar) \Big \rVert_{L_u^1 L^2(S_{u,\ubar})}\\ \lesssim  \sum_{i_1+i_2 \leq N} \lVert u^{i_1+i_2-1} \nabla^{i_1}\psibar^{i_2+1} \rVert_{L_u^1 L^2(S_{u,\ubar})}\cdot \sum_{i_3\leq N-2} \lVert u^{i_3+1} \nabla^{i_3}(\psi,\psibar) \rVert_{L_u^\infty L^\infty(S_{u,\ubar})} \\ + \sum_{i_1+i_2 \leq N} \lVert u^{i_1+i_2-1} \nabla^{i_1}\psibar^{i_2+1} \rVert_{L_u^1 L^{\infty}(S_{u,\ubar})}\cdot \sum_{i_3\leq N-2} \lVert u^{i_3+1} \nabla^{i_3}(\psi,\psibar) \rVert_{L_u^\infty L^2(S_{u,\ubar})} \\ \lesssim \frac{\delta \al b^{\frac{1}{4}}}{u}\left( \al b^{\frac{1}{4}} +\frac{\delta \al b^{\frac{1}{4}}}{u} \right) \lesssim \frac{\delta a b^{\frac{1}{2}}}{u}.
	\end{multline}
	
	\item Finally, the last term is estimated as follows
	
	\begin{multline}
	\sum_{i \leq N} \Big \lVert \sum_{i_1+i_2 +i_3=i-1} u^{i-1} \nabla^{i_1}\psibar^{i_2+1} \nabla^{i_3}\psi \Big \rVert_{L_u^1 L^2(S_{u,\ubar})} \\ \lesssim \sum_{i_1+i_2 \leq N-1} \lVert u^{i_1+i_2-1} \nabla^{i_1}\psibar^{i_2+1} \rVert_{L_u^1 L^2(S_{u,\ubar})} \sum_{i_3 \leq 1} \lVert u^{i_3+1}\nabla^{i_3}\psi \rVert_{L_u^{\infty} L^{\infty}(S_{u,\ubar})} \\ +\sum_{i_1+i_2 \leq 1} \lVert u^{i_1+i_2-1} \nabla^{i_1}\psibar^{i_2+1} \rVert_{L_u^1 L^{\infty}(S_{u,\ubar})} \sum_{i_3 \leq N-1} \lVert u^{i_3+1}\nabla^{i_3}\psi \rVert_{L_u^{\infty} L^{2}(S_{u,\ubar})} \\ \lesssim \frac{\delta \al b^{\frac{1}{4}}}{u}\left( \al b^{\frac{1}{4}} +\frac{\delta \al b^{\frac{1}{4}}}{u} \right) \lesssim \frac{\delta a b^{\frac{1}{2}}}{u}.
	\end{multline}Finally, applying the condition that $u > \delta \al b$ it is easy to see that all the terms above are bounded by $\al$. Also, recalling that initially we have \[ \sum_{i \leq N+3}  \lVert \nabla^i \chihat_0 \rVert_{L_u^{\infty} L^{2}(S_{1,\ubar})} \leq \al,     \]we get \[ \sum_{i\leq N} \lVert u^i \nabla^i \chihat \rVert_{L_u^{\infty} L^2(S_{u,\ubar})} \lesssim \al.    \]
\end{itemize}The estimates for the rest of the Ricci coefficients are obtained in the same way.
\end{proof}

The way to generalize the energy estimates to higher order is still of the same philosophy. In particular, the introduction of the crucial functions $\omega^\dagger$ and $\mu$ and their commutation formula is preserved.We assume at this stage the inequality \begin{equation}
\mathcal{O}_N \lesssim 1 + \widetilde{O}_{N+1,2} + \mathcal{R}_N
\end{equation} and give an example of how one can show \[ \widetilde{O}_{N+1,2}\lesssim 1+ \mathcal{R}_N.     \]We are ready to address the elliptic estimates.

\begin{proposition}
Under the assumptions of Theorem \ref{main2} and the bootstrap assumptions \eqref{bootstrap1}-\eqref{bootstrap4}, there holds 
\[ \lVert u^{N+2}\nabla^{N+1}\tr\chi \rVert_{L_{\ubar}^2 L^2(S_{u,\ubar})}  \lesssim \delta^{\frac{3}{2}}ab^{\frac{1}{4}}     \]and \[      \lVert u^{N+1}\nabla^{N+1}\chihat \rVert_{L_{\ubar}^2 L^2(S_{u,\ubar})}  \lesssim \delta^{\frac{1}{2}} \al (1+\mathcal{R}).      \]
\end{proposition}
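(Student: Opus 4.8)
The plan is to exploit the differing natures of the two quantities. The component $\tr\chi$ obeys a $\nabla_4$-transport equation \eqref{trchieq} with no curvature source, so its top-order estimate is self-contained and $\mathcal{R}$-free; by contrast $\chihat$ is reached through the Codazzi (div-curl) system, into which we will feed both the $\tr\chi$ bound just obtained and the curvature energy $\mathcal{R}_N$. Accordingly I would prove the $\tr\chi$ estimate first and the $\chihat$ estimate second. The only genuine coupling between the two occurs at top order and is disposed of via the bootstrap assumption \eqref{bootstrap3}, so the argument is not circular.

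\textbf{Step 1 (the $\tr\chi$ estimate).} Start from \eqref{trchieq}, commute with $\nabla^{N+1}$ using Proposition \ref{commutationformulaeprop}, and observe that every term produced is of schematic form $\nabla^{i_1}\psi^{i_2}\nabla^{i_3}(\psi,\chihat)\nabla^{i_4}(\cdots)$, hence purely Ricci, with no curvature component ever entering (this is precisely what will make the bound $\mathcal{R}$-free). Sorting the right-hand side by order: the terms carrying at most $N$ angular derivatives are controlled by $\mathcal{O}_N$ through Propositions \ref{riccihelpful} and \ref{riccichihat}; the term $\tr\chi\cdot\nabla^{N+1}\tr\chi$ is moved to the left and absorbed by a trivial Gr\"onwall in $\ubar$ (its coefficient is $O(1/u)$ and $\delta/u\ll1$); and the genuinely top-order remainders, $\chihat\cdot\nabla^{N+1}\chihat$ from $|\chihat|^2$ and $\tr\chi\cdot\nabla^{N+1}\omega$ from $\omega\tr\chi$, are the ones for which one invokes the bootstrap bound $\widetilde{\mathcal{O}}_{N+1,2}\leq b^{1/4}$. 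Applying the $\nabla_4$-transport estimate of Proposition \ref{transportprop1} along $H_u$ — the datum on $S_{u,0}$ vanishes, since on $\Hbar_0$ the quantity $\tr\chi$ depends on $u$ alone — and integrating in $\ubar$ over $[0,\delta]$, each such remainder, after extracting a factor $\lVert u\,(\cdot)\rVert_{L^\infty(S_{u,\ubar})}\lesssim\al$ from its Ricci coefficient and a factor $\delta^{1/2}$ from Cauchy--Schwarz in $\ubar$, is bounded using $\lVert u^{N+1}\nabla^{N+1}(\chihat,\omega)\rVert_{L^2(H_u)}\leq\delta^{1/2}\al b^{1/4}$; a final $L^2_{\ubar}$ Cauchy--Schwarz then gives $\lVert u^{N+2}\nabla^{N+1}\tr\chi\rVert_{L^2_{\ubar}L^2(S_{u,\ubar})}\lesssim\delta^{3/2}ab^{1/4}$, every other contribution being strictly smaller.

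\textbf{Step 2 (the $\chihat$ estimate).} Use the Codazzi relation $\div\chihat=\tfrac12\nabla\tr\chi-\tfrac12(\eta-\etabar)\cdot(\chihat-\tfrac12\tr\chi)-\beta$, so that $\chihat$ is a symmetric traceless $2$-tensor solving $\div\chihat=f$, and Proposition \ref{Hodgeprop2} with $i=N+1$ gives $\lVert u^{N+1}\nabla^{N+1}\chihat\rVert_{L^2(S_{u,\ubar})}\lesssim\sum_{j\leq N}\big(\lVert u^{j+1}\nabla^j f\rVert_{L^2(S_{u,\ubar})}+\lVert u^j\nabla^j\chihat\rVert_{L^2(S_{u,\ubar})}\big)$. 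Taking the $L^2_{\ubar}$ norm and splitting $f$ into its three pieces: the $\beta$-piece is $\lVert u^{j+1}\nabla^j\beta\rVert_{L^2_{\ubar}L^2(S_{u,\ubar})}=\lVert u^{j+1}\nabla^j\beta\rVert_{L^2(H_u)}\lesssim\delta^{1/2}\al(1+\mathcal{R})$ directly by the definition of $\mathcal{R}_N$; the $\nabla\tr\chi$-piece is at worst $\nabla^{N+1}\tr\chi$, bounded by Step 1 after using $u\geq\delta\al b$ to recover the missing power of $u$ (producing a harmless factor $b^{-3/4}<1$); the quadratic Ricci terms are distributed by Leibniz and estimated exactly as in Proposition \ref{riccihelpful}, contributing $\lesssim\delta^{3/2}a\leq\delta^{1/2}\al$ since $\delta\al b<1$; and the terms $\lVert u^j\nabla^j\chihat\rVert$ with $j\leq N$ are $\lesssim\delta^{1/2}\al$ by $\mathcal{O}_N$. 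Adding these yields $\lVert u^{N+1}\nabla^{N+1}\chihat\rVert_{L^2_{\ubar}L^2(S_{u,\ubar})}\lesssim\delta^{1/2}\al(1+\mathcal{R})$.

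The main obstacle is the two-way top-order coupling: $\nabla^{N+1}\chihat$ sits on the right of the $\tr\chi$ transport equation, while $\nabla^{N+1}\tr\chi$ sits inside the Codazzi system for $\chihat$. The resolution — and the point requiring care — is that in the $\tr\chi$ estimate this term comes multiplied by $\chihat$ and is integrated over an $\ubar$-interval of length $\leq\delta$, so one may legitimately invoke the bootstrap bound $\widetilde{\mathcal{O}}_{N+1,2}\leq b^{1/4}$ there rather than any not-yet-established sharp estimate; only afterwards is the resulting $\tr\chi$ bound fed into the elliptic estimate for $\chihat$. Apart from this, the remaining work is entirely the weighted-norm bookkeeping — tracking the exact powers of $\delta$, $a$, $b$ and converting $u$-weights through $u\geq\delta\al b$ — together with the check that no curvature component enters \eqref{trchieq}, which is exactly what makes the first bound $\mathcal{R}$-free while the second one must carry the $(1+\mathcal{R})$.
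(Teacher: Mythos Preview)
Your proposal is correct and follows essentially the same route as the paper: transport \eqref{trchieq} commuted with $\nabla^{N+1}$ for $\tr\chi$, then the Codazzi div--curl system and Proposition~\ref{Hodgeprop2} for $\chihat$, with the top--order coupling resolved via the bootstrap bound $\widetilde{\mathcal O}_{N+1,2}\le b^{1/4}$. The only (harmless) variation is that you absorb the self--interaction $\tr\chi\cdot\nabla^{N+1}\tr\chi$ by a Gr\"onwall in $\ubar$, whereas the paper simply lumps this term together with the other top--order $\psi\cdot\nabla^{N+1}\psi$ contributions and bounds all of them at once through the bootstrap; either treatment gives the same $\delta^{3/2}ab^{1/4}$.
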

\begin{proof}
Consider the following equation
\[    \nabla_4 \tr\chi + \frac{1}{2}(\tr\chi)^2 = -   \lvert \chihat \rvert^2 - 2\omega \tr\chi.       \]Commuting $(N+1)$ times  with angular derivatives, we get

\[ \nabla_4 \nabla^{N+1} \tr\chi = \sum_{i_1+i_2 +i_3+i_4=N+1}  \nabla^{i_1}\psibar^{i_2}\nabla^{i_3}\psi \nabla^{i_4}\psi.       \]Using Proposition \ref{transportprop1} we can estimate $\lVert u^{N+2}\nabla^{N+1}\tr\chi \rVert_{L_{\ubar}^\infty L_u^\infty L^2(S_{u,\ubar})}$ by the $\lVert u^{N+2} \cdot \rVert_{L_{\ubar}^\infty L_u^1 L^2(S_{u,\ubar})}$ norm of the right-hand side. It is important here once again to differentiate between the case where all the derivatives fall on a $\psi$ and the other cases.

\begin{multline}
\Big \lVert u^{N+2} \sum_{i_1+i_2 +i_3+i_4=N+1}  \nabla^{i_1}\psibar^{i_2}\nabla^{i_3}\psi \nabla^{i_4}\psi \Bigg \rVert_{L_{u}^\infty L_{\ubar}^1 L^2(S_{u,\ubar})}\\ \lesssim \delta^{\frac{1}{2}} \lVert u \psi \rVert_{L_{\ubar}^\infty L^{\infty}(S_{u,\ubar})} \lVert u^{N+1}\nabla^{N+1}\psi \rVert_{L_u^{\infty} L_{\ubar}^2 L^{2}(S_{u,\ubar})}\\ + \delta \sum_{i \leq N-2} \lVert u^{i+1}\nabla^{i}\psi \rVert_{ L_u^\infty L_{\ubar}^\infty L^\infty(S_{u,\ubar})} \sum_{i_1+i_2\leq N}\lVert u^{i}\nabla^{i}\psi \rVert_{L_u^\infty L_{\ubar}^\infty  L^2(S_{u,\ubar})}\\ +\delta \sum_{i_1+i_2 \leq N-2}\lVert u^{i_1+i_2+2}\nabla^{i_1}\psibar^{i_2+1}\rVert_{ L_u^\infty L_{\ubar}^\infty L^\infty(S_{u,\ubar})}  \\ \times \sum_{i_3\leq N-2}\lVert u^{i_3+1}\nabla^{i_3}\psi \rVert_{L_{\ubar}^\infty L^\infty(S_{u,\ubar})}\cdot \sum_{i_4\leq N} \lVert u^{i_4}\nabla^{i_4}\psi \rVert_{L_u^{\infty} L_{\ubar}^\infty L^{2}(S_{u,\ubar})}\\ +\delta \sum_{i_1+i_2 \leq N-2}\lVert u^{i_1+i_2+2}\nabla^{i_1}\psibar^{i_2+1}\rVert_{ L_u^\infty L_{\ubar}^\infty L^2(S_{u,\ubar})}  \\ \times \sum_{i_3\leq N-2}\lVert u^{i_3+1}\nabla^{i_3}\psi \rVert_{L_{\ubar}^\infty L^\infty(S_{u,\ubar})}\cdot \sum_{i_4\leq N} \lVert u^{i_4}\nabla^{i_4}\psi \rVert_{L_u^{\infty} L_{\ubar}^\infty L^{\infty}(S_{u,\ubar})}\\ \lesssim \delta a b^{\frac{1}{4}}.
\end{multline} Recall that $\nabla^{N+1}\tr\chi = 0$ initially on $\Hbar_0$. Therefore, we have $\lVert u^{N+2}\nabla^{N+1}\tr\chi \rVert_{L_{\ubar}^\infty L^2(S_{u,\ubar})} \lesssim \delta a b^{\frac{1}{4}}$. Taking $L^2$ in $\ubar$ we arrive at \[  \lVert u^{N+2}\nabla^{N+1}\tr\chi \rVert_{L_{\ubar}^2 L^2(S_{u,\ubar})} \lesssim \delta^{\frac{3}{2}} a b^{\frac{1}{4}}.    \]As for $\chihat$, we employ the elliptic estimates for Hodge systems using a starting point the Codazzi equation \[ \text{div} \hspace{.5mm} \chihat = \frac{1}{2}\nabla \tr\chi -  \beta + \psi \psibar.    \]Using Proposition \ref{Hodgeprop2} we obtain

\begin{multline}
\twoSu{u^{N+1}\nabla^{N+1}\chihat} \lesssim \sum_{i \leq N} \twoSu{u^{i+1}\nabla^{i+1}\tr\chi} + \sum_{i \leq N} \twoSu{u^{i+1}\nabla^i \beta} \\ + \sum_{i \leq N}\twoSu{\sum_{i_1+i_2=i} u^{i+1}\nabla^{i_1}\psi \nabla^{i_2}\psibar} + \sum_{i \leq N} \twoSu{u^i \nabla^i \chihat}.
\end{multline}Taking $L^2$ in $\ubar$ and using the bootstrap assumptions, the control on $\psi$ derived from the lower-order Ricci coefficient estimates as well as the bound on $\tr\chi$ just obtained, we have

\begin{multline}
\lVert u^{N+1} \nabla^{N+1}\chihat \rVert_{L_{\ubar}^2 L^2(S_{u,\ubar})}\\  \lesssim  \sum_{1\leq i \leq N} \lVert u^{i+1} \nabla^{i+1}\tr\chi \rVert_{L_{\ubar}^2 L^2(S_{u,\ubar})} + \sum_{i \leq N} \lVert u^{i+1} \nabla^{i+1}\beta \rVert_{L_{\ubar}^2 L^2(S_{u,\ubar})}\\ + \sum_{i \leq N} \sum_{i_1+i_2=i} \lVert u^{i+1} \nabla^{i+1} \psibar \nabla^{i_2}\psi\rVert_{L_{\ubar}^2 L^2(S_{u,\ubar})} + \sum_{i \leq N} \lVert u^i \nabla^i \chihat\rVert_{L_{\ubar}^2 L^2(S_{u,\ubar})}\\ \lesssim \frac{\delta^{\frac{3}{2}}a b^{\frac{1}{4}}}{u}+\delta^{\frac{1}{2}}\al + \delta^{\frac{1}{2}}\al\mathcal{R}_N \lesssim \delta^{\frac{1}{2}}\al (1+\mathcal{R}_N).
\end{multline}
\end{proof}The rest of the elliptic estimates are carried in the same spirit.

\subsection{Energy estimates}

By this stage we have managed to prove the following two inequalities:

\begin{gather} 
\mathcal{O}_N \lesssim 1+ \widetilde{\mathcal{O}}_{N+1,2} + \mathcal{R}_N,    \\  \widetilde{\mathcal{O}}_{N+1,2} \lesssim 1+\mathcal{R}_N. 
\end{gather}The goal of this section will be to explain how we can arrive at the desired \begin{equation}
\mathcal{R}_N \lesssim 1.
\end{equation}We are going to need several preliminary propositions.

\begin{proposition}
Suppose $\phi_1$ and $\phi_2$ are tensorfields. Then,
\begin{equation*}\label{energyprop1}
\iint_{\mathcal{D}(u,\ubar)} \phi_1 \nabla_4 \phi_2 + \iint_{\mathcal{D}(u,\ubar)} \phi_2 \nabla_4 \phi_1 = \int_{\Hbar_{\ubar}^{(1,\ubar)}} \phi_1\phi_2 - \int_{\Hbar_0^{(1,u)}} \phi_1 \phi_2 + \iint_{\mathcal{D}{(u,\ubar)}} (2\omega-\tr\chi)\phi_1 \phi_2.      \end{equation*}
\end{proposition}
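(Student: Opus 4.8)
The plan is to prove this identity by reducing it to the one–dimensional fundamental theorem of calculus in the $\ubar$–variable, after recording the Jacobian relating the spacetime and null measures to $d\mu_{\gamma}\,du'\,d\ubar'$. There is no analytic difficulty: the whole content is in two first–variation formulas and in matching the measure conventions of \cite{AL17}.

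First I would reduce the integrand to the $e_4$–derivative of a scalar. Under the integral $\phi_1\phi_2$ denotes the full $\gamma$–contraction $\phi_1\cdot\phi_2$, a scalar on $\mathcal{D}(u,\ubar)$. The projected derivative $\nabla_4$ annihilates the induced metric: for $S_{u,\ubar}$–tangent $X,Y$ one has $e_4(\gamma(X,Y))=e_4(g(X,Y))=g(D_4X,Y)+g(X,D_4Y)=\gamma(\nabla_4X,Y)+\gamma(X,\nabla_4Y)$, because $g(e_3,\cdot)=g(e_4,\cdot)=0$ on $TS_{u,\ubar}$, so $\nabla_4\gamma=0$. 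Hence by the Leibniz rule and $e_4=\Omega^{-1}\partial_{\ubar}$ (Section 2.2),
\[\phi_1\,\nabla_4\phi_2+\phi_2\,\nabla_4\phi_1=\nabla_4(\phi_1\cdot\phi_2)=e_4(\phi_1\cdot\phi_2)=\Omega^{-1}\,\partial_{\ubar}(\phi_1\cdot\phi_2).\]

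Next I would record the Jacobian. In the coordinates $(u',\ubar',\theta^A)$ the area element of $S_{u',\ubar'}$ is $d\mu_{\gamma'}=\sqrt{\det\gamma'}\,d\theta^1 d\theta^2$, and with the conventions of \cite{AL17} the spacetime and incoming–null measures are $\iint_{\mathcal{D}}h=\int\!\!\int\!\!\int h\,\Omega^2\,d\mu_{\gamma'}\,du'\,d\ubar'$ and $\int_{\Hbar_{\ubar'}}h=\int\!\!\int h\,\Omega\,d\mu_{\gamma'}\,du'$. From $\partial_{\ubar}\gamma_{AB}=2\Omega\chi_{AB}$ one gets the first variation of area $\partial_{\ubar}\log\sqrt{\det\gamma}=\Omega\,\tr\chi$, and from $\omega=-\frac{1}{2}\nabla_4\log\Omega$ (identity (2.3)) one gets $\partial_{\ubar}\log\Omega=-2\Omega\,\omega$; therefore
\[\partial_{\ubar}\bigl(\Omega\sqrt{\det\gamma}\bigr)=\Omega^{2}\bigl(\tr\chi-2\omega\bigr)\sqrt{\det\gamma}.\]
Then I would write
\[\iint_{\mathcal{D}(u,\ubar)}\!\!\bigl(\phi_1\nabla_4\phi_2+\phi_2\nabla_4\phi_1\bigr)=\int_0^{\ubar}\!\!\int_u^1\!\!\int\partial_{\ubar'}(\phi_1\cdot\phi_2)\;\Omega\sqrt{\det\gamma'}\;d\theta\,du'\,d\ubar',\]
apply the fundamental theorem of calculus in $\ubar'$ to $\partial_{\ubar'}\bigl((\phi_1\cdot\phi_2)\,\Omega\sqrt{\det\gamma'}\bigr)$, and transfer the $\ubar'$–derivative of $\Omega\sqrt{\det\gamma'}$ to the other side using the displayed Jacobian identity. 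The endpoint contributions at $\ubar'=\ubar$ and $\ubar'=0$ are exactly $\int_{\Hbar_{\ubar}^{(1,\ubar)}}\phi_1\phi_2$ and $\int_{\Hbar_0^{(1,u)}}\phi_1\phi_2$ (the portions of the incoming cones bounding $\mathcal{D}(u,\ubar)$, carrying the measure $\Omega\,d\mu_{\gamma'}\,du'$), and the leftover bulk term is $-\iint_{\mathcal{D}(u,\ubar)}(\tr\chi-2\omega)\phi_1\phi_2=\iint_{\mathcal{D}(u,\ubar)}(2\omega-\tr\chi)\phi_1\phi_2$, which is the asserted identity. The companion statement with $\nabla_4$, $\Hbar_{\ubar}$, $\omega$, $\tr\chi$ replaced by $\nabla_3$, $H_u$, $\omegabar$, $\tr\chibar$ follows in the same way, integrating in $u'$ instead of $\ubar'$.

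The only point requiring genuine care — the ``main obstacle'' — is the bookkeeping of the measure conventions: one must check that the endpoints of the $\ubar'$–integration reproduce $\int_{\Hbar}\phi_1\phi_2$ with no leftover power of $\Omega$, and, crucially, that the bulk correction is the combination $\tr\chi-2\omega$ rather than $\tr\chi$ alone, which is exactly why the first–variation formula for $\Omega$, and not merely that for the area element, must be invoked. Everything else is the Leibniz rule and the fundamental theorem of calculus.
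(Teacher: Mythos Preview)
Your argument is correct and is exactly the standard derivation: Leibniz rule plus $\nabla_4\gamma=0$ to reduce to $e_4(\phi_1\cdot\phi_2)$, then the first-variation identities $\partial_{\ubar}\sqrt{\det\gamma}=\Omega\,\tr\chi\,\sqrt{\det\gamma}$ and $\partial_{\ubar}\log\Omega=-2\Omega\omega$, and finally the fundamental theorem of calculus in $\ubar'$. The paper does not supply its own proof of this proposition; it is listed among the ``preliminary propositions'' imported from \cite{AL17} and stated without argument, so there is nothing to compare against beyond noting that your proof is the one implicitly being cited.
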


\begin{proposition}\label{midenergyprop}
If$ \hspace{2mm} \prescript{(1)}{}{\phi}$ is an $r-$tensorfield and $ \hspace{.5mm} \prescript{(2)}{}{\phi}$ is an $(r-1)-$tensorfield, then

\begin{multline*}
\iint_{\mathcal{D}{(u,\ubar)}} \prescript{(1)}{}{\phi}^{A_1\dots A_r} \nabla_{A_r} \prescript{(2)}{}{\phi}_{A_1\dots A_{r-1}} + \iint_{\mathcal{D}{(u,\ubar)}} \nabla^{A_r}\prescript{(1)}{}{\phi}_{A_1\dots A_r} \prescript{(2)}{}{\phi}^{A_1\dots A_{r-1} } \\= - \iint_{\mathcal{D}{(u,\ubar)}}(\eta+ \etabar)\prescript{(1)}{}{\phi}\prescript{(2)}{}{\phi}.
\end{multline*}
\end{proposition}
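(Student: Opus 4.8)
The plan is to prove this identity by integration by parts on the spacetime region $\mathcal{D}(u,\ubar)$, using the fact that the divergence $\nabla^{A_r}$ is taken with respect to the induced covariant derivative on the spheres $S_{u',\ubar'}$, not the full spacetime connection. First I would recall that the volume form on $\mathcal{D}(u,\ubar)$ splits, in the double null coordinates $(u,\ubar,\theta^1,\theta^2)$, as $\Omega^2 \sqrt{\det\gamma}\, du\, d\ubar\, d\theta^1 d\theta^2$, so that the spacetime integral is an iterated integral of the integral over each $S_{u',\ubar'}$ against $\Omega^2\, du'\, d\ubar'$. On each fixed sphere $S_{u',\ubar'}$, the quantity $\prescript{(1)}{}{\phi}^{A_1\dots A_r}\,\prescript{(2)}{}{\phi}_{A_1\dots A_{r-1}}$ is an $S$-tangent $1$-form (in the free index $A_r$), and the ordinary divergence theorem on the closed surface $S_{u',\ubar'}$ gives
\begin{equation*}
\int_{S_{u',\ubar'}} \nabla^{A_r}\!\left( \prescript{(1)}{}{\phi}_{A_1\dots A_r}\, \prescript{(2)}{}{\phi}^{A_1\dots A_{r-1}} \right) = 0,
\end{equation*}
since $S_{u',\ubar'}$ has no boundary. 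Expanding the divergence by the Leibniz rule turns this into the statement that $\int_{S_{u',\ubar'}} \prescript{(1)}{}{\phi}^{A_1\dots A_r}\nabla_{A_r}\prescript{(2)}{}{\phi}_{A_1\dots A_{r-1}} + \int_{S_{u',\ubar'}}\nabla^{A_r}\prescript{(1)}{}{\phi}_{A_1\dots A_r}\,\prescript{(2)}{}{\phi}^{A_1\dots A_{r-1}} = 0$ on each sphere.

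Next I would integrate this spherical identity against $\Omega^2\, du'\, d\ubar'$ over the parameter range defining $\mathcal{D}(u,\ubar)$. The subtlety — and the one genuinely non-trivial point — is that the integrand in the \emph{spacetime} integrals of the proposition uses the spacetime volume element, so one must be careful whether the factor $\Omega^2\sqrt{\det\gamma}$ is being differentiated. Since $\nabla^{A_r}$ acts tangentially on each sphere, the extra terms that arise are precisely those coming from the variation of the metric volume form along the sphere, i.e. the Christoffel terms already absorbed into $\nabla$; no $\Omega^2$ factor gets hit, so the naive fibrewise computation goes through verbatim and we would get exactly $0$ on the right-hand side. The appearance of $-\iint_{\mathcal{D}}(\eta+\etabar)\prescript{(1)}{}{\phi}\prescript{(2)}{}{\phi}$ instead of $0$ must therefore come from a different source: the two covariant derivatives appearing in the statement are genuinely the \emph{spacetime} projected derivatives $\nabla$, and when one commutes them past the coordinate divergence structure — or, equivalently, when one expresses $\nabla_{A_r}$ acting on a spacetime-projected tensor in terms of the purely intrinsic sphere derivative — one picks up connection coefficients involving $\eta$ and $\etabar$, which are exactly $g(D_3 e_A,e_4)$ and $g(D_4 e_A,e_3)$. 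Concretely, the identity $g(D_A e_B, e_3) = -\chi_{AB}$ type relations, together with $\eta_A = -\tfrac12 g(D_3 e_A,e_4)$, feed in when one differentiates the frame $e_A$ implicit in the index positions; tracking these carefully produces the $(\eta+\etabar)$ term.

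Thus the concrete plan of steps is: (i) reduce to a fibrewise integral over each $S_{u',\ubar'}$ using the double-null volume splitting; (ii) apply the boundaryless divergence theorem on $S_{u',\ubar'}$ to the $S$-tangent $1$-form $\prescript{(1)}{}{\phi}\cdot\prescript{(2)}{}{\phi}$, being careful that the divergence there is the intrinsic one; (iii) account for the difference between the intrinsic sphere divergence and the projected spacetime derivative $\nabla$, which is where the $(\eta+\etabar)$ correction enters, using the definitions in (2.1) of $\eta$ and $\etabar$; (iv) integrate back up against $\Omega^2\,du'\,d\ubar'$ to recover the spacetime integrals. The main obstacle I anticipate is step (iii): getting the bookkeeping of the connection terms exactly right — in particular confirming that the correction is precisely $(\eta+\etabar)$ with the stated sign and that there is no leftover $\omega$, $\omegabar$ or $\tr\chi$ contribution (in contrast to Proposition \ref{energyprop1}, where $\nabla_4$ is in play and such terms \emph{do} survive). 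I would cross-check the final form against the corresponding identity in \cite{AL17} (or \cite{C09}, Chapter on energy estimates), since this is a standard manipulation there and the coefficient structure is already known.
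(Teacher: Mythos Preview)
Your overall architecture is right --- reduce to an integral over each sphere $S_{u',\ubar'}$, apply the divergence theorem there, and integrate back up --- but you have misidentified where the $(\eta+\etabar)$ term comes from, and your step (iii) is spurious.

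In this paper $\nabla$ \emph{is} the induced (intrinsic) covariant derivative on $S_{u,\ubar}$; there is no distinction between ``the intrinsic sphere divergence'' and ``the projected spacetime derivative $\nabla$'', so no connection coefficients appear when passing between them. The actual source of the correction is exactly the thing you dismissed: the $\Omega^2$ factor in the spacetime volume form \emph{does} get hit. The spacetime integral over $\mathcal{D}(u,\ubar)$ is $\int\!\!\int du'\,d\ubar' \int_{S_{u',\ubar'}} 2\Omega^2(\cdot)\, d\mu_\gamma$, so on each sphere you are computing $\int_S \Omega^2\, \nabla^{A_r}\!\big(\prescript{(1)}{}{\phi}_{A_1\dots A_r}\prescript{(2)}{}{\phi}^{A_1\dots A_{r-1}}\big)\, d\mu_\gamma$. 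The divergence theorem on the closed sphere applies to $\nabla^{A_r}(\Omega^2 V_{A_r})$, not to $\Omega^2\nabla^{A_r}V_{A_r}$; the difference is $V^{A_r}\nabla_{A_r}\Omega^2 = 2\Omega^2\, V^{A_r}\nabla_{A_r}\log\Omega$. Now use the identity $\eta_A+\etabar_A = 2\nabla_A\log\Omega$ from (2.3), and the $(\eta+\etabar)$ term drops out with the stated sign. No frame--differentiation or $e_3,e_4$ connection terms are involved, which is also why no $\omega,\omegabar,\tr\chi$ appear --- those only enter in Propositions \ref{energyprop1} and \ref{energyprop2}, where $\nabla_3,\nabla_4$ genuinely hit the volume form in the null directions.
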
To control the components satisfying $\nabla_3$-equations, we need an analogue of Proposition \ref{energyprop1}.

\begin{proposition}\label{energyprop2}
Suppose $\phi$ is an $r-$tensorfield and let $\lambda_1 = 2(\lambda_0-\frac{1}{2})$. Then
\begin{multline*}
2 \iint_{\mathcal{D}{(u,\ubar)}} \lvert u \rvert^{2\lambda_1}\phi (\nabla_3+\lambda_0 \tr\chibar)\phi \\ = \int_{H_u^{(0,\ubar)}} \lvert u \rvert^{2\lambda_1} \lvert\phi \rvert^2 - \int_{H_0^{(0,\ubar)}} \lvert u \rvert^{2\lambda_1} \lvert \phi \rvert^2 + \iint_{\mathcal{D}{(u,\ubar)}} \lvert u \rvert^{2\lambda_1} f \lvert \phi \rvert^2,
\end{multline*}where $f$ obeys the estimate \[  \lvert f \rvert \lesssim \frac{\delta \al b^{\frac{1}{4}}}{\lvert u \rvert^2}  .    \]
\end{proposition}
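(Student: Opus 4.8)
The plan is to turn the left–hand side into a spacetime divergence, integrate by parts via the $\nabla_3$–analogue of the integration–by–parts identity already proved for $\nabla_4$, and then carry the weight $|u|^{2\lambda_1}$ through the computation; the relation $\lambda_1=2\lambda_0-1$, combined with $\tr\chibar=-2/|u|+O(\delta\al b^{1/4}/|u|^2)$, is exactly what makes the would–be $|u|^{-1}$–scale terms cancel and leaves only the small coefficient $f$.

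\textbf{Step 1 (reduction to a scalar identity).} Since $D_3 e_A=\nabla_3 e_A+\eta_A e_3$ — the component of $D_3 e_A$ transverse to $S_{u,\ubar}$ lies along $e_3$, which is killed in every $\gamma$–contraction — the projected connection $\nabla_3$ is compatible with the induced metric, so $2\phi\,\nabla_3\phi=\nabla_3|\phi|^2$ for the full contraction $|\phi|^2$. Hence
\[ 2|u|^{2\lambda_1}\phi(\nabla_3+\lambda_0\tr\chibar)\phi=\nabla_3\!\left(|u|^{2\lambda_1}|\phi|^2\right)-\left(\nabla_3|u|^{2\lambda_1}\right)|\phi|^2+2\lambda_0\tr\chibar\,|u|^{2\lambda_1}|\phi|^2. \]

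\textbf{Step 2 (divergence identity and carrying the weight).} Exactly as in the proof of the $\nabla_4$ identity stated above, using $e_3=\Omega^{-1}(-\partial_u+d^A\partial_{\theta^A})$, the volume element $2\Omega^2\sqrt{|\gamma|}\,du\,d\ubar\,d\theta^1 d\theta^2$, the relations $e_3\log\Omega=-2\omegabar$ and $e_3\log\sqrt{|\gamma|}=\tr\chibar-\Omega^{-1}\partial_{\theta^A}d^A$, and the cancellation of the stray term $\partial_{\theta^A}d^A$ against the total $\theta$–derivative generated by the shift $d^A$, one obtains for every scalar $g$
\[ \iint_{\mathcal D(u,\ubar)}\nabla_3 g=\int_{H_u^{(0,\ubar)}}g-\int_{H_0^{(0,\ubar)}}g+\iint_{\mathcal D(u,\ubar)}(2\omegabar-\tr\chibar)\,g. \]
Apply this with $g=|u|^{2\lambda_1}|\phi|^2$. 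Because $|u|$ is constant on each $S_{u,\ubar}$ and $u>0$ on the region under consideration, $\nabla_3|u|^{2\lambda_1}=e_3(|u|^{2\lambda_1})=-2\lambda_1\Omega^{-1}|u|^{2\lambda_1-1}$. Collecting the terms from Step 1, the boundary contributions are precisely $\int_{H_u^{(0,\ubar)}}|u|^{2\lambda_1}|\phi|^2-\int_{H_0^{(0,\ubar)}}|u|^{2\lambda_1}|\phi|^2$, and the remaining bulk term is $\iint_{\mathcal D(u,\ubar)}|u|^{2\lambda_1}f|\phi|^2$ with
\[ f=(2\lambda_0-1)\tr\chibar+\frac{2\lambda_1\Omega^{-1}}{|u|}+2\omegabar=\lambda_1\!\left(\tr\chibar+\frac{2}{|u|}\right)+\frac{2\lambda_1}{|u|}\!\left(\Omega^{-1}-1\right)+2\omegabar, \]
where we used $2\lambda_0-1=\lambda_1$ and added and subtracted $2\lambda_1/|u|$; the $-2\lambda_1/|u|$ hidden inside $\lambda_1\tr\chibar$ is exactly what cancels the $+2\lambda_1/|u|$ coming from differentiating the weight.

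\textbf{Step 3 (estimating $f$; the main obstacle).} Each summand of $f$ is a small quantity: $\tr\chibar+2/|u|$ and $\omegabar$ are $\psibar$–components, controlled in $L^\infty(S_{u,\ubar})$ by the bootstrap assumption \eqref{bootstrap1} (with $i=0$), giving $\lVert \tr\chibar+2/|u|\rVert_{L^\infty(S_{u,\ubar})}+\lVert\omegabar\rVert_{L^\infty(S_{u,\ubar})}\lesssim \delta\al b^{1/4}/|u|^2$, while $\lVert\Omega^{-1}-1\rVert_{L^\infty(S_{u,\ubar})}\lesssim\delta\al b^{1/4}/|u|$ by Proposition 3.1, so the middle term is again $O(\delta\al b^{1/4}/|u|^2)$. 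Hence $|f|\lesssim\delta\al b^{1/4}/|u|^2$, as claimed. The only places where any thought is needed are the exact cancellation of the $|u|^{-1}$–scale terms in $f$ — which is precisely what forces the normalization $\lambda_1=2(\lambda_0-\tfrac12)$ — and, inside the divergence identity, the clean emergence of the coefficient $2\omegabar-\tr\chibar$ with all $\partial_{\theta^A}d^A$ contributions dropping out; the fiddliest part to write will be this last bit of bookkeeping, but it is structurally identical to the already–completed $\nabla_4$ case, so no new difficulty arises.
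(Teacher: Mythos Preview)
Your proof is correct. The paper does not actually supply a proof of this proposition; it is listed among the ``preliminary propositions'' (alongside the $\nabla_4$ identity and the angular integration-by-parts), stated without argument and taken as standard input from \cite{AL17}. Your derivation is precisely the standard one: rewrite $2\phi\nabla_3\phi=\nabla_3|\phi|^2$, invoke the $\nabla_3$ divergence identity $\iint\nabla_3 g=\int_{H_u}g-\int_{H_0}g+\iint(2\omegabar-\tr\chibar)g$, carry the weight $|u|^{2\lambda_1}$ through via $e_3(|u|^{2\lambda_1})=-2\lambda_1\Omega^{-1}|u|^{2\lambda_1-1}$, and observe that the choice $\lambda_1=2\lambda_0-1$ forces the exact cancellation of the $|u|^{-1}$-scale piece of $\tr\chibar$ against the derivative of the weight. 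The remaining $f=\lambda_1(\tr\chibar+2/|u|)+2\lambda_1(\Omega^{-1}-1)/|u|+2\omegabar$ is then bounded termwise using the bootstrap assumption \eqref{bootstrap1} and Proposition~3.1, exactly as you do.
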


\begin{proposition}
Under the assumptions of Theorem \ref{main2} and the bootstrap assumptions \eqref{bootstrap1}-\eqref{bootstrap4}, there holds
\begin{equation*}
\sum_{1\leq i \leq N} \Big(       \Big \lVert u^{i+2} \nabla^i \left(K- \frac{1}{\lvert u \rvert^2}, \check{\sigma} \right) \Big \rVert_{L_u^{\infty} L_{\ubar}^{2}(S_{u,\ubar})}  + \lVert u^{i+2} \nabla^i \betabar \rVert_{L_{\ubar}^\infty L_u^2 L^2(S_{u,\ubar}) }\Big) \lesssim \delta^{\frac{3}{2}}a^{\frac{3}{4}}.
\end{equation*} 
\end{proposition}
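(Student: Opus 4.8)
The quantities to bound, $\{K-\tfrac{1}{|u|^2},\ \csigma,\ \betabar\}$ at the weight $u^{i+2}$, are precisely those controlled by the energy pair of the renormalized Bianchi system in which $\betabar$ obeys a $\nabla_4$-equation and $(K-\tfrac{1}{|u|^2},\csigma)$ obey $\nabla_3$-equations; schematically,
\begin{gather*}
\nabla_4\betabar+\tr\chi\,\betabar=\nabla K+\Hodge{\nabla}\csigma+(\text{nonlinear}),\\
\nabla_3\big(K-\tfrac{1}{|u|^2}\big)+\tr\chibar\,\big(K-\tfrac{1}{|u|^2}\big)=\div\betabar+(\text{nonlinear}),\\
\nabla_3\csigma+\tfrac32\tr\chibar\,\csigma=-\div\Hodge{\betabar}+(\text{nonlinear}).
\end{gather*}
First I would commute each equation $i$ times with angular derivatives, $1\le i\le N$, via Proposition \ref{commutation} (in the $\psi$-schematic packaging of Proposition \ref{commutationformulaeprop}), so that the principal parts become $\nabla_4\nabla^i\betabar=\nabla^{i+1}\big(K-\tfrac{1}{|u|^2},\csigma\big)+F_i$ and $\big(\nabla_3+\tfrac{i+2}{2}\tr\chibar\big)\nabla^i\big(K-\tfrac{1}{|u|^2},\csigma\big)=\nabla^{i+1}\betabar+G_i$, with $F_i,G_i$ the usual commutator-plus-nonlinear remainders; the renormalization constant $\tfrac{1}{|u|^2}$ is exactly what gives the commuted $\nabla_3$-equation for $K$ its Schwarzschildean form, as in \cite{AL17}.

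Next comes the energy pairing. I would apply Proposition \ref{energyprop2} to $\phi=u^{i+2}\nabla^i\big(K-\tfrac{1}{|u|^2}\big)$ and to $\phi=u^{i+2}\nabla^i\csigma$ (with $\lambda_0$ matched to the $\tr\chibar$-coefficient of the commuted equation), and Proposition \ref{energyprop1} to $\phi_1=\phi_2=u^{i+2}\nabla^i\betabar$, integrating over $\mathcal{D}(u,\ubar)$. Adding the identities, the two borderline top-order spacetime integrals $\iint u^{2(i+2)}\nabla^i\betabar\cdot\nabla^{i+1}\big(K-\tfrac{1}{|u|^2},\csigma\big)$ and $\iint u^{2(i+2)}\nabla^i\big(K-\tfrac{1}{|u|^2},\csigma\big)\cdot\div\nabla^i\betabar$ cancel up to the integration-by-parts remainder of Proposition \ref{midenergyprop}, namely $\iint(\eta+\etabar)\cdot u^{2(i+2)}\nabla^i\betabar\cdot\nabla^i\big(K-\tfrac{1}{|u|^2},\csigma\big)$. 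What survives on the good side is $\|u^{i+2}\nabla^i\betabar\|_{L^2(\Hbar_\ubar)}^2+\|u^{i+2}\nabla^i\big(K-\tfrac{1}{|u|^2},\csigma\big)\|_{L^2(H_u)}^2$, i.e.\ the fluxes to be estimated, minus their values on the initial cones, plus the error integrals. Since $\betabar=0$ and $K-\tfrac{1}{|u|^2}=\csigma=0$ on the Minkowskian cone $\Hbar_0$, while the $(K-\tfrac{1}{|u|^2},\csigma)$-data on the initial outgoing cone is controlled by $\sum_{i\le N+3}\|\nabla^i\chihat_0\|\le\al$, the initial-data contribution is $\lesssim\delta^{\frac32}a^{\frac34}$.

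The remaining — and dominant — task is to estimate the spacetime error integrals so that they can be absorbed. They are of four kinds: (i) the $\iint u^{2(i+2)}f|\phi|^2$ terms with $|f|\lesssim\delta\al b^{\frac14}|u|^{-2}$ from Proposition \ref{energyprop2} and the $\iint(2\omega-\tr\chi)$ term from Proposition \ref{energyprop1}, which are harmless because $\delta\al b^{\frac14}/|u|\le b^{-\frac34}$ combined with the extra $|u|^{-2}$; (ii) the integration-by-parts remainder, estimated by placing $\eta+\etabar$ in $L^\infty$ via \eqref{bootstrap1} (a factor $\delta\al b^{\frac14}|u|^{-2}$) and the two curvature factors in $L^2$; (iii) the commutator remainders $F_i,G_i$, of the schematic form $\nabla^{i_1}\psi^{i_2}\nabla^{i_3}(\text{curvature})\nabla^{i_4}\psi$, handled as in Proposition \ref{riccichihat} by putting the top-derivative factor in $L^2$ and the rest in $L^\infty$, using \eqref{bootstrap1improved}--\eqref{bootstrap4improved}, Proposition \ref{riccihelpful}, and the Sobolev embedding $\|\phi\|_{L^\infty(S_{u,\ubar})}\lesssim\sum_{j\le2}\||u|^{j-1}\nabla^j\phi\|_{L^2(S_{u,\ubar})}$; and (iv) the genuine Bianchi nonlinearities — $\chibarhat\cdot\beta$, $\chihat\cdot\alphabar$, $\etabar\,(K-\tfrac{1}{|u|^2})$, $\Hodge{\etabar}\,\csigma$, and, most delicately, the residual terms produced by the renormalization. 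It is exactly for the borderline members of (iv) that one reintroduces, verbatim from \cite{AL17}, the auxiliary quantities $\omega^\dagger$ and $\mu$ with their commutation formulae, whose purpose is to remove from the $\betabar$- and $(K,\csigma)$-equations the terms that would not otherwise come with a gain in $b$. Granting these bounds, each error is $\lesssim\delta^{\frac32}a^{\frac34}b^{\frac14}$ (or better) times a quantity that is $\le1$; feeding in the already-established $\mathcal{O}_N+\widetilde{\mathcal{O}}_{N+1,2}\lesssim1+\mathcal{R}_N$, performing a Gr\"onwall argument in $u$ to handle the $|u|$-weight growth, and absorbing the $b^{\frac14}$-factors through the bootstrap then yields $\sum_{1\le i\le N}(\,\cdots\,)\lesssim\delta^{\frac32}a^{\frac34}$. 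I expect the principal obstacle to be precisely this last bookkeeping in (iii)--(iv): confirming that every top-order term — in particular those in which all derivatives land on a $\psibar$-type factor ($\chibarhat$ or $\etabar$) multiplied by an un-renormalized curvature component — still carries a strictly negative power of $b$ once the anomalous $u^{i+2}$ weights are accounted for.
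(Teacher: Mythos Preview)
Your approach is essentially the same as the paper's: commute the renormalized Bianchi equations for $(K-\tfrac{1}{|u|^2},\csigma,\betabar)$, apply the energy identities of Propositions~\ref{energyprop1}--\ref{energyprop2}, cancel the top-order cross terms via Proposition~\ref{midenergyprop}, and then control the remainders $F_{1,i},F_{2,i},F_{3,i}$ term by term. Two details in your error bookkeeping are off, however.

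First, the term $\chihat\cdot\alphabar$ does \emph{not} appear in the renormalized system you wrote down: the whole purpose of replacing $(\rho,\sigma,\alpha,\alphabar)$ by $(K,\csigma)$ is to eliminate $\alphabar$ from the equations entirely, since $\alphabar$ is not controlled by the norms $\mathcal{R}_N$. In the $\nabla_3$-equation for $K-\tfrac{1}{|u|^2}$ the analogous term is $\tfrac12\chibarhat\cdot\nabla\widehat\otimes\eta$, which falls under the schematic $\sum\nabla^{i_1}\psibar^{i_2+1}\nabla^{i_3}\psibar$ in $F_{2,i}$. Second, $\omega^\dagger$ is not used in this proposition; it is introduced for the elliptic estimates on $\nabla^{N+1}\omega$. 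The auxiliary quantity that \emph{is} needed here is $\mu$ alone, appearing as $\tfrac{1}{|u|}\nabla^i\mu$ in $F_{2,i}$, and for it one invokes the improved bound from the elliptic step. Finally, you do not flag the structural role of the restriction $i\ge 1$: the paper explicitly uses $i\ge 1$ to access the improved bounds on $\nabla^i\mu$, $\nabla^i(\tr\chibar+\tfrac{2}{|u|})$, $\nabla^i(1-\Omega^{-1})$ and $\nabla^i\tr\chi$; at $i=0$ these terms would not close at the stated weight, which is why the sum in the statement starts at $i=1$.
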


\begin{proof}
We begin with the following schematic Bianchi equations for $K-\frac{1}{\lvert u \rvert^2}, \check{\sigma},\betabar$:

\be \label{checksigmaeq} \nabla_3 \check{\sigma} + \div\Hodge{\betabar} + \frac{3}{2}\tr\chibar \check{\sigma} = \sum_{i_1+i_2=1} \psibar^{i_1+1}\nabla^{i_2}\psibar,        \ee

\begin{multline}\label{Keq} \nabla_3\left(K-\frac{1}{\lvert u \rvert^2} \right)+\text{div} \betabar + \frac{3}{2}\tr\chibar\left(K-\frac{1}{\lvert u \rvert^2}\right)\\ =\sum_{i_1+i_2=1} \psibar^{i_1+1}\nabla^{i_2}\psibar + \frac{1}{\lvert u \rvert}\mu + \frac{1}{\lvert u \rvert^2}(\tr\chibar  +\frac{2}{\lvert u \rvert}) + \frac{\Omega^{-1}-1}{\lvert u \rvert^3} \end{multline}and finally

\begin{equation}\label{betabareq}
\nabla_4 \betabar - \nabla K - \Hodge{\nabla}\check{\sigma} =\psibar(K,\check{\sigma}) + \sum_{i_1+i_2 +i_3=1}\psibar^{i_1} \nabla^{i_2}\left(\tr\chibar+\frac{2}{\lvert u \rvert}, \chibarhat, \omegabar \right)\nabla^{i_3}\psi  + \sum_{i_1+i_2=1} \psibar^{i_1} \nabla^{i_2}\tr\chi.
\end{equation}Commuting \eqref{checksigmaeq}, \eqref{Keq} and \eqref{betabareq} with $i$ angular derivatives, we have

\be \nabla_3 \nabla^i \check{\sigma} + \text{div} + \frac{3+i}{2}\tr\chibar \nabla^i \check{\sigma} = F_{1,i}  \label{F1i}  \ee where $F_{1,i}$ is given by
\begin{equation}  F_{1,i} = \sum_{i_1+i_2 +i_3=i+1}\nabla^{i_1}\psibar^{i_2+1} \nabla^{i_3}\psibar + \frac{1}{\lvert u \rvert} \sum_{i_1+i_2 +i_3=i-1} \nabla^{i_1}\psibar^{i_2+1}\nabla^{i_3}\check{\sigma}   \end{equation}and the equation \be\nabla_3 \nabla^i \left( K-\frac{1}{\lvert u \rvert^2} \right) - \text{div}\nabla^{i}\betabar + \frac{3+i}{2}\tr\chibar\nabla^{i}\left(K - \frac{1}{\lvert u \rvert^2} \right) = F_{2,i}, \label{F2i}  \ee where $F_{2,i}$ admits the schematic form \begin{multline}
F_{2,i} = \sum_{i_1+i_2 +i_3=i+1} \nabla^{i_1}\psibar^{i_2+1}\nabla^{i_3}\psibar +\frac{1}{\lvert u \rvert} \nabla^{i}\mu \\ +\frac{1}{\lvert u \rvert}\sum_{i_1+i_2 +i_3=i} \nabla^{i_1}\psibar^{i_2+1} \nabla^{i_3} \psibar\\+ \frac{1}{\lvert u \rvert^2}\sum_{i_1+ i_2 + i_3 =i} \nabla^{i_1}\psibar^{i_2} \nabla^{i_3}\left( \tr\chibar + \frac{2}{\lvert u \rvert} \right)  \\+ \frac{1}{\lvert u \rvert^3}\sum_{i_1+ i_2 + i_3 =i} \nabla^{i_1}\psibar^{i_2} \nabla^{i_3}(1- \Omega^{-1}) \\ + \sum_{i_1+i_2 +i_3=i} \nabla^{i_1}\psibar^{i_2+1}\nabla^{i_3}\left( K-\frac{1}{\lvert u \rvert^2} \right)\\ + \sum_{i_1+i_2 +i_3=i-1}\frac{1}{\lvert u \rvert} \nabla^{i_1}\psibar^{i_2+1}\nabla^{i_3}\left( K-\frac{1}{\lvert u \rvert^2} \right)
\end{multline}and finally the equation

\begin{equation}\label{F3i}
\nabla_4 \nabla^i \betabar - \text{div}\nabla^{i}\left(K- \frac{1}{\lvert u \rvert^2}\right) - \Hodge{\nabla}\nabla^i \check{\sigma} = F_{3,i},
\end{equation}where $F_{3,i}$ admits the schematic form 
\begin{multline}
F_{3,i} =  \sum_{i_1+i_2 +i_3+i_4=i+1} \nabla^{i_1}\psibar^{i_2}\nabla^{i_3}\left( \tr\chibar + \frac{2}{\lvert u \rvert}, \chibarhat, \omegabar \right)\nabla^{i_4}\psi\\ + \sum_{i_1+ i_2 + i_3 =i} \nabla^{i_1}\psibar^{i_2+1} \nabla^{i_3}(K- \frac{1}{\lvert u \rvert^2},\check{\sigma}).
\end{multline}Using \eqref{F1i},\eqref{F2i} and \eqref{F3i}, we can obtain the energy estimates. Using Proposition \ref{energyprop1} and equation \eqref{F3i}, we get

\begin{multline}\label{energyeq1}
\frac{1}{2}\int_{\Hbar_{\ubar}^{(1,u)}} \left( u^{i+2} \nabla^i \betabar \right)^2 \\ = \frac{1}{2}\int_{\Hbar_{0}^{(1,u)}} \left( u^{i+2} \nabla^i \betabar \right)^2 + \iint_{\mathcal{D}{(u,\ubar)}} \langle u^{i+2}\betabar, u^{i+2}\nabla_4 \nabla^i \betabar \rangle_\gamma -\iint_{\mathcal{D}{(u,\ubar)}}(\omega- \frac{1}{2}\tr\chi)(u^{i+2}\nabla^i \betabar)^2\\ = \frac{1}{2}\int_{\Hbar_{0}^{(1,u)}} \left( u^{i+2} \nabla^i \betabar \right)^2 + \iint_{\mathcal{D}{(u,\ubar)}} \langle u^{i+2}\betabar, u^{i+2}F_{3i}\rangle_\gamma\\ + \iint_{\mathcal{D}{(u,\ubar)}} \Big \langle u^{i+2} \nabla^i \betabar,  u^{i+2} \left( \nabla \nabla^i \left(K-\frac{1}{\lvert u \rvert^2}+ \Hodge{\nabla} \nabla^i \check{\sigma} \right) \right)\Big \rangle_\gamma.
\end{multline}Now applying Proposition \ref{energyprop2} and \eqref{F1i} we have \begin{multline}\label{energyeq2}
\frac{1}{2}\int_{H_u^{(0,\ubar)}} \left( u^{i+2}\nabla^i \check{\sigma}\right)^2  =\frac{1}{2}\int_{H_1^{(0,\ubar)}} \left( u^{i+2}\nabla^i \check{\sigma}\right)^2\\ + \iint_{\mathcal{D}{(u,\ubar)}} \Big \langle u^{i+2} \check{\sigma}, u^{i+2}\left(\nabla_3 + \frac{i+3}{2}\tr\chibar \right)\nabla^i \check{\sigma} \Big \rangle_\gamma - \frac{1}{2}\iint_{\mathcal{D}{(u,\ubar)}} f \left( u^{i+2}\nabla^i  \check{\sigma}\right)^2 \\ = \frac{1}{2}\int_{H_1^{(0,\ubar)}} \left( u^{i+2}\nabla^i \check{\sigma}\right)^2  + \iint_{\mathcal{D}{(u,\ubar)}} \Big \langle u^{i+2} \check{\sigma}, u^{i+2}F_{1i} \Big \rangle_\gamma -\Big \langle u^{i+2} \check{\sigma}, u^{i+2}(\text{div}\Hodge{\nabla}^i \betabar) \Big \rangle_\gamma \\- \frac{1}{2}\iint_{\mathcal{D}{(u,\ubar)}} f\left( u^{i+2}\nabla^i \check{\sigma} \right)^2.
\end{multline}In the same spirit, using Proposition \ref{energyprop2} and \eqref{F2i} we get 
\begin{multline} \label{energyeq3} \frac{1}{2}\int_{H_u^{(0,\ubar)}} \left( u^{i+2} \nabla^i \left(K- \frac{1}{u^2}\right)\right)^2  = \frac{1}{2}\int_{H_1^{(0,\ubar)}} (u^{i+2}\nabla^i (K-1))^2 \\+ \iint_{\mathcal{D}{(u,\ubar)}} \Big \langle u^{i+2} \nabla^i \left(K- \frac{1}{u^2}\right), u^{i+2}F_{2,i} \Big \rangle_\gamma \\ + \iint_{\mathcal{D}{(u,\ubar)}}\Big \langle u^{i+2} \nabla^i \left(K- \frac{1}{u^2}\right), u^{i+2}(\text{div}\nabla^i \betabar)\Big \rangle_\gamma\\ - \frac{1}{2} \iint_{\mathcal{D}{(u,\ubar)}} f\left( u^{i+2} \nabla^i \left(K- \frac{1}{u^2}\right)\right)^2.
\end{multline}Now, we can integrate by parts on the spheres $S_{u,\ubar}$  using Proposition \ref{midenergyprop} to show that the sum of the terms with the highest order of angular derivatives in \eqref{energyeq1}, \eqref{energyeq2} and \eqref{energyeq3} cancels up to a lower order error term:

\begin{multline}\label{cancellationinenergytoporder}
\iint_{\mathcal{D}{(u,\ubar)}} \Big \langle u^{i+2} \nabla^i \betabar,  u^{i+2} \left( \nabla \nabla^i \left(K-\frac{1}{\lvert u \rvert^2}+ \Hodge{\nabla} \nabla^i \check{\sigma} \right) \right)\Big \rangle_\gamma \\ -\Big \langle u^{i+2} \check{\sigma}, u^{i+2}(\text{div}\Hodge{\nabla}^i \betabar) \Big \rangle_\gamma \\+ \iint_{\mathcal{D}{(u,\ubar)}}\Big \langle u^{i+2} \nabla^i \left(K- \frac{1}{u^2}\right), u^{i+2}(\text{div}\nabla^i \betabar)\Big \rangle_\gamma \\ \lesssim \Big \lVert u^{2i+4} \nabla^i \left(K-\frac{1}{u^2}, \check{\sigma} \right)\psibar \nabla^i \betabar \Big \rVert_{L_u^1 L_{\ubar}^1 L^1(S_{u,\ubar})}\\ \lesssim \frac{\delta^{\frac{3}{2}}\al b^{\frac{1}{4}} }{u^{\frac{3}{2}}} \Big \lVert  u^{i+2} \nabla^i \left(K-\frac{1}{u^2}, \check{\sigma} \right) \Big \rVert_{ L_u^\infty L_{\ubar}^2 L^2(S_{u,\ubar})}\lVert  u^{i+2} \nabla^i \betabar \rVert_{ L_{\ubar}^\infty L_{u}^2 L^2(S_{u,\ubar})},
\end{multline}where in the last line we have used the bootstrap assumption \eqref{bootstrap1}. Upon addition of \eqref{energyeq1}, \eqref{energyeq2} and \eqref{energyeq3}, using \eqref{cancellationinenergytoporder} and  the bound for $f$ in in Proposition \ref{energyprop2}, we obtain 

\begin{multline}\label{anotherenergyeq}
\Big \lVert u^{i+2} \nabla^i \left(K-\frac{1}{u^2},\check{\sigma} \right) \Big \rVert_{L_{\ubar}^2 L^2(S_{u,\ubar})}^2 + \lVert u^{i+2} \nabla^i \betabar \rVert_{L_{u}^2 L^2(S_{u,\ubar})}^2 \\ \lesssim \Big \lVert u^{i+2} \nabla^i \left(K-\frac{1}{u^2},\check{\sigma} \right) \Big \rVert_{L_{\ubar}^2 L^2(S_{1,\ubar})}^2 + \lVert u^{2i+4} \nabla^i \check{\sigma} F_{1,i} \rVert_{L_u^1 L_{\ubar}^1 L^1(S_{u,\ubar})} \\ + \Big \lVert u^{2i+4} \nabla^i \left(K- \frac{1}{u^2},\check{\sigma} \right) F_{2,i} \Big \rVert_{L_u^1 L_{\ubar}^1 L^1(S_{u,\ubar})} \\ + \Big \lVert \frac{\delta \al b^{\frac{1}{4}}}{u^2}\cdot u^{2i+4} \nabla^i \left(K- \frac{1}{u^2},\check{\sigma} \right)\nabla^i \left(K- \frac{1}{u^2},\check{\sigma} \right) \Big \rVert_{L_u^1 L_{\ubar}^1 L^1(S_{u,\ubar})}\\ + \frac{\delta^{\frac{3}{2}} \al b^{\frac{1}{4}}}{u^{\frac{3}{2}}} \lVert u^{i+2} \nabla^i \left(K- \frac{1}{u^2},\check{\sigma} \right) \rVert_{L_u^{\infty} L_{\ubar}^2 L^{2}(S_{u,\ubar})} \lVert u^{i+2} \betabar \rVert_{L_{\ubar}^\infty L_u^2 L^2(S_{u,\ubar})}.
\end{multline}For the second, third and fourth terms, we can apply Cauchy-Schwarz in either
the $H$ or the $\Hbar$ hypersurfaces, so that the terms

\[   \Big \lVert  u^{i+2} \nabla^i \left(K- \frac{1}{u^2},\check{\sigma} \right) \Big\rVert_{ L_{\ubar}^2 L^{2}(S_{u,\ubar})}          \]and $\lVert u^{i+2} \nabla^i \betabar \rVert_{L_u^2 L^2(S_{u,\ubar})}$ can be absorbed to the left.  For the fifth term, noticing that \[ \Big \lVert \frac{\delta \al b^{\frac{1}{4}}}{u^2} \Big \rVert_{L_u^1} \lesssim \frac{1}{b^{\frac{3}{4}}},       \]we can control it by Gr\"onwall's inequality. Finally, since \[  \Big \lVert \frac{\delta^{\frac{3}{2}} \al b^{\frac{1}{4}}}{u^{\frac{3}{2}}} \Big \rVert_{L_u^{\infty}} \lesssim \frac{1}{b^{\frac{5}{4}}},      \]the term can be absorbed to the left hand side after using Schwarz’s inequality. Therefore, \eqref{anotherenergyeq} implies that

\begin{multline}
\Big \lVert u^{i+2} \nabla^i \left(K-\frac{1}{u^2},\check{\sigma} \right) \Big \rVert_{L_{\ubar}^2 L^2(S_{u,\ubar})}^2 + \lVert u^{i+2} \nabla^i \betabar \rVert_{L_{u}^2 L^2(S_{u,\ubar})}^2 \\ \lesssim \Big \lVert u^{i+2} \nabla^i \left(K-\frac{1}{u^2},\check{\sigma} \right) \Big \rVert_{L_{\ubar}^2 L^2(S_{1,\ubar})}^2 +\lVert u^{i+2}F_{1,i} \rVert_{L_u^1 L_{\ubar}^2 L^2(S_{u,\ubar})}\\ +\lVert u^{i+2}F_{2,i} \rVert_{L_u^1 L_{\ubar}^2 L^2(S_{u,\ubar})}+ +\lVert u^{i+2}F_{3,i} \rVert_{L_{\ubar}^1 L_{u}^2 L^2(S_{u,\ubar})}.
\end{multline}Summing over $1\leq i \leq N$ and using the fact that \[\sum_{1\leq i \leq N} \Big\lVert \nabla^i \left(K-\frac{1}{u^2}\right),\check{\sigma} \Big \rVert_{L_{\ubar}^2 L^2(S_{1,\ubar})} \lesssim \delta^{\frac{1}{2}} \al,  \]we obtain

\begin{multline}\label{midwayenergyeq}
\sum_{1\leq i \leq N} \left( 	\Big \lVert u^{i+2} \nabla^i \left(K-\frac{1}{u^2},\check{\sigma} \right) \Big \rVert_{L_{\ubar}^2 L^2(S_{u,\ubar})}^2 + \lVert u^{i+2} \nabla^i \betabar \rVert_{L_{u}^2 L^2(S_{u,\ubar})}^2  \right) \\ \lesssim  \delta^{\frac{1}{2}}\al + \sum_{1\leq i \leq N} \left( \lVert u^{i+2}F_{1,i} \rVert_{L_u^1 L_{\ubar}^2 L^2(S_{u,\ubar})} +\lVert u^{i+2}F_{2,i} \rVert_{L_u^1 L_{\ubar}^2 L^2(S_{u,\ubar})}+ +\lVert u^{i+2}F_{3,i} \rVert_{L_{\ubar}^1 L_{u}^2 L^2(S_{u,\ubar})}\right).
\end{multline}We will estimate the right hand side of \eqref{midwayenergyeq} term by term. Let us recall the schematic form of $F_{1,i}$: \[  F_{1,i} = \sum_{i_1+i_2 +i_3=i+1}\nabla^{i_1}\psibar^{i_2+1} \nabla^{i_3}\psibar + \frac{1}{\lvert u \rvert} \sum_{i_1+i_2 +i_3=i-1} \nabla^{i_1}\psibar^{i_2+1}\nabla^{i_3}\check{\sigma}.    \]Notice, in particular, that it consists of a schematic term involving only Ricci coefficients and one involving curvature (in particular $\check{\sigma}$). We deal with these terms separately. For the first term, assume without loss of generality that $i_1 \leq i_3$. We bound separately the contributions where
there are at most $NH$ derivatives falling on any of the Ricci coefficients, where
$N+1$ derivatives fall on $(\tr\chibar, \chibarhat, \omegabar)$ and where $N+1$ derivatives fall on $(\eta,\etabar)$. In particular, we have

\begin{multline}\label{F1i1}
\sum_{i \leq N} \Big \lVert u^{i+2} \sum_{i_1+ i_2 + i_3 =i+1} \nabla^{i_1} \psibar^{i_2+1}\nabla^{i_3}\psibar \Big \rVert_{L_u^1 L_{\ubar}^2 L^2(S_{u,\ubar})} \\ \lesssim \sum_{\substack{i_1+i_2 \leq N+1 \\ i_1 \leq N-2}} \lVert u^{i_1+i_2+2} \nabla^{i_1} \psibar^{i_2+1} \rVert_{L_u^{\infty} L_{\ubar}^\infty L^{\infty}(S_{u,\ubar})}\\ \times \Big(  \delta^{\frac{1}{2}} \sum_{i_3 \leq N} \lVert u^{i_3+1} \nabla^{i_3}\psibar \rVert_{L_u^{\infty} L_{\ubar}^\infty L^{2}(S_{u,\ubar})} \lVert u^{-2} \rVert_{L_u^1} \\ + \delta^{\frac{1}{2}} \lVert u^{N+1} \nabla^{N+1} (\tr\chibar, \chibarhat, \omegabar) \rVert_{ L_{\ubar}^\infty L_{u}^2 L^2(S_{u,\ubar})}  \lVert u^{-1} \rVert_{L_u^2} \\ \lVert u^{N+2} \nabla^{N+1}(\eta,\etabar) \rVert_{ L_u^\infty L_{\ubar}^2 L^2(S_{u,\ubar})}\lVert u^{-2} \rVert_{L_u^1}         \Big)\\ \lesssim \delta \al b^{\frac{1}{4}} \left(\frac{\delta^{\frac{3}{2}}\al b^{\frac{1}{4}}}{u} + \frac{\delta^{\frac{3}{2}} a^{\frac{3}{4}}}{u}\right)\lesssim \frac{\delta^{\frac{5}{2}}a^{\frac{5}{4}}b^{\frac{1}{4}}}{u}.
\end{multline}For the remaining contributions in $F_{1,i}$, we will prove the slightly more general bound where we allow $(K-\frac{1}{u^2},\check{\sigma})$ in  place of $\check{\sigma}$. Using Sobolev embedding, we have

\begin{multline}\label{F1i2}
\sum_{i \leq N} \Big \lVert u^{i+2} \sum_{i_1+i_2+i_3=i} \nabla^{i_1} \psibar^{i_2+1}\nabla^{i_3} \left(K-\frac{1}{u^2},\check{\sigma} \right) \Big \rVert_{L_u^1 L_{\ubar}^2 L^2(S_{u,\ubar})}\\ \lesssim \sum_{\substack{i_1+i_2 \leq N \\ i_1 \leq N-2}} \lVert u^{i_1+i_2+1}\nabla^{i_1}\psibar^{i_2+1} \rVert_{L_u^{\infty}L_{\ubar}^\infty L^2(S_{u,\ubar}) }\cdot \sum_{i_3 \leq N} \Big \lVert u^{i_3+2} \nabla^{i_3}\left(K-\frac{1}{u^2},\check{\sigma} \right) \Big \rVert_{ L_{u}^\infty L_{\ubar}^2 L^2(S_{u,\ubar})}\cdot\lVert u^{-2} \rVert_{L_u^1} \\ \lesssim \frac{\delta^{\frac{5}{2}}a^{\frac{5}{4}}b^{\frac{1}{2}}}{u}.
\end{multline}The final term in $F_{1,i}$ can also be controlled as above:

\begin{multline}\label{F1i3}
\sum_{i \leq N}  \Big \lVert u^{i+1} \sum_{i_1+ i_2 + i_3 =i-1} \nabla^{i_1} \psibar^{i_2+1} \nabla^{i_3}\left(K-\frac{1}{u^2},\check{\sigma} \right) \Big \rVert_{L_u^1 L_{\ubar}^2 L^2(S_{u,\ubar})}\\ \lesssim \sum_{\substack{i_1+i_2 \leq N-1\\ i_1\leq N-2}} \lVert u^{i_1+i_2+1} \nabla^{i_1}\psibar^{i_2+1} \rVert_{L_u^{\infty} L_{\ubar}^\infty L^2(S_{u,\ubar})}    \\ \times \sum_{i_3 \leq N-1} \Big \lVert u^{i_3+2} \nabla^{i_3}\left(K-\frac{1}{u^2},\check{\sigma} \right) \Big \rVert_{ L_{u}^\infty L_{\ubar}^2 L^2(S_{u,\ubar})}\cdot\lVert u^{-2} \rVert_{L_u^1}\\ \lesssim \frac{\delta^{\frac{5}{2}}a^{\frac{5}{4}}b^{\frac{1}{2}}}{u}.
\end{multline}We move on to estimates for $F_{2,i}$. Notice that the first, sixth and
seventh terms are already estimated above in \eqref{F1i1}, \eqref{F1i2} and \eqref{F1i3}. For	the second term, we need to use the improved estimates for $\nabla^i \mu$ provided in the elliptic estimates section. In particular, we need to use the fact that $i\geq 1$. 

\begin{equation}
\sum_{1\leq i \leq N} \uubarSuu{u^{i+1} \nabla^i \mu}{1}{2}{2} \lesssim \delta^{\frac{1}{2}} \sum_{1\leq i \leq N}\uubarSuu{u^{i+3}\nabla^i\mu}{\infty}{\infty}{2} \lVert u^{-2} \rVert_{L_u^1} \lesssim \frac{\delta^{\frac{5}{2}}a^{\frac{5}{4}}b^{\frac{1}{4}}}{\lvert u \rvert}.
\end{equation}For the third term in $F_{2,i}$,  we have

\begin{multline}
\sum_{i \leq N} \uubarSuu{u^{i+1} \sum_{i_1+i_2+i_3=i} \nabla^{i_1} \psibar^{i_2+1} \nabla^{i_3}\psibar}{1}{2}{2} \\ \lesssim \delta^{\frac{1}{2
}} \sum_{\substack{i_1+i_2 \leq N \\ i_1 \leq N-2}}\uubarSuu{u^{i_1+i_2+2}\nabla^{i_1}\psibar^{i_2+1}}{\infty}{\infty}{\infty} \cdot \sum_{i_3 \leq N} \uubarSuu{u^{i_3+1} \nabla^{i_3}\psibar}{\infty}{\infty}{2}\cdot \lVert u^{-2}\rVert_{L_u^1}\\ \lesssim \frac{\delta^{\frac{5}{2}}a b^{\frac{1}{2}}}{\lvert u \rvert}.
\end{multline}For the fourth term in $F_{2,i}$, we need to use the improved bounds in $\nabla^i(\tr\chibar +\frac{2}{\lvert u \rvert})$ and the fact that $i\geq 1$.

\begin{multline}
\sum_{1\leq i \leq N} \bigubaruSuu{u^i \sum_{i_1+ i_2 + i_3 =i}     \nabla^{i_1}\psibar^{i_2} \nabla^{i_3}\left( \tr\chibar + \frac{2}{\lvert u \rvert}\right) }{1}{2}{2}\\ \lesssim \delta^{\frac{1}{2}} \sum_{1\leq i \leq N} \biguubarSuu{u^{i+\frac{3}{2}} \nabla^i \left(\tr\chibar+ \frac{2}{\lvert u \rvert}\right)}{\infty}{\infty}{2} \lVert u^{-\frac{3}{2}}\rVert_{L_u^1}\\ + \delta^{\frac{1}{2}}  \sum_{i_1+i_2 \leq N-2} \Big \lVert u^{i_1+i_2+2} \nabla^{i_1}\psibar^{i_2+1} \Big\rVert_{ L_u^\infty L_{\ubar}^\infty L^\infty(S_{u,\ubar})}\cdot \sum_{i_3 \leq N} \uubarSuu{u^{i_3+1} \nabla^{i_3}\psibar}{\infty}{\infty}{2}\cdot \lVert u^{-2}\rVert_{L_u^1}\\ \lesssim  \frac{\delta^2 a^{\frac{3}{4}}}{\lvert u \rvert^{\frac{1}{2}}} +\frac{\delta^{\frac{5}{2}}a b^{\frac{1}{2}}}{\lvert u \rvert}.
\end{multline}For the fifth term in  $F_{2,i}$, finally, we use the fact that $i\geq 1$ and the improved bounds on $\nabla^i (\Omega^{-1}-1)$. We have

\begin{multline}
\sum_{1\leq i \leq N} \biguubarSuu{u^{i-1}\sum_{i_1+ i_2 + i_3 =i}\nabla^{i_1}\psibar^{i_2+1}\nabla^{i_3}\left(\Omega^{-1}-1\right) }{1}{2}{2}\\ \lesssim \delta^{\frac{1}{2}} \sum_{1\leq i \leq N} \uubarSuu{u^{i+\frac{1}{2}} \nabla^i (\log \Omega)}{\infty}{\infty}{2} \lVert u^{-\frac{3}{2}}\rVert_{L_u^1}\\ + \delta^{\frac{1}{2}} \sum_{i_1+i_2 \leq N-2} \uubarSuu{u^{i_1+i_2+2}\nabla^{i_1}\psibar^{i_2+1}}{\infty}{\infty}{\infty}\cdot \sum_{i_3 \leq N-1} \uubarSuu{u^{i_3+1}\nabla^{i_3}\psibar}{\infty}{\infty}{2} \lVert u^{-2} \rVert_{L_u^1} \\ \delta^{\frac{1}{2}}  \sum_{i_1+i_2 \leq N} \uubarSuu{u^{i_1+i_2+1}\nabla^{i_1}\psibar^{i_2+1}}{\infty}{\infty}{2}\cdot \sum_{i_3 \leq N-1} \uubarSuu{u(1-\Omega^{-1})}{\infty}{\infty}{2\infty} \lVert u^{-2} \rVert_{L_u^1}\\ \lesssim \frac{\delta^2 a^{\frac{3}{4}}}{\lvert u\rvert^{\frac{1}{2}}} +\frac{\delta^{\frac{5}{2}}a b^{\frac{1}{2}}}{\lvert u \rvert}.
\end{multline}We move on to estimates for $F_{3,i}$. For the first term, we have

\begin{multline}
\sum_{i \leq N}  \bigubaruSuu{u^{i+2} \sum_{i_1+i_2 +i_3+i_4=i+1} \nabla^{i_1} \psibar^{i_2}\nabla^{i_3}\left( \tr\chibar+\frac{2}{\lvert u \rvert},\chibarhat,\omegabar \right)}{1}{2}{2}\\ \lesssim \delta  \sum_{\substack{i_1+i_2\leq N+1\\ i_1 \leq N-2}} \uubarSuu{u^{i_1+i_2+2}\nabla^{i_1}\psibar^{i_2+1}}{\infty}{\infty}{\infty} \cdot \sum_{i_3 \leq N} \uubarSuu{u^{i_3}\nabla^{i_3}\psibar}{\infty}{\infty}{2} \lVert u^{-1} \rVert_{L_u^2} \\ + \delta^{\frac{1}{2}} \uubarSuu{u^2 \psibar}{\infty}{\infty}{\infty}\uubarSuu{u^{N+1}\nabla^{N+1}\psibar}{\infty}{\infty}{2} \lVert u^{-1} \rVert_{L_u^2} \\ + \delta \sum_{\substack{i_1+i_2 \leq N+1 \\  i_1 \leq N}}\uubarSuu{u^{i_1+i_2+1}\nabla^{i_1}\psibar^{i_2+1}}{\infty}{\infty}{2} \sum_{i_3 \leq N-2} \uubarSuu{u^{i_3+1}\nabla^{i_3}\psi}{\infty}{\infty}{2} \lVert u^{-1} \rVert_{L_u^2} \\ + \delta \bigubaruSuu{u^{N+1} \nabla^{N+1}\left(\tr\chibar+\frac{2}{\lvert u \rvert}, \chibarhat,\omegabar\right)}{\infty}{2}{2} \uubarSuu{u\psi}{\infty}{\infty}{\infty} \lesssim \frac{\delta a^2 b^{\frac{1}{4}}}{\lvert u \rvert^{\frac{1}{2}}}.
\end{multline}For the second term, we use Sobolev embedding
to get \begin{multline}
\sum_{i \leq N}\bigubaruSuu{u^{i+2}\sum_{i_1+ i_2 + i_3 =i} \nabla^{i_1}\psibar^{i_2+1}\nabla^{i_3}(K,\check{\sigma})}{1}{2}{2} \\ \lesssim \delta^{\frac{1}{2}}\sum_{i_1+i_2 \leq N} \uubarSuu{u^{i_1+i_2+1}\nabla^{i_1}\psibar^{i_2+1}}{\infty}{\infty}{2} \cdot \sum_{i_3 \leq N} \biguubarSuu{u^{i_3+2}\nabla^{i}\left(K-\frac{1}{\lvert u \rvert^2},\check{\sigma}\right)}{\infty}{2}{2} \lVert u^{-2} \rVert_{L_u^2} \\ + \delta \sum_{i_1+i_2 \leq N} \uubarSuu{u^{i_1+i_2+1}\nabla^{i_1}\psibar^{i_2+1}}{\infty}{\infty}{2}\lVert u^{-1}\rVert_{L_u^2}\\ \lesssim \frac{\delta^{3}a^{\frac{5}{4}}b^{\frac{1}{2}}}{\lvert u \rvert^{\frac{3}{2}}} + \frac{\delta^2 \al b^{\frac{1}{4}}}{\lvert u \rvert^{\frac{1}{2}}}.
\end{multline}For the final term in $F_{3,i}$, we use the improved estimates for $\nabla^i \tr\chi$, where $i\geq 1$. More precisely, there holds

\begin{multline}
\sum_{i \leq N} \bigubaruSuu{u^{i+1} \sum_{i_1+ i_2 + i_3 =i+1} \nabla^{i_1}\psibar^{i_2+1}\nabla^{i_3}\tr\chibar}{1}{2}{2}\\ \lesssim \left( \delta^{\frac{1}{2}}\uubarSuu{u^{N+2}\nabla^{N+1}\tr\chi}{\infty}{2}{2} + \delta \sum_{i \leq N} \uubarSuu{u^{i+1}\nabla^i \tr\chi}{\infty}{\infty}{2} \right) \lVert u^{-1} \rVert_{L_u^2}\\ + \delta \sum_{\substack{i_1+i_2 \leq N+1 \\ i_1\leq N-2}}\uubarSuu{u^{i_1+i_2+2}}{\infty}{\infty}{\infty} \cdot \sum_{i_3 \leq N} \uubarSuu{u^{i_3}\nabla^{i_3}\psi}{\infty}{\infty}{2} \lVert u^{-1} \rVert_{L_u^2}\\ +  \delta \sum_{\substack{i_1+i_2 \leq N+1 \\ i_1\leq N}}\uubarSuu{u^{i_1+i_2+1}\nabla^{i_1}\psibar^{i_2+1}}{\infty}{\infty}{2} \cdot \sum_{i_3 \leq N-2} \uubarSuu{u^{i_3}\nabla^{i_3}\psi}{\infty}{\infty}{\infty} \lVert u^{-1} \rVert_{L_u^2} \\ \lesssim \frac{\delta^2 a b^{\frac{1}{4}}}{\lvert u \rvert^{\frac{1}{2}}}.
\end{multline}Collecting all the above estimates and using  $\lvert u \rvert \geq \delta \al b$, the result follows.

\end{proof}\par \noindent The rest of the energy estimates are carried out in the same way. We summarize the results of this section in the following way:

\begin{theorem}\label{higherorderenergyestimates}
There exists a universal constant $k$ such that the following holds, for all $N$. Consider the following characteristic initial value problem for the Einstein vacuum equations. The initial incoming null hypersurface $\Hbar_0$ is required to coincide with a backwards light cone in Minkowski space with $0\leq u \leq 1$. Given $\delta$, for every $B>0$, there exist $a_0(B)$ and $b_0 =b_0(B)$ such that the following holds. Pick any $a$ and $b$ satisfying $a_0 \leq a < \delta^{-1}$ and $b_0 \leq \al \leq b < a< \delta^{-1}$ and assume that, along $H_0$, the initial shear satisfies

\[ \sum_{i \leq N+k} \lVert \nabla^i \chihat_0 \rVert_{L_{\ubar}^{\infty}L^2(S_{u,\ubar})} \leq \al, \]for all $0\leq \ubar \leq \delta,$ then there exists a unique solution to the Einstein vacuum equations  in the region $\delta \al b \leq u \leq 1, \hspace{.5mm} 0 \leq \ubar \leq \delta$. Moreover, the solution obeys the following higher order energy estimates:

\[ \mathcal{O}_N + \tilde{\mathcal{O}}_{N+1,2} + \mathcal{R}_N \lesssim 1 .   \]Moreover, the following improved bounds hold:

\[ \sum_{i \leq N}   \big \lVert u^{i+1}\nabla^i \left( \tr\chi - \frac{2}{u} \right) \big \rVert_{L^2(\S)} \lesssim \delta a, \hspace{2mm} \sum_{i \leq N-1}\big \lVert u^{i+2}\nabla^i \left( K - \frac{1}{u^2} \right) \big \rVert_{L^2(\S)} \lesssim \delta \al. \]   The implicit constant depends only on the initial data.
\end{theorem}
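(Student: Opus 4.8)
The plan is to follow the bootstrap scheme of \cite{AL17}, but at the level of $N$ derivatives rather than the fixed four derivatives of Theorem 2.1, using the observation that every structural ingredient — the commutation formulae of Section 3.5, the Hodge estimates of Propositions \ref{Hodgeprop1}--\ref{Hodgeprop2}, and the energy identities of Propositions \ref{energyprop1}--\ref{energyprop2} — holds for an arbitrary number of angular derivatives. Concretely, I would open the bootstrap by assuming \eqref{bootstrap1}--\eqref{bootstrap4}, then establish the chain of implications: first the preliminary metric estimates (Propositions 3.1, 3.2) and the Sobolev embedding (Proposition 3.8), which are already in place and do not see $N$; then the Ricci coefficient estimates $\mathcal{O}_N \lesssim 1 + \widetilde{\mathcal{O}}_{N+1,2} + \mathcal{R}_N$, carried out exactly as in Proposition \ref{riccichihat} for $\chihat$ with the remaining coefficients $(tr\chi, \omega, \eta, \etabar, \chibarhat, tr\chibar, \omegabar, \nabla\log\Omega)$ handled by the analogous transport/Codazzi arguments; then $\widetilde{\mathcal{O}}_{N+1,2} \lesssim 1 + \mathcal{R}_N$ via the elliptic estimates (Proposition 3.16 for $tr\chi$ and $\chihat$, and the parallel arguments for the remaining top-order quantities, crucially including the auxiliary $\omega^\dagger$ and $\mu$ whose commutation structure is preserved); and finally $\mathcal{R}_N \lesssim 1$ from the energy estimates, of which Proposition 3.21 for $(K - |u|^{-2}, \csigma, \betabar)$ is the prototype, with $\beta$ and $\alpha$ treated by the companion Bianchi pairs. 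Closing the bootstrap improves \eqref{bootstrap1improved}--\eqref{bootstrap4improved}, and a continuity argument on the region $\delta\al b \le u \le 1$, $0 \le \ubar \le \delta$ then yields existence and the stated bounds; the relation $a_0(B), b_0(B)$ comes from tracking, as in Corollary 1.1, how the initial bound $\sum_{i\le N+k}\lVert \nabla^i\chihat_0\rVert_{L^\infty_{\ubar}L^2} \le \al$ with $\al \le B$-type normalization feeds the constants, with $k$ the number of extra derivatives lost in the commutation and elliptic steps (one checks $k$ can be taken independent of $N$, e.g. $k = 3$).

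For the two improved bounds at the end, I would extract them from the energy estimates already obtained: for $\tr\chi - \tfrac{2}{u}$ one integrates the transport equation $\nabla_4(\tr\chi - \tfrac2u) = -|\chihat|^2 - 2\omega\tr\chi + (\text{lower order})$ along $e_4$ after commuting $i$ times, using Proposition \ref{transportprop1}, $\nabla^i(\tr\chi-\tfrac2u) = 0$ on $\Hbar_0$, and the now-established $\mathcal{O}_N$ bounds; the gain of a full power of $\delta a$ (rather than $\delta a b^{1/4}$) is exactly the extra $\al$ one squeezes out of $\int_0^{\ubar}|\chihat|^2 \lesssim \delta a$ once the bootstrap is closed, precisely as in the $\tr\chi$ computation inside the proof of Proposition 3.16 but retaining the sharp $\delta a$ instead of $\delta a b^{1/4}$. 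For $K - \tfrac{1}{u^2}$ one then uses the Gauss equation $K - \tfrac{1}{u^2} = -(\rho + \tfrac{2m_0}{u^3}) - \tfrac14\tr\chi\,\tr\chibar + \tfrac12\chihat\cdot\chibarhat + (\text{explicit lower order})$, or equivalently runs the $F_{2,i}$ energy estimate of Proposition 3.21 keeping track of the sharp powers, to upgrade to $\delta\al$ for $i \le N-1$.

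The main obstacle I expect is bookkeeping rather than conceptual: in the top-order estimates one must be scrupulous about \emph{which} factor in a schematic product $\nabla^{i_1}\psibar^{i_2}\nabla^{i_3}\psi\nabla^{i_4}\psi$ (or the corresponding curvature products) carries the maximal number of derivatives, and correspondingly whether it gets estimated in $L^2$ on $H_u$ or on $\Hbar_{\ubar}$, since the norms $\mathcal{R}_N$ and $\widetilde{\mathcal{O}}_{N+1,2}$ weight these two hypersurface families with different powers of $\delta$ and $|u|$. The key structural fact that makes this work — and which must be verified, not merely asserted — is the one isolated in Proposition \ref{riccihelpful}: in any such product at most one factor can carry more than $N-2$ derivatives, so all but one factor can be placed in $L^\infty$ via the bootstrap assumptions \eqref{bootstrap1}--\eqref{bootstrap2}, and the surviving top-order factor is absorbed either into the left-hand side (after Cauchy--Schwarz on the appropriate null hypersurface) or into $\mathcal{R}_N$ and $\widetilde{\mathcal{O}}_{N+1,2}$. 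The cancellation of the highest-order angular-derivative terms among the three Bianchi energy identities \eqref{energyeq1}--\eqref{energyeq3}, via integration by parts on $S_{u,\ubar}$ (Proposition \ref{midenergyprop}), is the other delicate point, but since it is algebraic in the schematic structure it transfers verbatim from the $N=4$ case to general $N$. A secondary subtlety is that the smallness factors $1/b^{3/4}$ and $1/b^{5/4}$ appearing in the Gr\"onwall and absorption steps must be checked to survive with $N$-independent constants, which they do because the number of terms generated by the commutation formulae, while growing with $N$, is still controlled by a fixed power and is dominated by the large constant $b$.
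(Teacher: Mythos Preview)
Your proposal is correct and follows essentially the same approach as the paper: the theorem is the summary of Section~3, whose proof is precisely the bootstrap scheme you outline — establish $\mathcal{O}_N \lesssim 1 + \widetilde{\mathcal{O}}_{N+1,2} + \mathcal{R}_N$ via transport equations (Propositions~\ref{riccihelpful}--\ref{riccichihat}), then $\widetilde{\mathcal{O}}_{N+1,2} \lesssim 1 + \mathcal{R}_N$ via elliptic estimates (Proposition~3.16 and analogues), then $\mathcal{R}_N \lesssim 1$ via the energy identities with the integration-by-parts cancellation (Proposition~3.21), all of which hold at arbitrary derivative level because the commutation formulae of Proposition~\ref{commutationformulaeprop} do. Your identification of the key structural point (at most one factor carries more than $N-2$ derivatives), of the value $k=3$, and of the mechanism for the improved $\tr\chi - \tfrac{2}{u}$ and $K - \tfrac{1}{u^2}$ bounds (re-running the transport argument after closing the bootstrap to shed the $b^{1/4}$ losses) all match what the paper does or invokes from \cite{AL17}.
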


\section{The transition region}
The goal here is to obtain estimates on the following pieces.
\begin{enumerate}
\item The sphere $S_{1,\delta}$ on the edge of the outgoing null cone of the initial data.
\item The (non-trivial) incoming null cone at the border of the region of existence.
\item The additional outgoing null cone extending beyond the region of existence, on which we impose no additional shear $\chihat=0$.  
\item The spacetime region developing from the characteristic initial value problem define by these outgoing and incoming null cones. This region is called the \textit{transition region}. Its existence is proved by using the estimates shown in 2 and 3. Such existence results have been shown, by \cite{L12}, and with an initial curvature singularity by \cite{LR14}, \cite{LR17}.
\end{enumerate}Throughout this paper we shall impose an extra condition on the initial data, namely \be \label{ssa} \int_0^{\delta} \lvert \chihat (1, \ubar, \theta) \rvert^2 \dubarprime =4m_0 , \ee for some positive constant $m_0$. We can think of this constant as representing the mass of a Schwarzschild spacetime, the geometry of which is close to the geometry of the entire transition region.

\begin{remark} \label{crucialremark}
Crucially, throughout this section, we assume  a bound on all the derivatives of $\chihat_0$:
\[  \sum_{i=0}^{\infty}\lVert \nabla^i \chihat_0 \rVert_{ L_{\ubar}^{\infty} L^{2}(S_{u,\ubar})}    \leq \al.   \]This means, thanks to Theorem \ref{higherorderenergyestimates}, that for any given $N$, the estimates
\[   \mathcal{O}_N + \tilde{\mathcal{O}}_{N+1,2}+\mathcal{R}_N \lesssim 1   \]hold in the region of existence.
\end{remark}
\subsection{The geometry of $S_{1,\delta}$}
The purpose of this section is to prove the following lemma
which, roughly speaking, says the geometry of the two sphere $S_{1,\delta}$ is close to the geometry of a given $2-$sphere in a Schwarzschild spacetime with mass $m_0$.

\begin{lemma}\label{spheregeoestimates}
On the sphere $S_{1,\delta}$ we have $\alpha\equiv 0$ and for all $k$

\[   \left(\lvert \nabla^k\left(\tr\chi- 2+4m_0\right) \rvert,\lvert  \nabla^k \beta \rvert,\lvert \nabla^k \sigma \rvert, \lvert \nabla^k (K-1) \rvert , \lvert \nabla^k (\rho + 2m_0) \rvert \right) \lesssim \delta a.       \]
\end{lemma}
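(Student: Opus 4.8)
The plan is to evaluate all the relevant geometric quantities on the sphere $S_{1,\delta}$, which lies at the far edge $\ubar = \delta$ of the outgoing null cone $H_1$. The starting observation is that on $H_1$ the only nontrivial data is $\chihat_0$, and that $\chihat_0 \to 0$ smoothly (in both $\ubar$ and $\theta$) as $\ubar \to \delta$ by the \textbf{Smooth} condition imposed on the initial data in Section 1.3; indeed $\chihat_0 = 0$ at $\ubar = \delta$. Since $\alpha$ appears on $H_1$ only through $\nabla_4 \chihat + \tr\chi\,\chihat = -2\omega\chihat - \alpha$ (equation \eqref{chihateq}) and all of $\chihat$, $\nabla_4\chihat$ vanish at $\ubar=\delta$ together with $\omega$ on the initial cone, we get $\alpha \equiv 0$ on $S_{1,\delta}$ immediately. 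For the remaining bounds, the idea is to integrate the relevant transport equations in the $e_4$ (i.e. $\ubar$) direction along $H_1$ from $\ubar = 0$ to $\ubar = \delta$, starting from Minkowskian/Schwarzschildean values, and to feed in the estimates of Theorem \ref{higherorderenergyestimates} (available here for every $N$ because of the all-orders bound on $\chihat_0$ assumed in Remark \ref{crucialremark}), together with the reference-subtraction built into the norms $\mathcal{O}_N$, $\mathcal{R}_N$.

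Concretely, I would proceed component by component. For $\tr\chi$: on $H_1$ one has $\nabla_4\tr\chi + \frac12(\tr\chi)^2 = -|\chihat|^2 - 2\omega\tr\chi$ (equation \eqref{trchieq}); comparing with the Schwarzschild profile (where the analogous combination $\tr\chi - \frac{2}{u} + \frac{4m_0}{u^2}$ at $u=1$ is $2 - 4m_0$ up to the relevant normalisation, hence the appearance of $2 - 4m_0$ in the statement), and using $\int_0^\delta |\chihat_0|^2\,d\ubar' = 4m_0$ from \eqref{ssa}, the difference $\tr\chi - 2 + 4m_0$ is controlled by an integral of $\psi\cdot\psi$-type terms which the $\mathcal{O}_N$-estimate bounds by $\delta a / u$, i.e. $\lesssim \delta a$ at $u = 1$. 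Commuting with $k$ angular derivatives (using Proposition \ref{commutationformulaeprop}) and invoking Proposition \ref{transportprop1} gives the same bound for $\nabla^k(\tr\chi - 2 + 4m_0)$. For $\beta$: use the Bianchi equation $\nabla_4\beta + 2\tr\chi\,\beta = \div\alpha - 2\omega\beta + \eta\cdot\alpha$; since $\alpha$ and $\beta$ both vanish on $\Hbar_0$ and the source is quadratic-plus-$\alpha$, integrating in $\ubar$ and using $\mathcal{R}_N \lesssim 1$ (which encodes $\|u^{i+1}\nabla^i\beta\|_{L^2(H_u)} \lesssim \delta^{1/2}a^{1/2}$, and one upgrades to the sphere via Sobolev, Proposition 3.12) yields $|\nabla^k\beta| \lesssim \delta a$ on $S_{1,\delta}$. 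For $\sigma$ (equivalently $\csigma$, since $\chibarhat\wedge\chihat$ is itself small), $K - 1$ (equivalently $K - \frac{1}{|u|^2}$ at $u=1$), and $\rho + 2m_0$ (equivalently $\rho + \frac{2m_0}{u^3}$ at $u=1$): these follow the same template — each is the reference-subtracted quantity already controlled by $\mathcal{R}_N$ or by the Gauss equation $K = -\rho - \frac14\tr\chi\tr\chibar + \frac12\chihat\cdot\chibarhat$ combined with the $\tr\chi$, $\rho$ bounds — so integrating the corresponding $\nabla_4$ Bianchi/null-structure equation from trivial data on $\Hbar_0$, commuting with $\nabla^k$, and applying Proposition \ref{transportprop1} and the Sobolev embedding of Proposition 3.12, gives $\lesssim \delta a$ for all $k$.

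The main obstacle, I expect, is bookkeeping rather than anything conceptual: one must be careful that the \emph{reference} values being subtracted are exactly the Schwarzschild-at-$u=1$ values (so that, e.g., $\tr\chi - \frac2u + \frac{4m_0}{u^2}$ evaluated at $u=1$ is what the statement calls $\tr\chi - 2 + 4m_0$), and one must verify that the condition \eqref{ssa}, $\int_0^\delta|\chihat_0|^2 = 4m_0$, is precisely what makes the $m_0$-terms cancel to leading order after integrating the transport equations in $\ubar$. A secondary technical point is passing from the hypersurface-$L^2$ norms in $\mathcal{R}_N$ to pointwise ($L^\infty$-on-$S_{1,\delta}$) bounds: this costs the Sobolev embedding of Proposition 3.12, which is uniform on each $S_{u,\ubar}$ thanks to the isoperimetric bound (Proposition 3.10) and $\mathrm{Area}(S_{u,\ubar}) \sim |u|^2$, and since we need it for every $k$ we rely on Remark \ref{crucialremark} to have $\mathcal{O}_N + \tilde{\mathcal{O}}_{N+1,2} + \mathcal{R}_N \lesssim 1$ for arbitrarily large $N$. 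Once these two points are pinned down, each component is a one-line transport-integration estimate of the kind already carried out in Section 3.
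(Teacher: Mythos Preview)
Your treatment of $\alpha$ and of $\tr\chi - 2 + 4m_0$ (at least for $k=0$) matches the paper. The gap is in how you handle $\beta$, $\sigma$, and $\rho + 2m_0$.

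For $\beta$ you propose to integrate the Bianchi equation $\nabla_4\beta + 2\tr\chi\,\beta = \div\alpha + \eta\cdot\alpha$ along $H_1$ and then invoke $\mathcal{R}_N\lesssim 1$ plus Sobolev. Neither step gives what you need. First, $\mathcal{R}_N$ only controls $\|u^{i+1}\nabla^i\beta\|_{L^2(H_u)}$, i.e.\ an $L^2_{\ubar}L^2(S)$ norm; Sobolev embedding on the sphere does not convert this into a bound at the single slice $\ubar=\delta$. Second, if you actually integrate the transport equation, the source $\div\alpha$ is large: on $H_1$ one has $\alpha = -\nabla_4\chihat_0 - \tr\chi\,\chihat_0$, and the initial-data assumption \eqref{AnLukansatz} only gives $\|\nabla\nabla_4\chihat_0\|_{L^2(S)}\lesssim a^{1/2}/\delta$, so $\int_0^\delta\|\div\alpha\|\,d\ubar'\lesssim a^{1/2}$, not $\delta a$. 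Any attempt to recover the correct bound by integrating $\nabla_4(\div\chihat)$ by parts in $\ubar$ is effectively rederiving the Codazzi equation.

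The paper instead uses the constraint equations directly on $S_{1,\delta}$, exploiting that $\chihat\equiv 0$ there. From Codazzi, $\beta = \tfrac12\nabla\tr\chi + \tfrac12\tr\chi\,\eta$ on $S_{1,\delta}$, and both pieces are $\lesssim \delta a$ by the improved bound $\|u^{k+1}\nabla^k(\tr\chi - 2/u)\|_{L^2(S)}\lesssim\delta a$ from Theorem~\ref{higherorderenergyestimates} and $|\nabla^k\eta|\lesssim\delta a^{1/2}$. For $\sigma$, the paper uses $\sigma=\check\sigma=\curl\eta$ (again because $\chihat=0$), giving $|\nabla^k\sigma|\lesssim\delta a^{1/2}$ immediately. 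For $\rho+2m_0$, the paper uses the Gauss-equation identity expressing $\rho+2m_0/u^3$ algebraically in terms of $K-1/u^2$, $\tr\chi-2/u+4m_0/u^2$, $\tr\chibar+2/u$, and $\chihat\cdot\chibarhat$, all already controlled; no transport integration is needed. The key point you are missing is that the vanishing of $\chihat$ at $\ubar=\delta$ turns the sphere constraints into purely algebraic identities for the curvature components.
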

\begin{proof}
To begin with, the fact that $\alpha \equiv 0 $ follows from \eqref{chihateq} and the fact that $\chihat$ has compact support in $C_1^{(0,\delta)}$. For $\tr\chi$ on $C_1$, we can recall the equation \eqref{trchieq} that reads

\[ \nabla_4 \tr\chi = - \frac{1}{2}(\tr\chi)^2 - \lvert \chihat \rvert^2 - 2\omega \hspace{.5mm} \tr\chi. \]Integrate this equation along $[0,\delta]$ to obtain

\begin{multline} \lvert \tr\chi(1, \delta) - 2 +4m_0 \rvert \lesssim  \int_0^{\delta}  \lvert \tr\chi(\ubar^{\prime},1)\rvert^2 \dubarprime + \int_0^{\delta} \lvert \omega(\ubar^{\prime},1)\rvert \lvert \tr\chi(\ubar^{\prime},1)\rvert \hspace{.5mm} \text{d}\ubar^{\prime} \\\lesssim  \delta \inftySu{\tr\chi}^2 + \delta \inftySu{\tr\chi}\inftySu{\omega}  \lesssim \delta \cdot \al \cdot \al = \delta a. \end{multline} Here we have used Theorem \ref{higherorderenergyestimates}. Moreover, for $k\geq 1$, the same Theorem implies that \begin{equation}\TwoSu{u^{k+1}\nabla^k \left(\tr\chi - \frac{2}{u} \right)} \lesssim \delta a.\end{equation}Consequently, for $u=1, \ubar= \delta$, there holds
\[   \big\lVert \nabla^k \left(\tr\chi - \frac{2}{u} \right)\big\rVert_{L^2(S_{1,\delta})} \lesssim \delta a, \]for all $k$. Using the Sobolev embedding Theorem \ref{Sobolevembedding}, there holds, for all $k\geq 1$,\be \label{sphereestimate1} \big\lVert \nabla^k \left(\tr\chi - 2+4m_0 \right)\big\rVert_{L^{\infty}(S_{1,\delta})} \lesssim \delta a.\ee 

\par \noindent For $\beta$, on the initial outgoing cone $C_{1}$, recall that we have \[ \text{div} \chihat = \frac{1}{2}\nabla \tr\chi - \chihat \cdot \eta +\frac{1}{2}\tr\chi \hspace{.5mm} \eta -\beta. \]We rewrite this as \[ \beta  = \frac{1}{2}\nabla \tr\chi - \chihat \cdot \eta +\frac{1}{2}\tr\chi \hspace{.5mm} \eta -\div \chihat \]Now notice that $\chihat \equiv 0$ on $S_{1,\delta}$, so that\[\beta = \frac{1}{2} \nabla \tr\chi + \frac{1}{2}\tr\chi \hspace{.5mm} \eta,   \]on $S_{1,\delta}$. Differentiating $k$ times by $\nabla$, we get \begin{multline} \lvert \nabla^k \beta \rvert(1,\delta) \lesssim \lvert \nabla^{k+1}\tr\chi\rvert(1,\delta) + \sum_{k_1+k_2=k} \lvert \nabla^{k_1} \eta \rvert(1,\delta) \cdot \lvert \nabla^{k_2} \tr\chi \rvert(1,\delta) \\\lesssim \big\lVert \nabla^{k+1} \left(\tr\chi - 2+4m_0 \right)\big\rVert_{L^{\infty}(S_{1,\delta})} + \sum_{k_1+k_2=k}\lVert \nabla^{k_1} \tr\chi \rVert_{L^{\infty}(S_{1,\delta})} \lVert \nabla^{k_2} \eta \rVert_{L^{\infty}(S_{1,\delta})}\lesssim \delta a + \al \cdot \delta \al \lesssim \delta a, \end{multline}where we have used \eqref{sphereestimate1} and the fact that $\lvert \nabla^{k_1} \eta \rvert(1,\delta) \lesssim \delta \al$ by Theorem \ref{higherorderenergyestimates}. \vspace{3mm}
\par \noindent For $\sigma$ on the initial cone $C_1$, we have

\[  \curl \eta = \sigma - \frac{1}{2}\chihat \wedge \chibarhat = \csigma. \]Since $\lvert \nabla^{k+1} \eta \rvert(1,\delta) \lesssim \delta \al$ and $\chihat \equiv 0$ on $S_{1,\delta}$ , taking $\nabla^k$ on both sides of the above yields the desired estimates. 

\vspace{3mm} \par \noindent  For $\nabla^k (K-1)$ on $S_{1,\delta}$,
Proposition 6.9  in \cite{AL17} along with Theorem \ref{higherorderenergyestimates} and an application of Sobolev's embedding yields \[  \lvert \nabla^k (K-1) \rvert \lesssim \delta \al (1+ \tilde{\mathcal{O}}_{k+1,2} + \mathcal{R}_k) \lesssim  \delta \al    . \]Moving on to estimates for $\rho+2m_0$, notice that there holds

	\begin{equation}\begin{split}
	\rho+ \frac{2m_0}{u^3}= &- \left(K- \frac{1}{u^2}\right) - \frac{1}{4}\left(\tr\chi- \frac{2}{u} +\frac{4m_0}{u^2} \right) \left(\tr\chibar + \frac{2}{u} \right)\\ &+ \frac{1}{2}\chihat \cdot \chibarhat + \frac{1}{2u} \left(\tr\chi- \frac{2}{u} +\frac{4m_0}{u^2} \right) - \frac{1}{2u}\left( \tr\chibar + \frac2u \right) + \frac{m_0}{u^2}\left( \tr\chibar + \frac2u \right). \end{split}
	\end{equation}Given the fact that $u=1$ and $\chihat=0$ on $S_{1,\delta}$, applying $\nabla^k$ to both sides of the above identity and using the estimates on $\tr\chibar+ \f2u$ from Theorem \ref{higherorderenergyestimates}, we arrive at 
	\begin{equation}
	    \lvert \nabla^k (\rho+2m_0) \rvert \lesssim \delta a.
	\end{equation}

\end{proof}


\subsection{The geometry of the incoming cone} \label{incomingconesection}

Throughout this section, our main goal will to establish the fact that the geometry of the cone $\underline{C}_{\delta}$
is close, in a rigorous sense to be given below, to the geometry of an incoming cone in a Schwarzschild spacetime with mass $m_0$, where $m_0$ is given by \eqref{ssa}. This will be achieved by propagating the appropriate null structure equations. This is where  equation \eqref{ssa} and the control on the geometry of $S_{1,\delta}$ will prove useful, as it is those that enable us to achieve the desired closeness. We shall prove the estimates in a series of propositions. We begin with the estimates for $\chihat$.

\begin{proposition}
	\label{chihatincomingprop} There holds \[ \lVert u^k \nabla^k \chihat \rVert_{L^2(S_{u,\delta})} \lesssim \frac{\delta a}{u}.    \]
\end{proposition}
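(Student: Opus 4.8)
The plan is to propagate the transport equation for $\chihat$ along the incoming direction $\nabla_3$, using the $\nabla_3$ structure equation for $\chihat$ together with the commutation formula of Proposition~\ref{commutation} (or its streamlined version Proposition~\ref{commutationformulaeprop}). Recall that $\chihat$ satisfies
\[
\nabla_3 \chihat + \frac{1}{2}\tr\chibar\,\chihat = \nabla\widehat{\otimes}\eta + 2\omegabar\,\chihat - \frac{1}{2}\tr\chibar\,\chihat + \eta\widehat{\otimes}\eta,
\]
which we write schematically as $\nabla_3\chihat + \frac12\tr\chibar\,\chihat = \nabla\eta + (\psi,\psibar)\psibar$, exactly as in the proof of Proposition~\ref{riccichihat}. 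Commuting with $k$ angular derivatives via Proposition~\ref{commutation}, one obtains
\[
\nabla_3\nabla^k\chihat + \frac{k+1}{2}\tr\chibar\,\nabla^k\chihat = \nabla^{k+1}\eta + \sum_{i_1+i_2=k}\nabla^{i_1}\psibar^{i_2+2} + \sum_{i_1+i_2+i_3=k}\nabla^{i_1}\psibar^{i_2+1}\nabla^{i_3}\psi + \frac{1}{u}\sum_{i_1+i_2+i_3=k-1}\nabla^{i_1}\psibar^{i_2+1}\nabla^{i_3}\psi.
\]

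**Applying the transport estimate and starting data.** I would then apply Proposition~\ref{transportprop2} with $\lambda_0 = \frac{k+1}{2}$, so $\lambda_1 = k$, which gives
\[
u^k\twoSu{\nabla^k\chihat} \lesssim \lVert\nabla^k\chihat\rVert_{L^2(S_{1,\delta})} + \int_u^1 |u'|^k \lVert F_k\rVert_{L^2(S_{u',\delta})}\,\dtext u',
\]
where $F_k$ denotes the right-hand side above. The starting term on $S_{1,\delta}$ is $\lVert\nabla^k\chihat\rVert_{L^2(S_{1,\delta})}$, which \emph{vanishes} because $\chihat \equiv 0$ on $S_{1,\delta}$ (the shear has compact support in $\ubar\in[0,\delta]$, being prescribed to vanish at $\ubar=\delta$); this is precisely the content exploited in Lemma~\ref{spheregeoestimates}. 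So everything reduces to estimating the integral of $F_k$.

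**Estimating the inhomogeneity.** For the $\nabla^{k+1}\eta$ term, Theorem~\ref{higherorderenergyestimates} (valid for all $N$ here, by Remark~\ref{crucialremark}) gives $\lVert u^{k+1}\nabla^{k+1}\eta\rVert_{L^2(S_{u,\delta})}$-type control via the $\mathcal{O}_N$ and $\widetilde{\mathcal{O}}_{N+1,2}$ norms; integrating $|u'|^k \cdot |u'|^{-(k+2)}\cdot\delta\al$ (for low order) or a Cauchy--Schwarz in $u'$ at top order yields a bound $\lesssim \delta\al / u$, and after inspecting the powers one gets $\lesssim \delta a / u$ once the factor $\al\cdot\al=a$ enters through the nonlinear terms (or directly $\delta a/u$ dominates $\delta\al/u$ since $\al\leq a$ — here one checks the arithmetic of the $a$-powers carefully, mimicking the bounds (3.11)--(3.14) in the proof of Proposition~\ref{riccichihat}). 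The nonlinear terms $\nabla^{i_1}\psibar^{i_2+1}\nabla^{i_3}(\psi,\psibar)$ are handled exactly as in Proposition~\ref{riccihelpful} and Proposition~\ref{riccichihat}: pull out the highest-derivative factor in $L^2$, put the rest in $L^\infty$ using the bootstrap bounds $\frac{\delta\al b^{1/4}}{u^2}$ for $\psibar$-factors, summing the geometric series in $i_2$, and use $\delta\al b^{1/4}/u \leq 1/b^{3/4} \lesssim 1$; each contributes at worst $\lesssim \delta a b^{1/2}/u$, which is absorbed since in the region $u \geq \delta\al b$ one verifies $\delta a b^{1/2}/u \lesssim \delta a/u$ is false in general — so more care is needed, and one instead notes these terms are lower-order in $\delta$ and the \emph{leading} behaviour $\delta a/u$ comes from the $\nabla^{k+1}\eta$ and $\psi\psibar$ contributions.

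**Main obstacle.** The main subtlety — and the step I would expect to be the real work — is bookkeeping the powers of $a$, $b$, $\delta$ and $u$ so that the final bound is \emph{exactly} $\delta a/u$ rather than something like $\delta a/u^{1/2}$ or $\delta^{1/2}\al/u^{1/2}$; in particular, the top-order $\nabla^{k+1}\eta$ term requires the refined estimate $\lVert u^{N+2}\nabla^{N+1}(\eta,\etabar)\rVert_{L^2(H_u)} \lesssim \delta^{3/2}a^{3/4}$ from $\widetilde{\mathcal{O}}_{N+1,2}$ together with a Cauchy--Schwarz in $u'$, and one must confirm the resulting power of $u$ combines correctly with the $|u'|^k$ weight from Proposition~\ref{transportprop2}. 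Once the worst term is pinned down, summing over $k$ (using Remark~\ref{crucialremark} to have all these estimates available simultaneously) and invoking $u \geq \delta\al b$ to discard strictly subleading contributions completes the argument.
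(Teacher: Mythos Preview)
Your approach is essentially the same as the paper's, and the skeleton is correct: schematic $\nabla_3$ equation, commutation, Proposition~\ref{transportprop2}, vanishing data on $S_{1,\delta}$. However, your execution of the inhomogeneity estimates is muddled in two places, and the confusion you flag as the ``main obstacle'' is not actually present.

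First, you repeatedly invoke the \emph{bootstrap} bounds carrying $b^{1/4}$ factors, which is why you end up with $\delta a b^{1/2}/u$ and then cannot absorb it. The paper instead uses the \emph{improved} bounds of Theorem~\ref{higherorderenergyestimates}, i.e.\ $\lVert u^{i+2}\nabla^{i+1}\psibar\rVert_{L^\infty_{\ubar}L^2(S_{u,\ubar})}\lesssim \delta\al$ and $\lVert u^{i+1}\nabla^i\psi\rVert_{L^\infty(S_{u,\ubar})}\lesssim\al$ with no $b^{1/4}$. With these, the linear $\nabla^{k+1}\eta$ term gives $\delta\al/u$ and each nonlinear $\psibar\cdot(\psi,\psibar)$ term gives at worst $\frac{\delta\al}{u}\cdot\al=\frac{\delta a}{u}$ directly; there is no stray $b$-power to worry about.

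Second, there is no top-order subtlety here. By Remark~\ref{crucialremark} we are in the regime where $\mathcal{O}_N\lesssim 1$ holds for \emph{every} $N$, so for any fixed $k$ the term $\nabla^{k+1}\eta$ is controlled by the low-order Ricci norm $\mathcal{O}_{k+1}$, not by $\widetilde{\mathcal{O}}_{N+1,2}$. You never need Cauchy--Schwarz in $u'$ or the $\delta^{3/2}a^{3/4}$ flux; the pointwise-in-$u$ bound $\lVert u^{k+2}\nabla^{k+1}\eta\rVert_{L^2(S_{u,\delta})}\lesssim\delta\al$ together with $\lVert u^{-2}\rVert_{L^1_u}\lesssim u^{-1}$ suffices. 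Once you make these two corrections the bookkeeping is straightforward and the bound $\delta a/u$ falls out exactly as in the proof of Proposition~\ref{riccichihat}, the only difference being that the initial-data contribution vanishes rather than being $\al$.
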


\begin{proof} We begin with the structure equation
		\be  \nabla_3 \chihat + \frac{1}{2} tr\chibar \hspace{.5mm}\chihat = \nabla \widehat{\otimes} \eta + 2\hspace{.5mm} \omegabar\hspace{.5mm} \chihat - \frac{1}{2}\hspace{.5mm} tr\chibar \hspace{.5mm} \chihat + \eta \widehat{\otimes} \eta.  \ee Commuting this with $i$ angular derivatives, we have
	
	\begin{equation}\begin{split}
	\nabla_3 \nabla^i \chihat + \frac{i+1}{2}\tr\chibar \nabla^i \chihat &= \sum_{i_1+i_2+i_3=i} \nabla^{i_1} \psi^{i_2} \nabla^{i_3+1} \eta\\ &+ \sum_{i_1+i_2+i_3+i_4=i} \nabla^{i_1}\psi^{i_2} \nabla^{i_3} \omegabar \nabla^{i_4}\chihat  + \sum_{i_1+i_2+i_3+i_4=i} \nabla^{i_1}\psi^{i_2} \nabla^{i_3}\eta \nabla^{i_4} \eta \\ &+ \sum_{i_1+i_2+1=i} \nabla^{i_1+1}\tr\chibar \nabla^{i_2}\chihat + \sum_{i_1+i_2+i_3+i_4+1=i} \nabla^{i_1}\psi^{i_2+1}\nabla^{i_3}\tr\chibar \nabla^{i_4} \chihat \\ &+ \sum_{i_1+i_2+i_3+i_4=i} \nabla^{i_1}\psi^{i_2} \nabla^{i_3}(\psi,\chibarhat,\tildetr) \nabla^{i_4} \chihat .
	\end{split}\end{equation}

	\vspace{3mm}
	\par \noindent Let us look at Proposition \ref{riccihelpful}. We follow the proof of Proposition \ref{riccichihat}. There holds \[ \nabla_3 \chihat + \frac{1}{2}\tr\chibar\hspace{.5mm} \chihat = \nabla \eta + (\psi,\psibar)\psibar.     \]Using Proposition \ref{commutation} and commuting with $i$ angular derivatives, we have that for any $i$ there holds \begin{equation*} \begin{split}
	\nabla_3 \nabla^i \chihat + \frac{i+1}{2}\tr\chibar \nabla^i \chihat &= \nabla^{i+1}\eta +\sum_{i_1+i_2=i}\nabla^{i_1} \psibar^{i_2+2} + \sum_{i_1+ i_2 + i_3 =i} \nabla^{i_1}\psibar^{i_2+1}\nabla^{i_3}\psi \\&+ \sum_{i_1+ i_2 + i_3 =i-1} \frac{1}{u} \nabla^{i_1} \psibar^{i_2+1}\nabla^{i_3}\psi. \end{split}
	\end{equation*}We now apply Proposition \ref{transportprop2}, which tells us that the quantity $\lVert u^i \nabla^i \chihat \rVert_{L^2(S_{u,\delta})}$ can be controlled by the sum of $\lVert u^i \nabla^i \chihat \rVert_{ L^2(S_{1,\delta})}$ and the $\lVert u^i \cdot\rVert_{ L_{u}^1 L^2(S_{u,\delta})}$ norm of the right-hand side in the equation above. We now estimate each of the terms on the right-hand side:
	
	\begin{itemize}

		\item For the linear term in $\eta$, there holds
		
		\begin{equation}
		\lVert u^i \nabla^{i+1} \eta \rVert_{L_u^1 L^2(S_{u,\delta})} \lesssim \Big \lVert \frac{1}{u^2} \Big \rVert_{L_u^1}   \lVert u^{i+2}\nabla^{i+1}\psibar \rVert_{L_{\ubar}^\infty L^2(S_{u,\ubar})}  \lesssim \frac{\delta \al}{u}, 
		\end{equation} where we have used the improved bound on $ \lVert u^{i+2}\nabla^{i+1}\psibar \rVert_{L_{\ubar}^\infty L^2(S_{u,\ubar})}$ obtained in Theorem \ref{higherorderenergyestimates}.
		
		\item We then control the second and third terms together. Here, we use the
		estimates derived in Proposition \ref{riccihelpful}. There holds \begin{equation}\begin{split}
		&\hspace{2mm} \Big \lVert \sum_{i_1+i_2 +i_3=i} u^i \nabla^{i_1}\psibar^{i_2+1} \nabla^{i_3}(\psi,\psibar) \Big \rVert_{L_u^1 L^2(S_{u,\delta})}\\ &\lesssim  \sum_{i_1+i_2+i_3=i} \lVert u^{i_1+i_2-1} \nabla^{i_1}\psibar^{i_2+1} \rVert_{L_u^1 L^2(S_{u,\delta})}\cdot\lVert u^{i_3+1} \nabla^{i_3}(\psi,\psibar) \rVert_{L_u^\infty L^\infty(S_{u,\delta})} 
		\\ &\lesssim \frac{\delta \al }{u}\cdot \al \lesssim \frac{\delta a }{u} .
		\end{split}
		\end{equation}
		
		\item Finally, the last term is estimated as follows
		
		\begin{equation}\begin{split}
	\Big \lVert &\sum_{i_1+i_2 +i_3=i-1} u^{i-1} \nabla^{i_1}\psibar^{i_2+1} \nabla^{i_3}\psi \Big \rVert_{L_u^1 L^2(S_{u,\ubar})} \\ \lesssim &\sum_{i_1+i_2+i_3=i-1} \lVert u^{i_1+i_2-1}\nabla^{i_1}\psibar^{i_2+1} \rVert_{L_u^1 L^2(S_{u,\delta})}  \cdot \lVert u^{i_3+1} \nabla^{i_3}\psi \rVert_{L_{u}^{\infty}L^{\infty}(S_{u,\delta})}  \\\lesssim &\frac{\delta \al }{u} \cdot \al \lesssim \frac{\delta a }{u}.
	\end{split}	\end{equation}
	\end{itemize}
	
	\par \noindent
	Recalling that initially we have \[\lVert \nabla^i \chihat_0 \rVert_{ L^{2}(S_{1,\delta})}=0,     \]we get \[  \lVert u^i \nabla^i \chihat \rVert_{L_u^{\infty} L^2(S_{u,\delta})} \lesssim \frac{\delta a}{u}.    \]This concludes the proof for $\chihat$.
\end{proof}

\par\noindent  Notice that Remark \ref{crucialremark} at the start of this Section implies that we have $L^2$ and $L^{\infty}$--control on all derivatives of the Ricci coefficients, which allowed us to close the estimates in the above Proposition. We move on to estimates for $\csigma$ and $\sigma$:

\begin{proposition} \label{csigmaandsigmaincoming}
	There holds \begin{equation*}
	\twoSudelta{ u^{i+1} \nabla^{i} \csigma}+  \twoSudelta{ u^{i+1} \nabla^{i} \sigma} \lesssim \frac{\delta \al}{u}.
	\end{equation*}
\end{proposition}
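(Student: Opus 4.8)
The plan is to avoid a transport argument entirely and use a constraint equation instead. Since $\csigma := \sigma + \tfrac12\chibarhat\wedge\chihat$ and one of the constraint equations reads $\curl\eta = \sigma + \tfrac12\chibarhat\wedge\chihat$, we have the identity $\csigma = \curl\eta$. Because the Levi-Civita connection on $S_{u,\ubar}$ annihilates the area form $\seps$, commuting $\nabla^i$ through the curl produces no lower-order terms, so that $(\nabla^i\csigma)_{A_1\dots A_i} = \seps^{BC}(\nabla^{i+1}\eta)_{A_1\dots A_i BC}$ and hence $\lvert\nabla^i\csigma\rvert \lesssim \lvert\nabla^{i+1}\eta\rvert$ pointwise on each sphere. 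Thus the first step is to reduce to
\[ \twoSudelta{u^{i+1}\nabla^i\csigma} \lesssim \twoSudelta{u^{i+1}\nabla^{i+1}\eta}, \]
and then, for each fixed $i$, to invoke Theorem \ref{higherorderenergyestimates} with $N \geq i+1$ — legitimate by Remark \ref{crucialremark}, since all derivatives of $\chihat_0$ are assumed bounded. The $\mathcal{O}_N$-part of that theorem gives $\twoSu{u^{j}\nabla^{j}\eta} \lesssim \frac{\delta\al}{\lvert u\rvert}$ for all $j \leq N$; taking $j = i+1$ and restricting to $\ubar = \delta$ yields $\twoSudelta{u^{i+1}\nabla^{i+1}\eta} \lesssim \frac{\delta\al}{u}$, which is exactly the bound wanted for $\csigma$.

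Next I would handle $\sigma$ by writing $\sigma = \csigma - \tfrac12\chibarhat\wedge\chihat$ and estimating the correction term as a product of two Ricci coefficients on $S_{u,\delta}$. Here I would Leibniz-expand $\nabla^i$, split the weight as $u^{i+1} = u^{i_1}\cdot u^{i_2+1}$ with $i_1 + i_2 = i$, place whichever factor carries more derivatives in $L^2(S_{u,\delta})$ and the other in $L^\infty(S_{u,\delta})$ (Sobolev embedding on $S_{u,\delta}$ costs two extra derivatives, which is harmless since $N$ is free). For the $\chihat$ factor I would use the incoming-cone bound $\twoSudelta{u^k\nabla^k\chihat} \lesssim \frac{\delta a}{u}$ just proved in Proposition \ref{chihatincomingprop}, and for the $\chibarhat$ factor the generic $\mathcal{O}_N$-bound $\inftySudelta{u^j\nabla^j\chibarhat} \lesssim \frac{\delta\al}{u^2}$. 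This produces
\[ \twoSudelta{u^{i+1}\nabla^i(\chibarhat\wedge\chihat)} \lesssim \frac{\delta\al}{u}\cdot\frac{\delta a}{u} \lesssim \frac{\delta\al}{u}, \]
the last step using $u \geq \delta\al b$ to bound $\frac{\delta a}{u}$. Adding this to the bound for $\csigma$ completes the estimate.

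The argument is short precisely because of $\csigma = \curl\eta$: this trades a curvature component for an angular derivative of a Ricci coefficient that is already controlled on spheres by $\mathcal{O}_N$. The obvious alternative — mirroring the proof of Proposition \ref{chihatincomingprop} by commuting the renormalized Bianchi equation $\nabla_3\csigma + \frac32\trchb\,\csigma = -\div\Hodge{\betabar} + \dots$ with $i$ angular derivatives and integrating along $\underline{C}_{\delta}$ from $S_{1,\delta}$ (where $\chihat \equiv 0$, so $\csigma = \sigma$ and Lemma \ref{spheregeoestimates} supplies $\lvert\nabla^k\csigma\rvert \lesssim \delta a$ as data) — would force one to estimate the curvature term $\div\Hodge{\betabar}$, which is coupled to $\betabar$ and $K - \frac{1}{\lvert u\rvert^2}$ via the $\mathcal{R}_N$-system; closing that would require the very incoming-cone estimates for $\betabar$ and $K$ proved in the following propositions, and that coupling is where the main difficulty of the alternative route would lie. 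For this reason I would carry out the proof via $\csigma = \curl\eta$, as above.
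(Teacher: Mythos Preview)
Your argument is correct and, for the $\csigma$ part, genuinely different from the paper's. The paper proves the $\csigma$ bound by commuting the Bianchi equation $\nabla_3\csigma + \tfrac32\trchb\,\csigma = -\div\Hodge{\betabar}+\dots$ with $i$ angular derivatives and integrating along $\underline{C}_\delta$ from $S_{1,\delta}$ via Proposition~\ref{transportprop2}; the $\betabar$ source is handled by the $\mathcal{R}_N$ flux bound $\lVert u^{i+3}\nabla^{i+1}\betabar\rVert_{L^2(\Hbar_\delta)}\lesssim \delta^{3/2}a^{3/4}$ from Theorem~\ref{higherorderenergyestimates}, and the remaining source terms by the standard $\psibar$-product estimates. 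You bypass all of this by invoking the constraint $\csigma=\curl\eta$ and reading off $\twoSu{u^{i+1}\nabla^{i+1}\eta}\lesssim\frac{\delta\al}{|u|}$ directly from $\mathcal{O}_{i+1,2}\lesssim 1$. (The paper is aware of the identity --- it uses ``$\csigma=\curl\eta=\nabla\psibar$ schematically'' inside its own $G_2$ estimate --- but still runs the transport argument.) For $\sigma$, your treatment is the same as the paper's: write $\sigma=\csigma-\tfrac12\chibarhat\wedge\chihat$, use the improved incoming bound on $\chihat$ from Proposition~\ref{chihatincomingprop}, the $\mathcal{O}_N$ bound on $\chibarhat$, and $\al\leq b$ to absorb $\frac{\delta a}{u}\lesssim 1$.

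One small correction to your closing paragraph: the transport route is \emph{not} circular. The $\div\Hodge{\betabar}$ term is controlled by the $L^2(\Hbar_\delta)$ flux of $\nabla^{i+1}\betabar$, which is part of the $\mathcal{R}_N$ norm already bounded in the An--Luk slab by Theorem~\ref{higherorderenergyestimates}; it does not require the incoming-cone $L^2(S_{u,\delta})$ estimates for $\betabar$ or $K$ proved later in Section~\ref{incomingconesection}. So the paper's route is self-contained --- merely longer than yours, and chosen for structural consistency with the rest of the section.
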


\begin{proof} We begin with the estimates for $\check{\sigma}$. The fact that $\chihat=0$ on $S_{1,\delta}$ implies that $\csigma = \sigma$ on $S_{1,\delta}$. Consequently, the bounds for $\check{\sigma}$ on $S_{1,\delta}$ are the same as those for $\sigma$, meaning $\lVert \nabla^{k} \csigma \rVert_{L^{\infty}(S_{1,\delta})} \lesssim \delta \al $ for all $k$. We proceed by commuting \[  \nabla_3 \check{\sigma}+\frac{3}{2}\tr{\chibar}\check{\sigma}=-\div \Hodge{\betabar}+\zeta\wedge\betabar -2\etabar\wedge \betabar +\frac{1}{2}\chibarhat \wedge(\nabla\widehat{\otimes}\eta) +\frac{1}{2}\chibarhat \wedge (\eta \widehat{\otimes} \eta). \]  with $i$ angular derivatives. We obtain 
	
	\begin{equation}
	\nabla_3 \nabla^i \csigma  +\frac{3+i}{2}\tr{\chibar}\nabla^i\check{\sigma} =G \end{equation}
	where $G$ is given by 
	\begin{equation}
	G=\sum_{i_1+i_2+i_3=i+1}\nabla^{i_1}\psibar^{i_2+1}\nabla^{i_3}\psibar+\frac{1}{u}\sum_{i_1+i_2+i_3=i-1}\nabla^{i_1}\psibar^{i_2+1}\nabla^{i_3}\check{\sigma} + \text{div} \Hodge{\nabla^i} \betabar:= G_1 +G_2+G_3.
	\end{equation} 
	We now apply proposition \ref{transportprop2} with $\lambda_0 = \frac{i+3}{2}$, so that 
	\[ \lvert u \rvert^{i+2} \twoSudelta{\nabla^i \csigma} \lesssim \lVert\nabla^i \csigma \rVert_{L^2(S_{1,\delta})} + \int_{u}^{1} \lvert u^\prime \rvert^{i+2} \lVert G \rVert_{L^2(S_{u^{\prime} , \delta })} \duprime. \] 
	There holds  \[\int_{u}^1 \lvert u^\prime \rvert^{i+2} \lVert G \rVert_{L^2(S_{u^{\prime} , \delta}) } \duprime \lesssim \sum_{j=1}^3 \int_{u}^1 \lvert u^\prime  \rvert^{i+2} \lVert G_j \rVert_{L^2(S_{u^{\prime} , \delta })} \duprime. \]  We estimate these terms separately.

	\begin{itemize}
		\item There holds \begin{equation} \begin{split}&\hspace{1mm} \big\lVert \sum_{i_1+i_2+i_3=i+1} u^{i+2} \nabla^{i_1}\psibar^{i_2+1} \nabla^{i_3}\psibar  \big\rVert_{L_{u}^1 L^2(S_{u , \delta} )}\\ &\lesssim \sum_{i_1+i_2 +i_3=i+1 } \lVert u^{i_1+i_2 -1} \nabla^{i_1}\psibar^{i_2+1} \rVert_{L_u^1 L^2(S_{u,\delta})}\cdot  \lVert u^{i_3+2} \nabla^{i_3}\psibar \rVert_{L_u^\infty L^{\infty}(S_{u,\delta})} \\ &\lesssim \frac{\delta \cdot \al }{u} \cdot \delta \al \lesssim \frac{\delta^2 a}{u}. \end{split}
		\end{equation}\item For the second term, there holds, recalling that $\csigma = \curl \eta = \nabla \psibar$ schematically, 
		
		\begin{equation}
		\begin{split}
		&\hspace{3mm} \big\lVert \sum_{i_1+i_2+i_3=i-1} u^{i+1}\nabla^{i_1}\psibar^{i_2+1}\nabla^{i_3}\csigma \big\rVert_{L_u^1 L^2(S_{u,\delta })} \\ &\lesssim  \sum_{i_1+i_2+i_3=i-1} \lVert u^{i_1+i_2 -1} \nabla^{i_1}\psibar^{i_2+1} \rVert_{L_u^1 L^2(S_{u,\delta})} \lVert u^{i_3+3} \nabla^{i_3+1}\psibar \rVert_{L_u^\infty L^{\infty}(S_{u,\delta})} \\ &\lesssim \frac{\delta\al}{u}\cdot \delta \al + \frac{\delta \al}{u^2}\cdot u \cdot \delta\cdot \al \lesssim \frac{\delta^2a}{u}. 
		\end{split}
		\end{equation}
		
		\item Finally, we estimate the term involving $\betabar$, 
		\begin{equation} \begin{split} \big\lVert u^{i+2} \nabla^{i+1}\betabar \big\rVert_{L_u^1 L^2(S_{u,\delta})} &= \int_u^1 \lvert u^\prime \rvert^{i+2} \lVert \nabla^{i+1}\betabar \rVert_{L^2(S_{u^{\prime},\delta})} \duprime \\ &\lesssim \left( \int_u^1 \lvert u^{\prime}\rvert^{2i+6}\lVert \nabla^{i+1}\betabar \rVert_{L^2(S_{u^{\prime},\delta})}^2 \duprime     \right)^{\frac{1}{2}} \cdot \left( \int_u^1 \lvert u^{\prime} \rvert^{-2} \duprime  \right)^{\frac{1}{2}} \\  &\lesssim \frac{1}{u^{\frac{1}{2}}} \cdot \big\lVert u^{i+3} \nabla^{i+1}\betabar \big\rVert_{L_u^2 L^2(S_{u,\delta})} \lesssim \frac{1}{u^{\frac{1}{2}}} \cdot \big\lVert u^{i+3} \nabla^{i+1}\betabar \big\rVert_{L^2(\Hbar_{\delta})} \lesssim \frac{\delta^{\frac{3}{2} }a^{\frac{3}{4}}    }{u^{\frac{1}{2}}}. \end{split} \end{equation}
	\end{itemize}
Consequently, the worst term comes from the initial data, so that multiplying by $u^{-1}$ we get

\begin{equation}
\twoSudelta{ u^{i+1} \nabla^i \csigma} \lesssim \frac{\delta \al}{u}. 
\end{equation}For $\sigma$, there holds \[ \csigma = \sigma + \f12 \chibarhat \wedge \chihat.   \] Consequently, we have

\begin{equation} \begin{split}
\twoSudelta{u^{i+1} \nabla^i \sigma} &\lesssim \twoSudelta{u^{i+1} \nabla^i \csigma} + \TwoSudelta{ u^{i+1} \sum_{i_1+i_2=i} \nabla^{i_1}\chihat \nabla^{i_2} \cdot \chibarhat} \\ &\lesssim \frac{\delta \al}{u} +  \sum_{i_1+i_2=i} \twoSudelta{u^{i_1} \nabla^{i_1}\chihat} \cdot \inftySudelta{u^{i_2+1} \nabla^{i_2} \chibarhat} \\ &\lesssim \frac{\delta \al}{u} + \frac{\delta a}{u }\cdot \frac{\delta \al}{u} \lesssim \frac{\delta \al}{u}.
\end{split}
\end{equation}We recall here that we have chosen $\al \leq b$ so that $\delta a \leq u$. Moreover, we have used the improved estimates for $\chihat$ on the incoming cone from Proposition \ref{chihatincomingprop}. This concludes the result for $\sigma$.
\end{proof}\par \noindent We now bound $\beta$.

\begin{proposition}
	There holds  \[  \twoSudelta{u^{i+1} \nabla^i \beta} \lesssim \frac{\delta a }{u}.    \]
\end{proposition}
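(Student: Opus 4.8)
The plan is to mimic the proof of Proposition \ref{csigmaandsigmaincoming}, propagating $\beta$ along the incoming direction from its known data on $S_{1,\delta}$. First I would recall the relevant $\nabla_3$-Bianchi equation for $\beta$, which in the renormalized setting takes the schematic form
\[
\nabla_3 \beta + \tr\chibar\, \beta = -\nabla K + \Hodge{\nabla}\check{\sigma} + \psibar\, \beta + \psibar(K,\check{\sigma}) + \sum \nabla^{i_1}\psibar^{i_2+1}\nabla^{i_3}(\tr\chi, \psi) + \dots,
\]
i.e.\ the equation listed just after (2.36). Commuting with $i$ angular derivatives via Proposition \ref{commutation}, one obtains $\nabla_3 \nabla^i \beta + \tfrac{i+2}{2}\tr\chibar\, \nabla^i\beta = G$, where $G$ contains: (i) a curvature term $\nabla^{i+1}(K - \tfrac{1}{u^2}, \check\sigma)$, (ii) a lower-order curvature term $\sum \nabla^{i_1}\psibar^{i_2+1}\nabla^{i_3}(K-\tfrac{1}{u^2},\check\sigma,\beta)$, and (iii) purely Ricci-coefficient terms of the form $\sum \nabla^{i_1}\psibar^{i_2+1}\nabla^{i_3}(\psi, \tr\chi)$ possibly with a $\tfrac{1}{u}$ weight. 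Then I would apply Proposition \ref{transportprop2} with $\lambda_0 = \tfrac{i+2}{2}$ so that $\lambda_1 = i+1$, giving
\[
|u|^{i+1}\twoSudelta{\nabla^i\beta} \lesssim \lVert \nabla^i\beta\rVert_{L^2(S_{1,\delta})} + \int_u^1 |u'|^{i+1}\lVert G\rVert_{L^2(S_{u',\delta})}\,\duprime.
\]

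For the initial-data term, Lemma \ref{spheregeoestimates} gives $\lVert\nabla^i\beta\rVert_{L^2(S_{1,\delta})} \lesssim \delta a$, which after multiplying by $u^{-(i+1)}$ (and using $u \leq 1$) contributes $\lesssim \tfrac{\delta a}{u}$ — the expected bound. For the terms in $G$: the purely Ricci-coefficient terms (type (iii)) are handled exactly as in Propositions \ref{riccichihat} and \ref{csigmaandsigmaincoming}, using Proposition \ref{riccihelpful} to collect the $\psibar^{i_2+1}$ factor and estimating the highest-derivative factor in $L^2$, the rest in $L^\infty$; since each $L^\infty$ factor carries a factor $\delta\al/u^2$ or $\al/u$, and we gain $\|u^{-2}\|_{L^1_u} \lesssim 1$ from the $u'$-integration, these close with room to spare (bounds of size $\tfrac{\delta^2 a}{u}$ or better, which is $\lesssim \tfrac{\delta a}{u}$ since $\delta \al \le \delta a \le u$). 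The curvature term (ii) is treated like $G_2$ in the previous proposition: pull out the $L^\infty$ Ricci factor (size $\delta\al/u$ from the improved bounds of Theorem \ref{higherorderenergyestimates}) and the $L^2$ curvature factor, controlled via the $\mathcal{R}_N$-norm (so $\lesssim \tfrac{\delta\al}{u}$), gaining an extra $\delta$.

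The main obstacle — as always in these energy-type arguments — is the top-order curvature term $\nabla^{i+1}(K - \tfrac{1}{u^2}, \check\sigma)$ (type (i)), which cannot be placed in $L^\infty$ and sits at one more derivative than $\beta$ itself. Here I would follow the same device used for the $\betabar$ term in $G_3$ of Proposition \ref{csigmaandsigmaincoming}: apply Cauchy--Schwarz in $u'$ to trade one power of $|u'|$ for an $L^2_{u'}$ norm, namely
\[
\int_u^1 |u'|^{i+1}\lVert \nabla^{i+1}(K-\tfrac{1}{u^2},\check\sigma)\rVert_{L^2(S_{u',\delta})}\,\duprime \lesssim \Big(\int_u^1 |u'|^{-2}\,\duprime\Big)^{1/2}\big\lVert u^{i+2}\nabla^{i+1}(K-\tfrac{1}{u^2},\check\sigma)\big\rVert_{L^2_u L^2(S_{u,\delta})},
\]
and then recognize the last factor as controlled by the $\Hbar_\delta$-flux bound $\lesssim \delta^{3/2}a^{3/4}$ furnished by the energy estimates (Proposition 3.21 / Theorem \ref{higherorderenergyestimates}). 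This produces a contribution $\lesssim \tfrac{\delta^{3/2}a^{3/4}}{u^{1/2}}$, which after multiplying by $u^{-(i+1)}$ in the final step must be shown to be $\lesssim \tfrac{\delta a}{u}$; this reduces to $\delta^{1/2}u^{1/2} \lesssim a^{1/4}\cdot(\text{something})$, i.e.\ to $u \geq \delta a$ combined with the smallness $\al \le b \le a$, which holds throughout the region of existence. Collecting all contributions and multiplying through by $u^{-(i+1)}$ yields $\twoSudelta{u^{i+1}\nabla^i\beta} \lesssim \tfrac{\delta a}{u}$, completing the proof.
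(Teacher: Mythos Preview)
Your overall strategy is the same as the paper's: propagate along $\nabla_3$ from the sphere data on $S_{1,\delta}$ using Proposition~\ref{transportprop2} with $\lambda_0=\tfrac{i+2}{2}$, and estimate the inhomogeneity term by term. The treatment of the initial data, the purely Ricci terms, and the lower-order curvature terms is fine. The gap is in the top-order curvature term $\nabla^{i+1}(K-\tfrac{1}{u^2},\check\sigma)$.

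You claim the $L^2_uL^2(S_{u,\delta})$ norm of $u^{i+2}\nabla^{i+1}(K-\tfrac{1}{u^2},\check\sigma)$ is bounded by $\delta^{3/2}a^{3/4}$, citing Proposition~3.21. But that proposition controls the $L^\infty_uL^2_{\ubar}L^2(S)$ norm of $(K-\tfrac{1}{u^2},\check\sigma)$, i.e.\ the $H_u$-flux, not the $\Hbar_\delta$-flux you need. The $\Hbar_{\ubar}$-flux bound actually available from $\mathcal{R}_N$ is only $\lVert u^{j+1}\nabla^{j}(K-\tfrac{1}{u^2},\check\sigma)\rVert_{L^2(\Hbar_{\ubar})}\lesssim\delta^{1/2}a^{1/2}$ (with $j=i+1$ this has exactly your weight $u^{i+2}$). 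Plugging this into your Cauchy--Schwarz gives a contribution $\tfrac{\delta^{1/2}a^{1/2}}{u^{1/2}}$, and the inequality $\tfrac{\delta^{1/2}a^{1/2}}{u^{1/2}}\lesssim\tfrac{\delta a}{u}$ is equivalent to $u\lesssim\delta a$, which fails for $u$ near $1$ (recall $\delta a\le\delta a^{1/2}b<1$). So the argument does not close as written.

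The fix, which is what the paper does, is to use instead the \emph{improved pointwise-in-$u$} $L^2(S)$ bounds: $\lVert u^{j+2}\nabla^{j}(K-\tfrac{1}{u^2})\rVert_{L^2(S_{u,\ubar})}\lesssim\delta a^{1/2}$ from Theorem~\ref{higherorderenergyestimates}, and the analogous incoming-cone bound $\lVert u^{j+2}\nabla^{j}\check\sigma\rVert_{L^2(S_{u,\delta})}\lesssim\delta a^{1/2}$ just established in Proposition~\ref{csigmaandsigmaincoming}. These carry one more power of $u$ than the flux bound. With $j=i+1$ one gets $\lVert u^{i+2}\nabla^{i+1}(K-\tfrac{1}{u^2},\check\sigma)\rVert_{L^2_uL^2(S_{u,\delta})}\lesssim\delta a^{1/2}\lVert u^{-1}\rVert_{L^2_u}\lesssim\tfrac{\delta a^{1/2}}{u^{1/2}}$, and after the final Cauchy--Schwarz factor $\lVert u^{-1}\rVert_{L^2_u}$ this yields $\tfrac{\delta a^{1/2}}{u}\le\tfrac{\delta a}{u}$ with no further conditions needed.
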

\begin{proof}
We begin with the Bianchi equation for $\beta$:
\begin{equation} \begin{split}
\nabla_3 \beta + \tr \chibar \beta = &-\nabla K+ \Hodge{\nabla} \check{\sigma}+2
\omegabar \beta-3(\eta K-\Hodge{\eta} \check{\sigma})+\frac{1}{2}(\nabla(\chihat  \cdot \chibarhat)+\Hodge{\nabla}(\hat{\chi}\wedge \chibarhat))\\&-\frac{3}{4}\hsp \eta \hsp \tr \chi \tr\chibar+ \frac{3}{2}(\eta \chihat \cdot \chibarhat+\Hodge{\eta} \chihat \wedge \chibarhat)-\frac{1}{4}(\nabla \tr \chi \tr \chibar + \tr \chi \nabla \tr \chibar). \end{split}
\end{equation}Commuting with $i$ angular derivatives and using the schematic representation, we have

\begin{equation}
\nabla_3 \nabla^i \beta +\frac{i+2}{2}\tr\chibar \nabla^i \betabar= G_i,
\end{equation}where
\begin{equation}\begin{split}
G_i&=     \Hodge{\mathcal{D}}_1\left(\nabla^i \left(K-\frac{1}{u^2}\right),\nabla^i \csigma \right) +  \psibar \nabla^{i+1}\psi + \frac{1}{u} \nabla^{i+1}\tr\chi   + \psi \nabla^{i+1}(\chibarhat,\tr\chibar)\\ &+ \sum_{i_1+i_2+i_3=i} \nabla^{i_1}\psibar^{i_2+1} \nabla^{i_3}\left( K-\frac{1}{u^2},\csigma \right) + \sum_{\substack{i_1+i_2+i_3=i+1\\ i_1,i_3\leq i}} \nabla^{i_1}\psibar^{i_2+1}\nabla^{i_3}\psi \\ 
&+\frac{1}{u}\sum_{i_1+i_2+i_3=i}\nabla^{i_1}\psibar^{i_2+1}\nabla^{i_3}\psi +\frac{1}{u^2}\sum_{i_1+i_2=i}\nabla^{i_1}\psibar^{i_2+1}.
\end{split}\end{equation}Applying Proposition \ref{transportprop2} with $\lambda_0 = \frac{i+2}{2}$, we can bound \[ \lVert u^{i+1} \nabla^i \beta \rVert_{L^2(S_{u,\delta })} \lesssim  \lVert \nabla^i \beta \rVert_{L^2(S_{1,\delta })} +  \lVert u^{i+1} G_i \rVert_{L_u^1 L^2(S_{u,\delta })}. \]

\begin{itemize}
	\item There holds \begin{equation} \begin{split}  &\big\lVert u^{i+1} \nabla^{i+1} \left( K- \frac{1}{u^2}, \csigma \right) \big\rVert_{L_u^{1} L^2(S_{u,\delta}) }  \\ \lesssim    &\big\lVert u^{i+2} \nabla^{i+1} \left( K- \frac{1}{u^2},\csigma \right) \big\rVert_{L_u^{2} L^2(S_{u,\delta}) }  \cdot  \big\lVert \frac{1}{u} \big\rVert_{L_u^2 } \lesssim \frac{\delta\al}{u}, \end{split} \end{equation}where we have used the improved  bounds on $K-\frac{1}{u^2}$ from Proposition \ref{higherorderenergyestimates} as well as the improved estimates on $\csigma$ from the previous proposition.
	
	\item There holds
	
	\begin{equation}\begin{split}
	&\big\lVert u^{i+1} \psibar \nabla^{i+1}\psi \big\rVert_{L_u^{1} L^2(S_{u,\delta}) } \\ \lesssim &\lVert u \hspace{.5mm} \psibar \rVert_{L_u^2 L^{\infty}(S_{u,\delta})} \lVert u^i \nabla^{i+1}\psi \rVert_{L_u^2 L^2(S_{u,\delta})} \\ \lesssim &\frac{\delta \al}{u^{\frac12}} \cdot \frac{\al}{u^{\frac{1}{2}}}  \lesssim \frac{\delta a}{u}.
	\end{split}
	\end{equation}
	\item There holds
	
	\begin{equation}   \lVert u^i \nabla^{i+1}\tr\chi \rVert_{L_u^{1} L^2(S_{u,\delta}) } =    \big\lVert u^i \nabla^{i+1}\left(\tr\chi - \frac{2}{u}\right) \big\rVert_{L_u^{1} L^2(S_{u,\delta}) } \lesssim \frac{\delta a}{u}. \end{equation}
	\item There holds 
	
	\begin{equation}\begin{split}
	&\big\lVert u^{i+1} \psi \nabla^{i+1}\left( \chibarhat,\tr\chibar       \right) \big\rVert_{L_u^{1} L^2(S_{u,\delta}) } \\ \lesssim  &\lVert \psi \rVert_{L_u^{2} L^{\infty}(S_{u,\delta})} \big\lVert u^{i+1}  \nabla^{i+1}\left( \chibarhat,\tr\chibar       \right) \big\rVert_{L_u^{2} L^2(S_{u,\delta}) }\\ \lesssim &\frac{\al}{u^{\frac12}} \cdot \frac{\delta \al}{u^{\frac12}}\lesssim \frac{\delta a}{u}.
	\end{split}
	\end{equation}
	\item There holds
	\begin{equation}
	\begin{split}
	&\big\lVert u^{i+1} \sum_{i_1+i_2+i_3=i} \nabla^{i_1}\psibar^{i_2+1}\nabla^{i_3}\left(K-\frac{1}{u^2},\csigma \right) \big\rVert_{L_u^{1}L^2(S_{u,\delta})} \\ &\lesssim \sum_{i_1+i_2+i_3=i}\lVert u^{i_1+i_2} \nabla^{i_1}\psibar^{i_2+1} \rVert_{L_u^{2}L^{\infty}(S_{u,\delta})} \big\lVert u^{i_3+1} \nabla^{i_3}\left(K-\frac{1}{u^2},\csigma\right) \big\rVert_{ L_u^{2}L^{2}(S_{u,\delta})}\\ &\lesssim \frac{\delta \al}{u} \cdot \delta^{\frac{1}{2}}\al  \lesssim \frac{\delta^{\frac{3}{2}}a}{u}.
	\end{split}
	\end{equation}
	
	\item There holds 	\begin{equation}\begin{split}&\big\lVert u^{i} \sum_{i_1+i_2+i_3=i} \nabla^{i_1}\psibar^{i_2+1}\nabla^{i_3} \psi \big\rVert_{ L_u^{1}L^2(S_{u,\delta})} \\ &\lesssim \sum_{i_1+i_2 +i_3= i} \lVert u^{i_1+i_2} \nabla^{i_1}\psibar^{i_2+1} \rVert_{ L_u^{2}L^2(S_{u,\delta})} \big\lVert u^{i_3} \nabla^{i_3} \psi \big\rVert_{ L_u^{2}L^{\infty}(S_{u,\delta})}\\ &\lesssim \frac{\delta \al}{u^{\frac12}} \cdot \frac{\al}{u^{\frac12}} \lesssim \frac{\delta a}{u}.\end{split} \end{equation}
	
	\item There holds
	
	\begin{equation}
	\sum_{i_1+i_2=i}\lVert u^{i_1+i_2-1} \nabla^{i_1}\psibar^{i_2+1} \rVert_{ L_u^1 L^2(S_{u,\delta})} \lesssim \frac{\delta \al}{u}.
	\end{equation}\end{itemize}
\par \noindent Consequently, there holds \begin{equation}
\twoSudelta{u^{i+1} \nabla^i \beta} \lesssim \frac{\delta a}{u}.
\end{equation}
\end{proof}
We proceed to control the term $\tr\chi- \f2u + \frac{4m_0}{u^2}$.

\begin{proposition}
	There holds  \[ \TwoSudelta{u^{i+1} \nabla^i \left(\tr\chi - \f2u + \frac{4m_0}{u^2}\right) } \lesssim \frac{\delta a}{u^{\f12}} + \frac{\delta \al}{u} \lesssim \frac{\dal \al}{u^{\f12}}. \]
\end{proposition}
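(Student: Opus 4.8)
The plan is to propagate a renormalised $\nabla_3$ transport equation for $\tr\chi$ down the incoming cone $\Hbar_\delta$, starting from the data on $S_{1,\delta}$ supplied by Lemma \ref{spheregeoestimates}, and to run the same scheme used in Propositions \ref{chihatincomingprop}, \ref{csigmaandsigmaincoming} and the preceding proposition for $\beta$.

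I would begin with the structure equation
\[ \nabla_3 \tr\chi + \frac12 \tr\chi\, \tr\chibar = 2\omegabar\,\tr\chi + 2\rho - \chihat\cdot\chibarhat + 2\div\eta + 2|\eta|^2, \]
and, using $\nabla_3 u = -\Omega^{-1}$ so that $\nabla_3(\frac2u - \frac{4m_0}{u^2}) = \frac{2\Omega^{-1}}{u^2} - \frac{8m_0\Omega^{-1}}{u^3}$, rewrite it as a transport equation for $\phi := \tr\chi - \frac2u + \frac{4m_0}{u^2}$. Writing $\tr\chi$, $\tr\chibar$, $\Omega^{-1}$ and $\rho$ as their leading Schwarzschild values $\frac2u-\frac{4m_0}{u^2}$, $-\frac2u$, $1$, $-\frac{2m_0}{u^3}$ plus the renormalised remainders $\phi$, $\trubar$, $\Omega^{-1}-1$, $\rho+\frac{2m_0}{u^3}$, the purely background terms (those in $m_0$ and $u$ alone, arising from $2\rho$, from $\nabla_3(\frac2u-\frac{4m_0}{u^2})$ and from $\frac12\tr\chibar(\frac2u-\frac{4m_0}{u^2})$) cancel exactly, and one obtains a transport equation of the form
\[ \nabla_3\phi + \frac12\tr\chibar\,\phi = F, \]
where $F$ is schematically the sum of: the good right-hand side terms $\frac1u\omegabar$, $\div\eta$, $|\eta|^2$, $\chihat\cdot\chibarhat$; the renormalised curvature term $\rho + \frac{2m_0}{u^3}$; the mismatch terms $\frac1u\trubar$ and $\frac1{u^2}(\Omega^{-1}-1)$ (together with $O(m_0/u^2)$ multiples of these, which are smaller); and a self-interaction term of size $\frac{\delta\al}{u^2}\phi$ coming from $\omegabar\cdot\phi$. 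Commuting with $i$ angular derivatives via Proposition \ref{commutation} promotes the transport coefficient to $\frac{i+1}{2}\tr\chibar$ up to acceptable lower-order commutator contributions, so that Proposition \ref{transportprop2} applies with $\lambda_0 = \frac{i+1}{2}$ and yields
\[ |u|^{i}\, \twoSudelta{\nabla^i\phi} \lesssim \lVert \nabla^i\phi \rVert_{L^2(S_{1,\delta})} + \int_u^1 |u'|^{i}\, \lVert \nabla^i F \rVert_{L^2(S_{u',\delta})}\, \duprime. \]

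The first term is $\lesssim \delta a$ by Lemma \ref{spheregeoestimates}. Each summand of $\nabla^i F$ is then estimated exactly as in the previous three propositions, using Theorem \ref{higherorderenergyestimates} to control $\nabla^j\omegabar$, $\nabla^j\eta$, $\nabla^j\trubar$, $\nabla^j(\Omega^{-1}-1)$ and $\nabla^j(\rho + \frac{2m_0}{u^3})$ — the last via the identity expressing $\rho + \frac{2m_0}{u^3}$ through $K - \frac1{u^2}$ and quadratic Ricci terms, as in the proof of Lemma \ref{spheregeoestimates} — and using Proposition \ref{chihatincomingprop} to control $\chihat$ on $\Hbar_\delta$. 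As before, non-top-order factors are placed in $L^\infty$ by Sobolev embedding, the top-order $\nabla^{N+1}$ curvature and Ricci contributions are handled by Cauchy--Schwarz in $u$ against the corresponding $L^2(\Hbar_\delta)$ norms, the self-interaction term is absorbed by Gr\"onwall since $\lVert \delta\al/u^2 \rVert_{L_u^1}$ is small, and the factor $\int_u^1 |u'|^{-2}\,\duprime \lesssim |u|^{-1}$ supplies the $u$-decay. Collecting the contributions and multiplying through by $|u|\le 1$ gives
\[ \TwoSudelta{ u^{i+1}\nabla^i\!\left(\tr\chi - \frac2u + \frac{4m_0}{u^2}\right)} \lesssim \frac{\delta a}{u^{\frac12}} + \frac{\delta\al}{u}, \]
and, since $\delta\al < 1$ (as $\delta\al b < 1$ and $b>1$) and $\delta \le u$ (as $u \ge \delta\al b$), the right-hand side is $\lesssim \frac{\dal\al}{u^{\frac12}}$.

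The delicate part is the first step: one must verify that the correct leading Schwarzschild values to subtract are exactly $\frac2u-\frac{4m_0}{u^2}$ for $\tr\chi$, $-\frac2u$ for $\tr\chibar$ and $-\frac{2m_0}{u^3}$ for $\rho$, while $\omegabar$ and $\Omega^{-1}-1$ carry no $m_0$ at leading order, so that the potentially dangerous $O(u^{-2})$ and $O(u^{-3})$ inhomogeneities in the transport equation cancel among themselves and the remainder of $F$ consists only of quantities already controlled in this section and in Theorem \ref{higherorderenergyestimates}. Once that bookkeeping is settled, the remaining estimates are routine and parallel Propositions \ref{chihatincomingprop}, \ref{csigmaandsigmaincoming} and the $\beta$-proposition line by line.
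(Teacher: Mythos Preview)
Your overall strategy matches the paper's, but there is one structural step that you have miscast as bookkeeping, and as written it does not close. The identity you plan to invoke for $\rho+\tfrac{2m_0}{u^3}$ is
\[
\rho+\frac{2m_0}{u^3}= -\Bigl(K-\frac{1}{u^2}\Bigr)-\frac14\,\phi\Bigl(\tr\chibar+\frac2u\Bigr)+\frac12\chihat\cdot\chibarhat+\frac{1}{2u}\,\phi-\frac{1}{2u}\Bigl(\tr\chibar+\frac2u\Bigr)+\frac{m_0}{u^2}\Bigl(\tr\chibar+\frac2u\Bigr),
\]
where $\phi=\tr\chi-\tfrac2u+\tfrac{4m_0}{u^2}$. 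This is \emph{not} expressible through $K-\tfrac{1}{u^2}$ and quadratic Ricci terms alone: it carries a \emph{linear} term $\tfrac{1}{2u}\phi$ in the very quantity you are estimating. With your transport coefficient $\tfrac12\tr\chibar$ and weight $u^i$, the source $2(\rho+\tfrac{2m_0}{u^3})$ then feeds back a $\tfrac{1}{u'}\phi$ term into the Gr\"onwall; since $\int_u^1 (u')^{-1}\,du'=\log(1/u)$ is not uniformly bounded over $u\in[\delta a^{1/2}b,1]$, the Gr\"onwall constant degenerates to $u^{-C}$ and the estimate does not close for small $u$. The $\omegabar\cdot\phi$ term you singled out, with coefficient $\delta a^{1/2}/u^2$, is harmless; the dangerous one is this hidden $\tfrac{1}{u}\phi$.

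The paper handles this by applying the Gauss equation $K=-\rho+\tfrac12\chihat\cdot\chibarhat-\tfrac14\tr\chi\,\tr\chibar$ to the structure equation for $\nabla_3\tr\chi$ \emph{before} renormalising with $m_0$. This replaces $2\rho$ by $-2K-\tfrac12\tr\chi\,\tr\chibar+\chihat\cdot\chibarhat$; the extra $-\tfrac12\tr\chi\,\tr\chibar$ combines with the original transport term to change the coefficient from $\tfrac12\tr\chibar$ to $\tr\chibar$. One then renormalises to obtain
\[
\nabla_3\phi+\tr\chibar\,\phi = G_0,
\]
with $G_0$ built from $-2(K-\tfrac1{u^2})$, $\tfrac1u(\tr\chibar+\tfrac2u)$, $\tfrac1{u^2}(\Omega^{-1}-1)$, $\omegabar\,\tr\chi$, $\div\eta$, $|\eta|^2$ and their $m_0$-weighted analogues --- and no $\tfrac1u\phi$ term. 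After commutation the coefficient is $\tfrac{i+2}{2}\tr\chibar$ and the weight is $u^{i+1}$; the top-order self-interaction is then handled by a Gr\"onwall with bounded constant, the lower-order $\phi$-terms by induction on $i$, and the remaining sources exactly as you described. In short, your ``delicate first step'' must include not only the cancellation of the $O(u^{-2})$ and $O(u^{-3})$ background terms but also the correct identification of the transport coefficient as $\tr\chibar$ rather than $\tfrac12\tr\chibar$.
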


\begin{proof}
	 Recall the transport equation
	\begin{equation}
	\nabla_3 tr\chi + \frac{1}{2}\hspace{.5mm} tr\chi \hspace{.5mm} tr\chibar = 2\hspace{.5mm} \omegabar \hspace{.5mm} tr\chi +2 \hspace{.5mm} \rho - \chihat\cdot \chibarhat + 2 \hspace{.5mm} \text{div}\hspace{.5mm}  \eta + 2\hspace{.5mm} \lvert \eta \rvert^2.
	\end{equation}	
	We can rewrite this using the Gauss equation $K=-\rho+\frac{1}{2}\chibarhat\cdot \chihat-\frac{1}{4}\tr{\chi}\tr{\chibar}$ to obtain 
	\begin{equation}
	\nabla_3 tr\chi + \hspace{.5mm} tr\chibar \hspace{.5mm} \tr\chi = - 2K+2\hspace{.5mm} \omegabar \hspace{.5mm} tr\chi + 2 \hspace{.5mm} \text{div}\hspace{.5mm}  \eta + 2\hspace{.5mm} \lvert \eta \rvert^2.
	\end{equation}	
	There holds
	\begin{equation}\begin{split}
	&\nabla_3\left( \tr\chi - \frac{2}{u}  + \frac{4m_0}{u^2}\right) + \tr\chibar \left(\tr\chi - \frac{2}{u}  + \frac{4m_0}{u^2}\right)  \\= &- 2\left(K-\frac{1}{u^2}\right) - \frac{2}{u}\left(\tr\chibar+ \frac{2}{u} \right) + \frac{2(1-\Omega^{-1})}{u^2} + \frac{4m_0}{u^2}\left(\tr\chibar+ \frac{2}{u}\right) \\+ &\frac{8m_0 (\Omega^{-1}-1)}{u^3} +2\hspace{.5mm} \omegabar \hspace{.5mm} tr\chi + 2 \hspace{.5mm} \text{div}\hspace{.5mm}  \eta + 2\hspace{.5mm} \lvert \eta \rvert^2 :=G_0. \end{split}
	\end{equation}Commuting with $i$ angular derivatives, we arrive at
	
	\begin{equation}\begin{split}
	\nabla_3 \nabla^i \left( \tr\chi - \frac{2}{u}  + \frac{4m_0}{u^2}\right) &+ \frac{i+2}{2}\tr\chibar \nabla^i \left( \tr\chi - \frac{2}{u}  + \frac{4m_0}{u^2}\right) \\ &= \sum_{i_1+i_2+i_3=i} \nabla^{i_1}\psibar^{i_2} \nabla^{i_3} G_0 \\ &+ \sum_{i_1+i_2+i_3=i} \nabla^{i_1}\psibar^{i_2} \nabla^{i_3}\left( \tr\chi - \frac{2}{u}  + \frac{4m_0}{u^2}\right)\\ &+ \sum_{\substack{i_1+i_2+i_3=i\\ i_3\leq i-1}} \tr\chibar \nabla^{i_1}\psibar^{i_2}\nabla^{i_3} \left( \tr\chi - \frac{2}{u}  + \frac{4m_0}{u^2}\right) := G_i.
	\end{split} \label{h}
	\end{equation} Proposition \ref{transportprop2} implies that we can control \begin{equation} \lVert u^{i+1} \nabla^{i}\left( \tr\chi-\frac{2}{u}+\frac{4m_0}{u^2} \right) \rVert_{L^2(S_{u,\delta})}  \lesssim \big\lVert \nabla^{i}\left( \tr\chi-\frac{2}{u}+\frac{4m_0}{u^2} \right) \big\rVert_{L^2(S_{1,\delta})} + \big\lVert u^{i+1} G_i \big\rVert_{L_u^1 L^2(S_{u,\delta})}. \end{equation}
	Before we continue, we remark the following: As can be seen from the last two terms in \eqref{h}, the estimates will have to be carried out using induction. To do this, we first obtain estimates of the form 
	
   \[ \big\lVert u^{i+1} \sum_{i_1+i_2+i_3=i} \nabla^{i_1} \psibar^{i_2} \nabla^{i_3} G_0 \big\rVert_{L_u^1 L^2(S_{u,\delta})}\lesssim T(\delta, a,u). \]As soon as we have obtained these estimates, we have essentially obtained the desired bounds for $i=0$.  For $i\geq 1$, we shall then use the inductive assumption 
   
   \[ \twoSudelta{u^{j+1} \nabla^j \left( \tr\chi - \f2u +\frac{4m_0}{u^2} \right)} \lesssim T(\delta, a,u), \hspace{2mm} \forall \hsp j<i,       \]which we will use to estimate the last two terms in \eqref{h}.
	
	\begin{itemize}
		\item There holds \begin{equation}\begin{split}
		&\hspace{4.5mm}\big \lVert \sum_{i_1+i_2+i_3=i} u^{i+1} \nabla^{i_1}\psibar^{i_2}\nabla^{i_3}\left(K- \frac{1}{u^2} \right) \big \rVert_{L_u^1L^2(S_{u,\delta})} \\ &\lesssim \big \lVert  u^{i+1} \nabla^{i}\left(K- \frac{1}{u^2} \right) \big \rVert_{L_u^1L^2(S_{u,\delta})} \\ &+\sum_{i_1+i_2+i_3+1=i} \lVert u^{i_1+i_2+1}\nabla^{i_1}\psibar^{i_2+1} \rVert_{L_u^2L^{\infty}(S_{u,\delta})} \cdot \lVert u^{i_3+1} \nabla^{i_3}\left(K-\frac{1}{u^2} \right) \big \rVert_{L_u^2L^2(S_{u,\ubar})}\\ &\lesssim \big \lVert  u^{i+1} \nabla^{i}\left(K- \frac{1}{u^2} \right) \big \rVert_{L_u^2L^2(S_{u,\delta})} \cdot \lVert 1 \rVert_{L_u^2} \\ &+\sum_{i_1+i_2+i_3+1=i} \lVert u^{i_1+i_2+1}\nabla^{i_1}\psibar^{i_2+1} \rVert_{L_u^2L^{\infty}(S_{u,\delta})} \cdot \lVert u^{i_3+1} \nabla^{i_3}\left(K-\frac{1}{u^2} \right) \big \rVert_{L_u^2L^2(S_{u,\ubar})} \\ &\lesssim \frac{\delta \al}{u^{\f12}}+ \frac{\delta \al}{u^{\f12}} \cdot \frac{\delta \al}{u^{\f12}} \lesssim \frac{\delta \al}{u^{\f12}}. \end{split}
		\end{equation}Here we have made use of the improved estimate $\twoSu{u^{i+2}\nabla^i \left(K - \frac{1}{u^2} \right) } \lesssim \delta \al$ from Theorem \ref{higherorderenergyestimates}.
		\item There holds \begin{equation}\begin{split}
		&\hspace{4.6mm}\Big \lVert \sum_{i_1+i_2+i_3=i} u^{i} \nabla^{i_1}\psibar^{i_2}\nabla^{i_3} \l \trubar \r \Big\rVert_{L_u^1L^2(S_{u,\delta})} \\&\lesssim \Big \lVert  u^{i} \nabla^{i}\left( \trubar \right) \Big \rVert_{L_u^1L^2(S_{u,\delta})} \\ &+\sum_{i_1+i_2+i_3+1=i} \lVert u^{i_1+i_2+1}\nabla^{i_1}\psibar^{i_2+1} \rVert_{L_u^{\infty}L^2(S_{u,\delta})} \cdot \big\lVert u^{i_3} \nabla^{i_3}\left(\trubar \right) \big \rVert_{L_u^1L^{\infty}(S_{u,\delta})}\\  &\lesssim \Big \lVert  u^{i} \nabla^{i}\left( \trubar \right) \Big \rVert_{L_u^2L^2(S_{u,\delta})} \cdot \lVert 1 \rVert_{L_u^2} \\ &+\sum_{i_1+i_2+i_3+1=i} \lVert u^{i_1+i_2+1}\nabla^{i_1}\psibar^{i_2+1} \rVert_{L_u^{\infty}L^2(S_{u,\delta})} \cdot \big\lVert u^{i_3} \nabla^{i_3}\left(\trubar \right) \big \rVert_{L_u^1L^{\infty}(S_{u,\delta})}\\ &\lesssim \frac{\delta \al}{u^{\frac12}} + \delta \al \cdot \frac{\delta \al}{u} \lesssim \frac{\delta \al}{u^{\frac12}} . \end{split}
		\end{equation}
		\item There holds \begin{equation}\begin{split}
		&\hspace{4.6mm}\Big \lVert \sum_{i_1+i_2+i_3=i} u^{i-1} m_0 \nabla^{i_1}\psibar^{i_2}\nabla^{i_3} \l \trubar \r \Big\rVert_{L_u^1L^2(S_{u,\delta})}\\ &\lesssim \Big \lVert \sum_{i_1+i_2+i_3=i} u^{i} \nabla^{i_1}\psibar^{i_2}\nabla^{i_3} \l \trubar \r \Big\rVert_{L_u^2L^2(S_{u,\delta})} \cdot \big\lVert \frac{m_0}{u}\big \rVert_{L_u^{2}}\\ &\lesssim \frac{\delta \al m_0}{u} \lesssim \frac{\delta \al}{u}.
		\end{split}
	    \end{equation}
		
		\item There holds \begin{equation}
		\big\lVert u^{i-1} \sum_{i_1+i_2+i_3=i} \nabla^{i_1}\psibar^{i_2} \nabla^{i_3} \left(1-\Omega^{-1}\right) \big\rVert_{L_u^1L^2(S_{u,\delta})} \\ \lesssim \frac{\delta \al}{u^{\frac12}},
		\end{equation}keeping in mind that $\omega = \frac{1}{2}\partial_{\ubar}\Omega^{-1}$ and that $\Omega^{-1}=1$ on $\Hbar_0$. Indeed, when $i=0$, there holds \[  \big\lVert u^{-1}   \left(1-\Omega^{-1}\right) \big\rVert_{L_u^1L^2(S_{u,\delta})}  \lesssim \lVert 1- \Omega^{-1} \rVert_{ L_{u}^{2}L^{2}(S_{u,\delta})} \cdot \lVert u^{-1} \rVert_{L_u^2 }  \lesssim \delta \al \cdot \frac{1}{u^{\f12}}. \]
		For $i \geq 1$, there holds \begin{equation}\begin{split}
		&\hspace{4.6mm} \big\lVert u^{i-1} \sum_{i_1+i_2+i_3=i} \nabla^{i_1}\psibar^{i_2} \nabla^{i_3} \left(1-\Omega^{-1}\right) \big\rVert_{L_u^1L^2(S_{u,\delta})} \\ &\lesssim \bigg\lVert u^{i-1} \nabla^i \left(\int_{0}^{\ubar} 2 \hs \omega(u,\ubar^{\prime}, \theta^1,\theta^2) \dubarprime \right) \bigg\rVert_{L_u^1L^2(S_{u,\delta})} \\&+ \bigg\lVert u^{i-1} \sum_{i_1+i_2+i_3=i} \nabla^{i_1}\psibar^{i_2} \nabla^{i_3} \left(\int_{0}^{\ubar} 2 \hs \omega(u,\ubar^{\prime}, \theta^1,\theta^2) \dubarprime \right) \bigg\rVert_{L_u^1L^2(S_{u,\delta})} \\ &\lesssim \bigg\lVert u^{i-1}  \int_{0}^{\ubar} \hs \nabla^i \omega(u,\ubar^{\prime}, \theta^1,\theta^2) \dubarprime \bigg\rVert_{L_u^1L^2(S_{u,\delta})} \\&+ \bigg\lVert u^{i-1} \sum_{i_1+i_2+i_3+1=i} \nabla^{i_1}\psibar^{i_2+1}  \left(\int_{0}^{\ubar}\nabla^{i_3} \omega(u,\ubar^{\prime}, \theta^1,\theta^2) \dubarprime \right) \bigg\rVert_{L_u^1L^2(S_{u,\delta})}\\ &\lesssim \lVert u^{-1} \rVert_{L_u^2} \cdot \delta \cdot \lVert u^{i} \nabla^i \omega \rVert_{L_u^2 L^{2}(S_{u,\delta})} \\ &+ \sum_{i_1+i_2+i_3=i} \lVert u^{i_1+i_2}\nabla^{i_1}\psibar^{i_2+1} \rVert_{L_u^2 L^{2}(S_{u,\delta})} \cdot \delta \cdot \lVert u^{i_3} \nabla^{i_3} \omega \rVert_{L_u^2 L^{\infty}(S_{u,\delta})}\\ &\lesssim \frac{1}{u^{\f12}} \cdot \delta \cdot \al + \frac{\delta \al}{u} \cdot \frac{\delta \al}{u^{\f12}} \lesssim \frac{\delta \al}{u^{\f12}}. \end{split} 
		\end{equation}
		
		\item Similarly, one bounds	\begin{equation}\begin{split}
		&\hspace{4.6mm} \lVert u^{i-2} m_0 \nabla^i \left(1-\Omega^{-1}\right) \rVert_{L_u^1L^2(S_{u,\delta})} \\ &\lesssim 	\lVert u^{i-1} \nabla^i \left(1-\Omega^{-1}\right) \rVert_{L_u^2L^2(S_{u,\delta})}\cdot \lVert u^{-1} m_0 \rVert_{L_{u}^{2} L^{\infty}(S_{u,\delta})}  \\& \lesssim \frac{\delta \al m_0}{u}\lesssim \frac{\delta \al}{u}. \end{split}
		\end{equation}	\item There holds \begin{equation}\begin{split}
	 &\hspace{4.6mm}	\big\lVert \sum_{i_1+i_2+i_3+i_4=i} u^{i+1} \nabla^{i_1}\psibar^{i_2}\nabla^{i_3}\omegabar \nabla^{i_4}\tr\chi \big \rVert_{L_u^1 L^2(S_{u,\delta})} \\ &=  \big\lVert \sum_{i_1+i_2+i_3=i} u^{i+1} \nabla^{i_1}\psibar^{i_2+1}\nabla^{i_3}\tr\chi \big \rVert_{L_u^1 L^2(S_{u,\delta})} \\ &\lesssim \sum_{i_1+i_2+i_3=i} \lVert u^{i_1+i_2} \nabla^{i_1}\psibar^{i_2+1} \rVert_{L_u^2 L^2(S_{u,\delta})} \lVert u^{i_3+1}\nabla^{i_3}\tr\chi \rVert_{L_u^{2} L^{\infty}(S_{u,\delta})}\\ &\lesssim \frac{\delta \al}{u^{\f12}}  \cdot \al \lesssim \frac{\delta a}{u^{\f12}}.
		\end{split}\end{equation}
		\item There holds \begin{equation}\begin{split}
	 &\hspace{4.6mm}	\big \lVert u^{i+1} \sum_{i_1+i_2+i_3=i} \nabla^{i_1}\psibar^{i_2}\nabla^{i_3+1}\eta \big\rVert_{L_u^1 L^2(S_{u,\delta})} \\ &= \lVert u^{i+1}\nabla^{i+1}\eta \rVert_{L_u^1 L^2(S_{u,\delta})}+  \lVert u^{i+1} \sum_{i_1+i_2+i_3+1=i} \nabla^{i_1}\psibar^{i_2+1}\nabla^{i_3+1}\eta \big\rVert_{L_u^1 L^2(S_{u,\delta})} \\ &\lesssim\lVert u^{i+1}\nabla^{i+1}\eta \rVert_{L_u^1 L^2(S_{u,\delta})} + \sum_{i-1} \hsp \lVert u^{i_1+i_2-1} \nabla^{i_1}\psibar^{i_2+1} \rVert_{L_u^1 L^2(\S)}\lVert u^{i_3+3}\nabla^{i_3+1} \eta \rVert_{L_u^{\infty}L^{\infty}(\S)}\\  &\lesssim \big \lVert \frac{1}{u} \big\rVert_{L_u^2} \lVert u^{i+2}\nabla^{i+1} \eta\rVert_{L_u^2 L^2(S_{u,\delta})} + \sum_{i-1} \hsp \lVert u^{i_1+i_2-1} \nabla^{i_1}\psibar^{i_2+1} \rVert_{L_u^1 L^2(S_{u,\delta})}\lVert u^{i_3+3}\nabla^{i_3+1} \eta \rVert_{L_u^{\infty}L^{\infty}(S_{u,\delta})} \\ &\lesssim \frac{1}{u^{\frac{1}{2}}}\cdot (\delta \al) + \frac{\delta \al}{u}\cdot \delta \al \lesssim \frac{\delta \al}{u^{\frac12}}. \end{split}
		\end{equation}
		\item There holds
		\begin{equation}
		\begin{split}
		&\hspace{4.6mm} \big \lVert u^{i+1} \sum_{i_1+i_2+i_3+i_4=i} \nabla^{i_1}\psibar^{i_2}\nabla^{i_3
		}\eta \nabla^{i_4} \eta \big\rVert_{L_u^1 L^2(S_{u,\delta})} = \big\lVert u^{i+1} \sum_{i_1+i_2+i_3=i} \nabla^{i_1}\psibar^{i_2+1} \nabla^{i_3}\eta \big\rVert_{L_u^1 L^2(S_{u,\delta}) }\\  &\lesssim \sum_{i_1+i_2+i_3=i} \lVert u^{i_1+i_2-1} \nabla^{i_1}\psibar^{i_2+1} \rVert_{L_{u}^{1}L^2(S_{u,\delta})} \lVert u^{i_3+2}\nabla^{i_3}\eta \rVert_{L_{u}^{\infty} L^{\infty}(S_{u,\delta})} \lesssim \frac{\delta^2 a}{u}.
    	\end{split}
		\end{equation}This concludes the terms of the form $\sum_{i_1+i_2+i_3=i} \nabla^{i_1}\psibar^{i_2} \nabla^{i_3}G_0$. The worst term that has appeared is $\frac{\delta a}{u^{\f12}} + \frac{\delta \al}{u}$. Consequently, the result holds for $i=0$, namely
		
		\[ \twoSudelta{u \hsp \tr\chi - 2 + \frac{4m_0}{u} } \lesssim  \frac{\delta a}{u^{\f12}} + \frac{\delta \al}{u}.    \] We can therefore set $T(a,u,\delta) = \frac{\delta a}{u^{\f12}}$ and make the inductive assumption \begin{equation}
		\TwoSudelta{u^{j+1}\nabla^j \left( \tr\chi - \f2u +\frac{4m_0}{u^2} \right)} \lesssim \frac{\delta a}{u^{\f12}} + \frac{\delta \al}{u},
		\end{equation} for all $j<i$.

		\item There holds 
		
		\begin{equation}\begin{split}
		\hspace{9.2mm} &\big \lVert u^{i+1} \sum_{i_1+i_2+i_3=i} \nabla^{i_1}\psibar^{i_2}\nabla^{i_3
		}\left( \tr\chi- \frac{2}{u} + \frac{4m_0}{u^2} \right) \big \rVert_{ L_{u}^{1}L^2(S_{u,\delta})}\\ &= \big\lVert u^{i+1} \nabla^{i}\left( \tr\chi- \frac{2}{u} + \frac{4m_0}{u^2} \right)  \big\rVert_{L^2(S_{u,\delta})}  \\ &+	\big \lVert u^{i+1} \sum_{i_1+i_2+i_3=i-1} \nabla^{i_1}\psibar^{i_2+1}\nabla^{i_3
		}\left( \tr\chi- \frac{2}{u} + \frac{4m_0}{u^2} \right) \big \rVert_{ L_{u}^{1}L^2(S_{u,\delta})}\\ &\lesssim \big\lVert u^{i+1} \nabla^{i}\left( \tr\chi- \frac{2}{u} + \frac{4m_0}{u^2} \right)  \big\rVert_{L_{u}^{1}L^2(S_{u,\delta})} \\&+ \sum_{\substack{i_1+i_2+i_3=i-1,\\ i_1 > i_3}} \lVert u^{i_1+i_2} \nabla^{i_1}\psibar^{i_2+1}\rVert_{L_u^1 L^2(S_{u,\delta})} \lVert u^{i_3+2} \nabla^{i_3}\left( \tr\chi- \frac{2}{u} + \frac{4m_0}{u^2} \right) \rVert_{ L_{u}^{\infty}L^\infty (S_{u,\delta})}\\ &+ \sum_{\substack{i_1+i_2+i_3=i-1, \\ i_3 > i_1}} \lVert u^{i_1+i_2} \nabla^{i_1}\psibar^{i_2+1}\rVert_{L_u^1 L^{\infty}(S_{u,\delta})} \lVert u^{i_3+2} \nabla^{i_3}\left( \tr\chi- \frac{2}{u} + \frac{4m_0}{u^2} \right) \rVert_{ L_{u}^{\infty}L^2(S_{u,\delta})}\\ &\lesssim \big\lVert u^{i+1} \nabla^{i}\left( \tr\chi- \frac{2}{u} + \frac{4m_0}{u^2} \right)  \big\rVert_{L_{u}^{1}L^2(S_{u,\delta})} + \frac{\delta \al}{u} \cdot T(a,u,\delta)    \end{split}	\end{equation}The first term in the last inequality is handled by Gr\"onwall's inequality. The second term is smaller than $\frac{\delta a}{u^{\f12}}$.
		
		\item We can also bound 
		
		\begin{equation}
		\begin{split}
		&\hspace{4.6mm} \big \lVert u^{i+1} \sum_{i_1+i_2+i_3+1=i}\tr\chibar \nabla^{i_1}\psibar^{i_2+1}\nabla^{i_3
		}\left( \tr\chi- \frac{2}{u} + \frac{4m_0}{u^2} \right) \big \rVert_{ L_{u}^{1}L^2(S_{u,\delta})} \\ &\lesssim 	\big \lVert u^{i+1} \sum_{i_1+i_2+i_3+1=i}\left(\trubar \right) \nabla^{i_1}\psibar^{i_2+1}\nabla^{i_3
		}\left( \tr\chi- \frac{2}{u} + \frac{4m_0}{u^2} \right) \big \rVert_{L_{u}^{1}L^2(S_{u,\delta})}\\   &+ 	\big \lVert u^{i} \sum_{i_1+i_2+i_3+1=i} \nabla^{i_1}\psibar^{i_2+1}\nabla^{i_3
		}\left( \tr\chi- \frac{2}{u} + \frac{4m_0}{u^2} \right) \big \rVert_{L_{u}^{1}L^2(S_{u,\delta})} \\ &\lesssim  \sum_{\substack{i_1+i_2+i_3+1=i, \\ i_1< i_3}} \lVert u^{i_1+i_2+1}\nabla^{i_1} \psibar^{i_2+2} \rVert_{L_u^1 L^{\infty}(\S)} \cdot \bigg\lVert u^{i_3+1}\nabla^{i_3}\left( \tru \right) \bigg\rVert_{L_u^{\infty}L^2(S_{u,\delta})}   \\&+ \sum_{\substack{i_1+i_2+i_3+1=i, \\ i_3< i_1}}\lVert u^{i_1+i_2} \nabla^{i_1}\psibar^{i_2+1} \rVert_{L_u^1 L^{2}(S_{u,\delta})} \cdot \bigg\lVert u^{i_3+1}\nabla^{i_3}\l \tru \r \bigg\rVert_{L_u^{\infty}L^{\infty}(S_{u,\delta})}   \\ &\lesssim \frac{\delta \al}{u} T(a,u,\delta).
	    \end{split}
		\end{equation}
		
	\end{itemize}
	So there holds \be \big\lVert u^{i+1} \nabla^i \left( \tr\chi - \frac{2}{u} + \frac{4m_0}{u^2}   \right) \big\rVert_{L_{\ubar}^{\infty} L_{u}^{\infty} L^2(S_{u,\delta})} \lesssim \frac{\delta a}{u^{\f12}} + \frac{\delta \al}{u} \lesssim  \frac{\dal \al}{u^{\f12}}.   \ee

\end{proof}
\par \noindent We now control $\omega + \frac{m_0}{2u^2}$.

\begin{proposition}
	For $i\geq 1$, there holds \[  \big\lVert u^i \nabla^i \left(\omega + \frac{m_0}{2u^2} \right)\big\rVert_{L^2(S_{u,\delta})} \lesssim \frac{\delta^{\frac{1}{2}}\al}{u^{\frac{1}{2}}}.  \] For $k=0$, there holds  \[  \big\lVert u  \left(\omega + \frac{m_0}{2u^2} \right)\big\rVert_{L^2(S_{u,\delta})} \lesssim \frac{\delta^{\frac{1}{2}}\al}{u^{\frac{1}{2}}}.  \] 
\end{proposition}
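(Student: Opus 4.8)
The plan is to run, for the renormalised lapse quantity $\omega + \frac{m_0}{2u^2}$, the same scheme already used on $\underline{C}_{\delta}$ for $\chihat$, $\csigma$, $\sigma$, $\beta$ and $\tr\chi - \frac2u + \frac{4m_0}{u^2}$: produce a renormalised $\nabla_3$ transport equation whose source is a controlled combination of renormalised quantities, commute with $i$ angular derivatives, apply Proposition \ref{transportprop2}, and then estimate the source term by term with the weighted bounds of Theorem \ref{higherorderenergyestimates} and the incoming-cone estimates obtained earlier in this section.

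First I would derive the renormalised equation. Since $\frac{m_0}{2u^2}$ is a function of $u$ alone, $\nabla_3\bigl(\frac{m_0}{2u^2}\bigr) = \frac{m_0\,\Omega^{-1}}{u^3}$, and combining this with the transport equation (2.21), $\nabla_3\omega = 2\omega\omegabar - \eta\cdot\etabar + \tfrac12|\etabar|^2 + \tfrac12\rho$, and writing $\tfrac12\rho = \tfrac12\bigl(\rho + \tfrac{2m_0}{u^3}\bigr) - \tfrac{m_0}{u^3}$, one gets
\[ \nabla_3\Bigl(\omega + \frac{m_0}{2u^2}\Bigr) = \frac12\Bigl(\rho + \frac{2m_0}{u^3}\Bigr) + \frac{m_0(\Omega^{-1}-1)}{u^3} + \psi\psibar + \psibar^2 =: G_0 , \]
which indeed vanishes to leading order in a Schwarzschild background of mass $m_0$. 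I would then replace $\rho + \tfrac{2m_0}{u^3}$, exactly as in the proof of Lemma \ref{spheregeoestimates}, by the Gauss-equation expression, so that $G_0$ becomes a schematic sum of $K - \tfrac{1}{u^2}$, $\tr\chi - \tfrac2u + \tfrac{4m_0}{u^2}$, $\tr\chibar + \tfrac2u$, $\chihat\cdot\chibarhat$ and $\tfrac{m_0}{u^3}(\Omega^{-1}-1)$; all of these are controlled on $\underline{C}_{\delta}$ — the first two by the weighted bounds of Theorem \ref{higherorderenergyestimates} and of the preceding proposition, the third by $\mathcal{O}_N$, and the last by $\Omega^{-1}-1 = \int_0^{\ubar} 2\omega\,\mathrm{d}\ubar'$ together with $\omega\sim u^{-2}$.

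Next, commuting $i$ times with $\nabla$ via Proposition \ref{commutationformulaeprop} gives
\[ \nabla_3\nabla^i\Bigl(\omega + \frac{m_0}{2u^2}\Bigr) + \frac{i}{2}\,\tr\chibar\,\nabla^i\Bigl(\omega + \frac{m_0}{2u^2}\Bigr) = G_i , \]
where $G_i$ is the commuted source; note that for $i\geq1$ the correction $\frac{m_0}{2u^2}$ is annihilated by $\nabla^i$, so this is simply the equation for $\nabla^i\omega$. Applying Proposition \ref{transportprop2} with $\lambda_0 = i/2$ reduces matters to the data on $S_{1,\delta}$ and the $L^1_u L^2(S_{u,\delta})$-norm of $G_i$. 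The data is clean: since $\Omega\equiv1$ on $H_1$, one has $\omega\equiv0$ on $S_{1,\delta}$, so the data vanishes for $i\geq1$ and equals the constant $\frac{m_0}{2}$ for $i=0$ — whose contribution is controlled by the favourable weight (for $i=0$ one has $\lambda_1 = -1$) together with the smallness of $m_0$, recalling $4m_0 = \int_0^\delta |\chihat_0|^2\,\mathrm{d}\ubar' \sim \al b^\mu\lambda\delta$. The terms of $G_i$ are then estimated as in the earlier propositions: Hölder in $u$, Sobolev embedding on the spheres, Proposition \ref{riccihelpful} for the products $\nabla^{i_1}\psibar^{i_2}$, the incoming-cone bounds on $\chihat$, $\csigma$, $\beta$ and $\tr\chi - \tfrac2u + \tfrac{4m_0}{u^2}$, and the improved curvature/Ricci bounds from Theorem \ref{higherorderenergyestimates}; the terms linear in $\omega + \frac{m_0}{2u^2}$ itself (from the commutator and from $2\omega\omegabar$) carry an extra $\psibar$-type factor and are absorbed by Grönwall's inequality, with an induction on $i$ for the lower-order pieces exactly as for $\tr\chi - \tfrac2u + \tfrac{4m_0}{u^2}$.

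The main obstacle is the bookkeeping of $u$-powers needed to land on the sharp $\frac{\delta^{1/2}\al}{u^{1/2}}$ rather than a worse negative power of $u$. The delicate contributions to $G_0$ are those coming from $\rho + \frac{2m_0}{u^3}$: after the Gauss substitution one must use the region-of-existence weighted estimates $\lVert u^{i+2}\nabla^i(K - \tfrac{1}{u^2})\rVert_{L^2}\lesssim\delta\al$ and $\lVert u^{i+1}\nabla^i(\tr\chi - \tfrac2u)\rVert_{L^2}\lesssim\delta a$ of Theorem \ref{higherorderenergyestimates} — not the cruder incoming-cone bounds — and then invoke $\delta a\lesssim \delta\al b\leq u$ (using $\al\leq b$) and $\delta\lesssim u$ to see that each resulting term is $\lesssim\frac{\delta^{1/2}\al}{u^{1/2}}$. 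The $i=0$ estimate closes separately using the vanishing of $\omega$ on $S_{1,\delta}$, the weight, and the smallness of $m_0$.
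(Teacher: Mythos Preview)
Your proposal is correct and follows essentially the same approach as the paper: derive the renormalised $\nabla_3$ equation for $\omega + \tfrac{m_0}{2u^2}$, substitute the Gauss identity for $\rho + \tfrac{2m_0}{u^3}$, commute with $\nabla^i$, apply Proposition~\ref{transportprop2} with $\lambda_0 = i/2$, and estimate the source using the weighted bounds of Theorem~\ref{higherorderenergyestimates} together with the incoming-cone improvements already obtained. The paper organises the two cases slightly differently (for $i\geq 1$ it starts from the non-renormalised schematic $\nabla_3\omega = K + \psi\psibar + \psibar\psibar + \tr\chi\,\tr\chibar$ and splits $K = (K-\tfrac{1}{u^2}) + \tfrac{1}{u^2}$, reserving the renormalised equation for $i=0$), but this is purely presentational; your mention of Gr\"onwall/induction for the $\omega\omegabar$ term is unnecessary since the paper estimates it directly via the $\psi$ bound on $\omega$, though your approach would also close.
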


\begin{proof}
	We first prove the bound for $i\geq 1$, then for $i=0$. We begin with the schematic equation \[ \nabla_3 \omega = K+ \psi \hspace{.5mm} \psibar + \psibar \hspace{.5mm} \psibar + \tr\chi \hspace{.3mm} \tr\chibar. \]Commuting this equation with $i$ angular derivatives, we obtain \begin{equation}\begin{split}
	\nabla_3 \nabla^i \omega + \frac{i}{2} \tr\chibar \nabla^i \omega &= \sum_{i_1+i_2+i_3=i} \nabla^{i_1}\psibar^{i_2}\nabla^{i_3}K \\ &+\sum_{i_1+i_2=i}\nabla^{i_1}\psibar^{i_2+2} + \sum_{i_1+i_2+i_3=i} \nabla^{i_1}\psibar^{i_2+1}\nabla^{i_3}\psi \\ &+ \sum_{i_1+i_2+i_3=i-1} \frac{1}{u} \nabla^{i_1}\psibar^{i_2+1}\nabla^{i_3} \psi + \frac{1}{u}\nabla^i \tr\chi := G_i.
	\end{split}
	\end{equation} Using Proposition \ref{transportprop2} with $\lambda_0 = \frac{i}{2}$ and given that the initial data for $u^{i-1} \nabla^i \left(\omega + \frac{m_0}{2u^2}\right)$ for $i \geq 1$ vanish, we conclude that we can control \[\big\lVert u^{i-1} \nabla^i \left(\omega + \frac{m_0}{2u^2}\right) \big \rVert_{L_u^{\infty} L^2(S_{u,\delta})} \lesssim \big\lVert u^{i-1} G_i \big \rVert_{L_u^{1} L^2(S_{u,\delta})} . \]Define \[  F_i:= \sum_{i_1+i_2=i}\nabla^{i_1}\psibar^{i_2+2} + \sum_{i_1+i_2+i_3=i} \nabla^{i_1}\psibar^{i_2+1}\nabla^{i_3}\psi \\ + \sum_{i_1+i_2+i_3=i-1} \frac{1}{u} \nabla^{i_1}\psibar^{i_2+1}\nabla^{i_3} \psi .   \]We claim that \[ \lVert u^{i-1} F_i \rVert_{L_u^{1}L^2(S_{u,\delta})} \lesssim \frac{\delta^{\frac{1}{2}}\al}{\lvert u \rvert^{\frac32}}.   \]Indeed, there holds \[  \lVert u^{i-1} F_i \rVert_{L_u^{1}L^2(S_{u,\delta})} \lesssim \frac1u   \lVert u^{i} F_i \rVert_{L_u^{1}L^2(S_{u,\delta})}.     \]Moreover, we have \begin{equation}\begin{split}
	&\hspace{4.6mm}\big \lVert u^i \sum_{i_1+ i_2 + i_3 =i} \nabla^{i_1}\psibar^{i_2+1}\nabla^{i_3} (\psibar,\psi) \big\rVert_{L_u^1 L^2(S_{u,\delta})} \\ &\lesssim \sum_{i_1+i_2+i_3=i}\lVert u^{i_1+i_2-1} \nabla^{i_1}\psibar^{i_2+1} \rVert_{L_u^{1} L^2(S_{u,\delta})} \cdot \lVert u^{i_3+1} \nabla^{i_3}(\psibar,\psi) \rVert_{L_u^{\infty} L^{\infty}(S_{u,\delta})} \\ &\lesssim \frac{\delta \al}{u}\left( \al + \frac{\delta \al}{u} \right) \lesssim \frac{\delta a}{u},
	\end{split} \end{equation}while
	\begin{equation}\begin{split}
	&\hspace{4.6mm} \big \lVert u^{i-1} \sum_{i_1+ i_2 + i_3 =i-1} \nabla^{i_1}\psibar^{i_2+1}\nabla^{i_3} \psi \big\rVert_{L_u^1 L^2(S_{u,\delta})} \\ &\lesssim \sum_{i_1+i_2+i_3=i-1}\lVert u^{i_1+i_2-1} \nabla^{i_1}\psibar^{i_2+1} \rVert_{L_u^{1} L^2(S_{u,\delta})} \cdot \lVert u^{i_3+1} \nabla^{i_3}\psi \rVert_{L_u^{\infty} L^{\infty}(S_{u,\delta})} \\ &\lesssim \frac{\delta \al}{u}\left( \al + \frac{\delta \al}{u} \right) \lesssim \frac{\delta a}{u}. \end{split}\end{equation}We now focus on the two remaining terms, the one involving $K$ and the one with $\frac{1}{u}\nabla^i \tr\chi$. We
	first estimate the term containing the Gauss curvature. We split it as follows:
	
	\[  K = \left( K- \frac{1}{u^2}  \right) +\frac{1}{u^2}  .  \]For the term involving $K-\frac{1}{u^2}$, when $i_2=0$, we have \begin{equation}
	\big\lVert u^{i-1}\nabla^i \left( K- \frac{1}{u^2}  \right)\big\rVert_{L_u^1 L^2(S_{u,\delta})} \lesssim 	\big\lVert u^{i+1}\nabla^i \left( K- \frac{1}{u^2}  \right)\big\rVert_{L_u^2 L^2(S_{u,\delta})} \cdot \lVert u^{-2} \rVert_{L_u^2} \lesssim \frac{\dal \al}{u^{\frac32}}.
	\end{equation}In the above we have used the improved bounds on $\mathcal{R}$ by Theorem \ref{higherorderenergyestimates}. For $i_2 \geq 1$, we have the following improved bound \begin{equation} \begin{split}
	&\hspace{4.6mm}\big\lVert \sum_{i_1+i_2 +i_3=i-1} u^{i-1}\nabla^{i_1}\psibar^{i_2+1} \nabla^{i_3} \left(K	- \frac{1}{u^2}  \right)\big\rVert_{L_u^1 L^2(S_{u,\delta})}\\ &\lesssim \sum_{i_1+i_2+i_3 = i-1} \lVert u^{i_1+i_2+2}\nabla^{i_1}\psibar^{i_2+1} \rVert_{L_u^{\infty}L^{\infty}(S_{u,\delta})} \cdot \big\lVert u^{i_3+1}\nabla^{i_3}\left( K- \frac{1}{u^2}\right) \big\rVert_{L_u^2 L^{2}(S_{u,\delta})} \cdot \lVert u^{-3} \rVert_{L_u^2} \\ &\lesssim \delta \al \cdot \dal \al \cdot \frac{1}{u^{\f52}} \lesssim \frac{\delta^{\f32}a}{u^{\f52}}.
	\end{split}\end{equation} We now examine the contribution arising from the term $\frac{1}{u^2}$. The only possibility in this case for having $i_2=0$ is $i=0$, which is excluded as we are working with $i\geq 1$. For $i_2\geq 1$, we have \begin{equation}\begin{split}
	&\hspace{4.6mm}\big \lVert \sum_{i_1+i_2=i-1} u^{i-1}\nabla^{i_1}\psibar^{i_2+1} \frac{1}{u^2}\big \rVert_{L_u^1 L^2(S_{u,\delta})}\\ &\lesssim \sum_{i_1+i_2 \leq i-1} \lVert u^{i_1+i_2+1} \nabla^{i_1}\psibar^{i_2+1} \rVert_{L_u^{\infty} L^2(S_{u,\delta})} \cdot \lVert u^{-3} \rVert_{L_u^1} \lesssim \frac{\delta \al}{u^2}.
	\end{split}\end{equation}We now estimate the remaining term, given that $i\geq 1$,  by \be  \lVert u^{i-2}\nabla^i \tr\chi \rVert_{L_u^1 L^2(S_{u,\delta})} = \big\lVert u^{i-2}\nabla^i \left(\tr\chi - \frac2u \right) \big\rVert_{L_u^1 L^2(S_{u,\delta})}  \lesssim \frac{\delta a}{u^2}.  \ee Multiplying the above estimates by $u$, we arrive at 
	
	\be \sum_{k \geq 1} \big \lVert u^k \nabla^k \left(\omega+ \frac{m_0}{2 u^2}\right) \big \rVert_{L^2(S_{u,\delta})} \lesssim \frac{\dal \al}{u^{\frac{1}{2}}}.      \ee We finally focus on $k=0$.  Recall that there holds \[ \nabla_3 \hspace{.5mm} \omega = 2 \hspace{.5mm}\omega \hspace{.5mm} \omegabar - \eta \cdot \etabar +\frac12 \lvert \eta \rvert^2 + \frac{1}{2}\rho.    \]In particular, this implies \begin{equation} \label{omegapluseq}
	\nabla_3 \left( \omega + \frac{m_0}{2u^2} \right) =  2 \hspace{.5mm}\omega \hspace{.5mm} \omegabar - \eta \cdot \etabar +\frac12 \lvert \eta \rvert^2 + \frac{1}{2}\left(\rho + \frac{2 m_0}{u^3}\right) + \frac{\left(1- \Omega^{-1}\right) m_0}{u^3}.
	\end{equation}Notice the following identity:
	
	\begin{equation} \begin{split}
	\rho+ \frac{2m_0}{u^3}= &- \left(K- \frac{1}{u^2}\right) - \frac{1}{4}\left(\tr\chi- \frac{2}{u} +\frac{4m_0}{u^2} \right) \left(\tr\chibar + \frac{2}{u} \right) + \frac{1}{2}\chihat \cdot \chibarhat + \frac{1}{2u} \left(\tr\chi- \frac{2}{u} +\frac{4m_0}{u^2} \right) \\ &- \frac{1}{2u}\left( \tr\chibar + \frac2u \right) + \frac{m_0}{u^2}\left( \tr\chibar + \frac2u \right).
	\end{split}\end{equation} Looking at \eqref{omegapluseq} and using Proposition \ref{transportprop2}, we can bound  $\lVert u^{-1} \left( \omega+ \frac{m_0}{2u^2} \right) \rVert_{ L^2(S_{u,\delta})}$ by the initial data $\lVert  \omega+ \frac{m_0}{2}  \rVert_{L^2(S_{1,\delta})}$  and the $\lVert u^{-1} \cdot   \rVert_{L_u^{1} L^2(S_{u,\delta})}$--norm of the right-hand side of \eqref{omegapluseq}.
	
	\begin{itemize}
		\item There holds \begin{equation}
		\lVert u^{-1} \omega \hspace{.5mm} \omegabar   \rVert_{ L_u^{1} L^2(S_{u,\delta})} \lesssim \int_{u}^{1} \lvert u^{\prime} \rvert^{-1} \lVert \omega \rVert_{L^{\infty}(S_{u^{\prime},\delta})}  \lVert \omegabar \rVert_{L^{2}(S_{u^{\prime},\delta})} \duprime \lesssim \frac{\delta \hspace{.5mm}a}{u^2}.
		\end{equation}\item There holds
		\begin{equation}
		\lVert u^{-1} \eta \hspace{.5mm} \etabar   \rVert_{ L_u^{1} L^2(S_{u,\delta})} \lesssim \int_{u}^{1} \lvert u^{\prime} \rvert^{-1} \lVert \eta \rVert_{L^{\infty}(S_{u^{\prime},\delta})}  \lVert \etabar \rVert_{L^{2}(S_{u^{\prime},\delta})} \duprime \lesssim \frac{\delta^2  a}{u^3}.
		\end{equation}
		\item Similarly, there holds \begin{equation}
		\big	\lVert u^{-1} \lvert \eta \rvert^2  \big\rVert_{L_u^{1} L^2(S_{u,\delta})} \lesssim \frac{\delta^2 a}{u^3}.
		\end{equation}
		
		\item There holds \begin{equation}
		\bigg\lVert\frac{\left(1- \Omega^{-1}\right) m_0}{u^4} \bigg\rVert_{L_u^{1} L^2(S_{u,\delta})} \lesssim \frac{\delta \al m_0}{u^3} .
		\end{equation}
		\item There holds \begin{equation}
		\bigg	\lVert u^{-1} \left(\K \right)  \bigg\rVert_{L_u^{1} L^2(S_{u,\delta})} \lesssim \lVert u^{-2} \rVert_{L_u^2} \cdot \bigg\lVert u \left(\K \right)  \bigg\rVert_{L_{\ubar}^{\infty} L_u^{2} L^2(S_{u,\delta})} \lesssim \frac{\dal\al}{u^{\frac{3}{2}}}. 
		\end{equation}
		\item There holds \begin{multline}
		\bigg	\lVert u^{-1} \left(\tru \right) \left( \trubar \right) \bigg\rVert_{ L_u^{1} L^2(S_{u,\delta})}\\ \lesssim \int_{u}^{1} \big\lVert u^{-1} \big\rVert_{L^{\i}(S_{u,\delta})} \cdot \big\lVert \trubar \big\rVert_{L^{\i}(S_{u,\delta})} \cdot \big\lVert \tru \big\rVert_{L^2(S_{u,\delta})} \duprime \lesssim \int_{u}^{1} \lvert u^{\prime} \rvert^{-1}\cdot \frac{\delta \al}{\lvert u^{\prime} \rvert^2} \cdot \frac{\al}{b \cdot u^{\prime}} \lesssim \frac{\delta a}{b u^3}
		\end{multline}
		\item There holds \begin{multline}
		\lVert u^{-1}\chihat\cdot \chibarhat \rVert_{L_u^1 L^2(S_{u,\delta})}\\  \lesssim \int_{u}^{1} \lVert u^{-1} \rVert_{L^{\infty}(S_{u,\delta})} \cdot \lVert \chihat \rVert_{L^{\i}} \cdot \lVert \chibarhat \rVert_{L^2(S_{u,\delta})} \duprime \lesssim \int_{u}^{1} \lvert u^{\prime} \rvert^{-1} \cdot \frac{\al}{\uprime} \cdot \frac{\delta \al}{\uprime} \duprime \lesssim \frac{\delta a}{u^2}.
		\end{multline}
		
		\item There holds \begin{equation}
		\big \lVert u^{-2} \left(\tru \right) \big\rVert _{L_u^1 L^2(S_{u,\delta})} \lesssim\int_u^1  \frac{\dal \al}{u^{\frac{7}{2}}} \duprime \lesssim  \frac{\dal \al}{u^{\frac{5}{2}}}
		\end{equation} 
		
		\item There holds \begin{equation}
		\big\lVert u^{-2} \left(\trubar \right) \big\rVert _{L_u^1 L^2(S_{u,\delta})} \lesssim\int_u^1  \frac{\delta \al}{u^{3}}\duprime \lesssim  \frac{\delta\al}{u^2} . \end{equation}
		\item There holds \begin{equation}
		\big\lVert u^{-3} m_0 \left(\trubar \right) \big\rVert _{L_u^1 L^2(S_{u,\delta})} \lesssim\int_u^1  \frac{\delta \al \hs m_0}{u^{4}}\duprime \lesssim  \frac{\delta\al \hs m_0}{u^3} . \end{equation} 
	\end{itemize}Multiplying all the above estimates by $i$ and putting them together, there holds \begin{equation} \label{omeganewbound}
	\big \lVert u \hsp \omega + \frac{m_0}{2u} \big\rVert_{L_{\ubar}^{\i} L_u^{\infty}L^2(S_{u,\delta})} \lesssim \frac{\dal \al}{u^{\frac{1}{2}}}.
	\end{equation} \end{proof} \begin{remark}
	Notice that we need an extra $u$ in \eqref{omeganewbound}, but this will not affect our construction of the Transition Region.
\end{remark}
We proceed with $\rho+\frac{2m_0}{u^3}$.

\begin{proposition}
There holds

\[\TwoSudelta{u^{k+2} \nabla^k \left( \rho + \frac{2 m_0}{u^3}\right) } \lesssim \frac{\dal \al}{u^{\f12}}. \]
\end{proposition}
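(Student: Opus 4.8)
The plan is to avoid transport equations altogether and use the fact that $\rho$ is recovered algebraically from the remaining quantities through the Gauss equation $K = -\rho - \tfrac14\,\tr\chi\,\tr\chibar + \tfrac12\,\chihat\cdot\chibarhat$. Solving for $\rho$ and renormalising exactly as in the proof of Lemma \ref{spheregeoestimates} (the same identity reappears in the proof of the preceding proposition on $\omega + m_0/(2u^2)$) gives, on all of $\underline{C}_\delta$,
\[ \rho + \tfrac{2m_0}{u^3} = -\left(K - \tfrac{1}{u^2}\right) - \tfrac14(\tru)(\trubar) + \tfrac12\,\chihat\cdot\chibarhat + \tfrac{1}{2u}(\tru) - \tfrac{1}{2u}(\trubar) + \tfrac{m_0}{u^2}(\trubar). \]
First I would record this identity, apply $\nabla^k$ to both sides, and expand the right-hand side by Leibniz (or, for the products involving $\chihat$, the schematic commutation formula of Proposition \ref{commutationformulaeprop}); since $u$ is constant on $S_{u,\delta}$, $\nabla^k$ passes through every explicit $1/u$ and $m_0$ factor, so $\nabla^k(\rho + 2m_0/u^3)$ becomes a finite sum of terms $u^{-j}\nabla^{k_1}(\cdot)\nabla^{k_2}(\cdot)$ with $k_1+k_2 \le k$ and $j \in \{0,1,2\}$, each $(\cdot)$ being one of the building blocks $K - 1/u^2$, $\tru$, $\trubar\ (=\psibar)$, $\chihat$, $\chibarhat\ (=\psibar)$.

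I would then estimate each such term on $S_{u,\delta}$ by placing the factor carrying the most angular derivatives in $L^2(S_{u,\delta})$ and the other in $L^\infty(S_{u,\delta})$, using the Sobolev embedding on the spheres $S_{u,\delta}$ established earlier in this section to trade an $L^\infty$ norm for $L^2$ norms of up to two extra derivatives on the lower-order factor when needed; this is legitimate since Remark \ref{crucialremark} gives $L^2$ and $L^\infty$ control of \emph{all} derivatives of the Ricci coefficients and of $K - 1/u^2$, hence also the $\underline{C}_\delta$ bounds obtained so far in this section at any order. The inputs are: $\twoSu{u^{i+2}\nabla^i(K - 1/u^2)} \lesssim \delta\al$ together with the $L^2(S)$ bootstrap bound $\|u^{i+1}\nabla^i(K - 1/u^2)\|_{L^2(S_{u,\ubar})} \lesssim b^{-3/4}$, both from Theorem \ref{higherorderenergyestimates}; $\TwoSudelta{u^{i+1}\nabla^i(\tru)} \lesssim \dal\al/u^{1/2}$ from the preceding proposition; $\twoSudelta{u^{i+1}\nabla^i\psibar} \lesssim \delta\al$ and $\InftySudelta{u^{i+2}\nabla^i\psibar} \lesssim \delta\al$ applied to $\trubar$ and $\chibarhat$; and $\twoSudelta{u^i\nabla^i\chihat} \lesssim \delta a/u$ from Proposition \ref{chihatincomingprop}. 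Multiplying by the overall weight $u^{k+2}$ the $u$-powers always match: for the bilinear terms one splits $u^{k+2} = u^{k_1+1}\cdot u^{k_2+1}$ with $k_1+k_2 = k$, and the explicit $1/u$, $1/u^2$ factors are absorbed as $u^{k+2} = u\cdot u^{k+1} = u^2\cdot u^k$.

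Collecting the outputs, the largest contributions are $\delta\al$ from the linear $K - 1/u^2$ term and $\dal\al/u^{1/2}$ from the term $\tfrac{1}{2u}(\tru)$; every remaining term carries an extra small factor — $\delta\al/u \le 1/b$, $\delta a/u < 1$, or $m_0 \lesssim \delta a$ — so that, using $u \ge \delta\al b$, $\delta a < 1$ and $u \le 1$, all of them are $\lesssim \dal\al/u^{1/2}$, which is the claim. The only real subtlety is bookkeeping: one must check that the bilinear terms $(\tru)(\trubar)$ and $\chihat\cdot\chibarhat$ never lose a power of $u$ under an uneven split of derivatives, and that Sobolev embedding is applied only to the lower-order factor so that no top-order norm is ever controlled in $L^\infty$ — precisely the case distinction already carried out in Proposition \ref{riccihelpful} and throughout the preceding propositions of this section, so no genuinely new estimate is needed.
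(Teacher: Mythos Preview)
Your proposal is correct and follows essentially the same approach as the paper: both proofs use the algebraic Gauss identity for $\rho + 2m_0/u^3$, apply $\nabla^k$, and estimate each building block on $S_{u,\delta}$ using the previously established bounds on $K - 1/u^2$, $\tr\chi - 2/u + 4m_0/u^2$, $\trubar$, $\chihat$, and $\chibarhat$. If anything, your write-up is more explicit than the paper's, which after the first two terms simply asserts that the remaining contributions are bounded by $\dal\al/u^{1/2}$.
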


\begin{proof}
    Notice the following identity
    
    \begin{equation}
        \begin{split}
            \rho+ \frac{2m_0}{u^3}= &- \left(K- \frac{1}{u^2}\right) - \frac{1}{4}\left(\tr\chi- \frac{2}{u} +\frac{4m_0}{u^2} \right) \left(\tr\chibar + \frac{2}{u} \right) + \frac{1}{2}\chihat \cdot \chibarhat + \frac{1}{2u} \left(\tr\chi- \frac{2}{u} +\frac{4m_0}{u^2} \right) \\ &- \frac{1}{2u}\left( \tr\chibar + \frac2u \right) + \frac{m_0}{u^2}\left( \tr\chibar + \frac2u \right).
        \end{split}
    \end{equation}Multiplying both sides of the equation by $u^{k+2}\nabla^k$, we arrive at the following
    
    \begin{itemize}
        \item There holds \be \TwoSudelta{ u^{k+2}\nabla^k \left(K - \frac{1}{u^2} \right) } \lesssim \delta \al . \ee
        
        \item There holds 
        \be \begin{split}
             &\hspace{4.6mm}\TwoSudelta{ u^{k+2} \sum_{k_1+k+2=k} \nabla^{k_1}\left( \tru \right) \cdot \nabla^{k_2} \left( \trubar \right) } \\ &\lesssim \sum_{k_1+k_2=k} \twoSudelta{ u^{k_1+1} \nabla^{k_1} \left( \tru \right)} \cdot \lVert u^{k_2+1} \nabla^{k_2} \left( \trubar \right) \rVert_{L^{\infty}(S_{u,\delta})} \\&\lesssim \left( \frac{\delta a}{u^{\f12}} + \frac{\delta \al}{u} \right) \cdot \frac{\delta \al}{u}. \end{split} \ee
             
             \item Similarly, we can prove that the remaining terms, when controlled in the $L^2(S_{u,\delta})$--norm, are bounded above by $\frac{\dal \al}{u^{\f12}}$.
    \end{itemize}The result follows.
\end{proof}\par \noindent We finally bound $\alpha$ and $\alphabar$. We begin with estimates for $\alphabar$ on $\underline{C}_{\delta}$. 
\begin{proposition}
There holds

\[ \twoSudelta{u^{i+1} \nabla^i \alphabar} \lesssim \frac{\delta a}{u}. \]
\end{proposition}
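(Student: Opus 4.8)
The plan is to follow exactly the same scheme used for $\beta$ and $\csigma$ in the preceding propositions: start from the null Bianchi equation for $\alphabar$, commute it with $i$ angular derivatives using Proposition \ref{commutation}, and then propagate along $\underline{C}_\delta$ via the $\nabla_3$-transport estimate of Proposition \ref{transportprop2}. The relevant structure equation is
\[ \nabla_3 \alphabar + \frac{1}{2}\tr\chibar\, \alphabar = -\nabla \widehat{\otimes} \betabar + 4\omegabar\, \alphabar - 3(\chibarhat \rho - \Hodge{\chibarhat}\sigma) + (\zeta - 4\etabar)\widehat{\otimes}\betabar, \]
which in schematic form reads $\nabla_3 \alphabar + \frac12 \tr\chibar\,\alphabar = \nabla\betabar + \psibar\,\betabar + \psibar(K,\csigma) + \frac{1}{u^2}\psibar$, where we have traded $\rho$ for $K - \frac{1}{|u|^2}$ plus lower-order terms and absorbed the $\frac{1}{|u|^2}\chibarhat$ piece into the last term. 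Commuting with $i$ angular derivatives via Proposition \ref{commutation} gives
\[ \nabla_3 \nabla^i \alphabar + \frac{i+1}{2}\tr\chibar\, \nabla^i \alphabar = G_i, \]
with $G_i$ containing: (i) a term $\nabla^{i+1}\betabar$; (ii) terms $\sum \nabla^{i_1}\psibar^{i_2+1}\nabla^{i_3}(K-\frac{1}{u^2},\csigma)$ with $i_1+i_2+i_3 = i$; (iii) terms $\sum \nabla^{i_1}\psibar^{i_2+1}\nabla^{i_3}\betabar$ with $i_1 + i_2 + i_3 = i$; and (iv) $\frac{1}{u^2}\sum\nabla^{i_1}\psibar^{i_2+1}$ and a $\frac{1}{u}$-weighted commutator term $\frac1u \sum \nabla^{i_1}\psibar^{i_2+1}\nabla^{i_3}\alphabar$.

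First I would apply Proposition \ref{transportprop2} with $\lambda_0 = \frac{i+1}{2}$ (so $\lambda_1 = i$), reducing the estimate to the initial data on $S_{1,\delta}$ plus the $\lVert u^{i+1}\,\cdot\,\rVert_{L_u^1 L^2(S_{u,\delta})}$ norm of $G_i$. The initial contribution: on $S_{1,\delta}$ one has $\chihat \equiv 0$, hence $\alpha \equiv 0$ by Lemma \ref{spheregeoestimates}; $\alphabar$ is not forced to vanish, but since all Ricci coefficients and curvature components other than $\alpha$ are $O(\delta a)$ there by Lemma \ref{spheregeoestimates} (and the Bianchi equation expresses $\nabla^i\alphabar$ on $S_{1,\delta}$ in terms of $\nabla^{i+1}\betabar$, $\nabla^i(\chibarhat\rho - \Hodge{\chibarhat}\sigma)$, etc.), one gets $\lVert \nabla^i \alphabar \rVert_{L^2(S_{1,\delta})} \lesssim \delta a$. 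Then I would bound each piece of $\lVert u^{i+1}G_i\rVert_{L_u^1 L^2(S_{u,\delta})}$ exactly as in the $\beta$-proposition: for term (i), $\lVert u^{i+1}\nabla^{i+1}\betabar\rVert_{L_u^1 L^2(S_{u,\delta})} \lesssim \lVert u^{-1}\rVert_{L_u^2}\,\lVert u^{i+2}\nabla^{i+1}\betabar\rVert_{L_u^2 L^2(S_{u,\delta})} \lesssim u^{-1/2}\lVert u^{i+2}\nabla^{i+1}\betabar\rVert_{L^2(\Hbar_\delta)} \lesssim \frac{\delta^{3/2}a^{3/4}}{u^{1/2}} \lesssim \frac{\delta a}{u}$ (using $\delta^{1/2}a^{-1/4} \leq \delta^{1/2}b^{-1/4} \lesssim u^{-1/2}$ from $u \geq \delta\al b$, or more simply $\delta^{3/2}a^{3/4} u^{1/2} \leq \delta a u$ which holds since $\delta^{1/2}a^{-1/4} \lesssim 1$); for (ii) and (iii), split so that the highest-derivative $\psibar$ factor takes a product of $L^\infty$ bounds each $\lesssim \frac{\delta\al b^{1/4}}{u^2}$ from \eqref{bootstrap1improved}, the curvature/$\betabar$ factor is taken in $L^2$ using $\mathcal{R}_N \lesssim 1$ and the previous propositions, so each contributes $\lesssim \frac{\delta a}{u}$; for (iv) the $\frac{1}{u^2}\nabla^{i_1}\psibar^{i_2+1}$ term gives $\lesssim \frac{\delta\al}{u}$ by Proposition \ref{riccihelpful} and $\lVert u^{-3}\rVert_{L_u^1}$, while the $\frac1u$-weighted $\alphabar$ commutator term is controlled by Gr\"onwall's inequality since $\lVert \frac{\delta\al b^{1/4}}{u^2}\rVert_{L_u^1} \lesssim b^{-3/4} \lesssim 1$. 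Collecting everything and using $u \geq \delta\al b$ to absorb $\al$-powers, one concludes $\twoSudelta{u^{i+1}\nabla^i\alphabar} \lesssim \frac{\delta a}{u}$.

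The main obstacle is the term $\nabla^{i+1}\betabar$ in $G_i$: at the top order $i = N$ this requires control of $\nabla^{N+1}\betabar$, one derivative above the curvature norm $\mathcal{R}_N$. This is precisely why Remark \ref{crucialremark} (the assumption of bounds on \emph{all} derivatives of $\chihat_0$, and hence $\mathcal{O}_N + \tilde{\mathcal{O}}_{N+1,2} + \mathcal{R}_N \lesssim 1$ for every $N$) is invoked at the start of the section — for any fixed $i$ one simply works with $N$ large enough that $i + 1 \leq N$, so $\nabla^{i+1}\betabar$ is always controlled by the energy norm $\mathcal{R}_N \lesssim 1$ via the $\Hbar_\delta$-flux estimate of Theorem \ref{higherorderenergyestimates}. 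A secondary point requiring care is that, unlike $\chihat$ or $\omega + \frac{m_0}{2u^2}$, the quantity $\alphabar$ has no distinguished ``Schwarzschild value'' to subtract — in Schwarzschild $\alphabar \equiv 0$, so the estimate $\twoSudelta{u^{i+1}\nabla^i\alphabar} \lesssim \frac{\delta a}{u}$ is already a smallness/closeness statement, and one must only make sure the initial data bound on $S_{1,\delta}$ is genuinely $O(\delta a)$ rather than $O(1)$, which follows from Lemma \ref{spheregeoestimates} applied through the Bianchi relation as indicated above.
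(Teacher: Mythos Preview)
Your approach has a fundamental gap: there is no $\nabla_3$ Bianchi equation for $\alphabar$. In the null Bianchi system (see Section~2.3 of the paper), $\alphabar$ sits at the bottom of the signature hierarchy and admits only a $\nabla_4$ transport equation,
\[
\nabla_4 \alphabar + \tfrac{1}{2}\tr\chi\,\alphabar = -\nabla\widehat{\otimes}\betabar + 4\omega\,\alphabar - 3(\chibarhat\rho - \Hodge{\chibarhat}\sigma) + (\zeta - 4\etabar)\widehat{\otimes}\betabar,
\]
just as $\alpha$ admits only a $\nabla_3$ equation. The ``equation'' you write with $\nabla_3$, $\tr\chibar$ and $\omegabar$ is not a consequence of the Bianchi identities; you have swapped barred and unbarred quantities in the genuine $\nabla_4$ equation. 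Consequently the entire transport argument along $\underline{C}_\delta$ in the $u$-direction via Proposition~\ref{transportprop2} cannot be set up.

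The paper proceeds in the only available direction: it uses the $\nabla_4$ equation for $\alphabar$, substitutes the Codazzi equation for $\betabar$ to avoid a bare $\nabla\betabar$, and integrates in $\ubar$ from the Minkowski incoming cone $\Hbar_0$ (where $\alphabar \equiv 0$ identically) up to $\ubar = \delta$, at fixed $u$. The source terms are controlled using the An--Luk slab estimates on $S_{u,\ubar'}$ for $0 \leq \ubar' \leq \delta$ (not just on $\underline{C}_\delta$), and the term $\tr\chi\,\alphabar$ is handled by Gr\"onwall. This also dissolves your secondary worry about initial data: one never needs to estimate $\alphabar$ on $S_{1,\delta}$, because the propagation starts from $\Hbar_0$ where $\alphabar$ vanishes exactly.
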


\begin{proof}
Recall the Bianchi equation for $\alphabar$:

\[ \nabla_4 \alphabar + \f12 \hsp \tr\chi \hsp \alphabar = - \nabla \otimes \betabar + 4\hsp \omega\hsp \alphabar- 3(\chibarhat \hsp \rho - \Hodge{\chibarhat}\hsp \sigma) + (\zeta-4\hsp \etabar)\widehat{\otimes}\betabar. \]Using the constraint equation

\[ \betabar = \div \chibarhat - \f12 \nabla \hsp \tr\chibar -\f12\hsp (\eta-\etabar)\cdot \left(\chibarhat- \f12 \tr\chi \right), \]an application of Proposition \ref{commutationformulaeprop} yields 

\begin{equation} \label{alphabarcommutationformula}
    \begin{split}
        \nabla_4 \nabla^i \alphabar &= \sum_{i_1+i_2+i_3+i_4=i} \nabla^{i_1} \psibar^{i_2}\nabla^{i_3}\psi \nabla^{i_4}\alphabar \\ &+ \sum_{i_1+i_2+i_3+i_4=i} \nabla^{i_1} \psibar^{i_2}\nabla^{i_3} \chibarhat \nabla^{i_4}(\rho,\sigma) \\  &+ \sum_{i_1+i_2+i_3+i_4=i} \nabla^{i_1} \psibar^{i_2}\nabla^{i_3} (\eta,\etabar) \nabla^{i_4+1}(\chibarhat,\tr\chibar) \\ &+  \sum_{i_1+i_2+i_3+i_4+i_5=i} \nabla^{i_1} \psibar^{i_2}\nabla^{i_3} (\eta,\etabar)\nabla^{i_4}(\eta, \etabar) \nabla^{i_5} (\chibarhat, \tr\chi)\\ &+ \sum_{i_1+i_2+i_3=i} \nabla^{i_1}\psibar^{i_2}\nabla^{i_3+2}(\chibarhat,\tr\chibar) \\  &+ \sum_{i_1+i_2+i_3+i_4=i} \nabla^{i_1} \psibar^{i_2}\nabla^{i_3} (\chibarhat,\tr\chibar)\nabla^{i_4+1} (\eta,\etabar) .
    \end{split}\end{equation}Notice, furthermore, that $\nabla^i \alphabar = 0$ on the initial incoming cone, because it is a Minkowski null cone. We shall prove the estimates inductively. For $i=0$, using Proposition \ref{transportprop1}, we have
    
\begin{equation}
    \begin{split}
     \twoSudelta{\alphabar} \lesssim \int_{0}^{\delta} \twoSubarprime{(\omega, \tr\chi) \hspace{.5mm
    } \alphabar} \dubarprime &+\int_{0}^{\delta} \twoSubarprime{ \nabla^2(\chibarhat,\tr\chibar)} \dubarprime \\ &+ \int_{0}^{\delta} \twoSubarprime{(\eta,\etabar)\nabla(\chibarhat,\tr\chibar)}\dubarprime \\&+ \int_{0}^{\delta} \twoSubarprime{(\chibarhat,\tr\chibar)\nabla(\eta, \etabar)}\dubarprime  \\&+ \int_{0}^{\delta} \twoSubarprime{\chibarhat \cdot (\rho,\sigma)}\dubarprime  \\&+ \int_{0}^{\delta} \twoSubarprime{(\eta,\etabar)\cdot (\eta,\etabar) \cdot(\chibarhat,\tr\chibar)}\dubarprime. 
    \end{split}
\end{equation}We treat each term separately.\begin{itemize}
        \item There holds $\lVert (\omega, \tr\chi ) \hsp \alphabar \rVert_{L_{\ubar}^{1} L^2(S)} \lesssim \lVert (\omega, \tr\chi) \rVert_{L^{\infty}(S)} \lVert \alphabar \rVert_{L_{\ubar}^1 L^2 (S)} \lesssim \frac{\al}{u} \cdot \lVert  \alphabar \rVert_{L_{\ubar}^{1} L^2(S)}. $ This term is thus absorbed to the left by Gr\"onwall's inequality.
        \item There holds \[\lVert\nabla^2 (\chibarhat, \tr\chibar) \rVert_{L_{\ubar}^{1} L^2(S)} \lesssim \delta \hsp \lVert\nabla^2 (\chibarhat, \tr\chibar) \rVert_{L_{\ubar}^{\infty} L^2(S)} \lesssim \delta \cdot \frac{\delta \hsp \al}{u^3}. \]
        
        \item There holds 
        \[ \lVert (\eta,\etabar) \hsp \nabla(\chibarhat,\tr\chibar) \rVert_{L_{\ubar}^1 L^2(S)} \lesssim    \delta \lVert (\eta, \etabar) \rVert_{L_{\ubar}^{\infty} L^{\infty}(S)} \lVert \nabla ( \chibarhat, \tr\chibar) \rVert_{L_{\ubar}^{\infty} L^{2}(S)}   \lesssim  \frac{\delta^3 a}{u^4}. \]
        \item There holds  \[ \lVert (\chibarhat,\tr\chibar) \hsp \nabla (\eta,\etabar) \rVert_{L_{\ubar}^1 L^2(S)} \lesssim    \delta \lVert \nabla (\eta, \etabar) \rVert_{L_{\ubar}^{\infty} L^{\infty}(S)} \lVert ( \chibarhat, \tr\chibar) \rVert_{L_{\ubar}^{\infty} L^{2}(S)}   \lesssim \delta \cdot \frac{\delta \al}{u^3} \cdot  \]
        
        \item There holds \begin{equation}
            \begin{split}
                \lVert \chihat \cdot (\rho,\sigma) \rVert_{L_{\ubar}^1 L^2(S)} &\lesssim  \lVert \chihat \cdot \rho \rVert_{L_{\ubar}^1 L^2(S)} \\ &+ \lVert \chihat \cdot \nabla \eta \rVert_{L_{\ubar}^1 L^2(S)} + \lVert \chihat \cdot \chibarhat \cdot \chihat \rVert_{L_{\ubar}^1 L^2(S)} \\ &\lesssim \lVert \chihat \cdot \rho \rVert_{L_{\ubar}^1 L^2(S)} \\ &+  \delta\cdot \lVert \chihat \rVert_{L_{\ubar}^{\infty} L^{\infty}(S)} \lVert  \nabla \eta \rVert_{L_{\ubar}^{\infty} L^2(S)} + \delta \lVert \chihat \rVert_{L_{\ubar}^{\infty} L^{\infty}(S)}^2 \lVert \chibarhat \rVert_{L_{\ubar}^{\infty} L^{2}(S)} \\ &\lesssim \lVert \chihat \cdot \rho \rVert_{L_{\ubar}^1 L^2(S)} + \frac{\delta^2 a}{u^3} + \frac{\delta^2 a^{\f32}}{u^3}.
            \end{split}
        \end{equation}For $\chihat \cdot \rho$, we can rewrite this term as \[ \chihat \cdot \rho = \chihat\cdot (K-\f{1}{u^2}) +\f14 \left(\chihat \cdot \tr\chibar \cdot (\tr\chi- \f2u ) + \frac{2 \chihat}{u} ( \tr\chibar + \f2u ) \right) + \f12 \chihat \cdot \chibarhat \cdot \chihat .\]We thus have
     \begin{equation}
         \begin{split}
             \lVert \chihat \cdot \rho \rVert_{L_{\ubar}^1 L^2(S)} & \lesssim \lVert \chihat \rVert_{L_{\ubar}^{2} L^{\infty}(S)} \cdot \big\lVert K -\frac{1}{u^2} \big\rVert_{L_{\ubar}^{2} L^{2}(S)} \\ &+ \f1u \lVert \chihat \rVert_{L_{\ubar}^{\infty} L^{\infty}(S)} \lVert \tr\chibar + \f2u \rVert_{L_{\ubar}^{1} L^{2}(S)} \\ &+ \f1u \lVert \chihat \rVert_{L_{\ubar}^{\infty} L^{\infty}(S)} \lVert \tr\chi - \f2u \rVert_{L_{\ubar}^{1} L^{2}(S)} \\&+ \delta \lVert \chihat \rVert_{L_{\ubar}^{\infty} L^{\infty}(S)}^2 \lVert \chibarhat \rVert_{L_{\ubar}^{\infty} L^{2}(S)} \\ &\lesssim \frac{\delta a}{u^2} + \frac{\delta^2 a}{u^3} + \frac{\delta^2 a^{\f32}}{u^3} ++ \frac{\delta^2 a^{\f32}}{u^3} \lesssim \frac{\delta a}{u^2}. 
         \end{split}
     \end{equation}
     \item Finally, \be \lVert (\eta,\etabar)\cdot (\eta,\etabar)\cdot (\chibarhat, \tr\chibar) \rVert_{L_{\ubar}^1 L^2(S)} \lesssim \delta \lVert (\eta,\etabar ) \rVert^2_{L_{\ubar}^{\infty } L^{\infty }(S)}\lVert (\chibarhat,\tr\chibar ) \rVert_{L_{\ubar}^{\infty} L^2(S)} \lesssim \frac{\delta^3 a}{u^4}.  \ee
    \end{itemize}
Combining the above estimates together we see that there holds

\be \twoSu{ u \alphabar} \lesssim \frac{\delta a}{ u } \lesssim \frac{\dal \al}{u^{\f12}} . \ee A routine induction argument using \eqref{alphabarcommutationformula} yields, for all $i$ and for all $u \geq \f12$, that

\begin{equation} \label{alphabarbounds}
    \twoSu{ u^{i+1}\nabla^i \alphabar} \lesssim \frac{\delta a}{ u } \lesssim \frac{\dal \al}{u^{\f12}} .
\end{equation} \end{proof}
\par \noindent Finally, we move on to estimates for $\alpha$ on the incoming cone $\underline{C}_{\delta}$. 

\begin{proposition}
There holds \[\twoSudelta{u^{i+1} \nabla^i \alpha} \lesssim \frac{\delta a}{u}. \]

\end{proposition}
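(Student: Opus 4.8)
The plan is to follow exactly the same scheme as in the preceding propositions for $\alphabar$, $\betabar$, $\check\sigma$, etc., but now transporting along the \emph{outgoing} direction $\nabla_4$ on the incoming cone $\underline{C}_\delta$, since $\alpha$ is the only curvature component without a good $\nabla_3$ equation. First I would recall the Bianchi equation for $\alpha$,
\[
\nabla_3 \alpha + \tfrac12 \tr\chibar \,\alpha = \nabla \widehat{\otimes} \beta + 4\omegabar\,\alpha - 3(\chihat\,\rho + \Hodge{\chihat}\sigma) + (\zeta + 4\eta)\widehat{\otimes}\beta,
\]
and note that, unlike the other components, the natural transport equation is the $\nabla_3$ one; however, since $\alpha$ propagates along $\underline{C}_\delta$ in $\ubar$ we instead use the $\nabla_4$ Bianchi equation
\[
\nabla_4 \alpha + \tfrac12 \tr\chi\,\alpha = \nabla\widehat\otimes\beta + \ldots
\]
and commute with $i$ angular derivatives using Proposition \ref{commutationformulaeprop}, replacing $\beta$ by $-\div\chihat + \tfrac12\nabla\tr\chi + \psi\psibar$ via Codazzi, to obtain a schematic equation of the form
\[
\nabla_4 \nabla^i \alpha = \sum \nabla^{i_1}\psi^{i_2}\nabla^{i_3}\psibar\nabla^{i_4}\alpha + \sum \nabla^{i_1}\psi^{i_2}\nabla^{i_3+2}(\chihat,\tr\chi) + (\text{lower order, involving } \chihat,\tr\chi,K,\check\sigma,\sigma).
\]

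Next I would apply Proposition \ref{transportprop1} to control $\twoSudelta{u^{i+1}\nabla^i\alpha}$ by the initial value on $S_{1,\delta}$ plus the $L^1_{\ubar}L^2(S_{u,\delta})$ norm of the right-hand side. The key input is Lemma \ref{spheregeoestimates}, which gives $\alpha\equiv 0$ on $S_{1,\delta}$, so the initial data contribute nothing. The bulk terms are then estimated exactly as in the previous propositions: the highest-derivative factor ($\nabla^{N+1}$-type or $\nabla^i\alpha$ itself) is placed in $L^2$, everything else in $L^\infty$ via the bootstrap/Theorem \ref{higherorderenergyestimates} bounds, the curvature terms $\nabla^i(K-\frac{1}{u^2},\check\sigma,\sigma)$ are controlled by the improved estimates already obtained in Propositions \ref{csigmaandsigmaincoming} and the $\tr\chi$-proposition, and the factors of $\frac{1}{u}$ are absorbed using $\delta a^{1/2} b \leq u$ and $\|u^{-2}\|_{L^1_u}, \|u^{-1}\|_{L^2_u}\lesssim 1$. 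The $\nabla^i\alpha$-term on the right is absorbed by Gr\"onwall's inequality since $\|(\omega,\tr\chi)\|_{L^\infty}\lesssim \al/u$ and $\int_u^1 \al/u'\,du'\lesssim \al$, which is $o(1)$. One runs the estimate inductively in $i$, exactly as for $\alphabar$: the $i=0$ case gives $\twoSudelta{u\,\alpha}\lesssim \frac{\delta a}{u}$, and the higher-$i$ cases follow from the inductive hypothesis applied to the lower-order terms $\nabla^{i_3}\alpha$ with $i_3<i$ appearing in the commutator.

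The main obstacle, as in the $\alphabar$ case, is bookkeeping: ensuring every term in the (long) commuted schematic equation is genuinely lower order or can be absorbed, and in particular handling the term $\nabla^{i+2}(\chihat,\tr\chi)$ coming from $\nabla\widehat\otimes\beta$ via Codazzi — here one needs the top-order bounds $\widetilde{\mathcal O}_{N+1,2}\lesssim 1$ together with the improved estimate $\TwoSudelta{u^{k+2}\nabla^k(K-\frac{1}{u^2})}\lesssim\delta\al$ and the already-established $\twoSudelta{u^{i+1}\nabla^i(\tr\chi-\frac2u+\frac{4m_0}{u^2})}$. Since Remark \ref{crucialremark} guarantees control on all derivatives of the Ricci coefficients, the induction closes for every $i$, and multiplying through by the appropriate power of $u$ gives $\twoSudelta{u^{i+1}\nabla^i\alpha}\lesssim\frac{\delta a}{u}$ as claimed.
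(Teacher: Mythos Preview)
There is a genuine gap in your proposal: the geometry is backwards, and the equation you invoke does not exist.

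On the incoming cone $\underline{C}_\delta=\{\ubar=\delta\}$ the coordinate that varies is $u$, not $\ubar$. Since $e_3=\Omega^{-1}(-\partial_u+d^A\partial_{\theta^A})$, transporting along $\underline{C}_\delta$ is precisely what the $\nabla_3$ equation does. Your sentence ``since $\alpha$ propagates along $\underline{C}_\delta$ in $\ubar$ we instead use the $\nabla_4$ Bianchi equation'' is therefore doubly mistaken: (i) motion along $\underline{C}_\delta$ is in $u$, not $\ubar$; and (ii) there is no Bianchi equation of the form $\nabla_4\alpha+\tfrac12\tr\chi\,\alpha=\nabla\widehat\otimes\beta+\ldots$ in the system --- the only transport equation for $\alpha$ is the $\nabla_3$ one (see the list in Section~2). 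Your subsequent appeal to Proposition~\ref{transportprop1} (the $\nabla_4$ transport lemma) and the phrase ``$L^1_{\ubar}L^2(S_{u,\delta})$'' are inconsistent with a fixed $\ubar=\delta$; indeed, later in your own write-up you integrate $\int_u^1(\cdot)\,du'$ and invoke $\|u^{-2}\|_{L^1_u}$, which silently reverts to the $\nabla_3$ picture.

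The paper's proof uses exactly the $\nabla_3$ Bianchi equation together with Proposition~\ref{transportprop2}, starting from $\alpha\equiv0$ on $S_{1,\delta}$ (Lemma~\ref{spheregeoestimates}). The right-hand side is controlled directly from the already-improved incoming-cone bounds $\|u^{i+1}\nabla^i\beta\|_{L^2(S_{u,\delta})}\lesssim\delta a/u$ and $\|u^{i}\nabla^i\chihat\|_{L^2(S_{u,\delta})}\lesssim\delta a/u$ (Proposition~\ref{chihatincomingprop}); these improved $\chihat$ and $\beta$ estimates are what make the $\nabla\widehat\otimes\beta$ and $\chihat(\rho,\sigma)$ terms small, and the $\omegabar\,\alpha$ term is absorbed by Gr\"onwall in $u$. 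There is no need to substitute Codazzi for $\beta$ and pick up $\nabla^{i+2}(\chihat,\tr\chi)$ as you suggest --- doing so would be unnecessarily indirect once the improved $\beta$ estimate is in hand.
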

\begin{proof}
    Recall the Bianchi equation for $\alpha$, given by

\[ \nabla_3 \alpha + \f12 \tr\chibar \alpha =  \nabla \otimes \beta + 4 \omegabar \hsp \alpha - 3 (\chihat \rho + \Hodge{\chihat}\sigma + (\zeta +4 \etabar) \hat{\otimes} \beta .   \] Using Proposition \ref{transportprop2}, there holds \begin{equation}
    \begin{split}
        \lVert \alpha \rVert_{L^2 (S_{u,\delta})} &\lesssim \lVert \nabla \beta \rVert_{L_u^1 L^2(S)} \\ &+ \int_u^{1} \lVert \omegabar \hsp \alpha \rVert_{L^2(S_{u^{\prime}, \delta})} \duprime \\ &+ \int_u^{1} \lVert \chihat \cdot (\rho,\sigma) \rVert_{L^2(S_{u^{\prime}, \delta})} \duprime \\ &+ \int_u^{1} \lVert (\eta, \etabar) \cdot \beta \rVert_{L^2(S_{u^{\prime}, \delta})} \duprime  .
    \end{split}
\end{equation}At this point we crucially use the estimates for $\beta$ obtained in this section. We have \[ \lVert u^{i+1} \nabla^i \beta \rVert_{L^2(S_{u,\delta})} \lesssim \frac{\delta a}{u}. \] 
     \begin{equation} \begin{split}
           \lVert \alpha \rVert_{L^2 (S_{u,\delta})} &\lesssim \int_{u}^1 \frac{\delta a}{\lvert u^{\prime}\rvert^3} \duprime \\&+ \int_u^{1} \lVert \omegabar \rVert_{L^{\infty}(S_{u^{\prime}, \delta})} \lVert \alpha \rVert_{L^2(S_{u^{\prime } , \delta})} \duprime \\&+ \int_u^{1} \lVert (\eta,\etabar) \rVert_{L^{\infty}(S_{u^{\prime}, \delta})} \lVert \beta \rVert_{L^2(S_{u^{\prime } , \delta})} \duprime \\&+ \int_u^{1} \lVert \chihat \rVert_{L^{\infty}(S_{u^{\prime}, \delta})} \lVert (\rho,\sigma ) \rVert_{L^2(S_{u^{\prime } , \delta})} \duprime
          \\ &\lesssim \frac{\delta a}{u^2} + \int_{u}^1 \frac{\delta \al}{\lvert \upr \rvert^2} \lVert \alpha \rVert_{L^2(S_{u^{\prime } , \delta})} \duprime + \int_{u}^1 \frac{\delta \hsp \al}{\lvert \upr\rvert^2} \cdot \frac{\delta a}{\lvert \upr \rvert^2}\duprime \\ &\lesssim \frac{\delta a}{u^2} + \frac{\delta^2 a^{\f32}}{u^3} + \int_{u}^1 \frac{\delta \al}{\lvert \upr \rvert^2} \lVert \alpha \rVert_{L^2(S_{u^{\prime } , \delta})} \duprime + \int_{u}^1 \frac{\delta a}{\upr} \cdot \lVert (\rho,\sigma ) \rVert_{L^2(S_{u^{\prime } , \delta})} \duprime. \end{split}
     \end{equation} By using Gr\"onwall's inequality and the improved estimates on $\chihat$ from Proposition \ref{chihatincomingprop}, we arrive at 
     
     \begin{equation}
         \begin{split}
             \lVert a \rVert_{L^2(S_{u,\delta})} \lesssim \frac{\delta a }{u^2} &+ \int_{u}^1 \frac{\delta a}{\lvert \upr \rvert^2} \cdot \lVert (\rho,\sigma ) \rVert_{L^2(S_{u^{\prime } , \delta})} \duprime \\ \lesssim \frac{\delta a }{u^2} &+ \int_{u}^1 \frac{\delta a}{\lvert\upr\rvert^2} \cdot \left( \lVert \nabla \eta \rVert_{L^2(S_{u^{\prime } , \delta})} + \lVert \chihat \cdot \chibarhat  \rVert_{L^2(S_{u^{\prime } , \delta})} \right)  \duprime \\ &+ \int_{u}^1 \frac{\delta a} {\lvert \upr\rvert^2} \left( \lVert K- \frac{1}{\lvert \upr \rvert^2} \rVert_{L^2(S_{\upr, \delta})} +\lVert \tr\chibar + \frac{2}{\upr} \rVert_{L^2(S_{\upr, \delta})} \right) \duprime \\ &+ \int_{u}^1 \frac{ \delta a}{\lvert \upr\rvert^2} \cdot \frac{1}{\upr} \cdot \lVert \tr\chi - \f{2}{\upr} \rVert_{L^2(S_{\upr, \delta})} \duprime.
         \end{split}
     \end{equation}Putting everything together, there holds
         
         \begin{equation}
             \lVert a \rVert_{L^2(S_{u,\delta})} \lesssim \frac{\delta a}{u^2} + \frac{\delta^2 a^2}{u^3} \lesssim \frac{\delta a}{u^2}.
         \end{equation}A simple induction argument then yields
         
         \begin{equation}
             \lVert u^{i+1}\nabla^i \alpha \rVert_{L^2(S_{u,\delta})} \lesssim \frac{\delta a}{u}.
         \end{equation}

\end{proof}
\begin{remark}
The problem with using $\alpha$ and $\alphabar$ in \cite{AL17} is not that these quantities are singular, as in \cite{LR17}, but rather that they are large. So large, in fact, that with them it would be impossible to prove the existence of the spacetime up to $u= \delta \al b$. The reason we are able to obtain better estimates here is that we have started with $\chihat =0$ and gradually improved every Ricci coefficient and curvature component. At most steps through the incoming cone estimates, we have used the improved estimates on the incoming cone that arose from previous propositions in this section. In other words, the improvement on $\chihat$ and its implications is what ultimately gives us better control on these quantities.
\end{remark}

\subsection{The geometry of the outgoing cone $C_0^{[\delta,\delta+1]}$}

We extend the data onto an outgoing cone, extending the original data, so that $\hat{\chi}=0$ on this extension, now labelled $C_0^{[\delta,\delta+1]}$. $m_0$ is the mass parameter of a Schwarzschild spacetime, and the various curvature and connection terms will be renormalized with respect to the values they take in a certain patch of an $m_0$ Schwarzschild spacetime. The quantities $\eta$, $\etabar$, $\omega$, $\omegabar$, $\csigma_{m_0}$ and $K_{m_0}-\frac{1}{u^2}$ all vanish in any Schwarzschild spacetime.

\vspace{3mm}
 \par \noindent We begin with a few preliminary remarks on setting up the problem.

\begin{remark}
We extend the initial data on a cone $C_0^{[\delta,\delta+1]}$ with $\Omega \equiv 1$ on the cone. 
\end{remark}

\begin{remark}
The fact that $\chihat =0$ and $\omega =0$ on $C_0^{[\delta,\delta+1]}$ implies that, initially, $\text{tr}\chi$ satisfies an equation of the form \[ \nabla_4 \tr\chi = - \frac{1}{2}\left(\tr\chi\right)^2.\]
In particular, $\tr\chi$ is pointwise bounded by a uniform constant on this outgoing cone. Using the identity \[ \frac{\text{d}}{\text{d}\ubar} \int_{\S} f = \int_{\S}\left( \frac{\text{d}f}{\text{d}\ubar} + \Omega \hs \tr\chi\hs f \right) =   \int_{\S} \Omega \left( \text{e}_4(f) + \tr\chi \hs f \right),                      \]plugging in $f = \lvert \phi \rvert_{\gamma}^2$ and using Cauchy-Schwartz on the sphere along with the $L^{\infty}$ bounds on $\Omega$ and $\tr\chi$, we have for $\ubar \geq \delta$ that

\[ \lVert{\phi} \rVert_{L^2(S_{1,\ubar})} \lesssim \lVert \phi \rVert_{L^2(S_{1,\delta})} + \int_{\delta}^{\ubar} \lVert \nabla_4 \phi \rVert_{L^2(S_{1,\ubar^{\prime}})} \dubarprime.\] 
\end{remark}

\begin{remark}
The fact that the areas of the spheres $S_{1,\ubar}$ for $\delta \leq \ubar \leq  \delta +1$ are uniformly bounded above and below by positive constants implies a Sobolev embedding statement:

\[ \lVert \phi \rVert_{L^{\infty}(S_{1,\ubar})} \lesssim \sum_{i=0}^{2}  \lVert \nabla^i \phi \rVert_{L^2(S_{1,\ubar})}. \]
\end{remark}
\begin{proposition}
On the outgoing cone $C_0^{[\delta,\delta+1]}$, we have $\chihat=0$, $\omega=0$, and for all $k$ the following:

\[ \nabla^k \left( \emph{\tr} \chi-\emph{\tr} \chi_{m_0}, \eta, \beta, K-1, \csigma, \omegabar -\omegabar_{m_0}, \emph{\tr} \chibar-\emph{\tr} \chibar_{m_0}, \betabar \right) \lesssim \delta a. \] \label{outgoingconeestimates}	\end{proposition}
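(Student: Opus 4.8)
The strategy is identical in spirit to the incoming-cone estimates of the previous subsection, only now transported in the $\nabla_4$ direction along $C_0^{[\delta,\delta+1]}$, where the situation is simpler because $\ubar$ ranges over a bounded interval and the spheres $S_{1,\ubar}$ have uniformly comparable areas (Remarks above). First, the vanishing of $\chihat$ on $C_0^{[\delta,\delta+1]}$ is immediate from the prescription of the data, and the vanishing of $\omega$ follows from $\omega = -\tfrac12 \nabla_4(\log\Omega)$ together with $\Omega\equiv 1$ on the cone (Remark on the extension). Given this, $\alpha\equiv 0$ on the cone follows from \eqref{chihateq} exactly as in Lemma \ref{spheregeoestimates}. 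The seed of all the estimates is the geometry of the initial sphere $S_{1,\delta}$, which Lemma \ref{spheregeoestimates} already controls: on $S_{1,\delta}$ every relevant renormalized quantity is $\lesssim \delta a$, and this will propagate along the cone.

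\textbf{Order of the argument.} I would handle $\tr\chi$ first: on the cone it satisfies $\nabla_4 \tr\chi = -\tfrac12(\tr\chi)^2$ (Remark), and the $m_0$-Schwarzschild value $\tr\chi_{m_0}$ satisfies the same ODE, so $\nabla_4(\tr\chi - \tr\chi_{m_0})$ is a product of $(\tr\chi - \tr\chi_{m_0})$ with a bounded factor; commuting with $i$ angular derivatives via Proposition \ref{commutationformulaeprop} and integrating in $\ubar$ from $\delta$ using the $S_{1,\delta}$ bound of Lemma \ref{spheregeoestimates}, Gr\"onwall's inequality yields $|\nabla^k(\tr\chi - \tr\chi_{m_0})|\lesssim \delta a$. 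Next $\eta$: use $\nabla_4\eta = -\chi\cdot(\eta-\etabar) - \beta$; here $\beta$ is expressed through the Codazzi relation $\beta = \tfrac12\nabla\tr\chi - \div\chihat - \tfrac12(\eta-\etabar)\cdot(\chihat - \tfrac12\tr\chi) = \tfrac12\nabla\tr\chi + \tfrac12\tr\chi\,\eta$ on the cone since $\chihat = 0$; thus the transport equation for $\nabla^k\eta$ closes in terms of $\nabla^{k+1}\tr\chi$ and lower-order $\eta$ terms, and integrating from $S_{1,\delta}$ (where $\eta\lesssim\delta a$, hence $\lesssim \delta\al\cdot\al^{1/2}$-type bounds improve to $\delta a$ as in Lemma \ref{spheregeoestimates}) gives $|\nabla^k\eta|\lesssim\delta a$ by Gr\"onwall. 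Then $\beta$ is estimated directly from the identity $\beta = \tfrac12\nabla\tr\chi + \tfrac12\tr\chi\,\eta$ and the just-obtained bounds. The quantity $\csigma = \curl\eta$ on the cone (since $\chihat = 0$ kills the $\chihat\wedge\chibarhat$ term) is then $\lesssim\delta a$ from the $\eta$-bounds; $K-1$ follows from the Gauss equation $K = -\rho - \tfrac14\tr\chi\tr\chibar + \tfrac12\chihat\cdot\chibarhat$ combined with the $\nabla_4$-Bianchi equation for $\rho$ (or directly from a Gauss-curvature transport argument as in \cite{AL17}). Finally $\omegabar - \omegabar_{m_0}$, $\tr\chibar - \tr\chibar_{m_0}$, and $\betabar$ are obtained by propagating their respective $\nabla_4$-transport equations \eqref{betabareq} and the $\nabla_4\tr\chibar$, $\nabla_4\omegabar$ equations from Section 2; each right-hand side, after commuting with $\nabla^i$ via Proposition \ref{commutationformulaeprop}, is a sum of products of already-controlled $O(\delta a)$ quantities plus the quantity itself, so Gr\"onwall closes the estimate starting from the Lemma \ref{spheregeoestimates} data on $S_{1,\delta}$.

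\textbf{Main obstacle.} The only real subtlety is bookkeeping: at each step one must verify that the schematic right-hand side obtained after commutation genuinely decomposes as (bounded coefficient)$\times$(the renormalized unknown) plus lower-order terms that are already $O(\delta a)$, with no uncontrolled contribution of size larger than $\delta a$. This is where the vanishing of $\chihat$ and $\omega$ on the cone is crucial --- it removes precisely the terms that would otherwise be merely $O(\al)$ or $O(\delta\al)$ rather than $O(\delta a)$ --- and where the inductive structure (estimating $\nabla^k$ assuming control of $\nabla^{j}$, $j<k$, and of $\nabla^{k+1}\tr\chi$, $\nabla^{k+1}(\chibarhat,\tr\chibar)$ from the metric-component analysis) must be set up carefully, exactly as in the proofs of the incoming-cone propositions above. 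Because $\ubar$ stays in $[\delta,\delta+1]$ and areas are bounded, there are no weight losses in $u$, so the estimates are cleaner than their incoming-cone counterparts; the induction and Gr\"onwall arguments are routine once the schematic forms are in place.
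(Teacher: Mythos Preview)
Your plan is essentially the paper's proof: transport each renormalized quantity along $\nabla_4$ from the sphere $S_{1,\delta}$ (where Lemma \ref{spheregeoestimates} furnishes the $O(\delta a)$ seed), exploit $\chihat=\omega=\alpha=0$ and $\eta=-\etabar$ on the cone to simplify the right-hand sides, and close by Gr\"onwall plus induction on the number of angular derivatives via the commutation formula. The order of estimation and the key simplifications you identify match the paper almost exactly.

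Two minor tactical differences are worth noting. First, to break the circularity between $\nabla^{k+1}\tr\chi$ and $\nabla^{k}\eta$ (the commuted $\tr\chi$-equation picks up $\nabla^{j}\beta$ terms, which via Codazzi involve $\nabla^{j}\eta$, which in turn needs $\nabla^{j+1}\tr\chi$), the paper introduces an explicit weak bootstrap $|\nabla^{j}\eta|\lesssim 1$ to close the $\tr\chi$-estimate first, and only afterwards improves $\eta$ to $O(\delta a)$; your ``inductive structure \dots\ must be set up carefully'' alludes to this but does not spell it out. Second, where you propose to read off $\beta$ from Codazzi and $\betabar$ from its $\nabla_4$-transport equation, the paper instead estimates $\beta$ via its own transport equation $\nabla_4\beta=-2\tr\chi\,\beta$ and obtains $\betabar$ from the Codazzi constraint $\div\chibarhat = \tfrac12\nabla\tr\chibar - \tfrac12\tr\chibar\,\eta + \betabar$ after first controlling $\chibarhat$; both routes work and cost the same.
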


\begin{proof}
The proof will proceed via an induction argument on the number $k$ of derivatives. We will obtain these estimates in the order in which they are stated. This reflects how various terms come into the null structure and transport equations. \vspace{3mm} \par \noindent 
Since on the outgoing cone $\ubar\in [\delta,\delta+1]$ we have $\chihat=\alpha=\omega=0$, $\eta=-\etabar$, $\sigma=\csigma$, $\zeta=\eta$ the transport equations in the $\nabla_4$ direction may be written as follows. Note below that we are loose with notation: for `\: $\div$' we write $\nabla$, for $\eta \hat{\otimes} \eta $ we write $\eta^2$, and for $\Hodge{\nabla}\sigma$ we simply write $\nabla \sigma$.
\begin{gather}
    \nabla_4 \tr \chi =-\frac{1}{2}(\tr \chi)^2,\\ 
    \nabla_4 \eta=-2\tr \chi \eta -\beta,\\ 
    \nabla_4 \beta =-2\tr \chi \beta,\\ 
    \nabla_4 K=-\tr \chi K -\nabla \beta - \eta \beta +\frac{1}{2}\tr \chi \nabla \eta +\frac{1}{2}\tr \chi \eta^2,\\
    \nabla_4 \csigma =-\frac{3}{2}\tr \chi \csigma -\nabla \beta + \eta \beta ,\\
     \nabla_4 \omegabar =\frac{3}{2}\eta^2 +\frac{1}{2}\rho,\\
    \nabla_4 \tr \chibar =-\frac{1}{2}\tr \chi \tr \chibar + 2\rho -2\nabla \eta +2\eta^2,\\
    \nabla_4 \chibarhat =-\frac{1}{2} (\tr \chi) \chibarhat - \nabla \eta +\eta^2,\\
    \nabla_4 \betabar = -\tr \chi \betabar -\nabla \rho +\nabla \csigma+2\chibarhat \beta +3\eta(\rho-\csigma),
\end{gather}
Note here on the outgoing cone the constraint equations for $\rho$ and $\beta$ become
\begin{gather}
    \beta= \frac{1}{2}\nabla \tr \chi +\frac{1}{2}\tr\chi \eta,\\
    \rho=-K-\frac{1}{4}\tr \chi \tr \chibar,
\end{gather}
The proof will proceed via an induction argument on the number $k$ of derivatives. Before starting, we note that, since $\eta+\etabar=0$ on the cone, the commutation formula in Proposition 5.12 of [AL17] becomes. 
\begin{proposition} If $\nabla_4 \phi=F_0$ then $\nabla_4 \nabla^i \phi =F_i$ is given by  
\[F_i=\sum_i\nabla^i F_0 + \sum_{i_1+i_2=i}\nabla^{i_1}\chi \nabla^{i_2}\phi + \sum_{i_1+i_2=i-1} \nabla^{i_1}\beta \nabla^{i_2}\phi\]
\end{proposition}

We are now in a position to prove the desired statements.

\begin{lemma}
For all $k$ there holds $\lvert \nabla^{k+1}\tr\chi \rvert + \lvert \nabla^k \eta \rvert \lesssim \delta a$.
\end{lemma}

\begin{proof}
We begin with $k=0$ and with $\tr\chi- \tr\chi_{m_0}$. There holds \[ \nabla_4 (\tr\chi- \tr\chi_{m_0}) = -\frac{1}{2}(\tr\chi+\tr\chi_{m_0})(\tr\chi- \tr\chi_{m_0}). \] Since $\tr\chi+ \tr\chi_{m_0}$ is pointwise bounded on $C_0^{[\delta,\delta+1]}$, there holds

\[\lVert \tr\chi -\tr\chi_{m_0} \rVert_{L^{\infty}(S_{1,\ubar})} \lesssim \lVert \tr\chi - \tr\chi_{m_0} \rVert_{L^{\infty}(S_{1,\delta})} \lesssim \delta a, \]
where in particular we have made use of the sphere estimates from Proposition \ref{spheregeoestimates}. From now on, we shall make the following bootstrap assumption on $\eta$:

\begin{assumption}\label{bootstrapeta}
    There holds $\lvert \nabla^j \eta \rvert \lesssim 1$ for all $0\leq j \leq k$.
\end{assumption}Assume, as an inductive step, that $\lvert\nabla^j (\tr\chi-\tr\chi_{m_0})\rvert \lesssim \delta a$ for all $j < k$. Using Assumption \ref{bootstrapeta}, we notice that the following schematic commutation formula holds:

\begin{multline}
    \nabla_4 \nabla^{k+1} \left(\tr\chi-\tr\chi_{m_0} \right) = \sum_{i_1+i_2=k+1}\nabla^{i_1}(\tr\chi-\tr\chi_{m_0})\nabla^{i_2}(\tr\chi+\tr\chi_{m_0}) + \sum_{i_1+i_2=k+1}\nabla^{i_1}\tr\chi \nabla^{i_2}(\tr\chi-\tr\chi_{m_0}) \\ + \sum_{i_1+i_2+i_3=k} \nabla^{i_1}\tr\chi \nabla^{i_2}\eta \nabla^{i_3}(\tr\chi-\tr\chi_{m_0}).
\end{multline}

Consequently, there holds

\begin{multline}
    \lVert \nabla^{k+1}(\tr\chi-\tr\chi_{m_0}) \rVert_{L^2(S_{1,\ubar})} \lesssim \delta a \\ + \int_{\delta}^{\ubar} \big \lVert \sum_{i_1+i_2=k+1}\nabla^{i_1}(\tr\chi-\tr\chi_{m_0})\nabla^{i_2}(\tr\chi+\tr\chi_{m_0}) \big \rVert_{L^2(S_{1,\ubar^{\prime}})} \dubarprime\\ + \int_{\delta}^{\ubar} \big \lVert \sum_{i_1+i_2=k+1}\nabla^{i_1}\tr\chi\nabla^{i_2}(\tr\chi-\tr\chi_{m_0}) \big \rVert_{L^2(S_{1,\ubar^{\prime}})} \dubarprime\\+ \int_{\delta}^{\ubar} \big \lVert \sum_{i_1+i_2+i_3=k}\nabla^{i_1}(\tr\chi-\tr\chi_{m_0})\nabla^{i_2}\eta \nabla^{i_3}(\tr\chi-\tr\chi_{m_0}) \big \rVert_{L^2(S_{1,\ubar^{\prime}})} \dubarprime.
\end{multline}For the first two terms, whenever either $i_1$ or $i_2$ equals $k+1$, we can use Gr\"onwall's inequality to absorb the corresponding term. When neither of the derivatives are of top order, we bound the term using our inductive step. For the third term, we use the inductive step along with the auxiliary bootstrap assumption on $\eta$ to bound the integral by $\delta a$. The result follows for $\tr\chi$.\vspace{3mm}
\par \noindent For $\eta$, there holds 

\[ \nabla_4 \eta + \frac{1}{2} \tr\chi \eta = \frac{1}{2}\nabla \tr\chi. \] 

Commuting with $i$ angular derivatives, for all $j\leq k$ there holds

\begin{equation}
    \nabla_4 \nabla^{j} \eta = \nabla^{j+1} \tr\chi + \sum_{j_1+j_2=j} \nabla^{j_1}\tr\chi \nabla^{j_2}\eta \\ + \sum_{j_1+j_2+j_3=j-1} \nabla^{j_1} \tr\chi \nabla^{j_2} \eta \nabla^{j_3}\eta.
\end{equation}We see that there holds \begin{multline}
    \lVert \nabla^j \eta \rVert_{L^2(S_{1,\ubar})} \lesssim \lVert \nabla^j \eta \rVert_{L^2(S_{1,\delta})} + \int_{\delta}^{\ubar} \lVert \nabla^{j+1}\tr\chi \rVert_{L^2(S_{1,\ubar^{\prime}})} \dubarprime \\ + \int_{\delta}^{\ubar} \big\lVert \sum_{j_1+j_2=j} \nabla^{j_1}\tr\chi \nabla^{j_2}\eta \big \rVert_{L^2(S_{1,\ubar^{\prime}})} \dubarprime \\ + \int_{\delta}^{\ubar} \big\lVert \sum_{j_1+j_2+j_3=j-1} \nabla^{j_1}\tr\chi \nabla^{j_2}\eta \nabla^{j_3}\eta \big \rVert_{L^2(S_{1,\ubar^{\prime}})} \dubarprime .
\end{multline}Using the recently obtained estimates on $\nabla^{j+1}\tr\chi$ and the Sobolev embedding statement, along with Proposition \ref{spheregeoestimates} for the sphere $S_{1,\delta}$, we can bound $\nabla^{j} \eta$ in $L^{\infty}$ by $\delta a$. The result follows. 
\end{proof}

\par \noindent We move on to estimates for $\beta$. There holds, taking into account that $\omega, \chihat$ and $\alpha$ vanish from $\delta$ to $\delta+1$ initially, 

\[ \nabla_4 \beta = -2 \tr\chi \beta. \] A direct application of Gr\"onwall's inequality implies that $\lVert \beta \rVert_{L^{\infty}(S_{1,\ubar})} \lesssim \delta a$. Upon commutation with $i\geq 1$ angular derivatives, we get

\begin{equation}
    \nabla_4 \nabla^i \beta = \sum_{i_1+i_2=i} \nabla^{i_1}\tr\chi \nabla^{i_2}\beta  + \sum_{i_1+i_2+i_3=i-1} \nabla^{i_1}\tr\chi \nabla^{i_2}\eta \nabla^{i_3}\beta.
\end{equation}Using the estimates on $\tr\chi$ and $\eta$ obtained above as well as Gr\"onwall's inequality, we obtain that $\lvert \nabla^i \beta \rvert \lesssim \delta a$ for all $i$.
\end{proof}\par \noindent We move on to estimates for $K$ and $\csigma$. We have the following structure equation:

\[ \nabla_4 K = -\tr\chi K - \div \beta + \eta \cdot \beta -\frac{1}{2}\tr\chi \div \eta - \frac{1}{2}\tr\chi \lvert \eta \rvert^2, \] 
\[ \nabla_4 K_{m_0} = -\tr\chi_{m_0} K_{m_0}. \]At first instance, the equation for $K$ implies along with the bounds on $\tr\chi, \eta$ and $\beta$ implies the bound  $\lVert K-1 \rVert_{L^{\infty}(S_{1,\ubar})} \lesssim \delta a$, so that $K$ is uniformly bounded above and below. Subtracting those two equations, we arrive at
\[ \nabla_4 \left(K - K_{m_0} \right) = -\left(\tr\chi-\tr\chi_{m_0}\right) K -\tr\chi_{m_0} \left( K - K_{m_0} \right) - \div \beta + \eta \cdot \beta - \frac{1}{2}\tr\chi \text{div} \eta - \frac{1}{2}\tr\chi \lvert \eta \rvert^2. \] Given the fact that $K$ is uniformly bounded, along with the estimates on $\tr\chi-\tr\chi_{m_0}, \tr\chi_{m_0}, \beta$ and $\eta$, we can conclude using Gr\"onwall's inequality that 
\[   \lvert K - K_{m_0} \rvert \lesssim \delta a. \]An induction argument gives us the result for all higher orders of angular derivatives. Furthermore, the fact that $\csigma = \curl \eta$ implies the same bounds for $\csigma$ as those of $\eta$, so that $\lvert \nabla^k \csigma \rvert \lesssim \delta a$.

\vspace{3mm} 

\par \noindent We move on to estimates for $\chibarhat$. There holds
\[ \nabla_4 \chibarhat + \frac{1}{2}\tr\chi \chibarhat = - \nabla \hat{\otimes} \eta +\eta \hat{\otimes}\eta.     \]Using the uniform pointwise bound on $\tr\chi$ and the estimates on all derivatives of $\eta$, we can use the inequality

\[ \lVert \chibarhat \rVert_{L^2(S_{1,\ubar})} \lesssim \lVert \chibarhat \rVert_{L^2(S_{1,\delta})} + \int_{\delta}^{\ubar} \lVert \nabla_{4} \chibarhat  \rVert_{L^2(S_{1,\ubar^{\prime})}} \dubarprime \]   along with a standard application of Gr\"onwall's lemma to conclude that \[ \lVert \chibarhat \rVert_{L^2(S_{1,\ubar})} \lesssim \delta a.  \] An application of the commutation formula offers the same estimate for any number $k$ of angular derivatives acting on $\chibarhat$.    For $\tr\chibar - \tr\chibar_{m_0},$  there holds 

\[ \nabla_4 \left( \tr\chibar - \tr\chibar_{m_0} \right)  + \tr\chi \left(
\tr\chibar - \tr\chibar_{m_0} \right) = -\tr\chibar_{m_0}\left(\tr\chi-\tr\chi_{m_0}\right) +2 \left(K_{m_0}-K \right) - 2 \div \eta + 2\lvert \eta \rvert^2.           \]Using the pointwise bounds on $\tr\chi, \tr\chi-\tr\chi_{m_0}$, $K- K_{m_0}$ and on $\eta$ and its derivatives, an application of Gr\"onwall's inequality yields the desired bound  \[ \lVert \tr\chibar-\tr\chibar_{m_0} \rVert_{L^2(S_{1,\ubar})} \lesssim \delta a.  \]An application of the commutation formula offers the same estimate for any number $k$ of angular derivatives. 

\vspace{3mm}
\par \noindent For $\betabar$, we work with the constraint equation \[ \div\chibarhat = \frac{1}{2}\nabla \tr\chibar - \frac{1}{2}\tr\chibar \hs \eta  + \betabar.  \]Using the estimates on $\eta,\tr\chibar$ and $\chibarhat$ we arrive at the desired conclusion, which can also be obtained for higher numbers of derivatives via the commutation formula.

\vspace{3mm}
\par \noindent Finally, for $\omegabar- \omegabar_{m_0}$, there holds

\[      \nabla_4 \left(\omegabar -\omegabar_{m_0} \right) = \frac{3}{2} \lvert \eta\rvert^2 + \frac{1}{2} (K_{m_0}- K) + \frac{1}{8}(\tr\chi \tr\chibar -\tr\chi_{m_0}\tr\chibar_{m_0}) \] and using the bounds on $\eta, K-K_{m_0}, \tr\chi-\tr\chi_{m_0}, \tr\chibar-\tr\chibar_{m_0}$, the result follows. An application of the commutation formula offers the same estimate for any number of angular derivatives.


\subsection{Construction of the transition region}

We consider a characteristic initial data problem on $H_1^{[\delta,\delta+1]} \cup \Hbar_{\delta}^{[1,\frac{1}{2}]}$, where the data on $H_1^{[\delta,\delta+1]}$ are given by $\chihat=0$ and on the incoming and outgoing cone the data coincide with the estimates in the previous section. We shall continue, by a slight abuse of notation, to use the space-time metric $g$ to denote the solution of this problem.

\vspace{3mm}

\par \noindent Adapted to this framework, we introduce the following norms:

\[  \mathfrak{R}_k^{\ubar}(u) := \big\lVert \nabla^k\left( \alpha, \beta, \rho - \rho_{m_0} , \sigma, \betabar \right) \big\rVert_{L^2\left(H_u^{(\delta,\ubar)}\right)}, \hspace{2mm} \underline{\mathfrak{R}}_k^{\ubar}(u) := \big\lVert \nabla^k \left( \beta, \rho - \rho_{m_0}, \sigma, \betabar, \alphabar \right) \big\rVert_{L^2\left(\Hbar_{\ubar}^{(u,1)}\right)}        \] \[ \mathfrak{O}_k(\ubar,u) = \big\lVert \nabla^k \left( \chihat,\chibarhat, \eta, \etabar, \tr\chi- \tr\chi_{m_0},\tr\chibar- \tr\chibar_{m_0}, \omega- \omega_{m_0},\omegabar-\omegabar_{m_0}  \right)\big\rVert_{L^2(S_{u,\ubar})}.     \]   
\begin{remark}
Here and throughout we fix a maximum number of derivatives $k_{\text{max}}$ and only work with $k \leq k_{\text{max}}$. Since the number of derivatives is now bounded and $u \geq \frac12$ in our setting, every weighted norm involved in Subsection \ref{incomingconesection} is equivalent to its natural unweighted norm. Consequently, in what follows, we will drop the powers of $u$ that we have used so far.
\end{remark}




\par \noindent The results of Subsection  \ref{incomingconesection} and Proposition \ref{outgoingconeestimates} are then summarised in the following.

\begin{proposition} \label{summary}
There exists a small number $\epsilon \approx \delta^{\f12} a^{\f12}$ such that 

\[  \sum_{k \leq k_{\text{max}}} \left( \mathfrak{R}_{k}^{\delta+1}(1) +   \underline{\mathfrak{R}}_{k}^{\delta}(1/2) + \sup_{\delta \leq \ubar \leq \delta+1}  \mathfrak{O}_k(\ubar, 1) + \sup_{1/2\leq u \leq 1 }\mathfrak{O}_k(\delta, u)  \right) \lesssim \epsilon. \]
\end{proposition}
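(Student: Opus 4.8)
\textbf{Proof plan for Proposition \ref{summary}.}

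The plan is to assemble this statement directly from the estimates already established, by reading off the worst power of $u$ in each family of bounds and then specializing to the relevant hypersurfaces. Recall the setup: on $\Hbar_\delta$ we run from $u=1$ down to $u=1/2$, on $H_1$ we run from $\ubar=\delta$ to $\ubar=\delta+1$, and on the outgoing cone $C_0^{[\delta,\delta+1]}$ all quantities are controlled by $\delta a$ via Proposition \ref{outgoingconeestimates}. Since $1/2 \leq u \leq 1$ throughout the transition region and $k \leq k_{\text{max}}$ is fixed, every weighted norm $\lVert u^{k+j}\nabla^k(\cdot)\rVert_{L^2(S_{u,\ubar})}$ is comparable to the corresponding unweighted norm, and the powers of $u$ in the denominators of all the incoming-cone estimates are comparable to $1$. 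So the first step is simply to invoke Remark (the one stating this equivalence) to drop all $u$-weights.

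Next I would go family by family. For the Ricci coefficient norms $\mathfrak{O}_k$ on $\Hbar_\delta$: Proposition \ref{chihatincomingprop} gives $\chihat \lesssim \delta a$; Proposition on $\csigma,\sigma$ gives $\lesssim \delta\al$; the proposition on $\tr\chi-\frac2u+\frac{4m_0}{u^2}$ gives $\lesssim \dal\al$; the one on $\omega+\frac{m_0}{2u^2}$ gives $\lesssim \dal\al$; and $\eta,\etabar,\chibarhat,\tr\chibar+\frac2u,\omegabar$ are controlled by $\delta\al$ (indeed by $\delta\al b^{1/4}$, hence by $\delta\al$ after using the improved bootstrap bounds of Theorem \ref{higherorderenergyestimates}). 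For the curvature norms $\underline{\mathfrak{R}}_k^\delta(1/2)$: $\beta \lesssim \delta a$, $\rho-\rho_{m_0}$ (which equals $\rho+\frac{2m_0}{u^3}$ up to the Schwarzschild background, and the difference of the Schwarzschild $\rho$ and $-2m_0/u^3$ is smooth and bounded) $\lesssim \dal\al$, $\sigma=\csigma$ up to $\chihat\wedge\chibarhat$ terms $\lesssim \delta\al$, $\betabar \lesssim \delta^{3/2}a^{3/4}$ from Theorem \ref{higherorderenergyestimates}, and $\alphabar \lesssim \delta a$ from the $\alphabar$ proposition. On $H_1^{[\delta,\delta+1]}$ and $C_0^{[\delta,\delta+1]}$ one uses Proposition \ref{outgoingconeestimates} together with Lemma \ref{spheregeoestimates} for the corner sphere $S_{1,\delta}$, giving everything $\lesssim \delta a$, and integrating the trivial $\chihat=0$ data over the unit-length $\ubar$-interval changes nothing. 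Taking the $L^2$ norm over a hypersurface costs at most a bounded length factor, so all the hypersurface norms $\mathfrak{R}_k^{\delta+1}(1)$ inherit the same bounds. The maximum over all these quantities, using $\al \leq b \leq a$, $\delta a \leq 1$, $a^{1/2}=\al$, and $\delta^{3/2}a^{3/4} = \dal \cdot \delta \al \leq \dal\al$, is $\dal\al = \delta^{1/2}a^{1/2}$; hence the claim with $\epsilon \approx \delta^{1/2}a^{1/2}$.

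The only genuine bookkeeping subtlety — and the place I expect the one real argument to be needed — is matching the \emph{renormalizations}. Subsection \ref{incomingconesection} is phrased with respect to a Schwarzschild spacetime of mass $m_0$ but using the \emph{explicit} subtractions $\frac{2}{u}$, $\frac{4m_0}{u^2}$, $\frac{m_0}{2u^2}$, $\frac{2m_0}{u^3}$ rather than the honest Schwarzschild Ricci/curvature values, whereas $\mathfrak{O}_k$, $\mathfrak{R}_k$, $\underline{\mathfrak{R}}_k$ are phrased with the honest Schwarzschild background $\tr\chi_{m_0},\rho_{m_0},\omega_{m_0}$, etc. So one must check that, on the region $1/2 \leq u \leq 1$, $\ubar \in [\delta,\delta+1]$, the differences $\tr\chi_{m_0} - (\frac2u - \frac{4m_0}{u^2})$, $\rho_{m_0} - (-\frac{2m_0}{u^3})$, $\omega_{m_0} - (-\frac{m_0}{2u^2})$ and their angular derivatives are $O(m_0^2) = O(\delta^2 a^2)$, hence negligible compared to $\epsilon$; this is an elementary expansion of the Schwarzschild metric components in $m_0$ on a compact $u$-range, noting $m_0 = \frac14\int_0^\delta|\chihat_0|^2 \lesssim \delta\al^2 = \delta a$. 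With that identification in hand, all the estimates line up and the proposition follows by taking the maximum. There is nothing deep left; the work is organizational.
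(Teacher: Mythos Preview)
Your proposal is correct and matches the paper's treatment: the paper presents this proposition explicitly as a summary of Subsection~\ref{incomingconesection} and Proposition~\ref{outgoingconeestimates} and gives no separate proof, so your plan of reading off the worst bound from each family and collapsing the $u$-weights on the compact range $1/2\leq u\leq 1$ is exactly what is intended. Your observation about the mismatch between the explicit subtractions $\tfrac{2}{u}-\tfrac{4m_0}{u^2}$, $-\tfrac{2m_0}{u^3}$, $-\tfrac{m_0}{2u^2}$ and the honest Schwarzschild values is a legitimate bookkeeping point that the paper leaves implicit; your resolution via $m_0\lesssim\delta a$ and hence $m_0^2\lesssim\epsilon$ on the compact $u$-range is correct.
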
Based on this Proposition, we shall prove the following Theorem.

\begin{theorem} \label{transitionthm}
There exists an $\epsilon_0 >0$ which is independent of $\delta,$ for $\delta$ sufficiently small, such that a solution to the Einstein equations exists in the slab \[\widetilde{\mathcal{D}} := \begin{Bmatrix}(u,\ubar) \in [1- \epsilon_0 ,1]\times [\delta, \delta+\epsilon_0 ] \end{Bmatrix}\] with the data satifying the bounds in Proposition \ref{summary}. Moreover, with $\epsilon$ as in Proposition \ref{summary},
there holds 

\[\sum_{k \leq k_{\text{max}}}\left( \mathfrak{R}_k^{\ubar}(u) + \underline{\mathfrak{R}}_k^{\ubar}(u) + \mathfrak{O}_k(\ubar,u) \right) \lesssim \epsilon.\]
\end{theorem}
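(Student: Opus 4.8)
The plan is to prove Theorem \ref{transitionthm} by a standard bootstrap/continuity argument for the characteristic initial value problem posed on $H_1^{[\delta,\delta+1]} \cup \Hbar_{\delta}^{[1,1/2]}$, exactly in the spirit of \cite{L12} (and \cite{LR14}, \cite{LR17} for the more singular versions). The key structural observation, already recorded in the remarks preceding the theorem, is that in the transition region the number of derivatives $k_{\mathrm{max}}$ is fixed, all spheres have areas comparable to unity, $u \geq 1/2$, and $\Omega$ is close to $1$; hence every weighted norm collapses to an ordinary unweighted Sobolev-type norm and the analysis reduces to a small-data local existence problem with smallness parameter $\epsilon \approx \delta^{1/2}a^{1/2}$ supplied by Proposition \ref{summary}. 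First I would set up the bootstrap region $\widetilde{\mathcal{D}}_* = \{(u,\ubar) \in [1-\epsilon_*,1]\times[\delta,\delta+\epsilon_*]\}$ and make the bootstrap assumption that
\[
\sum_{k \leq k_{\mathrm{max}}}\left( \mathfrak{R}_k^{\ubar}(u) + \underline{\mathfrak{R}}_k^{\ubar}(u) + \mathfrak{O}_k(\ubar,u) \right) \leq \Delta
\]
for a constant $\Delta$ to be chosen (of size $\sim \epsilon^{1/2}$ or a fixed multiple of $\epsilon$), and then argue that on $\widetilde{\mathcal{D}}_*$ this can be improved to $\lesssim \epsilon$, which by continuity forces the solution to extend to a fixed slab with $\epsilon_0$ independent of $\delta$.

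\textbf{Key steps, in order.} (1) \emph{Metric and null-lapse estimates}: from the transport equations for $\gamma_{AB}$, $\Omega$ and $d^A$, together with the bootstrap assumption, derive that the metric components stay close to their Schwarzschild values and that areas and $\Omega$ remain controlled; this is the analogue of Propositions 3.1--3.2 but now trivial because of the bounded region and the $\epsilon$-small data. (2) \emph{Ricci coefficient estimates via transport equations}: integrate the $\nabla_4$ equations (for $\chihat,\mathrm{tr}\chi,\omega$ and then $\eta,\etabar$ via the commutation formula of Proposition \ref{commutationformulaeprop}) from $H_1$, and the $\nabla_3$ equations (for $\chibarhat,\mathrm{tr}\chibar,\omegabar,\etabar$) from $\Hbar_\delta$, in each case using Propositions \ref{transportprop1} and \ref{transportprop2} and absorbing the nonlinear terms since they carry at least one factor of a quantity bounded by $\Delta$ and the $\ubar$- or $u$-interval has length $\leq \epsilon_*$. (3) \emph{Elliptic estimates}: upgrade top-order angular regularity of $\chihat,\chibarhat$ from the Codazzi systems using Propositions \ref{Hodgeprop1}--\ref{Hodgeprop2}, and of the Gauss curvature from the $K$-equation. (4) \emph{Energy estimates for curvature}: use the renormalized Bianchi system — here one should normalize $\rho,\sigma,K$ against their $m_0$-Schwarzschild values $\rho_{m_0},\sigma_{m_0}\equiv 0, K_{m_0}-\tfrac1{u^2}\equiv 0$ — and the integration-by-parts Propositions \ref{energyprop1}, \ref{midenergyprop}, \ref{energyprop2}; the crucial cancellation of the top-order angular-derivative terms between the $\nabla_4(\cdot)$ and $\nabla_3(\cdot)$ fluxes is the same one exploited in \eqref{cancellationinenergytoporder}, and the error terms are all controlled by $\epsilon + \Delta^2 + \epsilon_*\Delta$. (5) \emph{Closing the bootstrap}: collect the estimates to get the improved bound $\lesssim \epsilon$, fix $\Delta$ and then $\epsilon_*$ (hence $\epsilon_0$) small but $\delta$-independent, and conclude existence on $\widetilde{\mathcal{D}}$ by the standard last-slice/continuity argument; uniqueness follows from the same energy estimates applied to a difference of solutions.

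\textbf{Main obstacle.} The delicate point — and the reason the remark about going "back to the standard setting and employ \cite{L12}" is made — is that the data in Proposition \ref{summary} are stated for the \emph{renormalized} quantities ($\check\sigma$, $K-\tfrac1{|u|^2}$, and the Schwarzschild-subtracted Ricci coefficients), whereas a clean local existence theorem is most easily quoted for the unrenormalized system; one must therefore carefully track that the transition between the two descriptions does not lose the $\epsilon$-smallness. Concretely, the terms $\rho_{m_0},\mathrm{tr}\chi_{m_0},\mathrm{tr}\chibar_{m_0},\omegabar_{m_0}$ contribute inhomogeneities (of size $O(m_0)\sim O(1)$, not $O(\epsilon)$) to the transport and Bianchi equations, so the correct bootstrap quantities are the \emph{differences} from Schwarzschild, and one must verify that the Schwarzschild background itself solves the corresponding homogeneous equations exactly so that these $O(1)$ terms cancel and only $\epsilon$-small source terms remain — this is precisely what was checked term-by-term in Proposition \ref{outgoingconeestimates} on the outgoing cone and must be propagated into the interior. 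A secondary technical nuisance is the mixed-norm bookkeeping in the energy estimates (which flux to apply Cauchy--Schwarz on), but this is routine once the region is bounded; the genuine work is the renormalization/background-cancellation accounting. Everything else is a direct transcription of the small-data local existence machinery already invoked in the paper.
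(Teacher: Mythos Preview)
Your proposal is broadly correct and would work, but it differs from the paper's proof in two organizational and technical respects worth noting.

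First, on organization: rather than running a self-contained bootstrap in $\Delta$, the paper simply \emph{quotes} Luk's local existence theorem \cite{L12} as a black box. That theorem supplies both existence in the slab $\widetilde{\mathcal{D}}$ and a priori bounds of size $C(\epsilon_0)$ on all the norms (together with the Sobolev inequalities). Those $C(\epsilon_0)$ bounds then play the role of your bootstrap assumption---but they are already theorems, not hypotheses---and the paper makes a single pass through the transport equations (Lemma~\ref{lemmastep}) and energy estimates to upgrade $C(\epsilon_0)$ to $\epsilon$, finishing with a Gr\"onwall argument on $\mathcal{E}(u)+\mathcal{F}(\ubar)$. This buys a cleaner logical structure: no last-slice argument, no tracking of $\epsilon_*$ versus $\epsilon_0$.

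Second, on which curvature variables to use: you propose staying in the renormalized $(K,\check\sigma)$ framework and tracking background cancellations. The paper instead switches \emph{back} to the standard system $(\alpha,\beta,\rho,\sigma,\betabar,\alphabar)$, with the single renormalization $\rho\mapsto\rho-\rho_{m_0}$; this is exactly why the norms $\mathfrak{R}_k,\underline{\mathfrak{R}}_k$ in Section~4.4 are defined in terms of $\alpha,\alphabar$ rather than $K,\check\sigma$. The point is that on the incoming cone $\underline{C}_\delta$ the improved estimates of Section~4.2 make $\alpha,\alphabar$ genuinely $\epsilon$-small (they were avoided in the An--Luk slab only because they were large there, not singular), so one can apply \cite{L12} verbatim. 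Your approach would require either adapting \cite{LR17} or verifying by hand that the $(K,\check\sigma)$ energy scheme closes with the Schwarzschild background subtracted; this is doable but is precisely the extra bookkeeping the paper's switch avoids.
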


A theorem of Rendall \cite{R} gives us a local solution in the neighbourhood of the sphere $S_{1,\delta}$. However, at this point, we follow the methodology of \cite{LY15} and make use of a result of Luk \cite{L12}. Apart from the existence in a neighbourhood of the two null cones, Luk's result provides us with a quantitative control on the solution depending on the initial data. In particular, we have the following

\begin{proposition}
There exists a smooth solution $(\mathcal{M}_{\epsilon_0}, g)$ corresponding to the slab $\mathcal{D}$. Moreover, for all $(u,\ubar) \in \mathcal{D}$, there exists a constant $C(\epsilon_0)$ such that there holds holds \begin{equation} \label{startingpointlikebootstrapassumption}
    \sum_{k \leq k_{\text{max}}}\left( \mathfrak{R}_k^{\ubar}(u) + \underline{\mathfrak{R}}_k^{\ubar}(u) + \mathfrak{O}_k(\ubar,u) \right) \lesssim C(\epsilon_0).
\end{equation} Moreover, the following Sobolev inequalities hold in $\mathcal{D}$:

\begin{gather} \label{startingpointSobolev1} \lVert \phi \rVert_{L^4(\S)} \leq C(\epsilon_0) \left( \twoSu{\nabla \phi} + \twoSu{\phi} \right), \\ \label{startingpointSobolev2} \inftySu{\phi} \leq C(\epsilon_0) \sum_{i \leq 2} \twoSu{\nabla^i \phi}.\end{gather}
\end{proposition}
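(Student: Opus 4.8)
The plan is to read off this Proposition from Luk's local existence theorem for the characteristic initial value problem \cite{L12}, applied to the data posed on $H_1^{[\delta,\delta+1]} \cup \Hbar_{\delta}^{[1,\frac{1}{2}]}$ described above; the theorem of Rendall \cite{R} already yields a solution in a neighbourhood of the corner sphere $S_{1,\delta}$, but it is \cite{L12} that produces a genuine double-null slab together with quantitative control. First I would check that the hypotheses of \cite{L12} are met. Luk's theorem needs, on the two initial null cones, smooth characteristic data together with a lower bound on the induced metric of the corner sphere, an upper bound on its isoperimetric constant, and bounds on finitely many angular and transversal derivatives of the Ricci coefficients and curvature. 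The corner sphere is $S_{1,\delta}$, whose induced metric is bounded above and below by the metric estimates of Section \ref{section3} ($\text{Area}(S_{u,\ubar}) \sim |u|^2$ and $\lVert \Omega^{-1}-1 \rVert_{L^\infty}$ small) and whose isoperimetric constant is $\le 1/\pi$ by the bound of \cite{AL17}, while its connection coefficients and curvature are $O(\delta a)$-close to Schwarzschild by Lemma \ref{spheregeoestimates}. The derivative bounds on the data along $H_1^{[\delta,\delta+1]}$ (where $\chihat \equiv 0$) are supplied by Proposition \ref{outgoingconeestimates}, and those along $\Hbar_{\delta}^{[1,\frac{1}{2}]}$ by the propositions of Subsection \ref{incomingconesection}; since Remark \ref{crucialremark} grants control of \emph{all} derivatives of $\chihat_0$, and hence via Theorem \ref{higherorderenergyestimates} of all the quantities entering $\mathfrak{O}_k,\mathfrak{R}_k,\underline{\mathfrak{R}}_k$, we may feed \cite{L12} as many derivatives as it requires for the fixed value $\kmax$. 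All required inputs are thus in place.

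Second, I would invoke \cite{L12} to produce an $\epsilon_0>0$ and a smooth vacuum solution $(\mathcal{M}_{\epsilon_0},g)$ attaining this data in the double-null slab $\mathcal{D} = [1-\epsilon_0,1]\times[\delta,\delta+\epsilon_0]$, together with the quantitative statement that every Ricci coefficient and curvature component, measured in the standard double-null gauge, is bounded throughout $\mathcal{D}$ by a constant $C(\epsilon_0)$ depending only on $\epsilon_0$ and the initial-data norms. Since $u\ge 1/2$ on $\mathcal{D}$ and the Schwarzschild background quantities $\tr\chi_{m_0},\tr\chibar_{m_0},\rho_{m_0},K_{m_0},\omegabar_{m_0}$ and their derivatives are smooth and uniformly bounded there, the renormalised quantities appearing in $\mathfrak{O}_k,\mathfrak{R}_k^{\ubar},\underline{\mathfrak{R}}_k^{\ubar}$ differ from their unrenormalised counterparts only by such bounded background terms, and (as noted in the Remark preceding Proposition \ref{summary}) the surviving weights in $u$ are now pinched between positive constants and absorbed into $C(\epsilon_0)$. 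Hence Luk's bound transfers to give \eqref{startingpointlikebootstrapassumption}.

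Third, for the Sobolev inequalities \eqref{startingpointSobolev1}--\eqref{startingpointSobolev2} I would note that, after possibly shrinking $\epsilon_0$, the induced sphere metrics on $S_{u,\ubar}$ depend continuously on $(u,\ubar)\in\overline{\mathcal{D}}$ and stay uniformly close to the metric of $S_{1,\delta}$: this follows from the evolution equations for the induced metric along $e_3$ and $e_4$ together with the $C(\epsilon_0)$-control on $\chi,\chibar$ just obtained, integrated over the short intervals of length $\le\epsilon_0$. Consequently $\text{Area}(S_{u,\ubar})$ and $I(S_{u,\ubar})$ are bounded above and below by positive constants uniformly on $\mathcal{D}$. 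Feeding these two facts into the general surface Sobolev estimates on a closed $2$-manifold with controlled area and isoperimetric constant — the estimates of Lemmas 5.1 and 5.2 of \cite{C09} quoted above — yields \eqref{startingpointSobolev1} and \eqref{startingpointSobolev2} with a constant depending only on the area and isoperimetric bounds, hence on $\epsilon_0$.

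The main obstacle is really a bookkeeping task rather than a genuine difficulty: one must verify that every renormalised connection coefficient and curvature component in $\mathfrak{R}_k,\underline{\mathfrak{R}}_k,\mathfrak{O}_k$ is a smooth, bounded function of the unrenormalised quantities and of the (smooth, bounded, $u\ge 1/2$) Schwarzschild background, so that finiteness of Luk's standard norms controls finiteness of ours — and conversely that our controlled data translate into standard data to which \cite{L12} applies. I stress that at this stage no claim is made that $\epsilon_0$ or $C(\epsilon_0)$ is independent of $\delta$ or $a$; extracting a slab of $\delta$-independent size on which the \emph{sharp} bound $\lesssim\epsilon\approx\dal\al$ holds is precisely the content of Theorem \ref{transitionthm}, whose continuity/bootstrap argument takes \eqref{startingpointlikebootstrapassumption}--\eqref{startingpointSobolev2} as its starting point.
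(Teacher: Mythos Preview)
Your approach is essentially identical to the paper's: the paper simply cites Rendall \cite{R} for a local solution near $S_{1,\delta}$ and then invokes Luk's theorem \cite{L12} (following the methodology of \cite{LY15}) to obtain existence in a full slab together with quantitative control depending on the initial data, which is exactly what you spell out in more detail. Your additional bookkeeping — verifying Luk's hypotheses from Proposition~\ref{summary}, Lemma~\ref{spheregeoestimates}, and Proposition~\ref{outgoingconeestimates}, translating between renormalised and unrenormalised norms, and deriving the Sobolev inequalities from area and isoperimetric control — is all correct and makes explicit what the paper leaves to the reader.
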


\par \noindent Equations \eqref{startingpointlikebootstrapassumption} and \eqref{startingpointSobolev1}-\eqref{startingpointSobolev2} will be pivotal to what will follow.
\begin{remark}
Throughout this section, by $A \lesssim B$ we shall mean that there exists a constant $C$ depending only on $\epsilon_0$ for $\delta$ sufficiently small, such that $A \leq C \hs B$. 
\end{remark}

\par \noindent We begin by establishing the following lemma, which states that the connection coefficients can be controlled by the initial data and the curvature components.

\begin{lemma} \label{lemmastep}
There holds \[ \mathfrak{O}(\ubar,u) \lesssim  \sup_{\delta \leq \ubar^{\prime} \leq \ubar} \left( \mathfrak{O}(\ubar^{\prime}, 1) + \underline{\mathfrak{R}}^u(\ubar^{\prime}) \right) + \sup_{1\leq \upr \leq u} \left( \mathfrak{O}(\delta,\upr) + \mathfrak{R}^{\ubar}(\upr) \right). \]
\end{lemma}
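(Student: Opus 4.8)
\textbf{Proof plan for Lemma \ref{lemmastep}.} The strategy is the standard one for double null foliations: estimate each Ricci coefficient by integrating along whichever null direction its transport equation is adapted to, so that $\nabla_3$-equations are integrated from the initial incoming cone $\Hbar_\delta$ (i.e.\ from $\upr=1$ down to $\upr=u$) and $\nabla_4$-equations from the initial outgoing cone $H_1$ (i.e.\ from $\ubar^\prime=\delta$ up to $\ubar^\prime = \ubar$). Concretely, one splits the eight quantities in $\mathfrak{O}$ according to which transport equation controls them: $\chihat$, $\tr\chi - \tr\chi_{m_0}$, $\omega - \omega_{m_0}$ and $\etabar$ satisfy $\nabla_4$-equations (using the structure equations \eqref{trchieq}, \eqref{chihateq}, (2.20)\,ff.\ and the renormalized versions from Subsection 4.3), while $\chibarhat$, $\tr\chibar-\tr\chibar_{m_0}$, $\omegabar - \omegabar_{m_0}$ and $\eta$ satisfy $\nabla_3$-equations. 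For the first group, Proposition \ref{transportprop1} (in its unweighted form valid on $\mathcal D$, per Remark after Proposition \ref{summary}) gives
\[ \mathfrak{O}_{4\text{-part}}(\ubar,u) \lesssim \mathfrak{O}(\delta, u) + \int_\delta^{\ubar} \big\lVert \nabla_4(\,\cdot\,) \big\rVert_{L^\infty(S_{u,\ubar^{\prime\prime}})}\, \text{d}\ubar^{\prime\prime}, \]
and symmetrically for the $\nabla_3$-part with Proposition \ref{transportprop2}, integrating from $\upr = 1$ and picking up the data term $\mathfrak{O}(\ubar,1)$.

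The right-hand sides of the transport equations are, schematically, sums of curvature components ($\alpha, \beta, \rho - \rho_{m_0}, \sigma, \betabar, \alphabar$, or rather $\nabla\widehat\otimes\eta$, $\nabla\tr\chi$, etc.) plus quadratic-and-higher terms in the connection coefficients themselves. The curvature terms get absorbed into $\mathfrak{R}^{\ubar}(\upr)$ or $\underline{\mathfrak{R}}^u(\ubar^\prime)$ after a Sobolev embedding \eqref{startingpointSobolev2} to pass from $L^2$ on the cone to the pointwise/$L^4$ control needed — here one uses that $k_{\text{max}}$ is fixed, so only finitely many derivatives appear and there is always room. The quadratic connection terms are handled by Gr\"onwall: each such term is bounded by $\inftySu{\text{(some }\mathfrak{O}\text{-component)}} \cdot \lVert \text{(another)} \rVert_{L^2}$, and since the slab $\widetilde{\mathcal D}$ has size $\epsilon_0$, the integral $\int_\delta^{\ubar}(\cdots)\,\text{d}\ubar^{\prime\prime}$ carries a factor $\lesssim \epsilon_0$ (or $\lesssim \epsilon_0 \cdot C(\epsilon_0)$ using \eqref{startingpointlikebootstrapassumption}) multiplying $\sup \mathfrak{O}$, which for $\epsilon_0$ small can be absorbed to the left. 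One also needs the commutation formula (Proposition \ref{commutationformulaeprop}, in its simplified on-cone form Proposition after \eqref{startingpointlikebootstrapassumption}) to commute $k$ angular derivatives through the transport equations; the resulting extra terms are again of lower order in derivatives and handled inductively on $k \leq k_{\text{max}}$.

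The one structural subtlety — and the step I expect to be the main obstacle — is the coupling between the $\nabla_3$- and $\nabla_4$-controlled quantities: for instance $\eta$ appears on the right-hand side of the $\nabla_4$-equation for $\etabar$ (via $\chi\cdot(\eta-\etabar)$) and vice versa, and $\chibarhat$, $\tr\chibar$ feed into the $\nabla_4$-equations too. So the naive integration does not decouple. The fix is to set up the estimate as a single Gr\"onwall/bootstrap argument on the full quantity $\mathfrak{O}(\ubar,u)$: bound $\mathfrak{O}(\ubar,u)$ by the data terms $\mathfrak{O}(\delta,\cdot)$, $\mathfrak{O}(\cdot,1)$, the curvature norms, plus $\epsilon_0$ (times a constant depending only on $\epsilon_0$) times $\sup_{\delta\le\ubar^\prime\le\ubar,\ 1\le\upr\le u}\mathfrak{O}(\ubar^\prime,\upr)$, then take the supremum over the rectangle $[\delta,\ubar]\times[1,u]$ (monotone in both arguments) and absorb. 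This mirrors exactly the scheme of \cite{LY15} and \cite{L12}; no genuinely new difficulty arises, only bookkeeping of which term is integrated in which direction. Taking the supremum over $k \leq k_{\text{max}}$ at the end yields the stated inequality.
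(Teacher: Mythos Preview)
Your approach is essentially the same as the paper's, and the overall mechanism is correct. Two minor points are worth flagging.

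First, you have the assignment of $\eta,\etabar,\omega,\omegabar$ to null directions reversed: it is $\nabla_4\eta$, $\nabla_3\etabar$, $\nabla_4\omegabar$, $\nabla_3\omega$ that are available from the structure equations (2.19)--(2.22), not the other way round. This does not affect the argument structurally, since the scheme is symmetric, but it determines which data term and which curvature flux each quantity picks up.

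Second, the paper does not close the coupled terms by absorption using ``$\epsilon_0$ small'': $\epsilon_0$ is the fixed size coming from Luk's existence theorem and is not a free small parameter. Instead the paper handles the coupling by Gr\"onwall on each natural pair --- $(\eta,\etabar)$ first, then $(\omega-\omega_{m_0},\omegabar-\omegabar_{m_0})$ --- using \eqref{startingpointlikebootstrapassumption} to bound one factor in $L^\infty$ by the fixed constant $C(\epsilon_0)$. The resulting Gr\"onwall constant is $e^{C(\epsilon_0)\cdot\epsilon_0}$, which is finite regardless of the size of $\epsilon_0$. Your global bootstrap formulation would also work, but the absorption step as written would require an extra argument (or shrinking $\epsilon_0$).
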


\begin{proof}
The proof shall be carried out by integrating the null structure equations. We begin with the bounds on $\eta$ and $\etabar$.

\vspace{3mm}
\par \noindent Recall the following structure equations

\begin{gather}
    \nabla_4 \eta = - \left(\chihat +\frac{1}{2}\tr\chi \gamma\right) \cdot (\eta - \etabar) - \beta, \\ \nabla_3 \etabar = - \left( \chibarhat + \frac{1}{2} \tr\chibar \gamma \right) \cdot (\eta - \etabar) - \betabar.
\end{gather}\par \noindent Notice that, according to our notations, \eqref{startingpointlikebootstrapassumption} along with \eqref{startingpointSobolev1}-\eqref{startingpointSobolev2} imply that \[ \inftySu{\chihat} + \inftySu{\tr\hs \chi} \lesssim 1.\]Using this and integrating along the outgoing and incoming directions respectively, an application of Gr\"onwall's inequality yields \begin{gather}
\label{coupledetaone}    \twoSu{\eta} \lesssim \int_{\delta}^{\ubar} \left(
    \twoSubarprime{\etabar} + \twoSubarprime{\beta} \right) \hs \dubarprime, \\ \label{coupledetatwo} \twoSu{\etabar} \lesssim \lVert \etabar \rVert_{L^2(S_{u,0})} + \int_{u}^{1} \left( \twoSuprime{\eta} + \twoSuprime{\betabar} \right) \duprime.
\end{gather}Adding \eqref{coupledetaone} and \eqref{coupledetatwo} together and using Gr\"onwall's inequality,  we arrive at 

\begin{equation} \label{etaandetabarbounds}
    \twoSu{\eta}+\twoSu{\etabar} \lesssim \mathfrak{P}(u,\ubar),
\end{equation}where
\be \mathfrak{P}(u,\ubar) := \sup_{\delta \leq \ubar^{\prime} \leq \ubar} \left( \mathfrak{O}(\ubar^{\prime}, 1) + \underline{\mathfrak{R}}^u(\ubar^{\prime}) \right) + \sup_{u \leq \upr \leq 1} \left( \mathfrak{O}(\delta,\upr) + \mathfrak{R}^{\ubar}(\upr) \right). \ee Let us move on to $\chihat$ and $\chibarhat$.  There holds \[ \nabla_4 \chihat + \tr\chi \chihat = - 2\omega \chihat - \alpha. \]Consequently, 

\begin{equation}
\begin{split}
        \twoSu{\chihat} &\lesssim \lVert \chihat \rVert_{L^2(S_{u,\delta})} + \int_{\delta}^{\ubar} \twoSuubarprime{(\omega, \tr\chi) \chihat} \dubarprime + \int_{\delta}^{\ubar} \twoSuubarprime{\alpha} \dubarprime \\ &\lesssim \mathfrak{O}(\delta, u) + \int_{\delta}^{\ubar} \twoSuubarprime{\chihat} \dubarprime +\mathfrak{R}^{\ubar}(u) .
\end{split}
\end{equation}Using Gr\"onwall's inequality, there holds

 \be \label{chihattransitionregionbound} \twoSu{\chihat} \lesssim \mathfrak{O}(\delta, u) +\mathfrak{R}^{\ubar}(u) \lesssim \mathfrak{P}(u,\ubar). \ee We now continue with the induction hypothesis that for all $j < i$, there holds $\twoSu{\nabla^j \chihat} \lesssim \mathfrak{P}(u,\ubar)$.
Using the commutation formula from Proposition \ref{commutationformulaeprop}, there holds 

\be \begin{split} \nabla_4 \nabla^i \chihat &= \sum_{i_1+i_2 +i_3+i_4=i} \nabla^{i_1}\psi^{i_2} \nabla^{i_3}(\tr\chi, \omega) \nabla^{i_4} \chihat + \sum_{i_1+i_2+i_3=i} \nabla^{i_1}\psi^{i_2}\nabla^{i_3} \alpha \\&+ \sum_{i_1+i_2+i_3+i_4=i}\nabla^{i_1} \psi^{i_2}\nabla^{i_3}(\psi,\chihat) \nabla^{i_4}\chihat. \end{split} \ee

\par \noindent Using Proposition \ref{transportprop1} and bounding all terms apart from those involving $\nabla^{i_4}$  in $L^{\infty}$ by 1 using \eqref{startingpointlikebootstrapassumption} and using the induction hypothesis, we can also show

\begin{equation}
    \twoSu{\nabla^i \chihat} \lesssim \mathfrak{P}(u,\ubar).
\end{equation}

\par \noindent We move on to estimates for $\omegabar- \omegabar_{m_0}$ and $\omega-\omega_{m_0}$. The estimates for these two terms will be obtained in a similar way to those for $\eta$ and $\etabar$, meaning in a coupled way.  We recall the structure equation for $\nabla_4 \omega$, which reads

\[ \nabla_4 \omegabar = 2 \hsp \omega \hsp \omegabar - \eta \cdot \etabar + \f12 \lvert \eta \rvert^2 + \f12 \rho. \]Similarly, there holds \[ \nabla_4 \omegabar_{m_0} = 2\hs 
\omega_{m_0} \hs \omegabar_{m_0} + \frac{1}{2} \rho_{m_0}. \]Combining the two, there holds 

\begin{equation}
    \nabla_4 \left( \omegabar - \omegabar_{m_0} \right) = 2 \hsp \left( \omega \hsp \omegabar - \omega_{m_0} \hsp \omegabar_{m_0} \right) - \eta\cdot \etabar + \f12 \lvert \eta \rvert^2 + \f12 \left( \rho -\rho_{m_0}\right).
\end{equation}We can thus estimate, using $\lVert \omega \hsp \omegabar - \omega_{m_0} \hsp \omegabar_{m_0}  \rVert_{L^2(S_{u,\ubar})}  \lesssim \twoSu{ \omega-\omega_{m_0} } + \twoSu{\omegabar-\omegabar_{m_0}}  $, 

\begin{equation} \begin{split}
    \twoSu{\omegabar-\omegabar_{m_0}} &\lesssim \lVert \omegabar- \omegabar_{m_0} \rVert_{L^2(S_{u,\delta})} + \int_{\delta}^{\ubar} \left(  \twoSuubarprime{ \omega-\omega_{m_0} } + \twoSuubarprime{\omegabar-\omegabar_{m_0}} \right) \dubarprime \\ &+ \int_{\delta}^{\ubar}\left( \twoSuubarprime{\eta \cdot (\eta,\etabar)} + \twoSuubarprime{\chihat\cdot \chibarhat} + \twoSuubarprime{\rho- \rho_{m_0}} \right) \dubarprime. 
    \end{split}
\end{equation}The terms involving a product of two quantities are estimated by the product of one quantity in $L^{\infty}$ and the other in $L^2$. We contol each term separately.

\begin{itemize}
    \item There holds $\lVert \omegabar- \omegabar_{m_0} \rVert_{L^2(S_{u,\delta})} \lesssim \mathfrak{P}(u,\ubar).$

    \item In the term \[\int_{\delta}^{\ubar} \left(  \twoSuubarprime{ \omega-\omega_{m_0} } + \twoSuubarprime{\omegabar-\omegabar_{m_0}} \right) \dubarprime,  \]the second integrand is absorbed to the left by Gr\"onwall's inequality. 
    
    \item There holds \[ \twoSu{\eta \cdot (\eta,\etabar)} \lesssim \inftySu{\eta,\etabar} \cdot \twoSu{\eta} \lesssim 1 \cdot \mathfrak{P}(u,\ubar), \]where we have used \eqref{etaandetabarbounds}.
    
    \item Similarly, there holds  \[ \twoSu{\chihat \cdot \chibarhat} \lesssim \inftySu{\chibarhat} \cdot \twoSu{\chihat} \lesssim \mathfrak{P}(u,\ubar). \]Here we have used \eqref{chihattransitionregionbound}.
    
    \item Finally, there holds \[ \int_{\delta}^{\ubar} \twoSubarprime{\rho-\rho_{m_0}} \dubarprime \lesssim \left( \int_{\delta}^{\ubar} \twoSubarprime{\rho-\rho_{m_0}}^2 \dubarprime \right)^{\f12}  \lesssim \mathfrak{P}(u,\ubar). \]
\end{itemize}Putting all the above estimates together, we conclude that 

\begin{equation}
    \label{omegabartransitionregionbound} \twoSu{\omegabar-\omegabar_{m_0}} \lesssim \int_{\delta}^{\ubar} \twoSuubarprime{\omega-\omega_{m_0}} \duprime + \mathfrak{P}(u,\ubar). 
\end{equation}Through a similar procedure, we have  

\begin{equation}
    \label{omegatransitionregionbound} \twoSu{\omega-\omega_{m_0}} \lesssim \int_{u}^{1} \twoSuprime{\omegabar-\omegabar_{m_0}} \dubarprime + \mathfrak{P}(u,\ubar). 
\end{equation}Coupling \eqref{omegabartransitionregionbound} with \eqref{omegatransitionregionbound} and using Gr\"onwall's inequality, we arrive at \begin{equation}
    \label{omegaomegabarfinalboundtransitionregion} \twoSu{\omega-\omega_{m_0}} + \twoSu{\omegabar-\omegabar_{m_0}} \lesssim \mathfrak{P}(u,\ubar). 
\end{equation}Next, we move on to estimates for $\tr\chi-\tr\chi_{m_0}$ and $\tr\chibar-\tr\chibar_{m_0}$. There holds \begin{gather}
    \nabla_4 \tr\chi + \f12 (\tr\chi)^2 = - \lvert \chihat \rvert^2 -2 \hsp \omega \hsp \tr\chi, \\
    \nabla_4 \tr\chi_{m_0} + \f12 (\tr\chi_{m_0})^2 = - 2\omega_{m_0}\hsp \tr\chi_{m_0}.
\end{gather}Consequently, 

\begin{equation}
    \nabla_4 (\tr\chi- \tr\chi_{m_0}) = -\f12 (\tr\chi+\tr\chi_{m_0})(\tr\chi-\tr\chi_{m_0}) - \lvert \chihat \rvert^2 -2( \omega\tr\chi-\omega_{m_0}\tr\chi_{m_0}).
\end{equation}We notice the following things:

\begin{itemize}
    \item Equations \eqref{startingpointlikebootstrapassumption} and \eqref{startingpointSobolev1}-\eqref{startingpointSobolev2} imply that $\inftySu{\tr\chi+\tr\chi_{m_0}} \lesssim 1$.
    \item From \eqref{chihattransitionregionbound} and \eqref{startingpointlikebootstrapassumption}--\eqref{startingpointSobolev2}, there holds \[\twoSu{\lvert \chihat \rvert^2} \lesssim \inftySu{\chihat}\twoSu{\chihat} \lesssim \mathfrak{P}(u,\ubar).\]
        \item There holds $\twoSu{\omega\tr\chi-\omega_{m_0}\tr\chi_{m_0}} \lesssim \twoSu{\omega-\omega_{m_0}}+ \twoSu{\tr\chi-\tr\chi_{m_0}}$.
\end{itemize}

\par \noindent Integrating along the $\nabla_4$--direction, we have 

\begin{equation}
\begin{split}
    \twoSu{\tr\chi-\tr\chi_{m_0}} &\lesssim \lVert \tr\chi -\tr\chi_{m_0} \rVert_{L^2(S_{u,\delta})} \\ &+ \int_{\delta}^{\ubar} \left(\inftySuubarprime{ \tr\chi+\tr\chi_{m_0}}+1\right) \cdot \twoSuubarprime{\tr\chi-\tr\chi_{m_0}}  \dubarprime \\ &+ \int_{\delta}^{\ubar} \twoSuubarprime{\omega-\omega_{m_0}} \dubarprime + \mathfrak{P}(u,\ubar). 
\end{split}
\end{equation}The first integrand is absorbed to the left by Gr\"onwall's inequality. The second one is bounded by $\mathfrak{P}(u,\ubar)$, from the estimates obtained in \eqref{omegaomegabarfinalboundtransitionregion}. Finally,

\begin{equation}
    \label{trchitransitionregionbound}
    \twoSu{\tr\chi- \tr\chi_{m_0}} \lesssim \mathfrak{P}(u,\ubar). 
\end{equation}Similarly, using the structure equation for $\nabla_3\tr\chibar$, we can show that there holds 

\begin{equation}
    \label{trchibartransitionregionbound}
    \twoSu{\tr\chibar-\tr\chibar_{m_0}}\lesssim \mathfrak{P}(u,\ubar). 
\end{equation}The estimates for a higher number of derivatives are done through an induction argument. We conclude that

\[ \mathfrak{O}(u,\ubar) \lesssim \mathfrak{P}(u,\ubar). \]
\end{proof}

\par \noindent We proceed by rewriting the Bianchi equations to accommodate for the non--trivial curvature component $\rho_{m_0}$. Notice, to begin with, that $\rho_{m_0}$ satisfies the following equations:

\begin{gather}
    \nabla_4 \rho_{m_0} + \f32 \tr\chi_{m_0} \rho_{m_0} =0, \\  \nabla_3 \rho_{m_0} + \f32 \tr\chibar_{m_0} \rho_{m_0} =0.
\end{gather}Using these, we arrive at the following equations:

\begin{gather}
  \label{renormalizedrho1}      \nabla_4 (\rho - \rho_{m_0}) + \f32(\tr\chi \rho - \tr\chi_{m_0} \rho_{m_0}) = \div \beta - \f12 \chibarhat \cdot \alpha + (\zeta+2 \etabar) \cdot \beta , \\ \label{renormalizedrho2} \nabla_3 (\rho - \rho_{m_0}) + \f32(\tr\chibar \rho - \tr\chibar_{m_0} \rho_{m_0}) = - \div \betabar  - \f12 \chihat \cdot \alphabar + (\zeta-2 \etabar) \cdot \betabar . 
\end{gather}Moreover, since $\rho_{m_0}$ is constant on each $S_{u,\ubar}$, we can rewrite the Bianchi equations for $\beta$ and $\betabar$ as 

\begin{gather}
  \label{renormalizedbeta}  \nabla_3 \beta + \tr\chibar \hsp \beta = \nabla (\rho - \rho_{m_0}) + 2 \hsp \omegabar \hsp \beta  +\Hodge{\nabla} \sigma + 2 \chihat \cdot \betabar  + 3 (\eta \hsp \rho + \Hodge{\eta} \sigma), \\ \label{renormalizedbetabar} \nabla_4 \betabar + \tr\chi \hsp \betabar = - \nabla (\rho - \rho_{m_0}) + \Hodge{\nabla} \sigma + 2\omega \betabar + 2 \chibarhat \cdot \beta -3 (\etabar \rho - \Hodge{\etabar} \sigma).
\end{gather} We call $\eqref{renormalizedrho1}$--$\eqref{renormalizedrho2}$ along with \eqref{renormalizedbeta}--\eqref{renormalizedbetabar} and the six original Bianchi equations the \textit{renormalized Bianchi equations}. Using these Bianchi equations,  the following energy inequalities hold for all $(u,\ubar) \in \mathcal{D}$:

\begin{equation}
    \begin{split}
        &\sum_{R \in \lbrace
        \alpha, \beta, \rho-\rho_{m_0}, \sigma, \betabar  \rbrace } \int_{H_u^{(\delta, \ubar)}} \lvert R \rvert^2 \duprime + \sum_{\underline{R} \in \lbrace
         \beta, \rho-\rho_{m_0}, \sigma, \betabar, \alphabar \rbrace} \int_{\Hbar_{\ubar}^{(u,1)}} \lvert \underline{R} \rvert^2 \dubarprime   \\ \lesssim  &\sum_{R \in \lbrace
        \alpha, \beta, \rho-\rho_{m_0}, \sigma, \betabar  \rbrace } \int_{H_1^{(\delta, \ubar)}} \lvert R \rvert^2 \duprime + \sum_{\underline{R} \in \lbrace
         \beta, \rho-\rho_{m_0}, \sigma, \betabar, \alphabar \rbrace} \int_{\Hbar_{\delta}^{(u,1)}} \lvert \underline{R} \rvert^2 \dubarprime \\ + &\sum_{\substack{R \in \lbrace
        \alpha, \beta, \rho-\rho_{m_0}, \sigma, \betabar  \rbrace, \\ \underline{R} \in \lbrace
         \beta, \rho-\rho_{m_0}, \sigma, \betabar, \alphabar \rbrace, \\ \mathcal{R}_1 ,\mathcal{R}_2 \in \lbrace \alpha, \beta ,\rho-\rho_{m_0}, \sigma, \betabar,\alphabar \rbrace}} \iint_{\mathcal{D}(u,\ubar) } \big( \lvert \Gamma \cdot \mathcal{R}_1 \cdot \mathcal{R}_2 \rvert \\ &\hspace{45mm}+ \lvert \rho \rvert \lvert (\tr\chi- \tr\chi_{m_0}) (\rho-\rho_{m_0}) + \chihat \cdot \alpha + \eta \cdot \beta + \chibarhat \cdot \alphabar + \etabar \cdot \betabar \rvert \big) \duprime \dubarprime .
    \end{split}
\end{equation}Moreover, for $i=1,2,3$, the similar energy inequalities hold:

\begin{equation}
    \begin{split}
        &\sum_{R \in \lbrace
        \alpha, \beta, \rho-\rho_{m_0}, \sigma, \betabar  \rbrace } \int_{H_u^{(\delta, \ubar)}} \lvert \nabla^i R \rvert^2 \duprime + \sum_{\underline{R} \in \lbrace
         \beta, \rho-\rho_{m_0}, \sigma, \betabar, \alphabar \rbrace} \int_{\Hbar_{\ubar}^{(u,1)}} \lvert \nabla^i \underline{R} \rvert^2 \dubarprime   \\ \lesssim  &\sum_{R \in \lbrace
        \alpha, \beta, \rho-\rho_{m_0}, \sigma, \betabar  \rbrace } \int_{H_1^{(\delta, \ubar)}} \lvert \nabla^i R \rvert^2 \duprime + \sum_{\underline{R} \in \lbrace
         \beta, \rho-\rho_{m_0}, \sigma, \betabar, \alphabar \rbrace} \int_{\Hbar_{\delta}^{(u,1)}} \lvert \nabla^i  \underline{R} \rvert^2 \dubarprime \\ + &\sum_{\substack{R \in \lbrace
        \alpha, \beta, \rho-\rho_{m_0}, \sigma, \betabar  \rbrace, \\ \underline{R} \in \lbrace
         \beta, \rho-\rho_{m_0}, \sigma, \betabar, \alphabar \rbrace, \\ \mathcal{R}_1 ,\mathcal{R}_2 \in \lbrace \alpha, \beta ,\rho-\rho_{m_0}, \sigma, \betabar,\alphabar \rbrace}} \iint_{\mathcal{D}(u,\ubar) } \big(\sum_{j=0}^{i-1} \lvert \nabla^{j+1}\chihat \hsp  \nabla^{i-j-1}\underline{R} \hsp \nabla^i \underline{R} + \nabla^{j+1}\chibarhat \hsp  \nabla^{i-j-1}R \hsp \nabla^i R \rvert \\ &\hspace{46mm}+  \sum_{j=0}^{i-1} \lvert \nabla^{j}K \hsp  \nabla^{i-j-1}\underline{R} \hsp \nabla^i \underline{R} + \nabla^{j} K \hsp  \nabla^{i-j-1}R \hsp \nabla^i R \rvert \\ &\hspace{46mm}+ \lvert \nabla^i (\Gamma \cdot \mathcal{R}_1) \nabla^i \mathcal{R}_2\rvert  + \lvert \rho \rvert \lvert \nabla^i (\tr\chi- \tr\chi_{m_0}) \cdot \nabla^i \rho \rvert \\ &\hspace{46mm}+ \lvert \rho \rvert \lvert \nabla^i \chihat \cdot \nabla^i \alpha + \nabla^i \eta \cdot \nabla^i \beta + \nabla^i \chibarhat \cdot \nabla^i \alphabar + \nabla^i \etabar \cdot \nabla^i \betabar \rvert \big) \duprime \dubarprime.
    \end{split}
\end{equation}We notice at this point that in the terms of the form $\mathcal{R}_1 \cdot \mathcal{R}_2$ the term $\alpha \cdot \alphabar$ does not appear. We can, therefore, hence regard $\mathcal{R}_1 \cdot \mathcal{R}_2$ as either $\underline{R}_1 \cdot \underline{R}_2$ or $R_1 \cdot R_2$.  We now follow closely the approach of [LiYu] and obtain the following estimates using the Sobolev embedding theorem: 

\begin{equation}
    \sum_{j=0}^{1} \iint_{\mathcal{D}(u,\ubar)} \lvert \nabla^j \Gamma \cdot  \nabla^{i-j} R_1 \rvert^2 \duprime \dubarprime \lesssim \int_{u}^1 \int_{H_{\upr}^{(\delta,\ubar)}} \sum_{j=0}^i \lvert \nabla^j R_1 \rvert^2 \dubarprime \duprime \hspace{3mm} \text{for} \hspace{1mm} i\leq 3,  \end{equation}

\begin{equation}
    \iint_{\mathcal{D}(u,\ubar)} \lvert \nabla^2 \Gamma \cdot  \nabla^{i} R_1 \rvert^2 \duprime \dubarprime \lesssim \int_{u}^1 \int_{H_{\upr}^{(\delta,\ubar)}} \sum_{j=0}^{i+1} \lvert \nabla^j R_1 \rvert^2 \dubarprime \duprime \hspace{3mm} \text{for} \hspace{1mm} i\leq 1,  \end{equation}

\begin{equation}
    \iint_{\mathcal{D}(u,\ubar)} \lvert \nabla^3 \Gamma \cdot  R_1 \rvert^2 \duprime \dubarprime \lesssim \int_{u}^1 \int_{H_{\upr}^{(\delta,\ubar)}} \sum_{j=0}^{2} \lvert \nabla^j R_1 \rvert^2 \dubarprime \duprime.   \end{equation}Taking these into account and using the Cauchy--Schwartz inequality, we have
    
 \begin{equation}
     \sum_{i=0}^{3} \iint_{\mathcal{D}(u,\ubar)} \lvert \nabla^i (\Gamma \cdot R_1) \cdot \nabla^i R_2 \rvert^2 \duprime \dubarprime \lesssim \sum_{i=0}^3 \int_{u}^1 \int_{H_{\upr}^{(\delta,\ubar)}} \left( \lvert \nabla^i R_1 \rvert^2 + \lvert \nabla^i R_2 \rvert^2 \right) \duprime \dubarprime,  
 \end{equation}
 
 \begin{equation}
     \sum_{i=0}^{3} \iint_{\mathcal{D}(u,\ubar)} \lvert \nabla^i (\Gamma \cdot \underline{R}_1) \cdot \nabla^i \underline{R}_2 \rvert^2 \duprime \dubarprime \lesssim \sum_{i=0}^3 \int_{\delta}^{\ubar} \int_{H_{\ubar^{\prime}}^{(u,1)}} \left( \lvert \nabla^i \underline{R}_1 \rvert^2 + \lvert \nabla^i \underline{R}_2 \rvert^2 \right) \duprime \dubarprime.  
 \end{equation}The terms
\begin{equation}
    \begin{split}
        \sum_{j=0}^{2} \iint_{\mathcal{D}(u,\ubar)} \big( &\lvert \nabla^{j+1}\chihat \hsp  \nabla^{i-j-1}\underline{R} \hsp \nabla^i \underline{R} + \nabla^{j+1}\chibarhat \hsp  \nabla^{i-j-1}R \hsp \nabla^i R \rvert \\   &+\lvert \nabla^{j}K \hsp  \nabla^{i-j-1}\underline{R} \hsp \nabla^i \underline{R} + \nabla^{j} K \hsp  \nabla^{i-j-1}R \hsp \nabla^i R \rvert \big) \duprime \dubarprime
    \end{split}
\end{equation}can also be bounded by 
\[   \sum_{i=0}^3 \left( \int_{u}^1 \int_{H_{\upr}^{(\delta, \ubar)}}  \lvert \nabla^i R \rvert^2 \dubarprime \duprime + \int_{\delta}^{\ubar} \int_{\Hbar_{\ubar^{\prime}}^{(u,1)}} \lvert \nabla^i \underline{R} \rvert^2   \duprime \dubarprime \right). \]Finally, 

\begin{equation*}
 \sum_{i=0}^{3} \iint_{\mathcal{D}(u,\ubar)}   \lvert \rho \rvert \lvert \nabla^i (\tr\chi- \tr\chi_{m_0}) \cdot \nabla^i \rho \rvert + \lvert \rho \rvert \lvert \nabla^i \chihat \cdot \nabla^i \alpha + \nabla^i \eta \cdot \nabla^i \beta + \nabla^i \chibarhat \cdot \nabla^i \alphabar + \nabla^i \etabar \cdot \nabla^i \betabar \rvert \big)
\end{equation*}can be bounded, using Lemma \ref{lemmastep}, by

\begin{equation*}
    \begin{split}
        &\sup_{\delta \leq \ubar \leq \delta + \epsilon_0} \mathfrak{O}(\ubar,1)^2 + \sup_{1-\epsilon_0 \leq u \leq 1} \mathfrak{O}(\delta, u)^2 \\ &+\sum_{i=0}^{3} \left( \int_{u}^1 \sup_{1-\epsilon_0 \leq u^{\prime\prime} \leq u^{\prime} } \int_{H_{u^{\prime\prime} }^{(\delta,\ubar)}} \lvert \nabla^i R \rvert^2 + \int_{\delta}^{\ubar} \sup_{\delta \leq \ubar^{\prime\prime} \leq \ubar^{\prime}} \int_{\Hbar_{\ubar^{\prime\prime}}^{(u,1)}} \lvert \nabla^i \underline{R} \rvert^2  \right) .
    \end{split}
\end{equation*}We define

\[ \mathcal{E}(u) := \sum_{R \in \lbrace \alpha, \beta, \rho-\rho_{m_0}, \sigma, \betabar \rbrace} \sup_{u \leq \upr \leq 1} \sum_{i=0}^3 \int_{H_{\upr}^{(\delta, \delta+ \epsilon_0)}} \lvert \nabla^i R \rvert^2 ,  \] \[ \mathcal{F}(\ubar) := \sum_{R \in \lbrace  \beta, \rho-\rho_{m_0}, \sigma, \betabar, \alphabar \rbrace} \sup_{\delta \leq \ubar^{\prime} \leq \ubar } \sum_{i=0}^3 \int_{\Hbar_{\ubar^{\prime}}^{(1-\epsilon_0, 1)}} \lvert \nabla^i \underline{R} \rvert^2 .  \]The above estimates can then be summarized as

\begin{equation}
    \begin{split}
        \mathcal{E}(u)  + \mathcal{F}(\ubar)  &\lesssim  \mathcal{E}(1) + \mathcal{F}(\delta) \\ &+ \sup_{\delta \leq \ubar \leq \delta+ \epsilon_0} \mathfrak{O}(\ubar, 1)^2 + \sup_{1-\epsilon_0 \leq u \leq 1} \mathfrak{O}(\delta, \ubar)^2 + \left( \int_{u}^1 \mathcal{E}(\upr) \duprime + \int_{\delta}^{\ubar} \mathcal{F}(\ubar^{\prime}) \dubarprime \right).
    \end{split}
\end{equation}Applying Gr\"onwall's inequality, the result follows.

\subsection{Closeness to Schwarzschild}
With the transition region obtained, we now seek to show that some portion of the region is close to a patch of the Schwarzschild spacetime. The closeness is only obtained in some subregion of the L-shaped region of existence obtained in Theorem 4.1. The two crucial points here are that the size of the spacetime in which closeness is shown is independent of $\delta$, and that the closeness permits direct application of the Corvino-Schoen \cite{CS05} gluing arguments. The first point guarantees that when we take $\delta$ to be small, the region in which the gluing is performed remains of finite size, which is crucial for the argument in \cite{CS05}. 
\begin{theorem}[Geometry of the Transition Square]
In the region of existence obtained in Theorem 4.1, there is a  parameter $\epsilon_0>0$ depending only on $m_0$ and independent of $\delta$, such that the spacetime subregion $(M_{\epsilon_0},g)$ corresponding to $\delta\leq \ubar \leq \delta +\epsilon_0 $ and $1-\epsilon_0\leq u \leq 1$ is close to a compact subregion of a Schwarzschild spacetime with mass $m_0$ in the following sense 
\[ ||g-g_{m_0}||_{C^{k-3}(M_{\epsilon_0, g_{m_0}})} \lesssim \epsilon \]
\end{theorem}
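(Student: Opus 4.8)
The plan is to upgrade the $L^2$ estimates on Ricci coefficients and curvature components of the transition region, recorded in Theorem 4.1, to pointwise $C^{k-3}$ control on the difference $g - g_{m_0}$, where $g_{m_0}$ denotes the Schwarzschild metric written in a double null gauge adapted to the same pair of foliating cones. First I would fix the comparison spacetime precisely: in a neighbourhood of the sphere $S_{1,\delta}$ one has, by Lemma 4.1, that the geometry of $S_{1,\delta}$ is $O(\delta a)$-close to that of a round sphere of the appropriate radius in the mass-$m_0$ Schwarzschild spacetime, and by Proposition 4.11 and the incoming-cone estimates of Subsection 4.2 the data on $H_1^{[\delta,\delta+1]}$ and $\Hbar_\delta^{[1,1/2]}$ are $O(\epsilon)$-close (with $\epsilon \approx \delta^{1/2}a^{1/2}$) to the corresponding Schwarzschild characteristic data. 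One then solves the Schwarzschild metric as a \emph{reference} double null solution in $\widetilde{\mathcal D}$ with these exact Schwarzschild data; since the Schwarzschild data themselves satisfy the hypotheses of Luk's existence theorem, this reference solution exists on a slab of size $\epsilon_0$ depending only on $m_0$, and coincides with genuine Schwarzschild up to diffeomorphism.

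The next step is to run the energy/transport hierarchy of Theorem 4.1 not on the curvature and Ricci coefficients themselves but on their differences with the reference Schwarzschild values --- exactly in the renormalized form already set up in Subsection 4.4, where the quantities $K - K_{m_0}$, $\csigma$, $\rho - \rho_{m_0}$, $\tr\chi - \tr\chi_{m_0}$, etc.\ appear. All the inhomogeneous terms in the renormalized Bianchi and null-structure equations are either quadratic in small quantities or linear in a small quantity times a bounded Schwarzschild coefficient, so the same Gr\"onwall scheme carried out in the proof of Theorem 4.1 applies verbatim and yields
\[ \sum_{k \leq k_{\max}} \big( \mathfrak R_k^{\ubar}(u) + \underline{\mathfrak R}_k^{\ubar}(u) + \mathfrak O_k(\ubar,u) \big) \lesssim \epsilon \]
throughout $M_{\epsilon_0}$. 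By the Sobolev embeddings \eqref{startingpointSobolev1}--\eqref{startingpointSobolev2} these $L^2$ bounds on up to $k_{\max}$ angular derivatives, combined with the transport equations in the $\nabla_3,\nabla_4$ directions (which convert each $\nabla_3$ or $\nabla_4$ derivative into lower-order angular derivatives plus controlled source terms), give pointwise $C^{k_{\max}-2}$ bounds of size $\lesssim \epsilon$ on all the connection coefficients and curvature components minus their Schwarzschild counterparts. Finally I would integrate these pointwise bounds through the definition of the metric in the coordinate system $(u,\ubar,\theta^1,\theta^2)$: from $g = -2\Omega^2(du\otimes d\ubar + d\ubar\otimes du) + \gamma_{AB}(d\theta^A - d^A du)\otimes(d\theta^B - d^B du)$, the quantities $\Omega^{-1}-1$, $d^A$, and $\gamma_{AB} - (\gamma_{m_0})_{AB}$ are recovered by integrating $\nabla_4\log\Omega = -2\omega$, the evolution equation for $d^A$, and $\slashed{\mathcal L}_L \gamma = 2\Omega\chi$, against the corresponding Schwarzschild equations; each integration loses one coordinate derivative, so one lands on $\|g - g_{m_0}\|_{C^{k-3}} \lesssim \epsilon$ with $k = k_{\max}$, which is the claim.

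The main obstacle I anticipate is purely bookkeeping rather than conceptual: one must check that the Schwarzschild reference solution constructed from the \emph{perturbed} sphere/cone data (as opposed to exact Schwarzschild data) can be chosen so that the gauge normalizations match --- in particular $\Omega \equiv 1$ on the two initial cones for both $g$ and $g_{m_0}$ --- and that the angular coordinates $\theta^A$, propagated by $\slashed{\mathcal L}_L\theta^A = 0$ off $S_{1,\delta}$, do not drift in a way that spoils the comparison; this is where the $o(1)$-smallness of all relevant quantities (no angular ``squeezing'', as remarked after the construction of the initial data) is used. A secondary technical point is that the reference Schwarzschild $\tr\chi_{m_0}, \tr\chibar_{m_0}$ etc.\ are not constant on the coordinate spheres $S_{u,\ubar}$ of the $g$-foliation --- only on the $g_{m_0}$-foliation --- so one must either (i) additionally estimate the discrepancy between the two foliations (controlled, again, by the $O(\epsilon)$ difference in the eikonal functions) or (ii) set up the renormalization using the Schwarzschild quantities pulled back along the identification of coordinate systems, absorbing the foliation mismatch into the already-quadratic error terms. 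Either route works; I would take (ii) since it keeps the error algebra identical to that of Subsection 4.4. No genuinely new analytic input beyond Theorem 4.1, Luk's existence theorem \cite{L12}, and the Sobolev embeddings is required.
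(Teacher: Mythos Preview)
Your approach is essentially correct and aligned with the paper's, but the paper's route is more direct in two respects, and you gloss over one genuine technical point that the paper isolates explicitly.

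First, the paper does not construct a separate Schwarzschild reference solution with its own double null foliation. The Schwarzschild quantities $\tr\chi_{m_0}$, $\tr\chibar_{m_0}$, $\omega_{m_0}$, $\omegabar_{m_0}$, $\rho_{m_0}$ are simply explicit functions of $(u,\ubar)$ in the \emph{same} coordinate system already being used for $g$; the renormalized norms $\mathfrak{O}_k$, $\mathfrak{R}_k$, $\underline{\mathfrak{R}}_k$ already measure exactly the differences from these functions. Consequently the gauge-matching and foliation-mismatch obstacles you anticipate do not arise at all --- there is only one foliation in play. Your option (ii) is in effect what the paper does, but without any need to discuss pullbacks or absorption of mismatch errors.

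Second, and more substantively: your claim that ``the transport equations in the $\nabla_3,\nabla_4$ directions convert each $\nabla_3$ or $\nabla_4$ derivative into lower-order angular derivatives plus controlled source terms'' is not quite true. There are no null structure equations for $\nabla_4\omega$ or $\nabla_3\omegabar$ (only for $\nabla_3\omega$ and $\nabla_4\omegabar$). The paper handles this by writing a transport equation for $\nabla_4(\omega-\omega_{m_0})$ in the $e_3$-direction via the commutator:
\[
\nabla_3\nabla_4(\omega-\omega_{m_0}) = \nabla_4\nabla_3(\omega-\omega_{m_0}) + [\nabla_3,\nabla_4](\omega-\omega_{m_0}) = \nabla_4(\rho-\rho_{m_0}) + \text{l.o.t.} = \nabla\beta + \text{l.o.t.},
\]
and then integrating. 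The same device treats $\nabla_3(\omegabar-\omegabar_{m_0})$. This is the one step where a new idea beyond ``apply Sobolev and integrate transport'' is needed, and it is what drives the loss of derivatives down to $C^{k-3}$ rather than something better. Apart from this, the paper proceeds as you outline: $C^0$ from the metric coefficients $\Omega$, $b^A$, $\slashed g_{AB}$; $C^1$ from $\Gamma-\Gamma_{m_0}$ and $\slashed\Gamma-\slashed\Gamma_{m_0}$ (the latter via $\mathcal{L}_{e_3}\slashed g = 2\chibar$); and $C^{\geq 2}$ by induction using the null structure equations together with the commutator trick above.
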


\begin{proof} We recall  that we have managed to show, within $M_{\epsilon_0}$, the following bounds:

\be \label{closeness} \sup_{(u,\ubar) \in M_{\epsilon_0}} \sum_{k \leq \kmax} \left( \mathfrak{R}_k^{\ubar}(u) + \underline{\mathfrak{R}}_k^{\ubar}(u) + \mathfrak{O}_k(\ubar,u) \right) \lesssim \epsilon. \ee

\par \noindent We begin with the $C^{0}$ norms. In canonical double null coordinates $g$ can be written as follows

\[ g = -2 \hs \Omega^2  \left( \text{d}\ubar \otimes \text{d}u + \text{d}u \otimes \text{d}\ubar\right)    +\slashed{g}_{AB} \left( \text{d} \theta^A - b^A \text{d}\ubar \right) \otimes \left( \text{d} \theta^B - b^B \text{d}\ubar         \right). \]Using the bounds on $\omegabar-\omegabar_{m_0}$, $\chibar - \chibar_{m_0}$ and $\zeta$ due to \eqref{closeness} as well as the Sobolev inequalities, there holds \be \lVert g- g_{m_0} \rVert_{C^0(M_{\epsilon_0}, g_{m_0}) } \lesssim \epsilon. \ee Moving on to $C^1$ estimates, There holds $\nabla_{m_0}(g-g_{m_0}) = (\nabla_{m_0}-\nabla) g$.To obtain the $L^{\infty}$ bounds on $\nabla -\nabla_{m_0}$, we need the $L^{\infty}$ bounds on $\Gamma - \Gamma_{m_0}$ and $\slashed{\Gamma} - \slashed{\Gamma}_{m_0}$. Here $\Gamma$ refers to the null connection coefficients and $\slashed{\Gamma}$ to the Christoffel symbols of $\slashed{g}$. The bounds on \eqref{closeness} imply \[ \lvert \Gamma- \Gamma_{m_0} \rvert \lesssim \epsilon. \]  Moreover, to estimate $\slashed{\Gamma} -\slashed{\Gamma}_{m_0}$, given the variational formula $\mathcal{L}_{e_3}\slashed{g} = 2 \chibar$,  we need to control $\nabla \chibar$. But this also follows from \eqref{closeness}. Consequently, $\lvert \slashed{\Gamma} - \slashed{\Gamma}_{m_0} \rvert \lesssim \epsilon$ and hence

\be  \lVert g- g_{m_0} \rVert_{C^1(M_{\epsilon_0}, g_{m_0})} \lesssim \epsilon. \ee For $C^2$ estimates, there holds \[    \nabla_{m_0}^2(g-g_{m_0}) =  (\nabla^2 - \nabla_{m_0}^2) g =  \nabla \left(\nabla- \nabla_{m_0} g \right)  + (\nabla- \nabla_{m_0})\nabla_{m_0} g.\]The last term $ (\nabla- \nabla_{m_0})\nabla_{m_0} g$ has been controlled. Hence, we need the $L^{\infty}$ bounds of the following quantities:

\[   \nabla_4 \left(\Gamma- \Gamma_{m_0}\right), \nabla_4 \left(\slashed{\Gamma}- \slashed{\Gamma}_{m_0}\right),  \nabla_3 \left(\Gamma- \Gamma_{m_0}\right), \nabla_3 \left(\slashed{\Gamma}- \slashed{\Gamma}_{m_0}\right),  \nabla \left(\Gamma- \Gamma_{m_0}\right), \nabla \left(\slashed{\Gamma}- \slashed{\Gamma}_{m_0}\right) \]The estimates for the first four quantities can be obtained using the null structure equations,  provided we have  $L^{\infty}$ bounds of all first derivatives of null curvature components. We lack control at this stage, however, on \[ \nabla_4 \left(\omega-\omega_{m_0}\right), \nabla_3 \left(\omegabar- \omegabar_{m_0} \right). \]The estimates on the fifth quantity can be directly inferred from \eqref{closeness}. Finally, the bounds for the last quantity are also obtained by the propagation equation for $\slashed{\Gamma} - \slashed{\Gamma}_{m_0}$, provided we have an $L^{\infty}$ bound on $\chibarhat$. The goal of obtaining $C^2$ estimates on $g-g_{m_0}$ then reduces to proving bounds for \[  \lVert \nabla_4 \left(\omega-\omega_{m_0}\right) \rVert_{L^{\infty}},  \lVert \nabla_3 \left(\omegabar- \omegabar_{m_0} \right) \rVert_{L^{\infty}} \hs \hs \text{and} \hs \hs \lVert \nabla^2 \chibarhat \rVert_{L^{\infty}}. \]  The bounds on $\nabla^2 \chi$ follow from \eqref{closeness} and the Sobolev inequalities. For the first two quantities, notice that $\nabla_4 (\omega-\omega_{m_0})$ satisfies the following transport equation

\[    \nabla_3 \nabla_4 (\omega-\omega_{m_0}) = \nabla_4 \nabla_3 \left(\omega- \omega_{m_0}\right) + \text{l.o.t} = \nabla_4 (\rho- \rho_{m_0}) +\text{l.o.t} = \nabla \beta + \text{l.o.t} \]By \eqref{closeness} again, we have $\lVert \nabla \beta \lVert_{L^{\infty}} \lesssim \epsilon$. The same procedure can be done for $\nabla_3 (\omegabar-\omegabar_{m_0})$. It follows that \[     \lVert g-g_{m_0} \rVert_{C^2(M_{\epsilon_0},g_{m_0})} \lesssim \epsilon. \] An induction argument now proves the same control holds up to $C^{\kmax-1}$. The result follows.

\end{proof}

\section{Horizon Formation and the Penrose Inequality}
We split this section in two parts. \\ \indent 
We first treat the emergence of a unique smooth MOTS, for each $\ubar \in [\gamma \frac{a^{1/2}}{b}\delta,2\delta]$ for some $\epsilon$ to be specified. Together, these MOTS are leaves of a smooth topological hypersurface $S^2\times \mathbb{R}$ in the spacetime. That this hypersurface is in fact everywhere \textit{spacelike} can be shown under an additional assumption on the initial data; something which we explain but not assume for our initial data. \\ \indent 
We then show how this leads to a dynamical setting in which to test the spacetime Penrose inequality.\\ \indent 
The full set of details for the arguments below are contained in the works \cite{CS05}, \cite{LY15}, \cite{A17}, and as such our task here is to describe how to apply these in the current setting. 
\subsection{Horizon Emergence}
Here we show how An's argument \cite{A17} leads to the existence of a horizon with the desired properties. Our goal is to prove the following. 
\begin{theorem}
Take as starting point the existence theorem of \cite{AL17}. Suppose that the initial data $|\chihat_0|$ is prescribed as in Section 1.4. Then, for each $\ubar \in [\gamma \delta  \frac{a^{\frac{1}{2}}}{b}, 2\delta]$, there is a unique spherical MOTS $M_{u,\ubar}$ and together these form a smooth topological hypersurface $S^2\times \mathbb{R}$ in the spacetime. 
\end{theorem}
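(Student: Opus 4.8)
The plan is to follow An's argument \cite{A17} — itself an elaboration of \cite{KLR14} — adapted to the scale critical background of \cite{AL17} and to the $\ubar$--dependent data prescribed in Section 1.4; since the full details are in \cite{A17}, I will only lay out the structure. The first step is to record that the solution exists and is uniformly controlled on the enlarged slab $\ubar \in [0,2\delta]$, $\delta\al b \leq u \leq 1$: for $\ubar \leq \delta$ this is Theorem \ref{higherorderenergyestimates}, and for $\ubar \in [\delta, 2\delta]$ the shear $\chihat_0$ vanishes, so $\int_0^{\ubar}\snr{\chihat_0}^2\hs\mathrm{d}\ubar' = 4m_0$ stays constant there and Theorem \ref{higherorderenergyestimates} applies with $\delta$ replaced by $2\delta$. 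By Remark \ref{crucialremark} one then has $\mathcal{O}_N + \widetilde{\mathcal{O}}_{N+1,2} + \mathcal{R}_N \lesssim 1$ on this slab for every $N$.

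\textbf{The MOTS equation.} Next I would fix $\ubar$ in the range and seek a spherical section $M_{\ubar} \subset \Hbar_{\ubar}$ written as a graph $\{u = w(\ubar,\omega)\}$ over the foliation $\{S_{u,\ubar}\}$. Computing the outgoing null expansion $\theta^{+}$ of such a section from the null structure equations of Section 2, the equation $\theta^{+} = 0$ becomes a quasilinear elliptic equation of the schematic form
\[
\triangle_{\gamma} w \; = \; \tfrac{1}{2}\,\tr\chi \;-\; \tfrac{1}{2}\,\tr\chibar\,\snr{\snab w}_{\gamma}^{2} \;+\; \mathcal{N}\big(w, \snab w\hs ; \chihat,\chibarhat,\eta,\etabar\big),
\]
with $\mathcal{N}$ first order in $\snab w$ and, once the bounds above are inserted, small relative to the leading terms. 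Integrating \eqref{trchieq} from $\ubar = 0$ on $H_1$ and propagating inward, one finds along $\Hbar_{\ubar}$ that to leading order $\tr\chi \approx \frac{2}{u}\big(1 - \frac{1}{u}\int_0^{\ubar}\snr{\chihat_0}^2\hs\mathrm{d}\ubar'\big)$ and $\tr\chibar \approx -\frac{2}{u}$, so the leading balance localises $w$ near
\[
u_{*}(\ubar) \;\approx\; \int_0^{\ubar}\snr{\chihat_0}^2 \hs\mathrm{d}\ubar' \;=\; \begin{cases} \al\, b^{\mu}\, f(\ubar,\omega)\,\ubar, & \gamma\tfrac{\al}{b}\delta \leq \ubar \leq \lambda\delta,\\[2pt] 4m_0\,(1+o(1)), & \lambda'\delta \leq \ubar \leq 2\delta, \end{cases}
\]
with $f$ as in the $\ubar$--Dependence hypothesis of Section 1.4. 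By the conditions $\al > b$ and $\mu > 1$ imposed there, $u_{*}(\ubar)$ lies strictly inside $[\delta\al b, 1]$ for every $\ubar$ in the stated range.

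\textbf{Solving for each leaf, and gluing them together.} For each such $\ubar$ the round spheres $S_{c,\ubar}$ serve as barriers: since $\tr\chi$ changes sign across $u = u_{*}(\ubar)$, the sphere $S_{c,\ubar}$ is strictly outer trapped for $c$ just below $u_{*}(\ubar)(1+o(1))^{-1}$ and strictly untrapped for $c$ just above it. The topological fact of Section 1.4 — that $\chihat_0$ must vanish somewhere on each $S_{1,\ubar}$ — is not an obstruction here, since only the \emph{integrated} lower bound on $\int_0^{\ubar}\snr{\chihat_0}^2$, which holds uniformly in $\omega$, enters this construction. With the uniform $C^{k}$ control of the coefficients provided by Remark \ref{crucialremark}, one then runs a method of continuity (or a Schauder fixed point) between the round sphere and $\{u = w(\ubar,\cdot)\}$, as in \cite{KLR14}, \cite{A17}, obtaining a unique spherical solution $w(\ubar,\cdot) \in C^{\infty}(\mathbb{S}^2)$; a strong maximum principle then gives uniqueness of the spherical MOTS on $\Hbar_{\ubar}$ within the region of existence. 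To see that the leaves $M_{\ubar}$ assemble into a smooth hypersurface, note that the strict sign of the barriers makes each $M_{\ubar}$ a strictly stable MOTS, so the principal eigenvalue of its stability operator is positive; applying the implicit function theorem to $\ubar \mapsto w(\ubar,\cdot)$ for the equation $\theta^{+}(w,\ubar) = 0$ produces a smooth family, hence a smooth hypersurface $\bigsqcup_{\gamma\frac{\al}{b}\delta \leq \ubar \leq 2\delta} M_{\ubar}$ diffeomorphic to $\mathbb{S}^2 \times (\gamma\tfrac{\al}{b}\delta,\, 2\delta) \cong \mathbb{S}^2 \times \mathbb{R}$.

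\textbf{Main obstacle.} The delicate step is the last one: controlling the linearised MOTS (stability) operator uniformly in $\ubar$ and establishing its invertibility, which is exactly what upgrades leafwise solvability into a smooth hypersurface. A secondary difficulty is that the margins in the barrier construction degenerate near the endpoints $\ubar \sim \gamma\frac{\al}{b}\delta$ and near the transition $\ubar \sim \lambda\delta$, where the precise form of the $\ubar$--Dependence and Dominant Contribution hypotheses of Section 1.4 must be invoked to keep $u_{*}(\ubar)$ safely in the interior of the region of existence while still closing the elliptic estimates.
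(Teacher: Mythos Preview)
Your overall structure is in the spirit of the paper's adaptation of \cite{A17}, but there is a genuine gap precisely at the step you yourself flag as the main obstacle. You assert that ``the strict sign of the barriers makes each $M_{\ubar}$ a strictly stable MOTS, so the principal eigenvalue of its stability operator is positive.'' This inference is not valid: sub/super-solution barriers produce a MOTS between them but say nothing about the sign of the principal eigenvalue of the linearized MOTS operator; a marginally stable MOTS can perfectly well sit between strict barriers. Without an independent argument for invertibility of the linearization, your implicit function theorem step for smoothness in $\ubar$ does not go through, and the openness part of your method-of-continuity step for existence is likewise incomplete, since that too requires invertibility of the linearization rather than barriers alone.

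The paper closes this by following \cite{A17} much more closely than your sketch. One sets up two one-parameter families $F(R,\lambda)=0$ and $G(R,\lambda)=0$ (equations (5.8)--(5.9) in An's normalization, (5.43)--(5.44) here), both reducing at $\lambda=0$ to an equation with an explicit round-sphere solution; $G(R,1)=0$ is the MOTS equation. A derived simplified equation $H=0$ then yields a chain of a priori estimates: a $C^0$ bound pinning $R$ to $(\tfrac12+o(1))\big(\ubar\, a^{1/2}b^\mu\zeta(\ubar)+4m_0(1-\zeta(\ubar))\big)$ via the maximum principle; the key $C^1$ estimate $|\nabla R|\ll1$ obtained by applying Bochner's formula to $h(R)|\nabla R|^2$ for a specific auxiliary function $h(R)$ built from the $C^0$ scale; and then $W^{2,p}$ and $C^{2,\alpha}$ control. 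These estimates feed into an explicit computation showing the linearizations of $F$ and $G$ are invertible, which is what drives both the continuity method and uniqueness (the latter via a maximum principle on the difference equation, using a quantified lower bound on the zeroth-order coefficient $\nu(\omega)$). Smoothness in $\ubar$ is then obtained not by the implicit function theorem but by writing the equation satisfied by the difference quotient $(R(\ubar',\omega)-R(\ubar,\omega))/(\ubar'-\ubar)$, comparing it with an auxiliary linear problem for a function $\mathfrak{h}(\ubar,\omega;\ubar')$, and iterating in $k$ to get $\partial_{\ubar}^k R\in C^\infty(M_{\ubar})$. The only substantive change relative to \cite{A17} is the systematic replacement of $\ubar a$ by $\ubar\, a^{1/2}b^{\mu}\zeta(\ubar)+4m_0(1-\zeta(\ubar))$ throughout these estimates, reflecting the $\ubar$-Dependence and Dominant Contribution hypotheses of Section~1.3.
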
 
Since the argument \cite{A17} is long and computationally heavy, we will simply describe the necessary adjustments.\\ \\ 
To begin with, let us quickly recall what is done in \cite{AL17}. The setting there is $b\leq a$ and $\delta a^{1/2}b<1$, and existence is shown for $[\delta a^{1/2}b \leq u\leq 1]$, $[0\leq \ubar \leq \delta]$ with $u$ decreasing towards the future. They then show that after imposing a bound on the incoming shear of the initial outgoing null hypersurface
\begin{equation}
\inf_{\omega\in S_{0,0}} \int^\delta_0 |\chihat_0(\ubar',\omega)|^2 d\ubar' \geq 4b\delta a^{1/2}
\end{equation}
the sphere $S_{b\delta a^{1/2},\delta}$ is trapped.\footnote{$\omega$ previously denoted a Ricci component. In this section it also denotes the angular co-ordinates on the spheres foliating the initial null hypersurface $H_{u=1}$.}\\ \\
An takes as starting point the existence theorem of \cite{AL17} but constructs the initial shear differently to (5.1), yielding extra control permitting to prove the formation of the relevant MOTS hypersurface. To explain his work we first briefly recall the relevant previous work on which it is based.  \\ \\
For what follows we will use the $u$ co-ordinate and the gauge function $\Omega$ as they appear in \cite{KLR14} and \cite{A17}. $u$ in \cite{KLR14} and \cite{A17} corresponds to $1-u$ in the current set-up, which also what is employed in \cite{AL17}.  \\ \\
In \cite{KLR14} the authors prescribe the co-ordinates of a 2-sphere embedded in the $\ubar=\delta$ hypersurface by $(\ubar=\delta,u=1-R(\omega),\omega)$ where $\omega$ denotes the angular co-ordinates on the initial sphere at $\ubar=0$. The $u$ co-ordinate of this sphere is allowed to vary with $\omega$ and so this sphere is not one of the $S^2$ that appears in the original double null foliation (i.e. it is not defined by $H_u \cap \underline{H}_{\ubar}$). To compute the null expansion of this sphere, one transforms to an adapted frame $\{e\} \to \{e'\}$ where
$e_3=e_3'$, $e'_a=e_a-\Omega e_a(R)e_3$, $e'_4=e_4 -2\Omega e^a(R)e_a+\Omega^2|\nabla R|^2e_3$, where by definition $e_3(R)=\Omega^{-1}$ and consequently $g(e'_a,e'_b)=g(e_a,e_b)=\delta_{ab}$, $g(e'_4,e'_a)=g(e'_4,e'_4)=0$, $g(e'_3,e'_4)=-2$. In this frame, the authors in \cite{KLR14} show, owing to the pre-existing estimates in the slab, that this new sphere is trapped, $\tr \chi'<0$, provided there holds a certain elliptic inequality on the initial sphere, which they then show can be satisfied. \\ \\
The method of An \cite{A17} is entirely analogous. Since the aim is to identify a MOTS for each $\ubar$, one aims to solve the elliptic PDE corresponding to the MOTS condition $\tr \chi'=0$. Combining with the computation in \cite{KLR14}, he shows that the null expansion of the sphere located at $(u=1-R(\ubar,\omega), \ubar, \omega)$ is given by the following.
\begin{proposition}[A18]
\[\tr \chi'=\tr \chi -2\Omega \Delta' R-4\Omega \eta \cdot \nabla R-\Omega^2 \tr \chibar |\nabla R|^2 -8\Omega^2 \omegabar |\nabla R|^2\]
\end{proposition}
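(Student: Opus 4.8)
The statement is a direct computation of the outgoing null expansion of the deformed sphere $S_R := \{(u = 1-R(\ubar,\omega), \ubar, \omega)\}$ embedded in the fixed incoming hypersurface $\Hbar_{\ubar}$, carried out in the adapted null frame $\{e'\}$ described in the paragraph preceding the statement. The plan is to follow the frame-transformation bookkeeping of \cite{KLR14} and \cite{A17} verbatim, the only difference being that here $R$ is allowed to depend on $\ubar$ as well as $\omega$ — but since the computation takes place on a single slice $\Hbar_{\ubar}$ (i.e. $\ubar$ is frozen), this extra dependence plays no role and all the identities from \cite{KLR14} carry over unchanged with $R(\omega)$ replaced by $R(\ubar,\omega)$.

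\emph{Step 1: Set up the adapted frame.} Recall $e_3' = e_3$, $e_a' = e_a - \Omega\, e_a(R)\, e_3$, and $e_4' = e_4 - 2\Omega\, e^a(R)\, e_a + \Omega^2 |\nabla R|^2 e_3$, where $R$ is normalized by $e_3(R) = \Omega^{-1}$. One first checks, exactly as in \cite{KLR14}, that this frame is null-adapted to $S_R$: $g(e_a', e_b') = \delta_{ab}$, $g(e_4', e_a') = 0$, $g(e_3', e_4') = -2$, so that $\tr\chi'$ computed from $\{e_3', e_4'\}$ is the genuine outgoing null expansion of $S_R$.

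\emph{Step 2: Expand $\tr\chi' = \sum_a g(D_{e_a'} e_4', e_a')$.} Substitute the definitions of $e_a'$ and $e_4'$ and distribute the covariant derivative. The terms group as follows: the leading piece $g(D_{e_a} e_4, e_a) = \tr\chi$; the pieces coming from differentiating $-2\Omega e^b(R) e_b$ against $e_a'$ produce the Laplacian term $-2\Omega\,\slashed\triangle R$ (here one uses that the induced connection on $S_R$ agrees, at the level needed, with $\nabla$ on $S_{u,\ubar}$ after projecting, together with the relation between $e_a(e_b(R))$ and $\nabla_a\nabla_b R$ modulo Christoffel corrections); the cross terms involving $\eta$ (equivalently $\zeta$, $\etabar$, via \eqref{2.3}) arising from $g(D_{e_a} e_3, \cdot)$ and $g(D_{e_3} e_4, \cdot)$ contractions give $-4\Omega\, \eta\cdot\nabla R$; the $\Omega^2 |\nabla R|^2 e_3$ term in $e_4'$ contributes $-\Omega^2 \tr\chibar\, |\nabla R|^2$ through $g(D_{e_a} e_3, e_a) = \tr\chibar$; and the remaining $e_3$-derivative terms, using $D_{e_4} e_3$ and the definition $\omegabar = -\tfrac14 g(D_3 e_4, e_3)$, collect into $-8\Omega^2\omegabar\,|\nabla R|^2$. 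Keeping careful track of the factors of $2$ and of $\Omega$ (recalling $2\Omega^{-2} = -g(L', \underline{L}')$ and that $\Omega = 1$ on the initial cones) reproduces exactly
\[
\tr\chi' = \tr\chi - 2\Omega\,\slashed\triangle' R - 4\Omega\,\eta\cdot\nabla R - \Omega^2\,\tr\chibar\,|\nabla R|^2 - 8\Omega^2\,\omegabar\,|\nabla R|^2.
\]

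\emph{Step 3: Identify $\slashed\triangle' R$.} Note that $\slashed\triangle'$ denotes the Laplacian of the induced metric on $S_R$; since $S_R$ and $S_{u,\ubar}$ carry the same metric $\gamma_{AB}(\mathrm{d}\theta^A)\otimes(\mathrm{d}\theta^B)$ in the coordinate patch (the deformation is only in the $u$-direction and the metric $g$ in these coordinates has no cross terms between $\mathrm{d}\ubar$ and $\mathrm{d}\theta^A$ on $\Hbar_{\ubar}$), one has $\slashed\triangle' R = \slashed\triangle R$ up to terms already absorbed into the lower-order structure; this is precisely the reduction carried out in \cite{KLR14} and no new input is needed.

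\emph{Main obstacle.} There is no genuine analytic difficulty here — the proposition is an algebraic identity and its proof is a frame computation that has already been done in \cite{KLR14} and \cite{A17}. The only thing requiring care is the combinatorics of the covariant-derivative expansion in Step 2: making sure that every term of the schematic form $\Omega^k \cdot (\text{Ricci coefficient}) \cdot (\nabla R)^j$ is accounted for with the correct numerical coefficient, and that the $e_3(R) = \Omega^{-1}$ normalization is applied consistently (in particular this is what converts bare $D_{e_3}$-contractions into the stated $\Omega$, $\Omega^2$ weights). Since we are invoking \cite{KLR14} and \cite{A17}, the write-up can simply cite Proposition 3.1 of \cite{KLR14} for the null-expansion formula of a deformed sphere with $R$ independent of $\ubar$, and then observe that freezing $\ubar$ reduces our situation to theirs; the formula then follows by specializing the ambient Ricci coefficients and $\Omega$ to the incoming cone $\Hbar_{\ubar}$.
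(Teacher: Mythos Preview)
Your proposal is correct and aligns with the paper's treatment: the paper does not give its own proof of this proposition but simply states it as a result due to An \cite{A17} (building on the frame-transformation computation of \cite{KLR14}), and your proposal ultimately does the same, citing \cite{KLR14} and \cite{A17} while additionally sketching the algebraic bookkeeping of the expansion $\tr\chi' = (\gamma')^{ab} g(D_{e_a'} e_4', e_b')$. One small clarification on your Step~3: the formula genuinely involves $\Delta'$, the Laplace--Beltrami operator of the induced metric on the deformed sphere $S_R$, and this is \emph{not} in general equal to $\Delta$ on $S_{u,\ubar}$ (the induced metric depends on $\omega$ through $R$); the paper and \cite{A17} keep $\Delta'$ in the statement, so there is no need to argue it coincides with $\Delta$.
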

Here, for each $\ubar$, $\Delta'$ denotes the Laplace-Beltrami operator on the sphere $(u=1-R(\omega),\ubar,\omega)$ and, as before, $\nabla$ denotes the induced covariant derivative on $S_{u,\ubar}$.\\ \\
This suggests defining the following operator
\begin{equation}
L(R)\equiv \Delta'_M R+2 \eta \cdot \nabla R-\Omega \tr \chibar |\nabla R|^2 -8 \omegabar |\nabla R|^2-\frac{\Omega^{-1}}{2}\tr \chi
\end{equation}
so that finding a MOTS on each $\ubar$ reduces to solving the following.\footnote{Note here that $\omega$ denotes angular co-ordinates of the initial sphere and that $R(\omega)$ is a function of these co-ordinates.}
\begin{equation}
L(R(\omega))=0
\end{equation}
To do so, An makes the choice $b\leq a^{1/2}$ and imposes the following condition on the initial data.
\begin{equation}
\int^{\ubar}_0 |\chihat_0(\ubar',\omega)|^2d\ubar'=f(\ubar,\omega)\ubar a \: \: \: \text{for each} \: \: \: \frac{b\delta}{a^{1/2}} \leq \ubar \leq \delta 
\end{equation}
after having made the choice $b\leq a^{1/2}$, where $f(\ubar,\omega)$ is a smooth function with properties $\frac{20}{21}\leq f(\ubar,\omega)\leq \frac{22}{21}$ and $|\partial^i_{\omega} f(\ubar,\omega)|\lesssim 1$ for all $i\in \mathbb{N}$ and $\omega\in \mathbb{S}^2$. He then shows that along every $\underline{H}_{\ubar}$ for $\frac{b\delta}{a^{1/2}}\leq \ubar \leq \delta$ there is a unique MOTS. He then also shows that if one takes $i,j\to \infty$ in the initial data of \cite{AL17}, then the MOTS are strung together in a smooth topological hypersurface in the spacetime.\\ \\
\textbf{Remark}. In \cite{A17} the result just mentioned appears as Theorem 1.4. \cite{A17} also contains Theorems 1.5 and 1.7. In Theorem 1.5 the integral condition is extended to $0 \leq \ubar \leq \delta$, and in Theorem 1.7 he considers an additional null hypersurface data $\ubar \in [\delta,2\delta]$ along which $\chihat_0=0$. \\ \indent 
We note here that the integral condition becomes increasingly difficult to construct as the lower bound of $\ubar$ approaches $0$. Since $\chihat_0$ is allowed to have angular freedom (its zero set moves from sphere to sphere), the angular derivatives of $\chihat_0$ become increasingly large as $\ubar\to 0$. It is not clear to us, from Appendix B in \cite{A17}, whether $\chihat_0$ is $C^1$ or even $C^0$ in the angular directions on approach of $\ubar=0$. We completely avoid such issues by setting integral condition for the interval $\ubar\in [\gamma \frac{a^{\frac{1}{2}}}{b}\delta,\lambda \delta]$. \\ \indent 
As for Theorem 1.7, the conclusion he obtains is a \textit{piecewise} smooth dynamical horizon with a $C^1$ discontinuity at $\ubar=\delta$. This discontinuity is due to a $C^0$ discontinuity in $\chihat_0$ at $\ubar=\delta$. This discontinuity is not present in our data and we will show that the dynamical horizon obtained is entirely smooth. We avoid discontinuous data to make the connection with the Final State and Weak Cosmic Censorship conjectures more direct (i.e. global well posedness ought not to be expected for discontinuous initial data).  \\ \\ 
As in \cite{KLR14}, the \cite{A17} argument relies on pre-established dynamical estimates, which in this case come from \cite{AL17} and are as follows. 
\begin{gather}
    |\chibarhat, \omegabar, \eta, \tr \chibar +\frac{2}{R(\omega)}| \leq \frac{\ubar a^{1/2}}{R(\omega)^2}\\
    |\Omega-1| \leq \frac{\ubar a^{1/2}b^{1/4}}{R(\omega)} \\
    |\tr \chi -\frac{2}{R(\omega)}+\frac{1}{R(\omega)^2} \int^{\ubar}_0 |\chihat_0|^2(\ubar',\omega)d\ubar' | \leq \frac{\ubar a^{1/2}b^{1/4}}{R(\omega)^2}
\end{gather}
Note that owing to a different orientation for $u$ in \cite{AL17}, they appear in a form where $R$ is replaced with $u$ and $\Omega$ with $\Omega^{-1}$. To de-clutter, we drop the  $\omega$ dependence in $R$. \\ \\
With the operator (5.2) in hand, An considers two PDEs and uses a method of continuity argument to prove the existence of a solution to (5.3). Given the initial data (5.4), the appropriate PDEs are 
\begin{equation}
    \Delta' R+\frac{1}{2}\Omega \tr \chibar |\nabla R|^2 -\frac{1}{R}+\frac{a\ubar }{2R^2}[1+(f(\ubar,\omega)-1)\lambda] = 0
\end{equation}
and
\begin{equation}
    \Delta' R+\frac{1}{2} \Omega \tr \chibar |\nabla R|^2 -\frac{1}{R}+\frac{a\ubar}{2R^2} + \lambda[2\eta_b \nabla^b R+4\Omega\omegabar |\nabla R|^2 -\frac{\Omega^{-1}}{2}\tr \chi +\frac{1}{R} -\frac{a \ubar f(\ubar,\omega)}{2R^2}] = 0
\end{equation}
Using the estimates (5.5-7), and absorbing lower order terms into $c_1,c_2,c_3$, these equations become equivalent, for any $\lambda\in [0,1]$, to the following PDE
\begin{equation}
     \Delta' R -\frac{1}{R}|\nabla R|^2-\frac{1}{R}+ \frac{a \ubar f(\ubar,\omega)}{2R^2} + \ubar a^{1/2}c_{1,a} \frac{\nabla^a R}{R^2} + \ubar a^{1/2}c_{2bc} \frac{\nabla^b R\nabla^c R}{R^2}+\frac{\ubar a^{1/2}c_3}{R^2} = 0
\end{equation}
where $\frac{20}{21}\leq f(\ubar,\omega)\leq \frac{22}{21}$, $|c_1,c_2,c_3|\leq b^{1/4}$ and $c_1,c_2,c_3$ do not depend on $\nabla R$, $\nabla ^2 R$ but only on $R$, and $\zeta(\ubar) (1-\frac{1}{c_2})\leq \zeta(\ubar,\omega)\leq \zeta(\ubar)(1+\frac{1}{c_2})$. We label (5.10) by $H=0$.\\ \\
This equation straightforwardly leads to a $C^0$ estimate, a different version of which is shown below.
\\ \\
For the rest of what follows let $F(R,\lambda)$ denote the left hand side of (5.8) and $G(R,\lambda)$ that of (5.9). Note that $F(R,\lambda=0)=G(R,\lambda=0)$, and that $\tr \chi'=0$ is equivalent to $G(u,\lambda=1)=0$.\\ \\
The first observation is that $F(R,\lambda=0)=0$ has an explicit solution $R= \frac{a\ubar}{2}$. Thus, to solve $F(R,\lambda)=0$, the method of continuity tells us that it suffices to show that $F(\tilde{R},\tilde{\lambda})[W]\equiv \lim_{\epsilon\to 0} \frac{1}{\epsilon}(F(\tilde{u}+\epsilon W,\tilde{\gamma})-F(\tilde{u},\tilde{\gamma}))$ is invertible for $W$. This will follow from \textit{a priori} estimates gathered for (5.8). \\ \\
Solving $G(R,\lambda)=0$ is done analogously, where we note that the explicit solution for $F(R,\lambda=0)=0$ is also a solution to $G(R,\lambda=0)=0$. By the method of continuity, it suffices to show that $G(\tilde{R},\tilde{\lambda})[W]\equiv \lim_{\epsilon\to 0} \frac{1}{\epsilon}(G(\tilde{R}+\epsilon W,\tilde{\lambda})-G(\tilde{R},\tilde{\lambda}))$ is invertible for $W$ when $\tilde{\lambda}$ is close to $\lambda$. One can show this explicitly from \textit{a priori} estimates for (5.9) and by using the analysis that was done for $F(R,\lambda)$. Note that the role of $F(R,\lambda)$ is simply to facilitate the analysis of $G(R,\lambda)$. In particular, the invertibility of $G(\tilde{R},\tilde{\lambda})$ turns out to involve only modest modifications to the analysis of $F(\tilde{R},\tilde{\lambda})$.\\ \\
The uniqueness and regularity parts of An's argument proceed using what has already established. The argument is computationally demanding but the ideas proceed naturally. Supposing there are two solutions $R$ and $\tilde{R}$, one writes two versions of (5.3), which in turn leads to an equation for $\Delta_R (\tilde{R}-R)$. Expanding the Laplacian $\Delta_R (\tilde{R}-R)$ into its components and using the estimate 
\begin{equation}
    \frac{\partial^2 R}{ \partial \theta_i \partial \theta_j} \ll \ubar a
\end{equation} 
coming from the $C^1$ estimate 
\begin{equation}
    |\nabla R|\ll 1
\end{equation} 
then yields estimates for each of these components. Using a null structure equation and a coarse version of the $C^0$ estimate $\frac{3}{8}\ubar \leq R,\tilde{R} \leq \frac{5}{8}\ubar a$, one obtains a further estimate 
\begin{equation}
    \nu(\omega)\geq \ubar a \frac{1}{(\frac{3\ubar a}{8})^3} =\frac{64}{81\ubar^2 a^2}
\end{equation}
where $\nu(\omega)$ is a term defined in terms of the decomposition for the leading term in the equation for $\tilde{R}-R$. \\ \\ 
Based on these, the equation for the difference $\tilde{R}-R$ becomes
\begin{equation}
    \Delta_R(\tilde{R}-R)-\nu(\omega)(\tilde{R}-R)+\frac{1}{\ubar^2 a^2}(R-\tilde{R})o(1)+\frac{1}{\ubar^2 a^2}\frac{\partial}{\partial \theta_i}(\tilde{R}-R)o(1)=0
\end{equation}
and using the estimate for $\nu(\omega)$ along with the maximum principle yields $\tilde{R}=R$, as desired. \\ \\
Regularity proceeds somewhat analogously. One starts by writing two versions of equation (5.3) for $\ubar$ and $\ubar'$ and one writes the equation for 
\begin{equation}
    \Delta_R \left(\frac{R(\ubar',\omega)-R(\ubar,\omega)}{\ubar'-\ubar}\right)
\end{equation} 
which is then split into two components, each of which are estimated. Expanding (5.15) into its components and using the estimates (5.12) and the $C^0$ estimate (5.11), when $\ubar$ is close to $\ubar'$, one obtains uniform (independent of $\ubar$) upper and lower bounds on the quantity $\nu(\ubar,\omega;\ubar)$, defined analogously to $\nu(\omega)$ above. This leads to the following equation 
\begin{equation}
\begin{split}
    \Delta_R\left(\frac{R(\ubar',\omega)-R(\ubar,\omega)}{\ubar'-\ubar}\right) - \frac{\nu(\ubar,\omega;\ubar')}{\ubar^2 a^2}\left(\frac{R(\ubar',\omega)-    R(\ubar,\omega)}{\ubar'-\ubar}\right)\\ +\tilde{\nu}(\ubar,\omega;\ubar')\frac{a}{\ubar^2a^2}+\frac{1}{\ubar^2a^2}\frac{R(\ubar',\omega)-R(\ubar,\omega)}{\ubar'-\ubar}o(1)\\
    +\frac{1}{\ubar^2 a^2}\frac{\partial}{\partial \theta_i}\frac{R(\ubar',\omega)-R(\ubar,\omega)}{\ubar'-\ubar}o(1)=\frac{a}{\ubar^2 a^2}o(1)
\end{split}
\end{equation}
The next step is to define a function $\mathfrak{h}(\ubar,\omega;\ubar')$ via the following equation
\begin{equation}
    \Delta_R \mathfrak{h}(\ubar,\omega;\ubar') -\frac{\nu(\ubar,\omega;\ubar')}{\ubar^2 a^2}\mathfrak{h}(\ubar,\omega;\ubar')+\tilde{\nu}(\ubar,\omega;\ubar')\frac{a}{\ubar^2 a^2}=0
\end{equation}
where $\tilde{\nu}(\ubar,\omega;\ubar')$ is a quantity defined in terms of the components one obtains in the expansion of the components of (5.15). \\ \\
Given the bounds on $\nu$ and $\tilde{\nu}$, what has already been shown makes it clear that there is a unique smooth solution $\mathfrak{h}(\ubar,\omega;\ubar')$. One uses it to rewrite (5.16) as follows.
\begin{equation}
\begin{split}
    \Delta_R\left(\frac{R(\ubar',\omega)-R(\ubar,\omega)}{\ubar'-\ubar}-\mathfrak{h}(\ubar,\omega;\ubar') \right) - \frac{\nu(\ubar,\omega;\ubar')}{\ubar^2 a^2}\left(\frac{R(\ubar',\omega)-R(\ubar,\omega)}{\ubar'-\ubar}-\mathfrak{h}(\ubar,\omega;\ubar')\right)\\ +\frac{1}{\ubar^2a^2}\left(\frac{R(\ubar',\omega)-R(\ubar,\omega)}{\ubar'-\ubar}-\mathfrak{h}(\ubar,\omega;\ubar') \right)o(1)\\
    +\frac{1}{\ubar^2 a^2}\left(\frac{\partial}{\partial \theta_i}\frac{R(\ubar',\omega)-R(\ubar,\omega)}{\ubar'-\ubar}-\mathfrak{h}(\ubar,\omega;\ubar')\right)o(1)=\frac{a}{\ubar^2 a^2}o(1)
\end{split}
\end{equation}
Working now with (5.16), standard elliptic theory eventually leads to 
\begin{equation}
    ||\frac{R(\ubar',\omega)-R(\ubar,\omega)}{\ubar'-\ubar}-\mathfrak{h}(\ubar,\omega)||_{L^\infty(M_{\ubar})} \leq ao(1)
\end{equation}
where $\mathfrak{h}(\ubar,\omega)$ is defined as the solution to an equation of the form (5.17) but with $\nu$ and $\tilde{\nu}$ replaced with their respective limits as $\ubar\to \ubar'$.\\ \\ Further standard elliptic theory combined with the estimates obtained yields 
\begin{equation}
    \frac{\partial R}{\partial \ubar}\in C^\infty(M_{\ubar})
\end{equation}
One then studies 
\begin{equation}
    \frac{\frac{\partial R}{\ubar}(\ubar',\omega)-\frac{\partial R}{\partial \ubar}(\ubar,\omega)}{\ubar'-\ubar}
\end{equation}
which by the same argument yields that 
\begin{equation}
    \frac{\partial^2 R}{\partial \ubar^2} \in C^{\infty}(M_{\ubar})
\end{equation}
Iterating this argument finally yields 
\begin{equation}
    \frac{\partial^k R}{\partial \ubar^k} \in C^{\infty}(M_{\ubar}) 
\end{equation}
for all $k\in \mathbb{Z}^+$, as desired.\\ \\ 
To establish the spacelike nature of the hypersurface $u=1-R(\ubar,\omega)$, An makes the additional assumption that on small discs $D_{\ubar} \subset S^2$ the initial shear satisfies
\begin{gather}
0\leq |\chihat_0(\ubar,\omega)|\leq a \hspace{0.5in} \text{for} \hspace{0.2in} \omega\in D_{\ubar} \\
|\chihat_0(\ubar,\omega)|^2=a \hspace{0.5in} \text{for} \hspace{0.2in} \omega \in D_{\ubar} \backslash S^2 \\
\int_{D_{\ubar}} d\omega \sim \frac{1}{c^2}
\end{gather}
where $1\ll c \ll b \leq a^{1/2}$. \\ \\
By the structure equation for $\nabla_3\chihat$ and dynamical estimates from [AL17] for the terms appearing in the $\nabla_3\chihat$ equation, we have
\begin{equation}
u^2|\chihat(u,\ubar,\omega)|\sim |\chihat(0,\ubar,\omega)|^2 
\end{equation}
Combining with previously established bounds and the assumption on the discs $D_{\ubar}$ eventually leads to 
\begin{equation}
    |\frac{\partial R(\ubar,\theta_1,\theta_2)}{\partial \ubar}-h(\theta_1,\theta_2)|\leq ao(1)
\end{equation}
and 
\begin{equation}
    h(\ubar,\theta_1,\theta_2)=(\frac{1}{2}+o(1))a
\end{equation}
where $\theta_1,\theta_2$ are angular co-ordinates on the sphere at $\ubar, u=1-R$. \\ \\
Given that $a$ is a fixed large positive constant, the tangent vectors $\frac{\partial}{\partial \ubar}$, $\frac{\partial}{\partial \theta_1}$, $\frac{\partial}{\partial \theta_2}$ are all spacelike. Letting $\lambda_{1,2,3}$ be any real numbers, and using the estimate $h(\ubar,\theta_1,\theta_2)=(\frac{1}{2}+o(1))a$, one computes
\begin{equation}
    \begin{split}
    g'(\lambda_1\frac{\partial }{\partial \theta_1}+\lambda_2\frac{\partial }{\partial \theta_2}+\lambda_3\frac{\partial }{\partial \ubar},\lambda_1\frac{\partial }{\partial \theta_1}+\lambda_2\frac{\partial }{\partial \theta_2}+\lambda_3\frac{\partial }{\partial \ubar})\\
    =\lambda_1^2g_{\theta_1\theta_1}+\lambda^2g_{\theta_2\theta_2}+4\lambda_1\lambda_3\frac{\partial R}{\partial \theta_1}+4\lambda_2\lambda_3\frac{\partial R}{\partial \theta_2}+\lambda_3^2h(\ubar,\theta_1,\theta_2)(1+o(1))
\end{split}
\end{equation}
Seeking to show $g'(\cdot,\cdot)$ is always $>0$, the interesting term is 
\begin{equation}
    \lambda_3\left(4\lambda_1\frac{\partial R}{\partial \theta_1}+4\lambda_2\frac{\partial R}{\partial \theta_2}+\lambda_3h(\ubar,\theta_1,\theta_2)(1+o(1))\right) 
\end{equation}
The mixed terms $\frac{\partial R}{\partial \theta_{1,2}}$ involve a power of $\delta$ compared to $h(\ubar,\theta_1,\theta_2)$, and so these are lower order. If $\lambda_{1,2}$ are large compared to $\lambda_3$ by an order $\delta^{-1}$ or more, then this term could be negative but in that case at least one of first two terms in (5.25) is very large and positive. \\ \\
So $g'(\cdot,\cdot)>0$ for any spacelike vector $\cdot$ and the horizon is spacelike. \\ \\
In summary, the main argument in \cite{A17} goes as follows. 
\begin{enumerate}
    \item Use the idea of \cite{KLR14} to write a MOTS equation. Use it to write two further equations, $F=0$ and $G=0$, and use the \textit{a priori} estimates of \cite{AL17} to derive a third equation, $H=0$, from $F=0$ and $G=0$.   
    \item Derive a $C^0$ estimate, with $\alpha$ coming from the bounds on $f(\ubar,\omega)$.
    \begin{equation}
        (1-\frac{1}{\alpha})(\frac{1}{2}+o(1))\ubar a \leq R(\ubar,\theta) \leq (1+\frac{1}{\alpha})(\frac{1}{2}+o(1))\ubar a
    \end{equation}
    \item Use (5.32) to obtain a $W^{1,p}$ estimate 
    \begin{equation}
        \int_{M_{\ubar}} |\nabla R|^2 \ll \ubar a
    \end{equation}
    \item Define a function $h(R)=1+\frac{8}{\ubar^2a^2}(R-\frac{\ubar a}{2})^2$, and use (5.32), Bochner's formula, and an estimate for the Ricci curvature, to obtain the $C^1$ estimate
    \begin{equation}
        |\nabla R|\ll 1
    \end{equation}
    \item Use (5.34) and standard elliptic theory to obtain 
    \begin{equation}
        |\frac{\partial^2}{\partial \theta_i \partial \theta_j} R | \ll \ubar a  
    \end{equation}
    \item Use (5.34/5) and other \textit{a priori} estimates to work through two continuity arguments for $F$ and $G$ to yield existence of solution for the MOTS equation. 
    \item Use (5.32/4) and standard elliptic theory to show uniqueness. 
    \item Use (5.32/35) and standard elliptic theory to show 
    \begin{equation}
        \frac{\partial^k R}{\partial \ubar^k} \in C^\infty(M_{\ubar})
    \end{equation}
    for all $k$.
    \item Combine dynamical and elliptic estimates so that the assumption on the discs $D_{\ubar}$ yields a uniform estimate for $h(\ubar,\theta_1,\theta_2)$ to prove that the horizon is spacelike. 
\end{enumerate}
An also uses this proof to show that if one places trivial data $\chihat_0=0$ on a subsequent null hypersurface $\ubar\in [\delta,2\delta]$, then there remains a dynamical horizon satisfying the relevant properties. He can no longer prove that it is spacelike however, and the differences from the above argument are as follows.
\begin{itemize}
\item The equation $H=0$ derived from $F=0$ and $G=0$ is different owing to the fact that $\chihat_0=0$ is on this null hypersurface. This leads to a $C^0$ estimate that is independent of $\ubar$. 
\item The $C^{1}$ estimate is the same, but it is obtained by defining $h(R)=1+\frac{8}{\delta a}(R-\frac{\delta a}{2})^2$. 
\item The proof of existence and uniqueness proceeds as above, but there is a difference for regularity. In working out the analog of (5.29), one obtains
\begin{equation}
    |h(\ubar,\theta_1,\theta_2)| \leq o(1) a
\end{equation}
whereas previously that had been
\begin{equation}
  h(\ubar,\theta_1,\theta_2) = (\frac{1}{2}+o(1))a
\end{equation}
Since (5.28) holds in both cases, the quantity $\frac{\partial R}{\partial \ubar}$ must jump at $\ubar=\delta$. This $C^1$ discontinuity in the dynamical horizon is caused by a $C^0$ discontinuity in $\chihat_0$ on the initial null hypersurface, which we avoid in our setting.
\end{itemize}
\newpage 
Our initial data differs from \cite{A17} in the following sense.  
\begin{itemize}
\item First note that $\delta a^{1/2}b<1$, $\delta a^{1/2} b^{\mu}<1$ and that $a^{1/2} < b$.
    \item Let $\lambda$ be a constant satisfying $1>\lambda>\frac{a^{\frac{1}{2}}}{b}>0$, $\mu>1$ a constant to be specified, and $\gamma$ a free $o(1)$ parameter $>0$. Then require, for all $\ubar \in [\gamma \frac{a^{\frac{1}{2}}}{b}\delta,\lambda \delta]$ the following
\begin{equation}
\int_0^{\ubar} | \chihat_0|^2 (\ubar', \omega) \hspace{.5mm} \text{d}\ubar' =a^{1/2}b^\mu f(\ubar,\omega) \ubar 
\end{equation} 
for a smooth (in $\ubar$ and $\omega$) function $f(\ubar,\omega)$ such that $1-\frac{1}{c_1} \leq f(\ubar,\omega) \leq 1+\frac{1}{c_1}$ for a constant $c_1\geq 20$, and moreover $| \partial_{\omega}^i f(\ubar,\omega)| \lesssim 1$ for all $i \in \mathbb{N}$ and all $\omega \in \mathbb{S}^2$.

\item For $\ubar \in [\lambda \delta, \lambda' \delta]$ where $\lambda'$ is a constant such that $\lambda <\lambda'<1$, we have 
    \begin{equation}
       |\chihat_0(\ubar,\omega)|^2 = \mathcal{A}(\ubar, \omega) |\chihat_0(\ubar=\lambda \delta,\omega)|^2 
    \end{equation}
where $\mathcal{A}(\ubar,\omega)$ is a smooth (in $\ubar$ and $\omega$) cut-off function with $\mathcal{A}(\lambda \delta,\omega)=1$ and $\mathcal{A}(\lambda'\delta,\omega)= 0$. 
\item The total shear from $0$ to $\ubar > \lambda' \delta$ is dominated by the contribution from $\ubar \in [0,\lambda \delta]$ 
\begin{equation}
\begin{split}
\int_0^{\ubar} |\chihat_0(\ubar',\omega)|^2 d\ubar' = \int_0^{\lambda \delta} |\chihat_0(\ubar',\omega)|^2d\ubar' + \int_{\lambda \delta}^{\lambda'\delta} |\chihat_0(\ubar',\omega)|^2d\ubar' + \int_{\lambda'\delta}^{\ubar}|\chihat_0(\ubar',\omega)|^2d\ubar'\\
= M^*(\omega) +\epsilon(\lambda'\delta,\omega) +0=4m_0
\end{split}
\end{equation}
where $M^*(\omega)=\int_0^{\lambda \delta}|\chihat_0(\ubar',\omega)|^2d\ubar' \approx d_0 \epsilon(\lambda'\delta,\omega)$ for some universal large constant $d_0$ and 
\begin{equation}
    \int_0^{\ubar}|\chihat_0(\ubar',\omega)|^2 d\ubar' = b^\mu a^{1/2}\mu \ubar f(\ubar,\omega)\zeta(\ubar,\omega) + (1-\zeta(\ubar,\omega))4m_0
\end{equation}
where $\zeta(\ubar,\omega)$ is a smooth (in both $\omega$ and $\ubar$) function resulting from integrating $\mathcal{A}(\ubar,\omega)$, which is $=1$ for $\ubar \leq \lambda \delta$, and $=0$ for $\ubar \geq \lambda'\delta$, and moreover which satisfies $\zeta(\ubar) (1-\frac{1}{c_2})\leq \zeta(\ubar,\omega)\leq \zeta(\ubar)(1+\frac{1}{c_2}) $ for a constant $c_2\geq 20$, $|\partial^i_\omega \zeta(\ubar,\omega)|\lesssim 1$ for $ i \in \mathbb{N}$ and $\omega\in S^2$, with $\zeta(\ubar)$ a smooth cut-off function in $\ubar$ such that $\zeta(\lambda \delta)=1$ and $\zeta(\lambda'\delta)=0$. 
\end{itemize}
We now describe how this initial data permits the above argument in \cite{A17} to proceed more or less unchanged. \\ \\
First, the analogs to $F=0$, $G=0$ are as follows.
\begin{equation}
     \Delta' R+\frac{1}{2}\Omega \tr \chibar |\nabla R|^2 -\frac{1}{R}+\left(\frac{b^\mu a^{1/2} \ubar f(\ubar,\omega)\zeta(\ubar,\omega) + (1-\zeta(\ubar,\omega))4m_0 }{2R^2}\right) [1+(f(\ubar,\omega)-1)\lambda] = 0 
\end{equation}
\begin{equation}
\begin{split}
     \Delta' R+\frac{1}{2} \Omega \tr \chibar |\nabla R|^2 -\frac{1}{R}+\left(\frac{b^\mu a^{1/2} \ubar f(\ubar,\omega)\zeta(\ubar,\omega) + (1-\zeta(\ubar,\omega))4m_0}{2R^2}\right) +\\ \lambda \left[ 2\eta_b \nabla^b R+4\Omega\omegabar |\nabla R|^2 -\frac{\Omega^{-1}}{2}\tr \chi +\frac{1}{R} -\left(\frac{b^\mu a^{1/2} \ubar f(\ubar,\theta)\zeta(\ubar,\omega) + (1-\zeta(\ubar,\omega))4m_0)}{2R^2}\right)\right] = 0 
\end{split}
\end{equation}
Substituting the dynamical estimates from \cite{AL17} onto these equations whilst re-absorbing the contributions by lower order terms into $c_1,c_2,c_3$ leads to the following analog of $H=0$.
\begin{equation}
\begin{split}
     \Delta' R -\frac{1}{R}|\nabla R|^2-\frac{1}{R}+ \left(\frac{b^\mu a^{1/2} \ubar f(\ubar,\omega)\zeta(\ubar,\omega) + (1-\zeta(\ubar,\omega))4m_0}{2R^2}\right) + \\
     \ubar a^{1/2}c_{1,a} \frac{\nabla^a R}{R^2} + \ubar a^{1/2}c_{2bc} \frac{\nabla^b R\nabla^c R}{R^2}+\frac{\ubar a^{1/2}c_3}{R^2} = 0
     \end{split}
\end{equation}
where $(1-\frac{1}{\alpha_1})\leq f(\ubar,\omega)\leq (1+\frac{1}{\alpha_1})$, $|c_1,c_2,c_3|\leq b^{1/4}$ and $c_1,c_2,c_3$ do not depend on $\nabla R$, $\nabla ^2 R$ but only on $R$, and $\zeta(\ubar) (1-\frac{1}{c_2})\leq \zeta(\ubar,\omega)\leq \zeta(\ubar)(1+\frac{1}{c_2})$. \\ \\
For the $C^0$ estimate, we use (5.45) directly as follows.\\ \\
Let $M_0(\ubar,\omega)\equiv b^\mu a^{1/2} \ubar f(\ubar,\omega)\zeta(\ubar,\omega) + (1-\zeta(\ubar,\omega))4m_0$, and $R_{\text{max}}\equiv \max_\omega R(\omega)$ (and equivalently for $R_{\text{min}}$). At $R_{\text{max}}$ we have $\nabla R_{\text{max}}=0, \Delta R_{\text{max}}\leq 0$, and so (5.22) yields 
\[0\leq \frac{-2R_{\text{max}}+{M_0}_{\text{max}} +2\ubar a^{1/2}c_3}{2R^2_{\text{max}}} \] with $|c_3|\leq b^{1/4}$ which leads to 
\[R_{\text{max}}\leq (\frac{1}{2}+o(1)){M_0}_{\text{max}}\]
and
\[R_{\text{min}}\geq (\frac{1}{2}+o(1)){M_0}_{\text{min}}\]
which when combined gives 
\begin{equation}
\begin{split}
(\frac{1}{2}+o(1))(1-\frac{1}{\alpha_1})b^\mu a^{1/2} \ubar (1-\frac{1}{\alpha_2})\zeta(\ubar) + (1-\zeta(\ubar))(1-\frac{1}{\alpha_2})2m_0 \leq R \\ 
\leq (\frac{1}{2}+o(1))(1+\frac{1}{\alpha_1})b^\mu a^{1/2} \ubar (1+\frac{1}{\alpha_2})\zeta(\ubar) + (1-\zeta(\ubar))(1+\frac{1}{\alpha_2})2m_0 
\end{split}
\end{equation}
The terms involving $\alpha_1$ and $\alpha_2$ come from the the angular bounds on $f(\ubar,\omega)$ and $\zeta(\ubar,\omega)$ as in \cite{A17}, the $o(1)$ term comes from $\ubar a^{1/2}c_3$ which is $\ll 1$ since $\ubar\leq \delta$, $c_3\leq b^{1/4}$ and yet $\delta a^{1/2}b<1$ with $b,a$ both large constants. \\ \\
So long as $\alpha_1,\alpha_2$ are chosen suitably large, integrating (5.21) yields the following $W^{1,2}$ estimate.
\begin{equation}
\int_M |\nabla R|^2\lesssim  a^{1/2} b^\mu \ubar \zeta(\ubar)+(1-\zeta(\ubar))4m_0 
\end{equation}
The next and key estimate in \cite{A17} is the $C^1$ estimate $|\nabla R| \ll 1$. To obtain this, An considers a function $h(R)=1+\frac{8}{\ubar^2 a^2}(R-\frac{\ubar a}{2})^2$, computes $\Delta'(h(R)|\nabla R|^2) $ with the help of Bochner's formula, and estimates each term using the slab estimates derived from \cite{AL17}. In our case, we use
\begin{equation}
    h(R)=1+\left(\frac{8}{(\ubar a^{1/2}b^\mu\zeta(\ubar)+(1-\zeta(\ubar))4m_0)^2}\right)\left[R-\left(\frac{\ubar a^{1/2}b^\mu\zeta(\ubar)}{2}+(1-\zeta(\ubar))2m_0\right)\right]^2
\end{equation}$\ubar a$ with $\ubar a^{1/2}b^\mu\zeta(\ubar)+(1-\zeta(\ubar))4m_0$. Note that for suitable $\alpha_{1,2}$, many functions like $h(R)$ will do and in particular the factor `$8$' in \cite{A17} is to some extent arbitrary and can be replaced by any other real number with lower bound depending on how close $R$ is to $\frac{1}{2}\ubar b^\mu \delta a^{1/2}$.  \\ \\
For the first term estimated, one uses the transport equations for $\chi$, $\chihat$, $\tr \chi$ and the $\nabla_4$ equation for $\crho=\rho-\frac{1}{2}\chihat \cdot \chibarhat$ which reads
\begin{equation}
    \nabla_4 \crho =\frac{-3}{2}\tr \chi \crho +\div \beta +\zeta\cdot \beta +2\etabar \cdot \beta -\frac{1}{2}\chihat \cdot \nabla \hat{\otimes} \etabar -\frac{1}{2}\chihat\cdot(\etabar \hat{\otimes} \etabar)+ \frac{1}{4}\tr \chibar |\chihat|^2 
\end{equation}
Using once more the estimates in \cite{AL17}, one gets 
\begin{equation}
    \crho|_{S_{1-R,\ubar}} = \frac{-b^\mu a^{1/2} f(\ubar,\omega)\zeta(\ubar,\omega)+(1-\zeta(\ubar,\omega))4m_0}{2R^3}+\frac{\ubar a^{1/2}c_3}{R^3}
\end{equation}
and upon combining with the $\nabla_3$ equation for $\tr \chi$ this leads to  
\begin{equation}
    \begin{split}
        \nabla_3 \tr{\chi}|_{S_{1-R,\ubar}}=\frac{1}{R}(\frac{2}{R}-\frac{\ubar b^\mu a^{1/2} f(\ubar,\omega)\zeta(\ubar,\omega)+(1-\zeta(\ubar,\omega))4m_0}{R^2}) -\\
        \frac{b^\mu \ubar f(\ubar,\omega) a^{1/2}\zeta(\ubar,\omega)+(1-\zeta(\ubar,\omega))4m_0}{R^3} + \frac{ \ubar a^{1/2}c_3}{R^3}
    \end{split}
\end{equation}
where $c_3$ is used to mean a quantity $\leq b^{1/4}$. The analog of the quantity on pg.23 of \cite{A17} for which one needs a lower bound is 
\begin{equation}
    -2R+2 b^\mu a^{1/2}f(\ubar,\omega)\zeta(\ubar,\omega)+(1-\zeta(\ubar,\omega))8m_0
\end{equation}
which in \cite{A17} satisfies $>\frac{1}{2}R$ and so too in our case by our assumption on the initial data.  \\ \\
As in \cite{A17}, the $C^1$ estimate for $R$ will come from term-by-term analysis of (4.7) on p.22 of \cite{A17}. From the $C^0$ estimate (5.42) we get \begin{equation}
    |R-\left(\frac{\ubar b^\mu a^{1/2}\zeta(\ubar)+(1-\zeta(\ubar))4m_0}{2} \right)|\ \leq o(1)\left((1+\frac{1}{\alpha_1})(1+\frac{1}{\alpha_2}) \ubar b^\mu a^{1/2}\zeta(\ubar)+(1+\frac{1}{\alpha_2})(1-\zeta(\ubar))4m_0\right)
\end{equation}
With this it is clear that the estimates in the middle of p.24 \cite{A17} go through in their analogous form here, which in turn suffices to guarantee that $|\nabla R|\ll 1$.\\ \\
The other estimates - $W^{2,p}$, $C^{1,p}$, and $C^{2,q}$ - proceed as in \cite{A17} and involve no significant modification and so one finally obtains the following analog of (4.14) on p.27 of \cite{A17}
\begin{equation}
    |\frac{\partial^2}{\partial \theta_i \partial \theta_j}R|\ll b^\mu a^{1/2}\ubar\zeta(\ubar)+4m_0(1-\zeta(\ubar)) 
\end{equation}
\textbf{Existence}. The invertibility of $F(\tilde{R},\tilde{\gamma})[W]\equiv \lim_{\epsilon\to 0} \frac{1}{\epsilon}(F(\tilde{R}+\epsilon W,\tilde{\gamma})-F(\tilde{R},\tilde{\gamma}))$ proceeds virtually identically. Since we share the estimates for dynamical quantities as in  \cite{AL17}, the computation and estimates which lead to an expression for the coefficients of $I_1+I_2+I_3$ on p.30 of \cite{A17} will be identical bar the relevant replacements of $\ubar a$ with $\ubar  a^{1/2}b^\mu \zeta(\ubar)+(1-\zeta(\ubar))4m_0$, and since the estimate $|\nabla R|\ll 1$ holds, the invertibility of $F(R,\lambda)$ follows. The second continuity argument in section 6 of \cite{A17} proceeds identically, with $\ubar a \to \ubar a^{1/2}b^\mu\zeta(\ubar) +(1-\zeta(\ubar))4m_0$ and with $|c|\leq \frac{1}{a^{1/2}}\to \frac{1}{b^{1/4}}$ in the lower part of p.32 of \cite{A17}, but again the invertibility of $G(\tilde{R},\tilde{\lambda})$ follows straightforwardly. \\ \\
\textbf{Uniqueness}. This proceeds with the same kind of superficial modifications: $\ubar a \to \ubar b^\mu a^{1/2}\zeta(\ubar)+(1-(\zeta(\ubar))4m_0$. One key estimate is for a quantity $\nu(\omega)$ defined on p.35 and shown to obey the lower bound $\nu(\omega)\geq \frac{64}{81 \ubar^2a^2}$. This guarantees that uniqueness via a maximum principle argument. In our case the estimate becomes 
\begin{equation}
    \nu(\omega)\geq \frac{64}{81 (a^{1/2}b^\mu \ubar\zeta(\ubar)+(1-\zeta(\ubar))^2)4m_0)}
\end{equation} 
and the same maximum principle argument yields uniqueness. \\ \\
\textbf{Regularity}. As described, one starts by writing out the equation for $\Delta_{R(\ubar,\omega)}\frac{(R(\ubar',\omega)-R(\ubar,\omega)}{\ubar'-\ubar}$. The long computation yields terms which are estimated via the estimate $\frac{\partial^2 R(\ubar',\omega)}{\partial \theta_i \theta_i} \approx o(1)\ubar a $, a similar version of which holds in our case. Using our $C^0$ estimate (5.46) and (5.54), the argument p.41-4 of \cite{A17} proceeds.    \\ \\ 
\textbf{Spacelike}. Conditions (5.21)-(5.23) lead to the uniform estimate on $h(\ubar,\theta_1,\theta_2)$ which eventually permits proving that the horizon is spacelike. Although we could do something similar, we choose not to since the assumption is rather strong. Another issue is that our condition that 
\begin{equation}
    \int_0^\delta |\chihat_0(\ubar,\omega)|^2d\ubar'=4m_0
\end{equation}
be independent on $\omega$ implies that whatever happens in the small discs has to cancel appropriately. If we made such an assumption, $\chihat_0$ would have to compensate for this in the region $\ubar\in(0,\frac{a^{1/2}}{b}\delta)$, i.e., before the integral condition on $\chihat_0$ is imposed, and, presumably also after, i.e., from $\lambda \delta$ to $\lambda'\delta$. In that case, schematically, one obtains an estimate of the form 
\begin{equation*}
    h(\ubar,\theta_1,\theta_2)\sim [\frac{1}{2}(\zeta(\ubar) +\ubar\zeta'(\ubar) -\zeta'(\ubar))+o(1)]a^{1/2}b^\mu
\end{equation*}
which in turn would permit proving that the horizon is spacelike, at least for $\ubar\leq \lambda\delta$. We omit the details.

\subsection{Gluing}
The gluing procedure that eventually permits us a test of the Penrose inequality follows from Theorem 4.3, which gave the estimate for the spacetime metric $g$ in the region $\ubar\in [\delta+\epsilon_0]$ and $u\in [1,1-\epsilon_0]$
\begin{equation}
||g-g_{m_0}||_{C^{k-3}} 
\end{equation}
with $g_{m_0}$ the Schwarzschild metric with mass $m_0$. This is the same as that obtained in \cite{LY15} and since the spacetime is smooth, the gluing argument is exactly as in \cite{LY15}, which itself is based on \cite{CS05}. \\ \\
The result is simple to state. One can glue the $t=0$ slice of the Kerr spacetime onto the region $[\ubar,u]\in [\delta+\epsilon_0,1-\epsilon_0]$ with ADM mass and angular momentum satisfying
\begin{equation}
    |m-m_0|+|\textbf{a}|\lesssim \epsilon
\end{equation}
where $\epsilon$ is as in Theorem 1.8. The initial data of \cite{AL17} and estimates obtained in this region permits picking the $\delta$ dependence of $\epsilon$. The best possible choice gives $\epsilon =Ca^{1/2}\delta^{1/2}$ for a constant $C$ indepedent of $a$, $\delta$. \\ \\
Finally, the absence of trapped surfaces or MOTS on this slice follows by an elementary maximum principle argument and the fact that the slice is asymptotically flat.

\subsection{Area Estimate}
Here we give the areal estimate for the MOTS obtained in the region $\ubar\in [\gamma \frac{a^{1/2}}{b}\delta,\delta]$. On each MOTS $M_{\ubar}$, the induced metric is given by 
\begin{equation*}
    g'_{\theta_i \theta_j}=g_{\theta_i \theta_j}+\frac{\partial (1-R)}{\partial \theta_i} \frac{\partial (1-R)}{\partial \theta_j} g(\frac{\partial}{\partial u},\frac{\partial}{\partial u})=g_{\theta_i \theta_j}
\end{equation*}
So along $\Hbar_{\ubar}$ 
\begin{equation*}
    \sqrt{\text{det}g'}=\sqrt{\text{det}g}
\end{equation*}
By the first variation formula and properties of the double null foliation we have
\begin{equation*}
\frac{\partial }{\partial \ubar}\sqrt{\text{det}g}=\sqrt{\text{det}g}\Omega \text{tr}\chi
\end{equation*}
which in turn leads to
\begin{equation*}
\frac{\partial }{\partial \ubar}\sqrt{\text{det}g}=\sqrt{\text{det}g}\Omega \text{tr}\chi 
\end{equation*}
and
\begin{equation*}
|\frac{\sqrt{\text{det}g(u,\ubar,\theta_1,\theta_2)}}{\sqrt{\text{det}g(u,0,\theta_1,\theta_2)}}-1|\leq \frac{\ubar a^{1/2}b^{1/4}}{|u|}\ll 1
\end{equation*}
So for some constant $f_0\gg 1$. 
\begin{equation*}
    (1-\frac{1}{f_0})(\sqrt{\text{det}g(u,0,\theta_1,\theta_2)})\leq \sqrt{\text{det}g(u,\ubar,\theta_1,\theta_2)} \leq (1+\frac{1}{f_0})\sqrt{\text{det}g(u,0,\theta_1,\theta_2)}
\end{equation*}
which in turn gives 
\begin{gather*}
    |M_{\ubar}|=\int \int _{S^2} \sqrt{\text{det}g(u,\ubar,\theta_1,\theta_2)} d\theta_1 d\theta_2 \\
    \leq (1+\frac{1}{f_0}) \int \int _{S^2} \sqrt{\text{det}g(u,0,\theta_1,\theta_2)} d\theta_1 d\theta_2\\
    =(1+\frac{1}{f_0})4\pi |1-u|^2 
\end{gather*}
Using the $C^0$ estimate (5.46) for $R=1-u$ for  $\ubar\leq \lambda \delta$ and including the lower bound we obtain
\begin{equation}
    (\frac{1}{4}-o(1))b^\mu a^{1/2}\ubar \leq  \sqrt{\frac{|M_{\ubar}|}{16\pi}} \leq (\frac{1}{4}+o(1))b^\mu a^{1/2}\ubar 
\end{equation}
for some $o(1)\ll \frac{1}{4}$.

\end{document}